\newcommand{\rnum}[1]{\expandafter{\romannumeral #1}}
\newcommand{\Rnum}[1]{\uppercase\expandafter{\romannumeral #1}}
\newcommand{\s}[1]{#1}
\newcommand{\vPhi}{\varPhi}
\renewcommand{\u}[1]{\underline{#1}}
\newcommand{\inner}[2]{\left\langle{#1},{#2}\right\rangle}
\newcommand{\tang}[1]{T_{\mbox{\scriptsize $#1$}}}
\newcommand{\fff}{\mathrm{I}}
\newcommand{\sff}{\mathrm{II}}
\newcommand{\tff}{\mathrm{III}}
\newcommand{\cnt}{\mathrm{CNT}}
\newcommand{\sym}{\mathrm{sym}}
\newcommand{\figsize}{4.5cm}
\DeclareMathOperator{\proj}{Proj}
\DeclareMathOperator{\GC}{GC}
\theoremstyle{definition}
\newtheorem{Def}{Definition}[section]
\newtheorem{Ex}[Def]{Example}
\newtheorem{fact}[Def]{Fact}
\newtheorem{Rem}[Def]{Remark}
\theoremstyle{plain}
\newtheorem{Thm}[Def]{Theorem}
\newtheorem{Lem}[Def]{Lemma}
\newtheorem{Prop}[Def]{Proposition}
\newtheorem{Cor}[Def]{Corollary}
\theoremstyle{plain}
\author{Motoko Kotani, Hisashi Naito and Toshiaki Omori}
\address{
  M.~Kotani:
  Mathematical Institute,
  Tohoku University,
  Aoba, Sendai 980-8578, Japan
  and 
  AIMR
  Tohoku University,
  Aoba, Sendai 980-8577, Japan
}
\email{m-kotani@m.tohoku.ac.jp}
\address{
  H.~Naito:
  Graduate School of Mathematics, 
  Nagoya University, 
  Chikusa, Nagoya 464-8602, Japan
}
\email{naito@math.nagoya-u.ac.jp}
\address{
  T.~Omori:
  Department of Mathematics, 
  Faculty of Science and Technology, 
  Tokyo University of Science, 
  Noda, Chiba 278-8510, Japan
}
\email{omori{\char"5F}toshiaki@ma.noda.tus.ac.jp}
\title{A Discrete Surface Theory}
\dedicatory{Dedicated to Yumiko Naito}
\keywords{discrete surfaces theory, discrete curvature, discrete minimal surface}
\begin{document}
\begin{abstract}
  In the present paper, we propose a new discrete surface theory on 3-valent embedded graphs in the 3-dimensional Euclidean space which are not necessarily ``discretization'' or ``approximation'' of smooth surfaces.
  The Gauss curvature and the mean curvature of discrete surfaces are defined which satisfy properties corresponding to the classical surface theory.
  We also discuss the convergence of a family of subdivided discrete surfaces 
  of a given 3-valent discrete surface by using the Goldberg-Coxeter construction.
  Although discrete surfaces in general have no corresponding smooth surfaces, 
  we may find one as the limit. 
\end{abstract}
\maketitle
%% SECTION 1 INTRODUCTION
\section{Introduction}
\label{section:Intro}
The present paper discusses ``discrete surface theory''. 
There are several proposals of discrete surface theory 
or ``discrete differential geometry'' by several authors 
from different viewpoints. 
For example, one of the big motivations of their studies is 
to visualize a given smooth surface 
and compute its geometric quantities, 
or to consider a series of simplex complex 
which approximates the surface 
and discuss convergence theories 
of the geometric quantities. 
This can be taken as a generalization of 
classical study of geometry of polyhedrons. 
Another direction is to study 
discrete integrable systems of the integer networks.
See for example 
\cite{MR2407724,MR1396732,MR2467378,MR2359767,MR1246481,MR2233848,MR2579698,MR2657431}, 
for references.
\par
Our motivation is to develop a surface theory of embedded graphs.
An embedded graph is a mathematical model of 
atomic configurations of a matter, 
where vertices represent atoms, 
edges interactions or bondings, respectively. 
A systematic study of chemical graphs is done by 
M.\ Deza and M.\ Dutour 
(see \cite{MR2429120,MR2035314}) by applying combinatorics. 
Our approach is a little different. 
We would like to define discrete surfaces out of embedded graphs, 
and their differential geometric notions 
such as their Gauss curvature, 
and mean curvature, 
which are believed in materials science 
to indicate inner frustration and
outer stress of the atomic configurations, respectively. 
For examples, A.\ L.\ Mackay and H.\ Terrones~\cite{Mackay-Terrones:1991} proposed a carbon network, 
which is supposed to be 
a discrete Schwarzian surface 
(triply periodic minimal surface, negatively curved in particular) 
and caught much attentions in materials science, 
but there is no precise definitions of curvatures, 
as far as the authors know.
\par
In the present paper, we define discrete surface as an ``embedded'' 3-valent graph equipped with the normal vector field $\u{n}$ over its vertices,
and the Gauss curvature $K$ and the mean curvature $H$ as the determinant and the trace and  of the Weingarten map $\nabla \u{n}$. 
We say a``graph" for an abstract graph and a 3-``discrete surface" for an embedded 3-valent graph.
We show their properties in Section \ref{Section3} corresponding to the classical surface theory, the variational formula of area (Theorem \ref{Thm(variation_formula)}), and the relation between harmonic maps and minimal surfaces (Theorem \ref{Thm(min_harm)}).
%%%%%%%%%%%%%%%%%%%% 
In Section \ref{Section(example)}, 
we compute the Gauss curvature and the mean curvature of some examples such as plane graphs,
sphere-shaped graphs, carbon nanotubes (hexagonal graphs on a cylinder), 
and the Mackay-like crystals (spatial graphenes).
We also discuss subdivision of discrete surfaces by using the subdivision theory for abstract 3-valent graphs, 
which is called the Goldberg-Coxeter construction, in Section \ref{Section(GC)}.
The Goldberg-Coxeter subdivision, which we discussed, keeps to be of 3-valent. 
Moreover we discuss their convergence to smooth surfaces in Section \ref{sec:subdivision}. 
Topological defects of the Mackay crystal can be detected by the
Goldberg-Coxeter subdivisions.
%%%%%%%%%%%%%%%%%%%% 
\par
We here emphasize that we cannot apply the classical study of polyhedrons or simplex complex because there is no natural way to assign faces which bound a given 1-skeltons (graphs) and thus no associate complex so that we can apply known notions.
We even treat graphs at each vertex of which graph the vector space spanned by the edges emerge from the vertex is of 2 dimensional (flat plane) but the whole graph lies as a surface in the 3-dimensional Euclidean space.
In that case, the classical definition of curvature defined by its angle defect is zero, but the surface looks like a negatively curved surface.
We need a new definition of curvatures to take care of examples including the Mackay crystal, arising from materials sciences.
%% SECTION 2 CLASSICAL THEORY
\section{The classical surface theory in $\mathbb{R}^3$}
\label{Section2}
Prior to the introduction of a discrete surfaces theory 
in Section \ref{Section3}, 
in this section we briefly review basic facts of 
the classical surface theory in $\mathbb{R}^3$ 
for the readers. 
See \cite{MR2566897} for example for details. 
\par
Let $M\subseteq \mathbb{R}^3$ be a regular surface 
(of class $C^2$), which is (locally) parameterized by, say,  
$\s{p}=\s{p}(u,v)\colon\Omega \to \mathbb{R}^3$, 
where $\Omega\subseteq \mathbb{R}^2$. 
The tangent plane $\tang{\s{p}}M$ at $\s{p}=\s{p}(u,v)$ 
is the vector space spanned by the partial derivatives 
$\partial_u\s{p}$ and $\partial_v\s{p}$ of $\s{p}$ 
with respect to $u$ and $v$, respectively. 
It is equipped with the standard inner product 
$\inner{\cdot}{\cdot}$ in $\mathbb{R}^3$. 
\par
The \emph{first fundamental form} 
$\fff=\fff(u,v)$ of $M$ at $\s{p}(u,v)$ 
is a symmetric $2$-tensor defined as 
\[
\fff 
= d\s{p}\cdot d\s{p} 
= \inner{\partial_u\s{p}}{\partial_u\s{p}}du\cdot du 
+ 2\inner{\partial_u\s{p}}{\partial_v\s{p}}du\cdot dv 
+ \inner{\partial_v\s{p}}{\partial_v\s{p}}dv\cdot dv, 
\]
which is also expressed by the matrix-form: 
\[
\fff 
= 
\begin{pmatrix}
  E & F \\
  F & G 
\end{pmatrix} 
= 
\begin{pmatrix}
  \inner{\partial_u\s{p}}{\partial_u\s{p}} 
  & 
  \inner{\partial_u\s{p}}{\partial_v\s{p}} 
  \\
  \inner{\partial_v\s{p}}{\partial_u\s{p}} 
  & 
  \inner{\partial_v\s{p}}{\partial_v\s{p}}
\end{pmatrix}.
\]
The matrix $\fff(u,v)$ has rank $2$ (positive definite) 
since we assume that $M$ is regular. 
% \par
The \emph{unit normal vector field} 
\[
\s{n} 
= 
\s{n}(u,v) 
= 
\frac{
  \partial_u\s{p}\times\partial_v\s{p}
}{
  \lvert \partial_u\s{p}\times\partial_v\s{p} \rvert
}
\]
is well-defined at every point $(u,v)\in \Omega$. 
% \par
The \emph{second fundamental form} $\sff=\sff(u,v)$ 
is then defined as 
\[
\sff 
= 
-d\s{p}\cdot d\s{n} 
= 
\begin{pmatrix}
  L & M \\ 
  M & N
\end{pmatrix}
= 
\begin{pmatrix}
  -\inner{\partial_u\s{p}}{\partial_u\s{n}} 
  & 
  -\inner{\partial_u\s{p}}{\partial_v\s{n}} 
  \\
  -\inner{\partial_v\s{p}}{\partial_u\s{n}} 
  & 
  -\inner{\partial_v\s{p}}{\partial_v\s{n}}
\end{pmatrix}, 
\]
which is also a symmetric tensor. 
% because 
% $\inner{\partial_u\s{p}}{\v{n}}
% = 0 
% = \inner{\partial_v\s{p}}{\v{n}}$, 
% so that 
% \begin{equation}
%   \begin{aligned}
%     0 
%     & = 
%     \partial_v\inner{\partial_u\s{p}}{\v{n}} 
%     = 
%     \inner{\partial_u\partial_v\s{p}}{\v{n}} 
%     + 
%     \inner{\partial_u\s{p}}{\partial_v\v{n}}, \\
%     0 
%     & = 
%     \partial_u\inner{\partial_v\s{p}}{\v{n}} 
%     = 
%     \inner{\partial_u\partial_v\s{p}}{\v{n}} 
%     + 
%     \inner{\partial_v\s{p}}{\partial_u\v{n}}. 
%     \label{Eq.dun&dvn}
%   \end{aligned}
% \end{equation}
% Since 
% \begin{equation}
%   \inner{\partial_u\v{n}}{\v{n}} 
%   = 
%   0 
%   = 
%   \inner{\partial_v\v{n}}{\v{n}}
%   \label{Eq.nunv_tangent}
% \end{equation}
% and since, as was mentioned above, 
% $\{\partial_u\s{p},\partial_v\s{p}\}$ 
% spanns $\tang{\s{p}}M$, 

\begin{fact}
  Th partial derivatives $\partial_u\s{n}$ and 
  $\partial_v\s{n}$ of $\s{n}$, 
  which is perpendicular to $\s{n}$, 
  can be represented by $\{\partial_u\s{p},\partial_v\s{p}\}$; 
  \begin{equation}
    \begin{aligned}
      \partial_u\s{n} 
      & = 
      \frac{FM-GL}{EF-F^2}\partial_u\s{p} 
      + 
      \frac{FL-EM}{EG-F^2}\partial_v\s{p}, 
      \\
      \partial_v\s{n} 
      & = 
      \frac{FN-GM}{EF-F^2}\partial_u\s{p} 
      + 
      \frac{FM-EN}{EG-F^2}\partial_v\s{p}.
    \end{aligned}
    \label{Eq.dun&dvn}
  \end{equation}
\end{fact}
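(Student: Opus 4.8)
The plan is to first verify that $\partial_u\s{n}$ and $\partial_v\s{n}$ really lie in the tangent plane, and then to pin down their coordinates in the basis $\{\partial_u\s{p},\partial_v\s{p}\}$ by pairing against that basis. Since $\s{n}$ has unit length, $\inner{\s{n}}{\s{n}}=1$ holds identically on $\Omega$; differentiating this in $u$ and in $v$ yields $\inner{\partial_u\s{n}}{\s{n}}=\inner{\partial_v\s{n}}{\s{n}}=0$, so both derivatives are orthogonal to $\s{n}$ and hence lie in $\tang{\s{p}}M=\operatorname{span}\{\partial_u\s{p},\partial_v\s{p}\}$. I may therefore write $\partial_u\s{n}=a_{11}\partial_u\s{p}+a_{12}\partial_v\s{p}$ and $\partial_v\s{n}=a_{21}\partial_u\s{p}+a_{22}\partial_v\s{p}$ for functions $a_{ij}=a_{ij}(u,v)$ still to be determined.

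Next I would take the inner product of each identity with $\partial_u\s{p}$ and with $\partial_v\s{p}$. By the definition of $\sff$ the four left-hand quantities $\inner{\partial_u\s{n}}{\partial_u\s{p}}$, $\inner{\partial_u\s{n}}{\partial_v\s{p}}$, $\inner{\partial_v\s{n}}{\partial_u\s{p}}$, $\inner{\partial_v\s{n}}{\partial_v\s{p}}$ equal $-L,-M,-M,-N$ respectively; the two middle entries coincide because differentiating $\inner{\s{n}}{\partial_u\s{p}}=0$ in $v$ and $\inner{\s{n}}{\partial_v\s{p}}=0$ in $u$ both give $-\inner{\s{n}}{\partial_v\partial_u\s{p}}$, using equality of mixed partials. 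On the right-hand sides the entries $E,F,G$ of $\fff$ appear, so I obtain two $2\times2$ linear systems sharing the coefficient matrix $\fff$: one for $(a_{11},a_{12})$ with right-hand side $(-L,-M)$, and one for $(a_{21},a_{22})$ with right-hand side $(-M,-N)$.

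Finally I would solve these systems by Cramer's rule. Because $M$ is regular, $\det\fff=EG-F^2\neq0$, so the inversion is valid and the common denominator throughout is $EG-F^2$. Computing the numerators returns exactly the asserted coefficients, e.g.\ $a_{11}=(FM-GL)/(EG-F^2)$ and $a_{12}=(FL-EM)/(EG-F^2)$, and likewise for the second row; equivalently, the matrix $(a_{ij})$ equals $-\sff\,\fff^{-1}$. This last paragraph is a routine linear-algebra computation, and the only point requiring care is the sign bookkeeping together with the observation that the two off-diagonal data coincide in the single value $-M$; there is no conceptual obstacle beyond that.
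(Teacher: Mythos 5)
Your proof is correct and complete. The paper itself gives no proof of this Fact---it appears in the Section~2 review of classical surface theory, with the reader referred to standard references such as \cite{MR2566897}---and your argument (tangency of $\partial_u\s{n},\partial_v\s{n}$ via differentiating $\inner{\s{n}}{\s{n}}=1$, pairing against $\{\partial_u\s{p},\partial_v\s{p}\}$ to get two linear systems with common coefficient matrix $\fff$, then Cramer's rule using $\det\fff=EG-F^2\neq0$ by regularity) is precisely the standard derivation of the Weingarten equations, including the correct justification of $\sff$'s symmetry from equality of mixed partials under the $C^2$ hypothesis. One incidental point worth recording: the denominators written as $EF-F^2$ in the paper's display are typos for $EG-F^2$, which your computation silently and correctly uses throughout.
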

We define the Weingarten map 
$S= \nabla n \colon\tang{\s{p}}M \to \tang{\s{p}}M$.
% by 
% \[
%	S(\v{v}) 
%	:= 
%	-v^1\partial_u\v{n} 
%	- 
%	v^2\partial_v\v{n}, \quad 
%	\text{
%	for 
%	$\v{v}=v^1\partial_u\s{p}+v^2\partial_v\s{p}
%	\in \tang{\s{p}}M$
% }
%   \]
By the symmetry of $\sff$, 
$S$ is a symmetric operator in the sense that 
it satisfies $\inner{S\s{V}}{\s{W}}=\inner{\s{V}}{S\s{W}}$ 
for any $\s{V},\s{W}\in \tang{\s{p}}M$. 
The trace of $S$ is called the \emph{mean curvature} $H(\s{p})$ 
and the determinant of $S$ the \emph{Gauss curvature} 
$K(\s{p})$, respectively. 
Since the representation matrix of $S$ with respect to 
$\{\partial_u\s{p},\partial_v\s{p}\}$ is $\fff^{-1}\sff$.
\begin{fact}
  The mean curvature $H(\s{p})$ and the Gauss curvature $K(\s{p})$ are defined by 
  \begin{equation}
    \begin{aligned}
      H(\s{p}) 
      & 
      = 
      \frac{1}{2}\mathrm{tr}(\fff^{-1}\sff) 
      = 
      \frac{EN+GL-2FM}{2(EG-F^2)}, 
      \\
      K(\s{p}) 
      &
      = 
      \det(\fff^{-1}\sff) 
      = 
      \frac{LN-M^2}{EG-F^2}. 
    \end{aligned}
    \label{Eq.H&K_smooth}
  \end{equation}
\end{fact}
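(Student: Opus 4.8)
The plan is to derive the closed-form expressions in \eqref{Eq.H&K_smooth} directly from the definition of the Weingarten map $S=\nabla n$ by computing its representation matrix with respect to the basis $\{\partial_u\s{p},\partial_v\s{p}\}$ of $\tang{\s{p}}M$, and then reading off its trace and determinant. The key observation, already noted in the text, is that this representation matrix is $\fff^{-1}\sff$. First I would establish this fact: since $S$ sends tangent vectors to tangent vectors and is defined so that $\inner{S\s{V}}{\s{W}}=\sff(\s{V},\s{W})$ (equivalently $-\inner{\partial_i\s{n}}{\partial_j\s{p}}$ are the entries of $\sff$), writing $S\partial_u\s{p}$ and $S\partial_v\s{p}$ as linear combinations of the basis vectors and pairing with $\partial_u\s{p},\partial_v\s{p}$ yields a linear system whose coefficient matrix is exactly $\fff$ and whose right-hand side is $\sff$. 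Solving gives the coordinate matrix $A=\fff^{-1}\sff$.

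Once $A=\fff^{-1}\sff$ is in hand, the two formulas follow from linear algebra applied to a $2\times 2$ matrix. For the Gauss curvature I would use multiplicativity of the determinant:
\[
K(\s{p})=\det(\fff^{-1}\sff)=\frac{\det\sff}{\det\fff}=\frac{LN-M^2}{EG-F^2}.
\]
For the mean curvature I would compute $\mathrm{tr}(\fff^{-1}\sff)$ explicitly. Using the inverse of a $2\times 2$ matrix,
\[
\fff^{-1}=\frac{1}{EG-F^2}\begin{pmatrix} G & -F \\ -F & E \end{pmatrix},
\]
I would multiply by $\sff=\begin{pmatrix} L & M \\ M & N \end{pmatrix}$ and take the trace of the product, which gives $\dfrac{GL-FM+EN-FM}{EG-F^2}=\dfrac{EN+GL-2FM}{EG-F^2}$; halving this yields $H(\s{p})$.

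The computations here are entirely routine $2\times 2$ matrix algebra once the identification $A=\fff^{-1}\sff$ is justified, so the only genuine point requiring care is that justification, namely verifying that the partial derivatives $\partial_u\s{n},\partial_v\s{n}$ lie in the tangent plane (they are orthogonal to $\s{n}$ because $\lvert\s{n}\rvert^2=1$ is constant) and that the coefficients expressing $S$ in the tangent basis are obtained by inverting $\fff$. In fact the explicit coordinate expressions in \eqref{Eq.dun&dvn} already encode precisely the columns of $-A=-\fff^{-1}\sff$, so I would simply read the entries of the representation matrix off that Fact and then apply the trace and determinant formulas above. Since both $\fff$ and $\sff$ are symmetric and $\fff$ is positive definite (hence invertible), every step is well defined, and no real obstacle arises beyond keeping the sign conventions in $\sff=-d\s{p}\cdot d\s{n}$ consistent throughout.
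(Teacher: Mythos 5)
Your proof is correct and takes essentially the same route as the paper, which states this Fact without proof in its review section and justifies it only by the preceding remark that the representation matrix of the Weingarten map with respect to $\{\partial_u\s{p},\partial_v\s{p}\}$ is $\fff^{-1}\sff$ --- exactly your key step, the remainder being the routine $2\times 2$ trace and determinant computation you carry out. One minor caution you implicitly (and correctly) resolve: the paper writes $S=\nabla\s{n}$, but the matrix $\fff^{-1}\sff$ corresponds to the convention $\inner{S\s{V}}{\s{W}}=\sff(\s{V},\s{W})$, i.e.\ $S=-\nabla\s{n}$ (the convention the paper itself adopts later for $S_{\triangle}$), and the denominators $EF-F^2$ appearing in \eqref{Eq.dun&dvn} are typos for $EG-F^2$, so reading the representation matrix off that Fact requires the correction your computation already embodies.
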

It is easy to see
\begin{equation}
  S^2-2H(\s{p})S+K(\s{p})\mathrm{Id}=0. 
  \label{Eq.S^2-2HS+KId=0}
\end{equation}
We also define the \emph{third fundamental form} 
$\tff=\tff(u,v)$ as 
\[
\tff 
= 
d\s{n}\cdot d\s{n} 
= 
\begin{pmatrix}
  \inner{\partial_u\s{n}}{\partial_u\s{n}} 
  & 
  \inner{\partial_u\s{n}}{\partial_v\s{n}} 
  \\
  \inner{\partial_v\s{n}}{\partial_u\s{n}} 
  & 
  \inner{\partial_v\s{n}}{\partial_v\s{n}}	
\end{pmatrix}.
\]
Because of the symmetry of $S$, 
$\inner{\partial_u\s{n}}{\partial_u\s{n}}
= \inner{S\partial_u\s{p}}{S\partial_u\s{p}}
= \inner{S^2\partial_u\s{p}}{\partial_u\s{p}}$ 
and so on, 
from (\ref{Eq.S^2-2HS+KId=0}) we infer 
\begin{equation}
  K(\s{p})\fff - 2H(\s{p})\sff + \tff = 0. 
  \label{Eq.KI-2HII+III=0}
\end{equation}

We are ready to present several different meanings 
of the Gauss curvature.
To do so let us consider the Gauss map 
$\s{n}\colon M \to \mathbb{S}^2$ 
from $M$ to the unit sphere $\mathbb{S}^2$. 
Then the Gauss curvature appears in its area element. 
\begin{fact}
  The Gauss curvature is written as 
  the ratio of the infinitesimal area elements: 
  \begin{equation}
    \lvert K(\s{p}(u_0,v_0)) \rvert 
    = 
    \lim_{\varepsilon \to 0}
    \frac{
      A_{\Omega_{\varepsilon}}(\s{n})
    }{
      A_{\Omega_{\varepsilon}}(\s{p})
    }. 
    \label{Eq.area_ratio}
  \end{equation}
\end{fact}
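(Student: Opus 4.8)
The plan is to write each area element as an integral of its local area density and to reduce the statement to the pointwise identity
\[
\lvert \partial_u\s{n}\times\partial_v\s{n}\rvert
= \lvert K(\s{p})\rvert\,\lvert \partial_u\s{p}\times\partial_v\s{p}\rvert .
\]
By definition of surface area,
\[
A_{\Omega_{\varepsilon}}(\s{p})
= \iint_{\Omega_{\varepsilon}} \lvert \partial_u\s{p}\times\partial_v\s{p}\rvert \,du\,dv,
\qquad
A_{\Omega_{\varepsilon}}(\s{n})
= \iint_{\Omega_{\varepsilon}} \lvert \partial_u\s{n}\times\partial_v\s{n}\rvert \,du\,dv,
\]
the latter being the area element of the Gauss map $\s{n}\colon M\to\mathbb{S}^2$. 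Once the pointwise identity is established, the continuity of $K$ and of the first fundamental area density, together with the mean value theorem for integrals, gives $A_{\Omega_{\varepsilon}}(\s{n})/A_{\Omega_{\varepsilon}}(\s{p})\to \lvert K(\s{p}(u_0,v_0))\rvert$ as $\Omega_{\varepsilon}$ shrinks to $(u_0,v_0)$, which is exactly (\ref{Eq.area_ratio}).

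The main step is the pointwise identity. Since $\partial_u\s{n}=S\,\partial_u\s{p}$ and $\partial_v\s{n}=S\,\partial_v\s{p}$, where $S$ is the Weingarten map regarded as an endomorphism of the tangent plane $\tang{\s{p}}M$, I would invoke the elementary linear-algebra fact that for any endomorphism $S$ of a two-dimensional space and any $\s{V},\s{W}$ in it one has $(S\s{V})\times(S\s{W})=\det(S)\,(\s{V}\times\s{W})$: both sides are alternating and bilinear in $(\s{V},\s{W})$ and normal to the plane, and the scaling factor is precisely $\det S$. Applying this with $\s{V}=\partial_u\s{p}$, $\s{W}=\partial_v\s{p}$ and recalling that $\det S=K(\s{p})$ yields $\partial_u\s{n}\times\partial_v\s{n}=K(\s{p})\,(\partial_u\s{p}\times\partial_v\s{p})$, and taking norms gives the identity.

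As an alternative route that avoids the geometric lemma, I could verify the identity in coordinates through the third fundamental form. Writing $S$ for the representation matrix $\fff^{-1}\sff$, the computation $\inner{\partial_i\s{n}}{\partial_j\s{n}}=\inner{S\partial_i\s{p}}{S\partial_j\s{p}}$ shows $\tff=S^{\top}\fff S$, whence $\det\tff=(\det S)^2\det\fff=K^2(EG-F^2)$; combined with the Lagrange identities $\lvert\partial_u\s{n}\times\partial_v\s{n}\rvert^2=\det\tff$ and $\lvert\partial_u\s{p}\times\partial_v\s{p}\rvert^2=EG-F^2$ this reproduces the pointwise identity. The only delicate point is the passage to the limit: one must know that the first fundamental area density stays bounded away from zero near $(u_0,v_0)$, which is guaranteed by the regularity assumption $EG-F^2>0$, so the quotient of the two integrals is well defined for all small $\varepsilon$ and converges to the ratio of the integrands at the center.
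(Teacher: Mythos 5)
Your proposal is correct and follows the same overall strategy as the paper: both arguments reduce (\ref{Eq.area_ratio}) to the pointwise identity $\partial_u\s{n}\times\partial_v\s{n}=K(\s{p})\,(\partial_u\s{p}\times\partial_v\s{p})$, i.e.\ (\ref{Eq.dun*dun=Kdup*dup}), and then compare the two area integrals over $\Omega_{\varepsilon}$. Where you differ is in how that identity is obtained: the paper derives it by direct computation, substituting the expressions (\ref{Eq.dun&dvn}) for $\partial_u\s{n}$ and $\partial_v\s{n}$ in the frame $\{\partial_u\s{p},\partial_v\s{p}\}$ and simplifying the cross product, whereas you invoke the coordinate-free fact that any endomorphism $S$ of a $2$-plane in $\mathbb{R}^3$ satisfies $(S\s{V})\times(S\s{W})=\det(S)\,(\s{V}\times\s{W})$, applied to the Weingarten map with $\det S=K$; your alternative route via $\tff=S^{\top}\fff S$, $\det\tff=K^2\det\fff$, and Lagrange's identity is an equally valid variant that directly yields the norm identity, which is all the statement needs. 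One small remark: the paper's sign conventions for $S$ versus $d\s{n}$ (it writes $S=\nabla\s{n}$ in Section 2 but $S_{\triangle}=-\nabla\u{n}$ later) are immaterial here, since $S$ acts on both slots, $\det(-S)=\det S$ in dimension two, and absolute values are taken anyway. You are also more careful than the paper at the limiting step: the paper stops after exhibiting the two integrals, while your observation that the quotient is a weighted average of $\lvert K\rvert$ with weight $\lvert\partial_u\s{p}\times\partial_v\s{p}\rvert$, bounded away from zero by the regularity assumption $EG-F^2>0$, and hence converges to $\lvert K(\s{p}(u_0,v_0))\rvert$ by continuity of $K$ (available since the surface is $C^2$), supplies exactly the detail the paper leaves implicit.
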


\begin{proof}
  It is easy by using (\ref{Eq.dun&dvn}) to have  
  \begin{equation}
    \partial_u\s{n} 
    \times 
    \partial_v\s{n} 
    = 
    \frac{LN-M^2}{EG-F^2} 
    (
    \partial_u\s{p} 
    \times 
    \partial_v\s{p}
    ) 
    = 
    K(\s{p})
    (
    \partial_u\s{p} 
    \times 
    \partial_v\s{p}
    ). 
    \label{Eq.dun*dun=Kdup*dup}
  \end{equation}
  If we take an $\varepsilon$-neighborhood 
  $\Omega_{\varepsilon}\subseteq \Omega$ 
  of $(u_0,v_0)\in \Omega$ 
  for any $\varepsilon>0$, then since 
  \begin{align*}
    A_{\Omega_{\varepsilon}}(\s{p}) 
    & = 
      \int_{\Omega_{\varepsilon}}
      \lvert 
      \partial_u\s{p} 
      \times 
      \partial_v\s{p} 
      \rvert\,
      dudv, 
    \\
    A_{\Omega_{\varepsilon}}(\s{n}) 
    & = 
      \int_{\Omega_{\varepsilon}}
      \lvert 
      \partial_u\s{n} 
      \times 
      \partial_v\s{n} 
      \rvert\,
      dudv 
      = 
      \int_{\Omega_{\varepsilon}}
      \lvert K\rvert 
      \lvert 
      \partial_u\s{p} 
      \times 
      \partial_v\s{p} 
      \rvert\,
      dudv
  \end{align*}
  are the area of the image 
  $\s{p}(\Omega_{\varepsilon})\subseteq M$ 
  and 
  $\s{n}(\Omega_{\varepsilon})\subseteq \mathbb{S}^2$, 
  respectively.
\end{proof}
A variational approach is also available 
for the formulation of the curvatures as follows. 
Let $\s{p}\colon\overline{\Omega}\to \mathbb{R}^3$ 
be a regular surface of class $C^2$. 
The functional $\mathcal{A}(\s{p})$ defined as 
\[
\mathcal{A}(\s{p}) 
:= 
\int_{\Omega}
\lvert 
\partial_u\s{p} 
\times 
\partial_v\s{p} 
\rvert\,
dudv 
= 
\int_{\Omega}
\,dA
\]
is called the \emph{area functional}, 
whose first and second variation formulas are those we want. 
Let 
$\s{q}_t=\s{q}(u,v,t)\colon 
\overline{\Omega}\times(-\varepsilon,\varepsilon)$ 
be a variation of $\s{p}$ 
with the variation vector field, say, 
\[
\s{V}(u,v) 
= 
\varphi^1(u,v)\partial_u\s{p}(u,v) 
+ 
\varphi^2(u,v)\partial_u\s{p}(u,v) 
+ 
\psi(u,v)\s{n}(u,v), 
\]
where $\varphi^i,\psi\in C^1(\overline{\Omega})$ ($i=1,2$). 

\begin{fact}
  The first variation of $\mathcal{A}$ 
  at $\s{p}$ 
  is then given as 
  \begin{equation}
    d\mathcal{A}(\s{p},\s{V}) 
    = 
    \left.
      \frac{d}{dt}
    \right\rvert_{t=0} 
    \mathcal{A}(\s{q}_t) 
    = 
    -2\int_{\Omega} 
    \psi\cdot H(\s{p})
    \lvert 
    \partial_u\s{p} 
    \times 
    \partial_v\s{p} 
    \rvert
    \,dudv, 
    \label{Eq.first_var_of_A}
  \end{equation}
  independently of variations in the tangential direction. 
  \par
  While the second variation of $\mathcal{A}$
  at a general regular surface $\s{p}$ 
  with respect to the normal variation $\s{V}=\psi\s{n}$ 
  {\upshape(}that is, $\varphi^1=\varphi^2=0$ {\upshape)} is given as 
  \begin{equation}
    d^2\mathcal{A}(\s{p},\psi\s{n}) 
    = 
    \int_{\Omega}
    \left(
      \lvert \nabla_M\psi \rvert^2 
      + 
      2\psi^2K(\s{p})
    \right)\,
    dA, 
    \label{Eq.second_var_of_A}
  \end{equation}
  where the norm $\lvert \nabla_M\psi\rvert^2$ is 
  taken with respect to $\fff$, 
  sometimes called 
  the first Beltrami differentiator. 
\end{fact}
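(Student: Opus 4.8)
The plan is to compute the derivatives of the area density $W_t := \lvert \partial_u\s{q}_t \times \partial_v\s{q}_t\rvert$ directly and then integrate, reducing everything to the Weingarten relations \eqref{Eq.dun&dvn}, the definitions \eqref{Eq.H&K_smooth} of $H$ and $K$, and the identity \eqref{Eq.KI-2HII+III=0}. Throughout I would use the pointwise facts $\inner{\partial_a\s{p}}{\s{n}}=0$ and $\inner{\s{n}}{\partial_a\s{n}}=0$, the symmetry of $\sff$, and $\partial_u\s{p}\times\partial_v\s{p}=W_0\s{n}$ at $t=0$.

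For the first variation, first I would differentiate $W_t^2$ in $t$ and evaluate at $t=0$ to get $\left.\frac{d}{dt}\right|_{t=0}W_t = \inner{\s{n}}{\partial_u\s{V}\times\partial_v\s{p}+\partial_u\s{p}\times\partial_v\s{V}}$, with $\s{V}=\partial_t\s{q}_t|_{t=0}$. Writing $\s{V}$ as its tangential part $\varphi^1\partial_u\s{p}+\varphi^2\partial_v\s{p}$ plus the normal part $\psi\s{n}$, the derivative-of-$\psi$ contributions $\partial_u\psi\,(\s{n}\times\partial_v\s{p})$ and $\partial_v\psi\,(\partial_u\s{p}\times\s{n})$ die after pairing with $\s{n}$; substituting \eqref{Eq.dun&dvn} for $\partial_u\s{n},\partial_v\s{n}$ and using $\partial_a\s{p}\times\partial_a\s{p}=0$, the surviving normal part collapses to $\psi\,\frac{2FM-GL-EN}{EG-F^2}\,W_0 = -2\psi H\,W_0$ by \eqref{Eq.H&K_smooth}. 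For the tangential part I would check that the derivative equals $\mathrm{div}_M(\varphi^1\partial_u\s{p}+\varphi^2\partial_v\s{p})\,W_0$, a surface divergence whose integral is a boundary term vanishing under the hypotheses; this yields the claimed independence from tangential variations and, after integration, gives \eqref{Eq.first_var_of_A}.

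For the second variation I would restrict to the normal variation $\s{q}_t=\s{p}+t\psi\s{n}$ and expand the perturbed coefficients $\widetilde{E},\widetilde{F},\widetilde{G}$ of $\fff$ to second order in $t$. Their linear parts are $-2t\psi L,\,-2t\psi M,\,-2t\psi N$, while the quadratic parts pair products $(\partial_a\psi)(\partial_b\psi)$ with the entries of $\tff$, namely $\inner{\partial_u\s{n}}{\partial_u\s{n}},\inner{\partial_u\s{n}}{\partial_v\s{n}},\inner{\partial_v\s{n}}{\partial_v\s{n}}$. The crucial move is to rewrite these via \eqref{Eq.KI-2HII+III=0} in the form $\tff=2H\sff-K\fff$, so that every entry of $\tff$ becomes a combination of $H$, $K$ and the fundamental forms rather than being differentiated out by hand. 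Assembling $\widetilde{E}\widetilde{G}-\widetilde{F}^2$, I expect the linear coefficient $-2\psi(EN+GL-2FM)=-4\psi H(EG-F^2)$ and a quadratic coefficient that splits into a gradient part $(EG-F^2)\lvert\nabla_M\psi\rvert^2$ and a pointwise part $(EG-F^2)\psi^2(4H^2+2K)$.

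Finally I would extract the area density by $\sqrt{1+ta+t^2b}=1+\tfrac12 ta+t^2\bigl(\tfrac12 b-\tfrac18 a^2\bigr)+O(t^3)$ with $a=-4\psi H$ and $b=\lvert\nabla_M\psi\rvert^2+\psi^2(4H^2+2K)$. The hard part will be keeping this bookkeeping honest: the $-\tfrac18 a^2$ term contributes $-2\psi^2 H^2$, which must cancel exactly the $2H^2$ inside $\tfrac12(4H^2+2K)$, leaving the $t^2$-coefficient of $W_t/W_0$ equal to $\tfrac12\lvert\nabla_M\psi\rvert^2+\psi^2 K$. Multiplying by $2$ for the second $t$-derivative and by $W_0\,dudv=dA$ and integrating then produces \eqref{Eq.second_var_of_A}. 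The genuine obstacle is thus not any single identity but the coordinated use of \eqref{Eq.KI-2HII+III=0} to avoid a brute-force second differentiation of $\s{n}$, together with tracking the $H^2$-cancellation that distinguishes the true area Hessian from a naive second-order expansion.
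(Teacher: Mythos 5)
Your proposal is correct, and the coefficients all check out: the linear part of $\widetilde{E}\widetilde{G}-\widetilde{F}^2$ is $-2\psi(EN+GL-2FM)=-4\psi H(EG-F^2)$; the cross products of the linear terms contribute $4\psi^2K(EG-F^2)$; the substitution $\tff=2H\sff-K\fff$ from \eqref{Eq.KI-2HII+III=0} converts the $\tff$-entries into $\psi^2(4H^2-2K)(EG-F^2)$, giving your $b=\lvert\nabla_M\psi\rvert^2+\psi^2(4H^2+2K)$; and the $-\tfrac{1}{8}a^2=-2\psi^2H^2$ term of the square-root expansion cancels the $2\psi^2H^2$, leaving $\tfrac{1}{2}\lvert\nabla_M\psi\rvert^2+\psi^2K$ as the $t^2$-coefficient of the area density, which doubles to \eqref{Eq.second_var_of_A}. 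There is, however, no proof in the paper to compare against: this Fact sits in the classical-review Section~\ref{Section2}, which defers to the literature (\cite{MR2566897}). What is worth noting is that your route is precisely the computation the authors \emph{do} write out for the discrete analogue, Lemma~\ref{Prop(var_of_triangle)}: there too the squared area element $\det(g_{ij}(t))$ is expanded in $t$, the expansion $\sqrt{1+t\lambda+t^2\mu}=1+\tfrac{\lambda}{2}t+(\tfrac{\mu}{2}-\tfrac{\lambda^2}{8})t^2+O(t^3)$ is applied, and the identity \eqref{Eq.KI-2HII+III!=0} plays exactly the role you assign to \eqref{Eq.KI-2HII+III=0}, namely avoiding a second differentiation of $\s{n}$; so your smooth proof and the paper's discrete proof are structurally the same argument. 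Two small caveats: your tangential contribution $\partial_u(\varphi^1 W_0)+\partial_v(\varphi^2 W_0)$ integrates to a boundary term, so the claimed independence from tangential variations holds only for closed surfaces or variations fixing $\partial\Omega$ --- a gloss already present in the Fact's own wording, which you inherit rather than create; and you correctly read the denominators in \eqref{Eq.dun&dvn} as $EG-F^2$, where the paper's display contains the typo $EF-F^2$.
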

A surface $M\subseteq \mathbb{R}^3$ 
satisfying $H(\s{p})=0$ for any point $\s{p}\in M$ 
is said to be \emph{minimal}. 
\par
At the end of this section, 
we state  a characterization of minimal surfaces 
as follows: 
\begin{fact}
  Let $\s{p}=\s{p}(u,v)\colon\Omega\to \mathbb{R}^3$ 
  be a regular surface of class $C^2$ and 
  $\s{n}\colon\Omega\to \mathbb{R}^3$ 
  be its Gauss map. Then 
  \begin{equation}
    \partial_v\s{n}\times \partial_u\s{p} 
    - 
    \partial_u\s{n}\times \partial_v\s{p} 
    = 
    2H(\s{p}) 
    \lvert 
    \partial_u\s{p}\times \partial_v\s{p}
    \rvert 
    \s{n}, 
    \label{Eq.charact_of_minimal}
  \end{equation}
  or equivalently, 
  \[
  d(\s{n}\times d\s{p}) 
  = 
  -2H(\s{p})\s{n}\,dA, 
  \]
  where 
  $
  \s{n}\times d\s{p} 
  = (\s{n}\times \partial_u\s{p})du 
  + (\s{n}\times \partial_v\s{p})dv
  $ 
  is a differential $1$-form on $\Omega$ along $\s{p}$. 
  That is to say, 
  $\s{p}\colon\Omega \to \mathbb{R}^3$ is 
  a minimal surface if and only if 
  $\s{n}\times d\s{p}$ is closed. 
\end{fact}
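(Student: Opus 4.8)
The plan is to derive the pointwise identity \eqref{Eq.charact_of_minimal} by a direct computation, substituting the Weingarten expressions \eqref{Eq.dun&dvn} for $\partial_u\s{n}$ and $\partial_v\s{n}$ into the two cross products and exploiting the antisymmetry of the vector product. Writing $\partial_u\s{n}=\alpha\,\partial_u\s{p}+\beta\,\partial_v\s{p}$ and $\partial_v\s{n}=\gamma\,\partial_u\s{p}+\delta\,\partial_v\s{p}$ with the coefficients read off from \eqref{Eq.dun&dvn}, the terms $\partial_u\s{p}\times\partial_u\s{p}$ and $\partial_v\s{p}\times\partial_v\s{p}$ vanish, so only the "off-diagonal" contributions survive:
\[
\partial_v\s{n}\times\partial_u\s{p}-\partial_u\s{n}\times\partial_v\s{p}
=-(\alpha+\delta)\,(\partial_u\s{p}\times\partial_v\s{p}).
\]
The key observation is that $\alpha+\delta$ is precisely the trace of the Weingarten coefficient matrix; using \eqref{Eq.H&K_smooth} one identifies $\alpha+\delta=\dfrac{2FM-GL-EN}{EG-F^2}=-2H(\s{p})$. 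Since $\partial_u\s{p}\times\partial_v\s{p}=\lvert\partial_u\s{p}\times\partial_v\s{p}\rvert\,\s{n}$ by the orientation convention fixing the unit normal, this yields \eqref{Eq.charact_of_minimal} at once.

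\textbf{The differential-form reformulation.} Next I would pass to the $1$-form $\s{n}\times d\s{p}=(\s{n}\times\partial_u\s{p})\,du+(\s{n}\times\partial_v\s{p})\,dv$ and compute its exterior derivative. Differentiating the $du$- and $dv$-components and subtracting, the two mixed-derivative terms $\s{n}\times\partial_u\partial_v\s{p}$ and $\s{n}\times\partial_v\partial_u\s{p}$ cancel by equality of mixed partials (here one uses the $C^2$ hypothesis), leaving
\[
d(\s{n}\times d\s{p})
=\bigl(\partial_u\s{n}\times\partial_v\s{p}-\partial_v\s{n}\times\partial_u\s{p}\bigr)\,du\wedge dv,
\]
which is exactly the negative of the left-hand side of \eqref{Eq.charact_of_minimal}. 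Combined with $dA=\lvert\partial_u\s{p}\times\partial_v\s{p}\rvert\,du\wedge dv$, the first identity immediately gives $d(\s{n}\times d\s{p})=-2H(\s{p})\,\s{n}\,dA$. Because $\s{n}$ is a unit vector and hence nowhere zero, the right-hand side vanishes identically if and only if $H(\s{p})\equiv 0$, i.e. exactly when the surface is minimal; this is the asserted equivalence ``$\s{p}$ is minimal iff $\s{n}\times d\s{p}$ is closed.''

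\textbf{Main obstacle.} There is no genuine analytic difficulty here: the statement reduces to bookkeeping. The one point demanding care is the consistent tracking of signs and orientation — in particular making sure that the sum of the diagonal Weingarten coefficients produces $-2H$ rather than $+2H$, and that the orientation convention $\partial_u\s{p}\times\partial_v\s{p}=\lvert\partial_u\s{p}\times\partial_v\s{p}\rvert\,\s{n}$ is applied with the correct sign. Once these conventions are fixed, both the pointwise identity and its $1$-form version follow from \eqref{Eq.dun&dvn} and \eqref{Eq.H&K_smooth} by the routine manipulations sketched above.
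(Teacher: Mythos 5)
Your proof is correct: expanding $\partial_u\s{n}$ and $\partial_v\s{n}$ in the frame $\{\partial_u\s{p},\partial_v\s{p}\}$ via (\ref{Eq.dun&dvn}), observing that only the off-diagonal cross products survive so that the left-hand side equals $-(\alpha+\delta)(\partial_u\s{p}\times\partial_v\s{p})$ with $\alpha+\delta=\frac{2FM-GL-EN}{EG-F^2}=-2H(\s{p})$ by (\ref{Eq.H&K_smooth}), and then cancelling the mixed partials (legitimate under the $C^2$ hypothesis) to pass to the $1$-form statement, is exactly the standard argument with all signs handled correctly. The paper itself states this Fact without proof, as classical background with a reference to the literature, but your computation is essentially the same technique the paper does carry out for the neighboring Gauss-curvature identity (\ref{Eq.dun*dun=Kdup*dup}) and for its discrete analogue (\ref{Eq.disc_min_eq}), so there is nothing to correct.
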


%% SECTION 3 DISCRETE SURFACE THEORY
\section{A surface theory for graphs in $\mathbb{R}^3$}
\label{Section3}
%% SECTION 3.1 DEFINITION OF CURVATURES
\subsection{Definition of curvatures}
\label{subsec:definition_of_curvatures}
Let $X=(V,E)$ be a general graph, 
where $V$ denotes the set of the vertices 
and $E$ the set of the oriented edges. 
The oriented edge $e$ is identified with 
a $1$-dimensional cell complex. 
Thus we can assume that every edge $e\in E$ is 
identified with the interval $[0,1]$. 
The reverse edge is denoted by $\bar{e}$, and
$E_x$ is the set of edges which emerge 
from a vertex $x \in V$.
\par
A map $\vPhi\colon X \to \mathbb{R}^3$ 
is said to be a \emph{piecewise linear realization} 
if the restriction $(\vPhi|e)(t)$ 
on each edge $e\in E$ is linear in $t\in [0,1]$ 
and $(\vPhi|{\bar{e}})(t)=(\vPhi|e)(1-t)$. 
\begin{Def}\label{Def(disc_surf)}
  An injective piecewise linear realization 
  $\vPhi\colon X \to \mathbb{R}^3$ 
  of a graph $X=(V,E)$ is said to be 
  a \emph{discrete surface} if 
  \begin{enumerate}
  \item[{\upshape (\rnum{1})}] 
    $X=(V,E)$ is a $3$-valent graph, 
    that is a graph of degree $3$, 
    % \item[{\upshape (\rnum{2})}] every simple closed path of 
    %   $\vPhi(X)$ can be filled by a surface 
    %   such that any such two surfaces are mutually disjoint, 
    % \item[{\upshape (\rnum{3})}] every edge in $\vPhi(X)$ 
    %   shared with only two faces, 
  \item[{\upshape (\rnum{2})}] 
    for each $x\in V$, 
    at least two vectors in 
    $\{\vPhi(e)\mid e\in E_x\}$ 
    are linearly independent as vectors in $\mathbb{R}^3$, 
  \item[{\upshape (\rnum{3})}] 
    locally oriented, that is, 
    the order of the three edges 
    is assumed to be assigned 
    to each vertex of $X$. 
  \end{enumerate}
\end{Def}
Let 
$\vPhi\colon X=(V,E) \to M \subseteq \mathbb{R}^3$ 
be a discrete surface. 
For each vertex $x\in V$, 
we assume it is of $3$-valent, 
namely the set $E_x=\{e_1,e_2,e_3\}$ 
of edges with origin $x$ consists of 
three oriented edges. 
In the sequel, we sometimes use 
the notation $\vPhi(e)=\u{e}\in M$ 
to denote the edge in $M$ which corresponds to $e\in E$. 
The \emph{tangent plane} $T_x$ at $\vPhi(x)$ 
is then the plane with $\u{n}(x)$ 
as its \emph{oriented unit normal vector} $\u{n}(x)$ 
at $\vPhi(x)$ is defined as 
\begin{equation}
  \begin{aligned}
    \u{n}(x) 
    :={} 
    & 
    \frac{
      (\u{e_2}-\u{e}_1)
      \times (\u{e}_3-\u{e}_1)
    }{
      \lvert 
      (\u{e}_2-\u{e}_1)
      \times 
      (\u{e}_3-\u{e}_1) 
      \rvert
    } 
    \\
    ={} 
    & 
    \frac{
      \u{e}_1\times\u{e}_2 + 
      \u{e}_2\times\u{e}_3 + 
      \u{e}_3\times\u{e}_1
    }{
      \lvert 
      \u{e}_1\times\u{e}_2 + 
      \u{e}_2\times\u{e}_3 + 
      \u{e}_3\times\u{e}_1 
      \rvert
    }.
    \label{Eq.n}
  \end{aligned}
\end{equation}
Note that we use the condition of graphs 
to be $3$-valent to define its tangent plane.
\par
For each $x\in V$ and $e\in E_x$, the vector 
\begin{equation}
  \nabla_e\vPhi 
  := 
  \proj[\vPhi(e)]
  = 
  \u{e} - 
  \langle \u{e},\u{n}(x)\rangle\u{n}(x)
  \label{Eq.tangent_vec}
\end{equation}
lies on $T_x$, where $\proj$ is denoted by 
the orthogonal projection onto $T_x$ and 
$\langle\cdot,\cdot\rangle$ stands for 
the standard inner product of $\mathbb{R}^3$. 
Similarly, the directional derivative of $\u{n}$ 
along $e\in E$ is defined as
\begin{equation}
  \nabla_e\u{n} 
  := 
  \proj[\u{n}(t(e))-\u{n}(o(e))], 
  \label{Eq.deriv_of_n}
\end{equation}
so that $\nabla_e\u{n}\in T_x$. 
\par
Before we define the curvature of a surface, 
we work with a triangle 
$\triangle=\triangle(\u{x}_0,\u{x}_1,\u{x}_2)$ 
of the graph in $\mathbb{R}^3$.
Oriented unit vectors are assigned 
, say, 
$\u{n}_0$, $\u{n}_1$ and $\u{n}_2$, respectively, 
at $\u{x}_0$, $\u{x}_1$ and $\u{x}_2$. 
Later they are taken as unit normal vectors, 
but we note they need not be 
perpendicular to the triangle $\triangle$.
\par
We set $\u{v}_1:=\u{x}_1-\u{x}_0$ 
and $\u{v}_2:=\u{x}_2-\u{x}_0$ for simplicity, 
which corresponds to (\ref{Eq.tangent_vec}). 
% \par 
The \emph{first fundamental form} 
$\fff_{\triangle}$ of $\triangle$ is now defined as 
\begin{equation}
  \fff_{\triangle} 
  := 
  \begin{pmatrix}
    E & F \\
    F & G
  \end{pmatrix} 
  = 
  \begin{pmatrix}
    \inner{\u{v}_1}{\u{v}_1} 
    & 
    \inner{\u{v}_1}{\u{v}_2} 
    \\
    \inner{\u{v}_2}{\u{v}_1} 
    & 
    \inner{\u{v}_2}{\u{v}_2} 
  \end{pmatrix}. 
  \label{Eq.EFG}
\end{equation}
As the directional derivative of $\u{n}_0$ 
along $\u{v}_1$ and $\u{v}_2$ 
corresponding to (\ref{Eq.deriv_of_n}), we set
\[
\nabla_i\u{n} 
:= 
\proj[\u{n}_i - \u{n}_0]
\]
for $i=1,2$, where $\proj$ is the orthogonal projection 
onto $T_{\triangle}$, a plane on which $\triangle$ lies. 
As is straightforward to check, 
$\nabla_1\u{n}$ and $\nabla_2\u{n}$ 
are in fact written, respectively, as 
\begin{equation}
  \begin{aligned}
    \nabla_1\u{n} 
    & = \frac{FM_1-GL}{EG-F^2}\u{v}_1 
    + 
    \frac{FL-EM_1}{EG-F^2}\u{v}_2, 
    \\
    \nabla_2\u{n} 
    & = \frac{FN-GM_2}{EG-F^2}\u{v}_1 
    + 
    \frac{FM_2-EN}{EG-F^2}\u{v}_2,
    \label{Eq.nabla_in}
  \end{aligned}
\end{equation}
where $E$, $F$ and $G$ are given by (\ref{Eq.EFG}) and 
$L$, $M_1$, $M_2$ and $L$ are defined as 
\begin{equation}
  \sff_{\triangle} 
  := 
  \begin{pmatrix}
    L & M_2 \\
    M_1 & N
  \end{pmatrix} 
  = 
  \begin{pmatrix}
    -\inner{\u{v}_1}{\nabla_1\u{n}} 
    & 
    -\inner{\u{v}_1}{\nabla_2\u{n}} 
    \\
    -\inner{\u{v}_2}{\nabla_1\u{n}} 
    & 
    -\inner{\u{v}_2}{\nabla_2\u{n}}
  \end{pmatrix}
  \label{Eq.LMN}
\end{equation}
in the \emph{second fundamental form} of $\triangle$. 
Note here that $M_1\neq M_2$ is possible in our case although 
the classical theory depends on the symmetry of $\sff$.
\par
Now we can define the Weingarten-type map 
$S_{\triangle}\colon T_{\triangle}\rightarrow T_{\triangle}$ 
as $S_{\triangle}= -\nabla \u{n}$ 
and the \emph{mean curvature} $H_{\triangle}$ 
and the \emph{Gauss curvature} $K_{\triangle}$ 
of $\triangle$ as its trace 
and determinant as in the classical case.
\par
The following result corresponds to (\ref{Eq.H&K_smooth}). 
\begin{Prop}
  The mean curvature $H_{\triangle}$ and the Gauss curvature $K_{\triangle}$ 
  have, respectively, the following representations: 
  \begin{equation}
    \begin{aligned}
      H_{\triangle} 
      & =  
      \frac{1}{2}
      \mathrm{tr}(
      \fff_{\triangle}^{-1}
      \sff_{\triangle}^{}
      ) 
      = 
      \frac{
	EN + GL - F (M_1 + M_2)
      }{
	2(EG - F^2)
      }, 
      \\
      K_{\triangle} 
      & = 
      \det(
      \fff_{\triangle}^{-1}
      \sff_{\triangle}^{}
      ) 
      = 
      \frac{
	LN - M_1M_2
      }{
	EG - F^2
      }. 
      \label{Eq.H&K_discrete}
    \end{aligned}
  \end{equation}
\end{Prop}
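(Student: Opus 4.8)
The plan is to show that the representation matrix of the Weingarten-type map $S_\triangle=-\nabla\u{n}$ with respect to the basis $\{\u{v}_1,\u{v}_2\}$ of $T_\triangle$ is exactly $\fff_\triangle^{-1}\sff_\triangle$; once this is established, the two asserted formulas drop out as one half the trace and the determinant, respectively, just as in the classical case.

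First I would extract two structural identities straight from the definitions (\ref{Eq.EFG}) and (\ref{Eq.LMN}). Writing $S=(S_{kj})$ for the matrix of $S_\triangle$ in the basis $\{\u{v}_1,\u{v}_2\}$, so that $S_\triangle\u{v}_j=-\nabla_j\u{n}=\sum_k S_{kj}\u{v}_k$, the defining expressions read
\[
  (\fff_\triangle)_{ij}=\inner{\u{v}_i}{\u{v}_j},
  \qquad
  (\sff_\triangle)_{ij}=-\inner{\u{v}_i}{\nabla_j\u{n}}=\inner{\u{v}_i}{S_\triangle\u{v}_j}.
\]
Expanding the last expression, $\inner{\u{v}_i}{S_\triangle\u{v}_j}=\sum_k S_{kj}\inner{\u{v}_i}{\u{v}_k}=\sum_k(\fff_\triangle)_{ik}S_{kj}$, gives the matrix identity $\sff_\triangle=\fff_\triangle\,S$, hence $S=\fff_\triangle^{-1}\sff_\triangle$. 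This is the heart of the argument, and it uses only the inner-product definitions, not the explicit coordinate formulas (\ref{Eq.nabla_in}); alternatively one could insert (\ref{Eq.nabla_in}) into $S_\triangle\u{v}_i=-\nabla_i\u{n}$ and check column by column that the resulting matrix coincides with $\fff_\triangle^{-1}\sff_\triangle$.

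With $S=\fff_\triangle^{-1}\sff_\triangle$ in hand the rest is a short computation. Using $\fff_\triangle^{-1}=\dfrac{1}{EG-F^2}\begin{pmatrix}G&-F\\-F&E\end{pmatrix}$ and multiplying by $\sff_\triangle$, the diagonal entries of $\fff_\triangle^{-1}\sff_\triangle$ are $(GL-FM_1)/(EG-F^2)$ and $(EN-FM_2)/(EG-F^2)$, so half their sum is the claimed $H_\triangle$, while $\det(\fff_\triangle^{-1}\sff_\triangle)=\det(\fff_\triangle^{-1})\det(\sff_\triangle)=(LN-M_1M_2)/(EG-F^2)$ is the claimed $K_\triangle$. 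The one place demanding care is that, unlike the classical setting, $\sff_\triangle$ is not symmetric ($M_1\neq M_2$ in general, as noted before the statement): one must keep $M_1=-\inner{\u{v}_2}{\nabla_1\u{n}}$ in the $(2,1)$ slot and $M_2=-\inner{\u{v}_1}{\nabla_2\u{n}}$ in the $(1,2)$ slot. It is precisely this honest bookkeeping that produces the symmetric combination $F(M_1+M_2)$ in the trace and the product $M_1M_2$ in the determinant. There is no analytic obstacle---the entire content is the linear-algebra identity $\sff_\triangle=\fff_\triangle S$---but the asymmetry is exactly why one cannot quote the classical formulas verbatim and must confirm that the trace and determinant depend on $\sff_\triangle$ in the stated way.
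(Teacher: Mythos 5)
Your proof is correct and coincides with the argument the paper intends: the Proposition is stated without proof precisely because, as in the classical case (where the paper remarks that the representation matrix of $S$ with respect to the frame is $\fff^{-1}\sff$), everything reduces to your identity $\sff_{\triangle}=\fff_{\triangle}S$, which is also what the coordinate expressions (\ref{Eq.nabla_in}) encode column by column. Your trace and determinant computations, with $M_1$ and $M_2$ kept in the correct off-diagonal slots of the non-symmetric $\sff_{\triangle}$, reproduce (\ref{Eq.H&K_discrete}) exactly.
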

Since both the first fundamental form (\ref{Eq.EFG}) 
and the third one (\ref{Eq.c_ij}) are 
symmetric, while it is not always the case with
the second fundamental form (\ref{Eq.LMN}), 
the same identity as (\ref{Eq.KI-2HII+III=0}) 
cannot be expected. 
But the next proposition shows that 
the symmetry of $\sff_{\triangle}$ 
is the only obstruction for (\ref{Eq.KI-2HII+III=0}) 
to be valid. 
\begin{Prop}\label{Prop(KI-2HII+III=0)} 
  Let $\tff_{\triangle}$ be the \emph{third fundamental form} 
  of $\triangle$ defined as 
  \begin{equation}
    \tff_{\triangle} 
    := 
    \begin{pmatrix}
      c_{11} & c_{12} \\
      c_{21} & c_{22}
    \end{pmatrix} 
    = 
    \begin{pmatrix}
      \inner{\nabla_1\u{n}}{\nabla_1\u{n}} 
      & 
      \inner{\nabla_1\u{n}}{\nabla_2\u{n}} 
      \\
      \inner{\nabla_2\u{n}}{\nabla_1\u{n}} 
      & 
      \inner{\nabla_2\u{n}}{\nabla_2\u{n}}
    \end{pmatrix}. 
    \label{Eq.c_ij}
  \end{equation}
  Then 
  \begin{equation}
    K_{\triangle}\fff_{\triangle} 
    - 
    2H_{\triangle}\sff_{\triangle}
    + 
    \tff_{\triangle}
    = 
    \frac{M_1-M_2}{EG-F^2} 
    \begin{pmatrix}
      EM_1 - FL 
      & 
      EN - FM_2 
      \\
      FM_1 - GL
      & 
      FN - GM_2
    \end{pmatrix}. 
    \label{Eq.KI-2HII+III!=0}
  \end{equation}
  In particular,  the second fundamental form 
  $\sff_{\triangle}$ is symmetric if and only if 
  \[
  K_{\triangle}\fff_{\triangle} 
  - 
  2H_{\triangle}\sff_{\triangle} 
  + 
  \tff_{\triangle} 
  = 
  0. 
  \]
\end{Prop}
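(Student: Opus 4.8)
The plan is to reduce the entire statement to $2\times 2$ linear algebra organised around the single matrix $A:=\fff_{\triangle}^{-1}\sff_{\triangle}$, which the preceding Proposition already exhibits as the matrix with $\mathrm{tr}(A)=2H_{\triangle}$ and $\det(A)=K_{\triangle}$. First I would check that $A$ is exactly the representation matrix of the Weingarten-type map $S_{\triangle}$ in the basis $\{\u{v}_1,\u{v}_2\}$. Reading off the coefficients of $\nabla_i\u{n}$ from \eqref{Eq.nabla_in} and using $S_{\triangle}\u{v}_i=-\nabla_i\u{n}$, a direct comparison with $\fff_{\triangle}^{-1}=(EG-F^2)^{-1}\left(\begin{smallmatrix} G & -F \\ -F & E\end{smallmatrix}\right)$ shows that the $i$-th column of the matrix of $S_{\triangle}$ coincides with the $i$-th column of $A$.

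Next I would express the third fundamental form intrinsically. Since $c_{ij}=\inner{\nabla_i\u{n}}{\nabla_j\u{n}}=\inner{S_{\triangle}\u{v}_i}{S_{\triangle}\u{v}_j}$ by \eqref{Eq.c_ij}, and $S_{\triangle}\u{v}_i=\sum_k A_{ki}\u{v}_k$, expanding the inner products gives $\tff_{\triangle}=A^{\top}\fff_{\triangle}A$. Substituting $A=\fff_{\triangle}^{-1}\sff_{\triangle}$ and using the symmetry of $\fff_{\triangle}$ collapses this to the clean identity $\tff_{\triangle}=\sff_{\triangle}^{\top}\fff_{\triangle}^{-1}\sff_{\triangle}$, which makes no use of any symmetry of $\sff_{\triangle}$. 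This is the structural observation driving the whole computation.

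The engine is then the Cayley--Hamilton theorem applied to $A$, namely $A^2-2H_{\triangle}A+K_{\triangle}\mathrm{Id}=0$, valid for every $2\times2$ matrix regardless of symmetry. Multiplying on the left by $\fff_{\triangle}$ and repeatedly using $\fff_{\triangle}A=\sff_{\triangle}$ converts this into $\sff_{\triangle}\fff_{\triangle}^{-1}\sff_{\triangle}-2H_{\triangle}\sff_{\triangle}+K_{\triangle}\fff_{\triangle}=0$. Subtracting, and inserting $\tff_{\triangle}=\sff_{\triangle}^{\top}\fff_{\triangle}^{-1}\sff_{\triangle}$, yields
\[
K_{\triangle}\fff_{\triangle}-2H_{\triangle}\sff_{\triangle}+\tff_{\triangle}
=(\sff_{\triangle}^{\top}-\sff_{\triangle})\fff_{\triangle}^{-1}\sff_{\triangle}.
\]
Since $\sff_{\triangle}^{\top}-\sff_{\triangle}=(M_1-M_2)J$ with $J=\left(\begin{smallmatrix}0&1\\-1&0\end{smallmatrix}\right)$, multiplying $J$ into $\fff_{\triangle}^{-1}\sff_{\triangle}=A$ produces exactly the matrix on the right-hand side of \eqref{Eq.KI-2HII+III!=0}; this explicit product is the only genuine computation. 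For the equivalence I would use the factored form $(M_1-M_2)JA$: as $J$ is invertible, the right-hand side vanishes if and only if $M_1=M_2$ or $A=0$, and $A=\fff_{\triangle}^{-1}\sff_{\triangle}=0$ forces $\sff_{\triangle}=0$ and hence $M_1=M_2$ as well, so in every case the vanishing is equivalent to the symmetry of $\sff_{\triangle}$.

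The hard part is not any single estimate but getting the bookkeeping right: the theorem is measuring precisely the failure of $\sff_{\triangle}$ to be symmetric, so one must keep the transpose on the correct factor throughout, in particular in $\tff_{\triangle}=\sff_{\triangle}^{\top}\fff_{\triangle}^{-1}\sff_{\triangle}$ versus the $\sff_{\triangle}\fff_{\triangle}^{-1}\sff_{\triangle}$ coming from Cayley--Hamilton, since it is exactly their difference $(\sff_{\triangle}^{\top}-\sff_{\triangle})\fff_{\triangle}^{-1}\sff_{\triangle}$ that carries the whole content of the identity.
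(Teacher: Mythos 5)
Your proof is correct, but it takes a genuinely different route from the paper's. The paper's own proof is a brute-force verification: it computes the entries $c_{11}$, $c_{12}=c_{21}$, $c_{22}$ of $\tff_{\triangle}$ explicitly from (\ref{Eq.nabla_in}) and then checks (\ref{Eq.KI-2HII+III!=0}) entry by entry against (\ref{Eq.EFG}), (\ref{Eq.LMN}) and (\ref{Eq.H&K_discrete}). You instead argue structurally through $A=\fff_{\triangle}^{-1}\sff_{\triangle}$: the Gram-matrix identity $\tff_{\triangle}=A^{\top}\fff_{\triangle}A=\sff_{\triangle}^{\top}\fff_{\triangle}^{-1}\sff_{\triangle}$ (valid with no symmetry assumption), Cayley--Hamilton in the form $\sff_{\triangle}\fff_{\triangle}^{-1}\sff_{\triangle}-2H_{\triangle}\sff_{\triangle}+K_{\triangle}\fff_{\triangle}=0$, and the factorization $\sff_{\triangle}^{\top}-\sff_{\triangle}=(M_1-M_2)J$ reduce everything to the single product $JA$, which --- with the paper's convention that $M_2$ sits top-right and $M_1$ bottom-left in (\ref{Eq.LMN}) --- indeed equals $(EG-F^2)^{-1}\bigl(\begin{smallmatrix} EM_1-FL & EN-FM_2 \\ FM_1-GL & FN-GM_2 \end{smallmatrix}\bigr)$; I checked this product and your sign conventions ($S_{\triangle}\u{v}_i=-\nabla_i\u{n}$, columns of $A$) against (\ref{Eq.nabla_in}), and they are consistent. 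Your route is the exact discrete transplant of the smooth passage from (\ref{Eq.S^2-2HS+KId=0}) to (\ref{Eq.KI-2HII+III=0}) in Section~\ref{Section2}, with the failure of symmetry retained as the explicit defect $(\sff_{\triangle}^{\top}-\sff_{\triangle})\fff_{\triangle}^{-1}\sff_{\triangle}$: it explains conceptually why the right-hand side factors through $M_1-M_2$ (and yields $c_{12}=c_{21}$ for free), which the paper's computation confirms but does not illuminate. It also buys a sharper proof of the ``in particular'' clause: since $J$ is invertible, vanishing of $(M_1-M_2)JA$ forces $M_1=M_2$ or $A=0$, and $A=0$ gives $\sff_{\triangle}=\fff_{\triangle}A=0$, hence $M_1=M_2$ anyway; this converse direction is left implicit in the paper. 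Both proofs tacitly assume $EG-F^2\neq 0$, which is in any case required for $H_{\triangle}$ and $K_{\triangle}$ to be defined.
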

\begin{proof}
  A straightforward computation 
  using (\ref{Eq.nabla_in}) gives 
  \begin{align*}
    c_{11} 
    & = 
      \frac{EM_1^2 - 2FLM_1 + GL^2}{EG-F^2}, 
    \\
    c_{12} 
    & = 
      c_{21} 
      = 
      \frac{EM_1N - FLN - FM_1M_2 + GLM_2}{EG-F^2}, 
    \\
    c_{22} 
    & = 
      \frac{EN^2 - 2FM_2N + GM_2^2}{EG-F^2}. 
  \end{align*}
  This equalities combined with 
  (\ref{Eq.EFG}), (\ref{Eq.LMN}) and (\ref{Eq.H&K_discrete}) 
  yield the required equality. 
\end{proof}
On the other hand, 
exactly same equality as (\ref{Eq.dun*dun=Kdup*dup}) or 
(\ref{Eq.area_ratio}) 
is obtained. 
\begin{Prop}\label{Prop(Gauss)}
  The Gauss curvature $K_{\triangle}$ satisfies
  \begin{equation}
    \nabla_1\u{n}\times\nabla_2\u{n} 
    = 
    \frac{LN - M^2}{EG - F^2} 
    (\u{v}_1 \times \u{v}_2) 
    = 
    K_{\triangle}
    (\u{v}_1 \times \u{v}_2), 
    \label{Eq.dun*dun=Kv1v2}
  \end{equation}
  Thus, in particular, 
  the absolute value of the Gauss curvature 
  $K_{\triangle}$ is given by
  \[
  \lvert K_{\triangle} \rvert 
  = 
  \frac{
    \lvert 
    \nabla_1\u{n} \times \nabla_2\u{n} 
    \rvert
  }{
    \lvert 
    \u{v}_1 \times \u{v}_2
    \rvert
  }. 
  \]
\end{Prop}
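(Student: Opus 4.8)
The plan is to turn the vector identity into a single scalar (determinant) computation, using only the bilinearity and antisymmetry of the cross product. First I would insert the explicit expansions of $\nabla_1\u{n}$ and $\nabla_2\u{n}$ from (\ref{Eq.nabla_in}), writing $\nabla_1\u{n} = a_1\u{v}_1 + b_1\u{v}_2$ and $\nabla_2\u{n} = a_2\u{v}_1 + b_2\u{v}_2$ with
\[
a_1 = \frac{FM_1 - GL}{EG-F^2}, \quad b_1 = \frac{FL - EM_1}{EG-F^2}, \quad a_2 = \frac{FN - GM_2}{EG-F^2}, \quad b_2 = \frac{FM_2 - EN}{EG-F^2}.
\]
Since $\u{v}_1\times\u{v}_1 = \u{v}_2\times\u{v}_2 = 0$ and $\u{v}_2\times\u{v}_1 = -(\u{v}_1\times\u{v}_2)$, the four-term expansion of $\nabla_1\u{n}\times\nabla_2\u{n}$ collapses at once to $(a_1 b_2 - a_2 b_1)(\u{v}_1\times\u{v}_2)$. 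Hence the entire proposition reduces to identifying the scalar $a_1 b_2 - a_2 b_1$ with $K_{\triangle}$.

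The quickest way to finish is to observe that $a_1 b_2 - a_2 b_1$ is precisely the determinant of the matrix $A$ whose columns are the coordinates of $\nabla_1\u{n}$ and $\nabla_2\u{n}$ in the basis $\{\u{v}_1,\u{v}_2\}$; that is, $A$ represents the map $\nabla\u{n}$. Pairing (\ref{Eq.nabla_in}) against $\u{v}_1,\u{v}_2$ and comparing with (\ref{Eq.LMN}) shows $A = -\fff_{\triangle}^{-1}\sff_{\triangle}$, so $S_{\triangle} = -\nabla\u{n}$ has matrix $\fff_{\triangle}^{-1}\sff_{\triangle}$ and, in the plane, $a_1 b_2 - a_2 b_1 = \det A = \det(\fff_{\triangle}^{-1}\sff_{\triangle}) = K_{\triangle}$ by (\ref{Eq.H&K_discrete}). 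This is the conceptual content: a planar endomorphism rescales the cross product of a pair of vectors by its determinant, so no symmetry of $\sff_{\triangle}$ is needed.

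Should one prefer to bypass the definition and verify the middle equality directly, I would clear the denominator $(EG-F^2)^2$ and expand the numerator $(FM_1-GL)(FM_2-EN) - (FN-GM_2)(FL-EM_1)$. The only point requiring care is pure bookkeeping: the mixed terms $-EFM_1N$ and $-FGLM_2$ occur identically in both products and cancel, leaving $F^2(M_1M_2 - LN) + EG(LN - M_1M_2) = (LN - M_1M_2)(EG-F^2)$, whence the coefficient is $(LN - M_1M_2)/(EG-F^2)$. There is thus no genuine obstacle here; the one subtlety worth flagging is that, in contrast with Proposition \ref{Prop(KI-2HII+III=0)}, the Gauss curvature involves the \emph{product} $M_1M_2$ rather than a square, so the asymmetry $M_1\neq M_2$ causes no trouble, and the ``$M^2$'' in the displayed formula must be read as $M_1M_2$. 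Finally, the norm formula for $\lvert K_{\triangle}\rvert$ follows immediately by taking absolute values of both sides of (\ref{Eq.dun*dun=Kv1v2}).
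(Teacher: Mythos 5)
Your proposal is correct and takes essentially the same route as the paper: expand $\nabla_1\u{n}\times\nabla_2\u{n}$ via (\ref{Eq.nabla_in}), collapse by bilinearity and antisymmetry to a scalar multiple of $\u{v}_1\times\u{v}_2$, and identify that scalar as $(LN-M_1M_2)/(EG-F^2)=K_{\triangle}$ --- your determinant-of-the-endomorphism shortcut (with $\det(-\fff_{\triangle}^{-1}\sff_{\triangle}^{})=\det(\fff_{\triangle}^{-1}\sff_{\triangle}^{})$ in dimension two) is a clean repackaging of the paper's explicit expansion, and your cancellation bookkeeping checks out. You are also right to flag that the ``$M^2$'' in the displayed statement must be read as $M_1M_2$, which is exactly what the paper's own computation produces.
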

\begin{proof}
  The proof again follows from a direct computation 
  using (\ref{Eq.nabla_in}) as follows: 
  \begin{align*}
    \nabla_1\u{n} \times \nabla_2\u{n}
    & = 
      \left(
      \frac{FM_1-GL}{EG-F^2}\u{v}_1
      + 
      \frac{FL-EM_1}{EG-F^2}\u{v}_2
      \right) 
    \\
    & \qquad \qquad 
      \times 
      \left(
      \frac{FN-GM_2}{EG-F^2}\u{v}_1 
      + 
      \frac{FM_2-EN}{EG-F^2}\u{v}_2 
      \right) 
    \\
    & = 
      \frac{
      \u{v}_1 \times \u{v}_2
      }{
      (EG-F^2)^2
      }
      \left\{
      (FM_1-GL)(FM_2-FN) 
      \right. 
    \\
    & \qquad \qquad 
      \left. 
      - 
      (FL-EM_1)(FN-GM_2)
      \right\} 
    \\
    & = 
      \frac{
      (LN-M_1M_2)(EG-F^2)
      }{
      (EG-F^2)^2
      }
      (\u{v}_1\times \u{v}_2) 
    \\
    & = 
      \frac{
      LN-M_1M_2
      }{
      EG-F^2
      }
      (\u{v}_1\times \u{v}_2) 
    \\
    & = 
      K_{\triangle}
      (\u{v}_1\times \u{v}_2), 
  \end{align*}
  as required. 
\end{proof}
\begin{figure}[htbp]
  \centering
  \includegraphics[width=7cm]
  {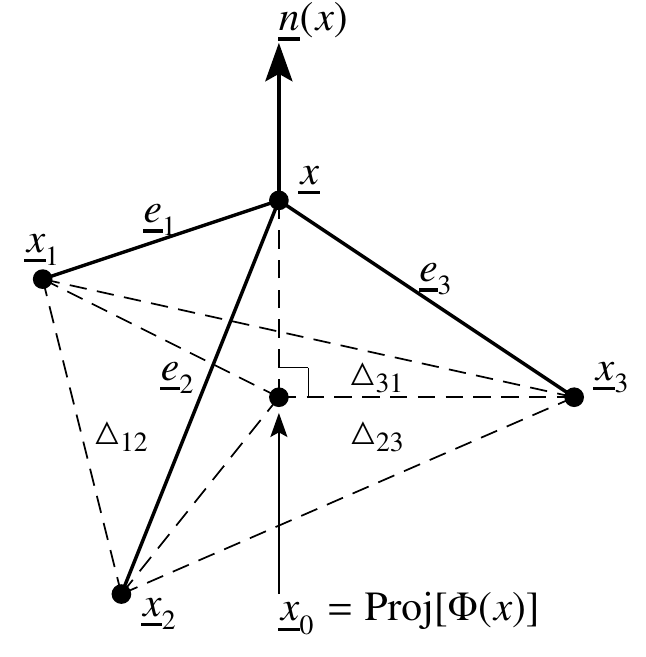}
  \caption{}
  \label{Figure(triangles)}
\end{figure}
Now we are ready to give the definitions of 
the mean curvature $H$ and the Gauss curvature $K$ 
of a discrete surface $\vPhi\colon X=(V,E)\to \mathbb{R}^3$. 
Idea is to define them as 
the area-weighted average of 
those of the three triangles around the vertex.
\par
Let $x\in V$ be a vertex, 
$E_x=\{e_1,e_2,e_3\}$ 
and $(\alpha,\beta)=(1,2)$, $(2,3)$ or $(3,1)$. 
If we choose the triangle 
$\triangle_{\alpha\beta}
= \triangle
(\u{x}_0,\u{x}_{\alpha},\u{x}_{\beta})$ 
as 
\[
\u{x}_0 
= 
\proj[\vPhi(x)], \quad 
\u{x}_{\alpha} 
= 
\vPhi(t(e_{\alpha}))~~ 
\text{and}~~
\u{x}_{\beta} 
= 
\vPhi(t(e_{\beta})), 
\]
(see Figure~\ref{Figure(triangles)}) then 
the first, second and third fundamental form of 
$\triangle_{\alpha\beta}$, are 
given as 
\begin{align*}
  \fff_{\alpha\beta} 
  & = 
    \begin{pmatrix}
      \inner{
	\nabla_{e_{\alpha}}\vPhi
      }{
	\nabla_{e_{\beta}}\vPhi
      }
      & 
      \inner{
	\nabla_{e_{\alpha}}\vPhi
      }{
	\nabla_{e_{\beta}}\vPhi
      } 
      \\
      \inner{
	\nabla_{e_{\beta}}\vPhi
      }{
	\nabla_{e_{\alpha}}\vPhi
      }
      & 
      \inner{
	\nabla_{e_{\beta}}\vPhi
      }{
	\nabla_{e_{\beta}}\vPhi
      }
    \end{pmatrix}, 
  \\
  \sff_{\alpha\beta} 
  & = 
    \begin{pmatrix}
      -\inner{
	\nabla_{e_{\alpha}}\vPhi
      }{
	\nabla_{e_{\alpha}}\u{n}
      }
      &  
      -\inner{
	\nabla_{e_{\alpha}}\vPhi
      }{
	\nabla_{e_{\beta}}\u{n}
      } 
      \\
      -\inner{
	\nabla_{e_{\beta}}\vPhi
      }{
	\nabla_{e_{\alpha}}\u{n}
      }
      & 
      -\inner{
	\nabla_{e_{\beta}}\vPhi
      }{
	\nabla_{e_{\beta}}\u{n}
      }
    \end{pmatrix}, 
  \\
  \tff_{\alpha\beta} 
  & = 
    \begin{pmatrix}
      \inner{
	\nabla_{e_{\alpha}}\u{n}
      }{
	\nabla_{e_{\alpha}}\u{n}
      }
      & 
      \inner{
	\nabla_{e_{\alpha}}\u{n}
      }{
	\nabla_{e_{\beta}}\u{n}
      } 
      \\
      \inner{
	\nabla_{e_{\beta}}\u{n}
      }{
	\nabla_{e_{\alpha}}\u{n}
      } 
      & 
      \inner{
	\nabla_{e_{\beta}}\u{n}
      }{
	\nabla_{e_j}\u{n}
      }
    \end{pmatrix}, 	
\end{align*}
respectively. 
Under this settings, we have already 
defined the mean curvature $H_{\triangle_{\alpha\beta}}$ 
and the Gauss curvature $K_{\triangle_{\alpha\beta}}$ of 
$\triangle_{\alpha\beta}$. 
Then 
\begin{Def}\label{Def(H&K)}
  For a discrete surface 
  $\vPhi\colon X=(V,E)\to \mathbb{R}^3$, 
  the \emph{mean curvature} $H(x)$ and 
  the \emph{Gauss curvature} $K(x)$ at $x\in V$ 
  are defined, respectively, as 
  \begin{align}
    H(x) 
    & := 
      \sum_{\alpha,\beta}
      \frac{\sqrt{\det \fff_{\alpha\beta}(x)}}{A(x)}
      H_{\triangle_{\alpha\beta}}(x), \label{Eq.disc_mean} 
    \\
    K(x) 
    & := 
      \sum_{\alpha,\beta}
      \frac{\sqrt{\det \fff_{\alpha\beta}(x)}}{A(x)}
      K_{\triangle_{\alpha\beta}}(x), \label{Eq.disc_gauss}
  \end{align}
  where the summations are taken over any 
  $(\alpha,\beta)\in \{(1,2),(2,3),(3,1)\}$ 
  such that the Weingarten-type map 
  $S_{\alpha\beta}\colon T_x \to T_x$ 
  is defined, also, 
  $A(x)$ is the denominator of {\upshape (\ref{Eq.n}):} 
  \[
  A(x) 
  := 
  \lvert 
  \u{e}_1\times\u{e}_2 + 
  \u{e}_2\times\u{e}_3 + 
  \u{e}_3\times\u{e}_1 
  \rvert, 
  \]
  twice the area of the triangle with 
  $\{\vPhi(t(e_1)), \vPhi(t(e_2)), \vPhi(t(e_3))\}$ 
  as its vertices. 
\end{Def}
\begin{Def}\label{Def(minimal)}
  A discrete surface is said to be \emph{minimal} 
  if its mean curvature vanishes at every vertex. 
\end{Def}
Here we give two observations, 
which are useful in practical computations 
of $H$ or $K$. 
The first asserts that 
we can forget about  the projection 
$\nabla_i\u{n}=\proj[\u{n}_i-\u{n}_0]$ as seen below. 
\begin{Lem}\label{Lem(sff)}
  The second fundamental form {\upshape (\ref{Eq.LMN})} of 
  $\triangle=\triangle(\u{x}_0,\u{x}_1,\u{x}_2)$ 
  satisfies 
  \[
  \sff_{\triangle} 
  = 
  \begin{pmatrix}
    -\inner{\u{v}_1}{\u{n}_1-\u{n}_0} 
    & 
    -\inner{\u{v}_1}{\u{n}_2-\u{n}_0} 
    \\
    -\inner{\u{v}_2}{\u{n}_1-\u{n}_0} 
    & 
    -\inner{\u{v}_2}{\u{n}_2-\u{n}_0}
  \end{pmatrix}. 
  \]
\end{Lem}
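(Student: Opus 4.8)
The plan is to observe that the entire content of the lemma is the elementary fact that orthogonal projection onto a subspace leaves inner products against vectors already lying in that subspace unchanged. Each entry of $\sff_{\triangle}$ is an inner product of the form $-\inner{\u{v}_i}{\nabla_j\u{n}} = -\inner{\u{v}_i}{\proj[\u{n}_j - \u{n}_0]}$ for $i,j \in \{1,2\}$, and I want to replace $\proj[\u{n}_j - \u{n}_0]$ by $\u{n}_j - \u{n}_0$ without changing the value.

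First I would record that both $\u{v}_1 = \u{x}_1 - \u{x}_0$ and $\u{v}_2 = \u{x}_2 - \u{x}_0$ lie in $T_{\triangle}$, since $T_{\triangle}$ is by definition the plane carrying the triangle $\triangle(\u{x}_0,\u{x}_1,\u{x}_2)$ and these are differences of its vertices. Next, for any vector $\u{w}\in\mathbb{R}^3$ the orthogonal decomposition with respect to $T_{\triangle}$ reads $\u{w} = \proj[\u{w}] + \u{w}^{\perp}$, where $\u{w}^{\perp} := \u{w} - \proj[\u{w}]$ is perpendicular to $T_{\triangle}$. Applying this with $\u{w} = \u{n}_j - \u{n}_0$ and pairing against $\u{v}_i$ gives
\[
\inner{\u{v}_i}{\u{n}_j - \u{n}_0} = \inner{\u{v}_i}{\proj[\u{n}_j - \u{n}_0]} + \inner{\u{v}_i}{(\u{n}_j - \u{n}_0)^{\perp}} = \inner{\u{v}_i}{\nabla_j\u{n}},
\]
because $\u{v}_i \in T_{\triangle}$ is orthogonal to the normal component $(\u{n}_j - \u{n}_0)^{\perp}$, so the middle inner product on the right vanishes. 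Carrying this out for the four pairs $(i,j)$ and substituting the four identities entrywise into definition (\ref{Eq.LMN}) yields exactly the claimed matrix.

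There is essentially no obstacle here: the computation is a single application of the self-adjointness (equivalently, idempotency) of orthogonal projection, and it is worth remarking that no symmetry of $\sff_{\triangle}$ is invoked anywhere, consistent with the possibility $M_1 \neq M_2$ flagged after (\ref{Eq.LMN}). The only point requiring a moment's care is the bookkeeping that the plane $T_{\triangle}$ onto which we project is the very plane containing $\u{v}_1$ and $\u{v}_2$; once that is noted, the vanishing of the perpendicular pairing is immediate.
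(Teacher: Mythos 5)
Your proposal is correct and is essentially the paper's own argument: the paper disposes of the lemma in one sentence, noting that $\u{v}_i=\u{x}_i-\u{x}_0$ lies on $T_{\triangle}$ while $\nabla_i\u{n}=\proj[\u{n}_i-\u{n}_0]$ is the orthogonal projection onto $T_{\triangle}$, which is precisely the orthogonal-decomposition computation you write out explicitly. Your version merely makes the vanishing of the perpendicular pairing explicit entrywise, which is a faithful (and slightly more detailed) rendering of the same proof.
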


\begin{proof}
  The assertion is obvious 
  because $\u{v}_i=\u{x}_i-\u{x}_0$ ($i=1,2$) 
  lies on $T_{\triangle}$, 
  whereas $\nabla_i\u{n}=\proj[\u{n}_i-\u{n}_0]$ 
  is the orthogonal projection onto $T_{\triangle}$. 
\end{proof}

The second asserts that 
the curvatures at $x$ are equal 
to the corresponding curvatures 
of the triangle 
$\triangle(\u{x}_1, \u{x}_2, \u{x}_3)$ 
with the adjacent vertices $\u{x}_1$, $\u{x}_2$ 
and $\u{x}_3$ of $\u{x}$. 

\begin{Prop}\label{Prop(large_triangle)}
  Let $\vPhi\colon X=(V,E) \to \mathbb{R}^3$ be 
  a  discrete surface. 
  The mean curvature $H(x)$ and the Gauss curvature $K(x)$ 
  at $x\in V$ are represented, respectively, as 
  \begin{align}
    H(x) 
    & = 
      \frac{1}{2}
      \mathrm{tr}(
      \fff_{\triangle(x)}^{-1}
      \sff_{\triangle(x)}^{}
      ), \label{Eq.H_large_triangle} 
    \\
    K(x) 
    & = 
      \det(
      \fff_{\triangle(x)}^{-1}\sff_{\triangle(x)}^{}
      ), \label{Eq.K_large_triangle}
  \end{align}
  where $\fff_{\triangle(x)}$ and 
  $\sff_{\triangle(x)}$ are respectively 
  the first and second fundamental forms of 
  the triangle $\triangle(x)$ with vertices 
  $\{\vPhi(x_1),
  \vPhi(x_2),\vPhi(x_3)\}$, 
  which is actually given as 
  \begin{align*}
    \fff_{\triangle(x)}
    & = 
      \begin{pmatrix}
	\inner{
          \u{e}_2-\u{e}_1
	}{
          \u{e}_2-\u{e}_1
	} 
	& 
	\inner{
          \u{e}_2-\u{e}_1
	}{
          \u{e}_3-\u{e}_1
	} 
	\\
	\inner{
          \u{e}_3-\u{e}_1
	}{
          \u{e}_2-\u{e}_1
	} 
	& 
	\inner{
          \u{e}_3-\u{e}_1
	}{
          \u{e}_3-\u{e}_1
	}
      \end{pmatrix}, 
    \\
    \sff_{\triangle(x)} 
    & = 
      \begin{pmatrix}
	-\inner{
          \u{e}_2-\u{e}_1
	}{
          \u{n}(x_2)-\u{n}(x_1)
	} 
	& 
	-\inner{
          \u{e}_2-\u{e}_1
	}{
          \u{n}(x_3)-\u{n}(x_1)
	} 
	\\
	-\inner{
          \u{e}_3-\u{e}_1
	}{
          \u{n}(x_2)-\u{n}(x_1)
	} 
	& 
	-\inner{
          \u{e}_3-\u{e}_1
	}{
          \u{n}(x_3)-\u{n}(x_1)
	}
      \end{pmatrix}, 
  \end{align*}
  where $E_x=\{e_1,e_2,e_3\}$ and $x_i=t(e_i)$ for $i=1,2,3$. 
\end{Prop}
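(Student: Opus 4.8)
The plan is to reduce everything to two-dimensional linear algebra inside the single tangent plane $T_x$, and the whole argument hinges on one geometric fact that I would establish first: the three neighbouring points $\vPhi(x_1),\vPhi(x_2),\vPhi(x_3)$ all sit at the same height above $T_x$. Concretely, from the definition (\ref{Eq.n}) of $\u{n}(x)$ one computes $\inner{\u{e}_i}{\u{n}(x)}=\det[\u{e}_1,\u{e}_2,\u{e}_3]/A(x)$ for each $i=1,2,3$ (the other two triple products vanish), so these three numbers coincide. Two consequences follow at once and drive the proof: (a) the edge vectors of the large triangle satisfy $\u{e}_2-\u{e}_1=\proj[\u{e}_2]-\proj[\u{e}_1]$ and $\u{e}_3-\u{e}_1=\proj[\u{e}_3]-\proj[\u{e}_1]$, i.e.\ they already lie in $T_x$; and (b) since $(\u{e}_2-\u{e}_1)\times(\u{e}_3-\u{e}_1)=\u{e}_1\times\u{e}_2+\u{e}_2\times\u{e}_3+\u{e}_3\times\u{e}_1=A(x)\u{n}(x)$, the large triangle $\triangle(x)$ lies in a plane parallel to $T_x$, its unit normal is exactly $\u{n}(x)$, and $A(x)$ equals twice its area. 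Thus $\triangle(x)$ and all three small triangles live in the \emph{same} plane $T_x$.

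With this arranged I would set $u_i:=\proj[\u{e}_i]$ and $w_i:=-\nabla_i\u{n}=-\proj[\u{n}(x_i)-\u{n}(x)]$ in $T_x$, and observe (using Lemma~\ref{Lem(sff)} so that no further projection intervenes in $\sff$, together with $\proj_{\triangle(x)}=\proj_{T_x}$ from (b)) that each Weingarten-type map is the unique endomorphism of $T_x$ prescribed on a basis: $S_{\alpha\beta}u_\alpha=w_\alpha,\ S_{\alpha\beta}u_\beta=w_\beta$ for the small triangles, while $S_{\triangle(x)}(u_2-u_1)=w_2-w_1$ and $S_{\triangle(x)}(u_3-u_1)=w_3-w_1$ for the large one. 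Writing $[\,\cdot\,,\cdot\,]$ for the signed area form $\inner{\cdot\times\cdot}{\u{n}(x)}$ on $T_x$, the orientation condition (\rnum{3}) makes $\sqrt{\det\fff_{\alpha\beta}}=[u_\alpha,u_\beta]$ and $A(x)=[u_2-u_1,u_3-u_1]$ simultaneously positive.

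For the Gauss curvature I would use $\det S=[Su_\alpha,Su_\beta]/[u_\alpha,u_\beta]$, which gives $\sqrt{\det\fff_{\alpha\beta}}\,K_{\triangle_{\alpha\beta}}=[w_\alpha,w_\beta]$ (equivalently, this is Proposition~\ref{Prop(Gauss)} read off in the common plane $T_x$); summing over the three cyclic pairs and using the telescoping identity $[w_1,w_2]+[w_2,w_3]+[w_3,w_1]=[w_2-w_1,w_3-w_1]$ yields $A(x)K(x)=[w_2-w_1,w_3-w_1]=A(x)K_{\triangle(x)}$, which is (\ref{Eq.K_large_triangle}). For the mean curvature I would instead use the trace formula $\operatorname{tr}S=([Su_\alpha,u_\beta]+[u_\alpha,Su_\beta])/[u_\alpha,u_\beta]$, so that $\sqrt{\det\fff_{\alpha\beta}}\operatorname{tr}(S_{\alpha\beta})=[w_\alpha,u_\beta]+[u_\alpha,w_\beta]$; summing over the cyclic pairs and expanding the large-triangle quantity $[w_2-w_1,u_3-u_1]+[u_2-u_1,w_3-w_1]$ by bilinearity and antisymmetry, the diagonal terms $[w_1,u_1]$ cancel and the two sides agree term by term, giving (\ref{Eq.H_large_triangle}).

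The main obstacle is genuinely the first paragraph: without the equal-height fact the four triangles would occupy four different planes, the projections $\proj_{\triangle_{\alpha\beta}}$ and $\proj_{\triangle(x)}$ would differ from $\proj_{T_x}$, and neither the uniform description of all Weingarten maps on one plane nor the telescoping of the area form would be available. Once it is in hand, the rest is bilinear bookkeeping. I would also dispatch the degenerate cases (some $\det\fff_{\alpha\beta}=0$, so that $S_{\alpha\beta}$ is undefined and dropped from the sums in Definition~\ref{Def(H&K)}): there the products $\sqrt{\det\fff_{\alpha\beta}}\,K_{\triangle_{\alpha\beta}}$ and $\sqrt{\det\fff_{\alpha\beta}}\operatorname{tr}(S_{\alpha\beta})$ still extend continuously to $[w_\alpha,w_\beta]$ and $[w_\alpha,u_\beta]+[u_\alpha,w_\beta]$, so the identities persist by continuity as long as $A(x)>0$, i.e.\ $\u{n}(x)$ is defined.
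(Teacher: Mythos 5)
Your argument is correct and, at bottom, performs the same computation as the paper, but it is organized along a genuinely different (and in places more transparent) route. The paper introduces the explicit change-of-frame matrix $P_{\alpha\beta}$ with $\begin{pmatrix}\nabla_{e_{\alpha}}\vPhi & \nabla_{e_{\beta}}\vPhi\end{pmatrix}=\begin{pmatrix}\u{e}_2-\u{e}_1 & \u{e}_3-\u{e}_1\end{pmatrix}P_{\alpha\beta}$, transforms $\fff_{\alpha\beta}={}^tP_{\alpha\beta}\fff_{\triangle(x)}P_{\alpha\beta}$ and $\sff_{\alpha\beta}$ (one-sidedly), and then telescopes: for $K(x)$ by multilinearity of $\det$ in the columns carrying $\u{n}_{\alpha},\u{n}_{\beta}$, and for $H(x)$ via a trace identity that it dismisses as ``a simple direct computation.'' You instead prove the equal-height identity $\inner{\u{e}_i}{\u{n}(x)}=\det[\u{e}_1,\u{e}_2,\u{e}_3]/A(x)$ --- a quantitative sharpening of the paper's passing remark that $\vPhi(e_j)-\vPhi(e_1)$ lies in $T_x$ --- place all four triangles in planes parallel to $T_x$, and work coordinate-free with the signed area form $[\cdot,\cdot]$ and the basis-prescribed maps $S_{\alpha\beta}$, $S_{\triangle(x)}$. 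This buys two things: the $H$-telescoping that the paper leaves unverified becomes a short expansion of $[w_2-w_1,u_3-u_1]+[u_2-u_1,w_3-w_1]$, and the irrelevance of the base normal is automatic since $\proj[\u{n}(x)]=0$. One caveat: your assertion that the orientation condition (iii) makes each $[u_{\alpha},u_{\beta}]=+\sqrt{\det\fff_{\alpha\beta}}$ is not a consequence of (iii) alone --- orientation only forces the \emph{sum} $[u_1,u_2]+[u_2,u_3]+[u_3,u_1]=A(x)$ to be positive, and configurations exist with one $[u_{\alpha},u_{\beta}]<0$. But the paper's step $\sqrt{\det\fff_{\alpha\beta}}=A(x)\det P_{\alpha\beta}$ tacitly assumes exactly the same positivity ($\det P_{\alpha\beta}=[u_{\alpha},u_{\beta}]/A(x)$), so on this point you are faithful to, and more explicit than, the paper.

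Your closing continuity remark about degenerate pairs, however, does not hold as stated. When $\det\fff_{\alpha\beta}=0$, Definition~\ref{Def(H&K)} \emph{omits} the pair, i.e.\ its contribution is set to zero, whereas your limiting value of $\sqrt{\det\fff_{\alpha\beta}}\,K_{\triangle_{\alpha\beta}}$ would be $\pm[w_{\alpha},w_{\beta}]$ (even the sign is ambiguous, since $\mathrm{sign}[u_{\alpha},u_{\beta}]$ can flip across the degeneracy). So if $[w_{\alpha},w_{\beta}]\neq 0$ the identity $K(x)=K_{\triangle(x)}$ genuinely fails in that case rather than ``persisting by continuity.'' The paper's proof needs $P_{\alpha\beta}^{-1}$ and silently excludes this situation as well; the correct fix is to assume nondegeneracy of all three pairs as a hypothesis, not to appeal to continuity.
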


\begin{proof}
  Let $x\in V$ be fixed let $E_x=\{e_1,e_2,e_3\}$ and 
  $x_i:=t(e_i)$ for $i=1,2,3$. 
  Notice first that 
  $\vPhi(e_i)$ itself needs not lie on 
  the tangent plane $T_x$ at $\vPhi(x)$, 
  while does $\vPhi(e_j)-\vPhi(e_1)$ ($j=1,2$). 
  In the sequel, set 
  $\u{n}_0:=\u{n}(x)$, 
  $\u{n}_i:=\u{n}(x_i)$ ($i=1,2,3$) for simplicity 
  and let $(\alpha,\beta)=(1,2)$, $(2,3)$ or $(3,1)$. 
  Then we can write as 
  \begin{gather*}
    \begin{pmatrix}
      \nabla_{e_{\alpha}}\vPhi 
      & 
      \nabla_{e_{\beta}}\vPhi
    \end{pmatrix} 
    = 
    \begin{pmatrix}
      \u{e}_2 - \u{e}_1 
      & 
      \u{e}_3 - \u{e}_1
    \end{pmatrix}
    P_{\alpha\beta}, 
    \\
    \text{with}~
    P_{\alpha\beta} 
    = 
    \frac{1}{A(x)}
    \begin{pmatrix}
      +\inner{
	\u{n}_0
      }{
	\u{e}_{\alpha}\times(\u{e}_3-\u{e}_1)
      } 
      & 
      +\inner{
	\u{n}_0
      }{
	\u{e}_{\beta}\times(\u{e}_3-\u{e}_1)
      } 
      \\
      -\inner{
	\u{n}_0
      }{
	\u{e}_{\alpha}\times(\u{e}_2-\u{e}_1)
      } 
      & 
      -\inner{
	\u{n}_0
      }{
	\u{e}_{\beta}\times(\u{e}_2-\u{e}_1)
      } 
    \end{pmatrix}. 
  \end{gather*}
  Under this transformation of frames, 
  the first fundamental form $\fff_{\alpha\beta}$ 
  and second fundamental form $\sff_{\alpha\beta}$ 
  of the triangle 
  $\triangle_{\alpha\beta}
  = \triangle(
  \vPhi(x),
  \vPhi(x_{\alpha}),
  \vPhi(x_{\beta}))$ are transformed as
  \begin{align*}
    \fff_{\alpha\beta} 
    & = 
      \begin{pmatrix}
	\inner{
          \nabla_{e_{\alpha}}\vPhi
	}{
          \nabla_{e_{\alpha}}\vPhi
	} 
	& 
	\inner{
          \nabla_{e_{\alpha}}\vPhi
	}{
          \nabla_{e_{\beta}}\vPhi
	} 
	\\
	\inner{
          \nabla_{e_{\beta}}\vPhi
	}{
          \nabla_{e_{\alpha}}\vPhi
	} 
	& 
	\inner{
          \nabla_{e_{\beta}}\vPhi
	}{
          \nabla_{e_{\beta}}\vPhi
	}
      \end{pmatrix} 
    \\
    & = 
      {}^tP_{\alpha\beta}
      \begin{pmatrix}
	\lvert \u{e}_2-\u{e}_1 \rvert^2 
	& 
	\inner{\u{e}_2-\u{e}_1}{\u{e}_3-\u{e}_1} 
	\\
	\inner{\u{e}_2-\u{e}_1}{\u{e}_3-\u{e}_1} 
	& 
	\lvert \u{e}_3-\u{e}_1 \rvert^2
      \end{pmatrix}
          P_{\alpha\beta} 
    \\
    & = 
      {}^tP_{\alpha\beta}
      \fff_{\triangle(x)}
      P_{\alpha\beta}, 
    \\
    \sff_{\alpha\beta} 
    & = 
      \begin{pmatrix}
	-\inner{
          \nabla_{e_{\alpha}}\vPhi
	}{
          \nabla_{e_{\alpha}}\u{n}
	} 
	& 
	-\inner{
          \nabla_{e_{\alpha}}\vPhi
	}{
          \nabla_{e_{\beta}}\u{n}
	} 
	\\
	-\inner{
          \nabla_{e_{\beta}}\vPhi
	}{
          \nabla_{e_{\alpha}}\u{n}
	} 
	& 
	-\inner{
          \nabla_{e_{\beta}}\vPhi
	}{
          \nabla_{e_{\beta}}\u{n}
	}
      \end{pmatrix} 
    \\
    & = 
      {}^tP_{\alpha\beta}
      \begin{pmatrix}
	-\inner{\u{e}_2-\u{e}_1}{\u{n}_{\alpha}} 
	& 
	-\inner{\u{e}_2-\u{e}_1}{\u{n}_{\beta}} 
	\\
	-\inner{\u{e}_3-\u{e}_1}{\u{n}_{\alpha}} 
	& 
	-\inner{\u{e}_3-\u{e}_1}{\u{n}_{\beta}} 
      \end{pmatrix}, 
  \end{align*}
  respectively. Therefore we obtain
  \[
  \fff_{\alpha\beta}^{-1}\sff_{\alpha\beta}^{} 
  = 
  P_{\alpha\beta}^{-1}\fff_{\triangle(x)}
  \begin{pmatrix}
    -\inner{\u{e}_2-\u{e}_1}{\u{n}_{\alpha}} 
    & 
    -\inner{\u{e}_2-\u{e}_1}{\u{n}_{\beta}} 
    \\
    -\inner{\u{e}_3-\u{e}_1}{\u{n}_{\alpha}} 
    & 
    -\inner{\u{e}_3-\u{e}_1}{\u{n}_{\beta}} 
  \end{pmatrix}.
  \]
  Since 
  $\sqrt{\det\fff_{\alpha\beta}} 
  = A(x)\det P_{\alpha\beta}$, 
  it follows from the definition (\ref{Eq.disc_mean}) of 
  $H(x)$ that 
  \begin{align*}
    2H(x) 
    & = 
      \sum_{\alpha,\beta}
      \frac{\sqrt{\det\fff_{\alpha\beta}}}{A(x)}
      \mathrm{tr}
      (\fff_{\alpha\beta}^{-1}\sff_{\alpha\beta}^{}) 
    \\
    & = 
      \sum_{\alpha,\beta}
      \mathrm{tr}
      \left[
      \fff_{\triangle(x)}^{-1}
      (\det P_{\alpha\beta}^{}\cdot P_{\alpha\beta}^{-1})
      \begin{pmatrix}
	-\inner{\u{e}_2-\u{e}_1}{\u{n}_{\alpha}} 
	& 
	-\inner{\u{e}_2-\u{e}_1}{\u{n}_{\beta}} 
	\\
	-\inner{\u{e}_3-\u{e}_1}{\u{n}_{\alpha}} 
	& 
	-\inner{\u{e}_3-\u{e}_1}{\u{n}_{\beta}} 
      \end{pmatrix}
          \right] 
    \\
    & = 
      \mathrm{tr}
      \left[
      \fff_{\triangle(x)}^{-1}
      \sum_{\alpha,\beta}
      (\det P_{\alpha\beta}^{}\cdot P_{\alpha\beta}^{-1})
      \begin{pmatrix}
	-\inner{\u{e}_2-\u{e}_1}{\u{n}_{\alpha}} 
	& 
	-\inner{\u{e}_2-\u{e}_1}{\u{n}_{\beta}} 
	\\
	-\inner{\u{e}_3-\u{e}_1}{\u{n}_{\alpha}} 
	& 
	-\inner{\u{e}_3-\u{e}_1}{\u{n}_{\beta}} 
      \end{pmatrix}
          \right], 
  \end{align*}
  where the summation are taken over 
  $(\alpha,\beta)=(1,2)$, $(2,3)$ and $(3,1)$. 
  We complete the proof of (\ref{Eq.H_large_triangle}) 
  by proving 
  \[
  \mathrm{tr}(\sff_{\triangle(x)}) 
  = 
  \mathrm{tr}	
  \sum_{\alpha,\beta}
  (\det P_{\alpha\beta}^{}\cdot P_{\alpha\beta}^{-1})
  \begin{pmatrix}
    -\inner{\u{e}_2-\u{e}_1}{\u{n}_{\alpha}} 
    & 
    -\inner{\u{e}_2-\u{e}_1}{\u{n}_{\beta}} 
    \\
    -\inner{\u{e}_3-\u{e}_1}{\u{n}_{\alpha}} 
    & 
    -\inner{\u{e}_3-\u{e}_1}{\u{n}_{\beta}} 
  \end{pmatrix}, 
  \]
  which follows from a simple direct computation. 
  \par
  Our next task is to prove (\ref{Eq.K_large_triangle}). 
  It again follows from the definition (\ref{Eq.disc_gauss}) 
  of $K(x)$ that 
  \begin{align*}
    K(x) 
    & = 
      \sum_{\alpha,\beta}
      \frac{\sqrt{\det\fff_{\alpha\beta}}}{A(x)}
      \det
      (\fff_{\alpha\beta}^{-1}\sff_{\alpha\beta}^{})
      = 
      \sum_{\alpha,\beta}
      \frac{
      \det(\sff_{\alpha\beta}^{})
      }{
      A(x)^2\det P_{\alpha\beta}
      } 
    \\
    & = 
      \frac{1}{A(x)^2}
      \sum_{\alpha,\beta}
      \det
      \begin{pmatrix}
	-\inner{\u{e}_2-\u{e}_1}{\u{n}_{\alpha}} 
	& 
	-\inner{\u{e}_2-\u{e}_1}{\u{n}_{\beta}} 
	\\
	-\inner{\u{e}_3-\u{e}_1}{\u{n}_{\alpha}} 
	& 
	-\inner{\u{e}_3-\u{e}_1}{\u{n}_{\beta}} 
      \end{pmatrix} 
    \\
    & = 
      \frac{1}{A(x)^2}
      \begin{pmatrix}
	-\inner{\u{e}_2-\u{e}_1}{\u{n}_2-\u{n}_1} 
	& 
	-\inner{\u{e}_2-\u{e}_1}{\u{n}_3-\u{n}_1} 
	\\
	-\inner{\u{e}_3-\u{e}_1}{\u{n}_2-\u{n}_1} 
	& 
	-\inner{\u{e}_3-\u{e}_1}{\u{n}_3-\u{n}_1} 
      \end{pmatrix} 
    \\
    & = 
      \frac{1}{A(x)^2}\det(\sff_{\triangle(x)}).
  \end{align*}
  Thus we prove (\ref{Eq.K_large_triangle}) 
  because $A(x)^2=\det(\fff_{\triangle(x)})$. 
\end{proof}
We end this subsection 
by the following proposition, 
which corresponds to (\ref{Eq.charact_of_minimal}). 
\begin{Prop}
  Let $\vPhi\colon X=(V,E) \to \mathbb{R}^3$ 
  be a discrete surface, 
  let $x\in V$ be fixed and let $E_x=\{e_1,e_2,e_3\}$. 
  Then 
  the mean curvature $H(x)$ at $\vPhi(x)$ is 
  written as
  \begin{equation}
    \begin{aligned}
      2H(x)A(x)\u{n}(x) 
      & = 
      \sum_{\alpha,\beta}
      \left(
	\nabla_{e_{\beta}}\u{n}
	\times
	\nabla_{e_{\alpha}}\vPhi 
	- 
	\nabla_{e_{\alpha}}\u{n}
	\times
	\nabla_{e_{\beta}}\vPhi
      \right) 
      \\
      & = 
      \nabla_{e_2-e_1}\u{n}\times \nabla_{e_3-e_1}\vPhi 
      - 
      \nabla_{e_3-e_1}\u{n}\times \nabla_{e_2-e_1}\vPhi 
      \\
      & = 
      \nabla_{e_2-e_3}\u{n} \times \vPhi(e_1) 
      + 
      \nabla_{e_3-e_1}\u{n} \times \vPhi(e_2) 
      + 
      \nabla_{e_1-e_2}\u{n} \times \vPhi(e_3), 
    \end{aligned}
    \label{Eq.disc_min_eq}
  \end{equation}
  where the summation is taken over all 
  $(\alpha,\beta)=(1,2)$, $(2,3)$ and $(3,1)$, and 
  $\nabla_{e_i-e_j}\vPhi
  := \nabla_{e_i}\vPhi-\nabla_{e_j}\vPhi
  = \vPhi(e_i)-\vPhi(e_j)$ 
  as well as 
  $\nabla_{e_i-e_j}\u{n}
  := \nabla_{e_i}\u{n}-\nabla_{e_j}\u{n}$ 
  denote the directional derivatives 
  along $\vPhi(e_i)-\vPhi(e_j)$. 
\end{Prop}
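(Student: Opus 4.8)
This is the discrete analogue of the minimality characterization \eqref{Eq.charact_of_minimal}, and my plan is to reduce all three displayed expressions to a single ``one‑triangle'' identity and then rearrange cross products. First I would record that, for the triangle $\triangle$ of the setup preceding \eqref{Eq.nabla_in}, with tangent vectors $\u{v}_1,\u{v}_2$ and projected normal derivatives $\nabla_1\u{n},\nabla_2\u{n}$,
\[
\nabla_2\u{n}\times\u{v}_1-\nabla_1\u{n}\times\u{v}_2=2H_{\triangle}\,(\u{v}_1\times\u{v}_2),
\]
which is the exact analogue of \eqref{Eq.charact_of_minimal} for $\triangle$. This is proved exactly as Proposition~\ref{Prop(Gauss)}: substitute \eqref{Eq.nabla_in} for $\nabla_1\u{n}$ and $\nabla_2\u{n}$, use $\u{v}_1\times\u{v}_1=\u{v}_2\times\u{v}_2=0$, and the surviving scalar coefficient is $\frac{EN+GL-F(M_1+M_2)}{EG-F^2}=2H_{\triangle}$ by \eqref{Eq.H&K_discrete}.

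For the first line I would apply this to each $\triangle_{\alpha\beta}$, taking $\u{v}_1=\nabla_{e_{\alpha}}\vPhi$ and $\u{v}_2=\nabla_{e_{\beta}}\vPhi$, so that each summand is $2H_{\triangle_{\alpha\beta}}(\nabla_{e_{\alpha}}\vPhi\times\nabla_{e_{\beta}}\vPhi)$. Since the projected vectors lie in $T_x$, a short expansion (the terms containing $\u{e}_i\times\u{n}(x)$ are orthogonal to $\u{n}(x)$ and drop out) gives $\nabla_{e_{\alpha}}\vPhi\times\nabla_{e_{\beta}}\vPhi=\langle\u{e}_{\alpha}\times\u{e}_{\beta},\u{n}(x)\rangle\u{n}(x)$, whose length is $\sqrt{\det\fff_{\alpha\beta}}$; moreover the three scalars $\langle\u{e}_{\alpha}\times\u{e}_{\beta},\u{n}(x)\rangle$ sum to $A(x)$ by \eqref{Eq.n}. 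Summing over $(\alpha,\beta)\in\{(1,2),(2,3),(3,1)\}$ and using the definition \eqref{Eq.disc_mean} of $H(x)$ then yields $2H(x)A(x)\u{n}(x)$.

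The other two lines follow by pure rearrangement. Grouping the six cross products of the first line by the factor $\nabla_{e_i}\vPhi$ rewrites the sum as $\nabla_{e_2-e_3}\u{n}\times\nabla_{e_1}\vPhi+\nabla_{e_3-e_1}\u{n}\times\nabla_{e_2}\vPhi+\nabla_{e_1-e_2}\u{n}\times\nabla_{e_3}\vPhi$. Replacing each $\nabla_{e_i}\vPhi$ by $\vPhi(e_i)=\u{e}_i$ changes the term $\nabla_{e_j-e_k}\u{n}\times\nabla_{e_i}\vPhi$ by $\langle\u{e}_i,\u{n}(x)\rangle\,\nabla_{e_j-e_k}\u{n}\times\u{n}(x)$; summing, this error vanishes because $\langle\u{e}_i,\u{n}(x)\rangle=\det(\u{e}_1,\u{e}_2,\u{e}_3)/A(x)$ is independent of $i$ while $\nabla_{e_2-e_3}\u{n}+\nabla_{e_3-e_1}\u{n}+\nabla_{e_1-e_2}\u{n}=0$, and this produces the third line. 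For the middle line I would instead use Proposition~\ref{Prop(large_triangle)}: the triangle $\triangle(x)$ on $\{\vPhi(x_1),\vPhi(x_2),\vPhi(x_3)\}$ has $H_{\triangle(x)}=H(x)$, its tangent vectors are $\u{e}_2-\u{e}_1$ and $\u{e}_3-\u{e}_1$, and $(\u{e}_2-\u{e}_1)\times(\u{e}_3-\u{e}_1)=A(x)\u{n}(x)$ directly from \eqref{Eq.n}; feeding these into the one‑triangle identity reproduces the middle line.

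The only genuine computation is the one‑triangle identity, which is the same determinant bookkeeping as in Proposition~\ref{Prop(Gauss)}; everything afterwards is linear algebra. The point demanding care is orientation: one must consistently use that $\nabla_{e_{\alpha}}\vPhi\times\nabla_{e_{\beta}}\vPhi$ and $(\u{e}_2-\u{e}_1)\times(\u{e}_3-\u{e}_1)$ are \emph{positive} multiples of $\u{n}(x)$, and keep the two terms in the fixed order $\nabla_2\u{n}\times\u{v}_1-\nabla_1\u{n}\times\u{v}_2$ of \eqref{Eq.charact_of_minimal} (so that the middle line should be read as $\nabla_{e_3-e_1}\u{n}\times\nabla_{e_2-e_1}\vPhi-\nabla_{e_2-e_1}\u{n}\times\nabla_{e_3-e_1}\vPhi$, matching the first and third), since transposing a pair reverses the sign of the corresponding $\u{v}_1\times\u{v}_2$. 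That $\langle\u{e}_i,\u{n}(x)\rangle$ is independent of $i$ is precisely what forces the three superficially different expressions to agree.
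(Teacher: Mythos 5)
Your proof is correct, and it takes a genuinely different route from the paper's. The paper proves only the \emph{third} expression, by brute force: starting from Proposition~\ref{Prop(large_triangle)} it expands $H(x)$ as an explicit sum of inner products, collects the terms involving each $\u{n}_i$ via $\inner{\u{a}}{\u{c}}\u{b}-\inner{\u{b}}{\u{c}}\u{a}=(\u{a}\times\u{b})\times\u{c}$, arrives at the scalar identity $2H(x)A(x)=\inner{\u{n}(x)}{\nabla_{e_2-e_3}\u{n}\times\u{e}_1+\nabla_{e_3-e_1}\u{n}\times\u{e}_2+\nabla_{e_1-e_2}\u{n}\times\u{e}_3}$, upgrades it to a vector identity by a parallelism claim, and dismisses the first two expressions as ``easily proved''. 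You instead isolate the per-triangle identity $\nabla_2\u{n}\times\u{v}_1-\nabla_1\u{n}\times\u{v}_2=2H_{\triangle}(\u{v}_1\times\u{v}_2)$ --- the mean-curvature analogue of Proposition~\ref{Prop(Gauss)}, and correct, as your substitution of \eqref{Eq.nabla_in} shows, since the surviving coefficient is $(EN+GL-F(M_1+M_2))/(EG-F^2)=2H_\triangle$ --- which buys you the first line directly from the definition \eqref{Eq.disc_mean}, the middle line from Proposition~\ref{Prop(large_triangle)} applied to $\triangle(x)$, and the third by regrouping. Two of your side remarks are genuine improvements over the paper. First, your cancellation argument (that $\inner{\u{e}_i}{\u{n}(x)}=\det(\u{e}_1,\u{e}_2,\u{e}_3)/A(x)$ is independent of $i$ while $\nabla_{e_2-e_3}\u{n}+\nabla_{e_3-e_1}\u{n}+\nabla_{e_1-e_2}\u{n}=\u{0}$) repairs a small gap in the paper's last step: the paper asserts that each term $\nabla_{e_\alpha-e_\beta}\u{n}\times\u{e}_\gamma$ is parallel to $\u{n}(x)$, which fails term-by-term whenever $\u{e}_\gamma$ has a nonzero normal component; only the \emph{sum} is parallel, for exactly the reason you give. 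Second, your orientation caveat about the middle line is a real catch: as printed, $\nabla_{e_2-e_1}\u{n}\times\nabla_{e_3-e_1}\vPhi-\nabla_{e_3-e_1}\u{n}\times\nabla_{e_2-e_1}\vPhi$ expands (grouping by the $\u{n}_i$) to the \emph{negative} of the first and third expressions, so the corrected ordering you propose, which matches \eqref{Eq.charact_of_minimal}, is the right reading. The one point you share with the paper rather than improve on is the implicit orientation assumption $\sqrt{\det\fff_{\alpha\beta}}=\inner{\u{e}_\alpha\times\u{e}_\beta}{\u{n}(x)}$ (the paper's $\sqrt{\det\fff_{\alpha\beta}}=A(x)\det P_{\alpha\beta}$ in the proof of Proposition~\ref{Prop(large_triangle)}), i.e.\ that each projected pair $\{\nabla_{e_\alpha}\vPhi,\nabla_{e_\beta}\vPhi\}$ is positively oriented --- which you at least state explicitly rather than leave silent.
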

\begin{proof}
  Set, for simplicity, $\u{e}_i=\vPhi(e_i)$ 
  and $\u{n}_i=\u{n}(t(e_i))$ ($i=1,2,3$) as usual. 
  As a consequence of Proposition~\ref{Prop(large_triangle)}, 
  we have 
  \begin{align*}
    H(x) 
    & = 
      \frac{1}{2A(x)^2}
      \Bigl\{
      - 
      \lvert \u{e}_2-\u{e}_1 \rvert^2
      \inner{\u{e}_3-\u{e}_1}{\u{n}_3-\u{n}_1} 
      - 
      \lvert \u{e}_3-\u{e}_1 \rvert^2
      \inner{\u{e}_2-\u{e}_1}{\u{n}_2-\u{n}_1} 
    \\
    & \qquad \qquad \qquad 
      + 
      \inner{\u{e}_2-\u{e}_1}{\u{e}_3-\u{e}_1}
      \bigl(
      \inner{\u{e}_2-\u{e}_1}{\u{n}_3-\u{n}_1}
      + 
      \inner{\u{e}_3-\u{e}_1}{\u{n}_2-\u{n}_1}
      \bigr)
      \Bigr\} 
    \\
    & = 
      \frac{-1}{2A(x)^2}
      \Bigl\{
      \lvert \u{e}_1-\u{e}_2\rvert^2
      \inner{\u{e}_3}{\u{n}_3}
      + 
      \lvert \u{e}_2-\u{e}_3\rvert^2
      \inner{\u{e}_1}{\u{n}_1} 
      +
      \lvert \u{e}_3-\u{e}_1\rvert^2
      \inner{\u{e}_2}{\u{n}_2} 
    \\
    & \qquad \qquad \qquad 
      + 
      \inner{\u{e}_1-\u{e}_2}{\u{e}_2-\u{e}_3}
      \bigl(
      \inner{\u{e}_3}{\u{n}_1} 
      + 
      \inner{\u{e}_1}{\u{n}_3}
      \bigr) 
    \\
    & \qquad \qquad \qquad 
      + 
      \inner{\u{e}_2-\u{e}_3}{\u{e}_3-\u{e}_1}
      \bigl(
      \inner{\u{e}_1}{\u{n}_2} 
      + 
      \inner{\u{e}_2}{\u{n}_1}
      \bigr) 
    \\
    & \qquad \qquad \qquad 
      + 
      \inner{\u{e}_3-\u{e}_1}{\u{e}_1-\u{e}_2}
      \bigl(
      \inner{\u{e}_2}{\u{n}_3} 
      + 
      \inner{\u{e}_3}{\u{n}_2}
      \bigr)\Bigr\}. 
  \end{align*}
  The terms involving $\u{n}_1$ are then summarized, 
  using 
  $\inner{\u{a}}{\u{c}}\u{b} 
  - \inner{\u{b}}{\u{c}}\u{a}
  = (\u{a}\times\u{b})\times\u{c}$ 
  for $\u{a},\u{b},\u{c}\in \mathbb{R}^3$, as 
  \[
  \inner{
    \u{e}_2-\u{e}_3
  }{
    (
    \u{e}_1\times\u{e}_2
    + \u{e}_2\times\u{e}_3 
    + \u{e}_3\times\u{e}_1
    )
    \times 
    \u{n}_1
  }
  = 
  A(x)\inner{\u{e}_2-\u{e}_3}{\u{n}(x)\times\u{n}_1}, 
  \]
  and similarly for $\u{n}_2$ and $\u{n}_3$. 
  Therefore, 
  \begin{align*}
    H(x) 
    & = 
      \frac{-1}{2A(x)}
      \Bigl\{
      \inner{\u{e}_2-\u{e}_3}{\u{n}(x)\times\u{n}_1} 
      + 
      \inner{\u{e}_3-\u{e}_1}{\u{n}(x)\times\u{n}_2} 
      + 
      \inner{\u{e}_1-\u{e}_2}{\u{n}(x)\times\u{n}_3}
      \Bigr\} 
    \\
    & = 
      \frac{-1}{2A(x)}
      \bigl\langle
      \u{n}(x), 
      \u{n}_1\times (\u{e}_2-\u{e}_3) 
      + 
      \u{n}_2\times (\u{e}_3-\u{e}_1) 
      + 
      \u{n}_3\times (\u{e}_1-\u{e}_2)
      \bigr\rangle 
    \\
    & = 
      \frac{1}{2A(x)}
      \bigl\langle
      \u{n}(x), 
      (\u{n}_2-\u{n}_3)\times \u{e}_1 
      + 
      (\u{n}_3-\u{n}_1)\times \u{e}_2 
      + 
      (\u{n}_1-\u{n}_2)\times \u{e}_3 
      \bigr\rangle. 
  \end{align*}
  Since 
  $\nabla_{e_{\alpha}-e_{\beta}}\u{n}
  = \nabla_{e_{\alpha}}\u{n}
  - \nabla_{e_{\beta}}\u{n}
  = \proj[\u{n}_{\alpha}-\u{n}_{\beta}]\in T_x$ is 
  the orthogonal projection onto the tangent plane 
  $T_x$ whose normal vector is $\u{n}(x)$, 
  so that 
  $\nabla_{e_{\alpha}-e_{\beta}}\u{n}
  \times \u{e}_{\gamma}$ 
  is parallel to $\u{n}(x)$ 
  for $(\alpha,\beta,\gamma)=(1,2,3)$, 
  $(2,3,1)$ or $(3,1,2)$, 
  we infer 
  \[
  2H(x)A(x)\u{n}(x) 
  = 
  \nabla_{e_2-e_3}\u{n}\times\u{e}_1 
  + 
  \nabla_{e_3-e_1}\u{n}\times\u{e}_2 
  + 
  \nabla_{e_1-e_2}\u{n}\times\u{e}_3. 
  \]
  The remaining two expressions in 
  (\ref{Eq.disc_min_eq}) are easily proved. 
\end{proof}
\begin{Cor}
  \label{Thm(charact_of_disc_min)}
  Let $X=(V,E)$ be a fixed graph, 
  $\vPhi_0:X=(V,E) \to \mathbb{R}^3$ 
  be a $3$-valent discrete surface with 
  $\u{n}_0\colon V \to \mathbb{R}^3$ 
  its oriented unit normal vector field, 
  and $H\colon V \to \mathbb{R}$ be a function. 
  Assume that 
  $\{\nabla_{e_2-e_1}\u{n}_0,
  \nabla_{e_3-e_1}\u{n}_0\}$ 
  is a pair of linearly independent vectors in $\mathbb{R}^3$, 
  for every $x\in V$, 
  where $E_x=\{e_1,e_2,e_3\}$. 
  If a $3$-valent discrete surface 
  $\vPhi\colon X=(V,E) \to \mathbb{R}^3$ 
  solves
  \begin{equation}
    2H(x)\u{m}(x) 
    = 
    \nabla_{e_2-e_3}\u{n}_0 
    \times 
    \vPhi(e_1) 
    + 
    \nabla_{e_3-e_1}\u{n}_0 
    \times 
    \vPhi(e_2) 
    +
    \nabla_{e_1-e_2}\u{n}_0 
    \times 
    \vPhi(e_3), 
    \label{Eq.prescribed_H}
  \end{equation}
  the prescribed mean curvature equation, 
  where $m\colon V\rightarrow \mathbb{R}^3$ is 
  the unnormalized normal vector field, given as 
  \[
  \u{m}(x)
  = \vPhi(e_1)\times \vPhi(e_2) 
  + \vPhi(e_2)\times \vPhi(e_3) 
  + \vPhi(e_3)\times \vPhi(e_1), 
  \]
  then, 
  after switching the orientation of edges $E_x=\{e_1,e_2,e_3\}$ 
  at each $x\in V$ if necessary, 
  the mean curvature of $\vPhi$ 
  coincides with $H(x)$ at each $x\in V$. 
\end{Cor}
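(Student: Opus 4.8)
The plan is to recognise (\ref{Eq.prescribed_H}) as nothing but the identity (\ref{Eq.disc_min_eq}) of the preceding proposition, with the \emph{true} normal field of $\vPhi$ replaced by the reference field $\u{n}_0$. So I would apply that proposition to $\vPhi$ itself and compare the resulting identity with the hypothesis: the two right-hand sides coincide precisely when the oriented normal of $\vPhi$ equals $\u{n}_0$, and the bulk of the work is to arrange this after the permitted orientation changes.

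Concretely, write $\u{n}$ for the oriented unit normal of $\vPhi$ determined by (\ref{Eq.n}) and $\widetilde{H}$ for the mean curvature of $\vPhi$. Since $\u{m}(x)=A(x)\u{n}(x)$ is exactly the unnormalised normal in the statement, the preceding proposition yields
\[
2\widetilde{H}(x)\,\u{m}(x)
=
\nabla_{e_2-e_3}\u{n}\times\vPhi(e_1)
+\nabla_{e_3-e_1}\u{n}\times\vPhi(e_2)
+\nabla_{e_1-e_2}\u{n}\times\vPhi(e_3)
=:\u{w}(x).
\]
Writing $\u{w}_0(x)$ for the right-hand side of (\ref{Eq.prescribed_H}), I would dot both (\ref{Eq.prescribed_H}) and this identity with $\u{n}(x)$ and use $\inner{\u{m}(x)}{\u{n}(x)}=A(x)$, reducing the desired equality $\widetilde{H}=H$ to the single scalar condition $\inner{\u{n}(x)}{\u{w}_0(x)-\u{w}(x)}=0$. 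Since $\u{w}$ and $\u{w}_0$ differ only through the normal field (its values at the three neighbours and the tangent plane onto which the $\nabla_{e_\alpha-e_\beta}$ project), this condition holds automatically as soon as $\u{n}=\u{n}_0$ at $x$ and at its neighbours; thus everything comes down to identifying the two normal fields.

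Securing $\u{n}=\u{n}_0$ is where I expect the real difficulty. First I would extract parallelism: because $\u{m}(x)$ is a multiple of $\u{n}(x)$, equation (\ref{Eq.prescribed_H}) forces $\u{w}_0(x)\parallel\u{n}(x)$. Splitting $\u{w}_0(x)$ along $\u{n}_0(x)$ and its orthogonal complement $T_x^{\vPhi_0}$, and using the telescoping relation $\nabla_{e_2-e_3}\u{n}_0+\nabla_{e_3-e_1}\u{n}_0+\nabla_{e_1-e_2}\u{n}_0=0$, the part of $\u{w}_0(x)$ lying in $T_x^{\vPhi_0}$ equals $\bigl((h_1-h_3)\nabla_{e_2-e_3}\u{n}_0+(h_2-h_3)\nabla_{e_3-e_1}\u{n}_0\bigr)\times\u{n}_0(x)$, where $h_i:=\inner{\vPhi(e_i)}{\u{n}_0(x)}$ are the heights of the neighbours over the reference tangent plane. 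The linear independence hypothesis is exactly what makes this tangential part vanish \emph{if and only if} the three heights coincide. Hence the crux — and the main obstacle — is to prove that a solution of (\ref{Eq.prescribed_H}) does place the three neighbouring vertices at a common height over $T_x^{\vPhi_0}$; this is not a formal identity, since, unlike in the preceding proposition (where the three heights over the surface's \emph{own} tangent plane are automatically equal, being the common triple product $\det[\vPhi(e_1),\vPhi(e_2),\vPhi(e_3)]/A(x)$), the heights over the reference plane carry no such built-in symmetry. Once this equal-height fact is established one gets $\u{w}_0(x)\parallel\u{n}_0(x)$, hence $\u{n}(x)\parallel\u{n}_0(x)$, and switching the local orientation at the vertices where the two point oppositely upgrades this to $\u{n}=\u{n}_0$, which closes the argument.
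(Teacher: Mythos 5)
Your overall skeleton is sound---reducing $\widetilde H=H$ to the identification $\u{n}=\u{n}_0$, and computing the tangential part of $\u{w}_0$ in the $\u{n}_0$-frame via the telescoping relation are both correct---but the proof stalls exactly where you flag it, and the gap is not just hard, it is structural. From the hypothesis you can only extract $\u{w}_0(x)\parallel\u{n}(x)$ (the \emph{unknown} normal of $\vPhi$), whereas the vanishing of your tangential component is equivalent to $\u{w}_0(x)\parallel\u{n}_0(x)$; these two parallelisms cannot be played against each other before you know $\u{n}(x)\parallel\u{n}_0(x)$, which is precisely what you are trying to prove. In fact the logical order is the reverse of your plan: the equal-height condition $h_1=h_2=h_3$ is a \emph{consequence} of the parallelism $\u{m}(x)\parallel\u{n}_0(x)$ (once $\u{m}\parallel\u{n}_0$, the equation forces $\u{w}_0\parallel\u{n}_0$, and then your own computation plus the independence hypothesis yields equal heights), not a stepping stone toward it. So as written the proposal is incomplete at its central step.

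The missing idea, and what the paper does instead, is to pair (\ref{Eq.prescribed_H}) with the edge-difference vectors of $\vPhi$ itself. Taking the inner product of the right-hand side with $\vPhi(e_2)-\vPhi(e_1)$, which annihilates the left-hand side because it is perpendicular to $\u{m}(x)$, and cyclically rearranging the scalar triple products $\inner{a}{b\times c}=\inner{b}{c\times a}$, one finds that all terms reassemble into the single quantity $\inner{\nabla_{e_2-e_1}\u{n}_0}{\u{m}(x)}$, which therefore vanishes; the same pairing with $\vPhi(e_3)-\vPhi(e_1)$ gives $\inner{\nabla_{e_3-e_1}\u{n}_0}{\u{m}(x)}=0$. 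The linear-independence hypothesis then says these two vectors span the tangent plane of $\vPhi_0$ at $x$, so $\u{m}(x)$ is perpendicular to that plane, i.e.\ parallel to $\u{n}_0(x)$, and $\u{m}(x)\neq\u{0}$ because $\vPhi$ is a discrete surface. This purely algebraic collapse---valid for arbitrary vectors, with no tangency or height input needed---is the one computation your proposal lacks; once it is in place, your concluding step (switch orientations so that $\u{n}=\u{n}_0$ everywhere, then compare $\u{w}_0$ with the intrinsic identity (\ref{Eq.disc_min_eq}) for $\vPhi$) finishes the argument as you describe.
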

\begin{proof}
  Let $\vPhi\colon X=(V,E)\rightarrow \mathbb{R}^3$ 
  solve (\ref{Eq.prescribed_H}). 
  Taking the inner product of (\ref{Eq.prescribed_H}) 
  with $\vPhi(e_2)-\vPhi(e_1)$, 
  which is perpendicular to $\u{m}(x)$, gives 
  \begin{align*}
    0 
    & = 
      \inner{
      \vPhi(e_2)-\vPhi(e_1)
      }{
      \nabla_{e_2-e_3}\u{n}_0 
      \times 
      \vPhi(e_1)
      } 
    \\
    & \qquad 
      + 
      \inner{
      \vPhi(e_2)-\vPhi(e_1)
      }{
      \nabla_{e_3-e_1}\u{n}_0 
      \times 
      \vPhi(e_2)
      } 
    \\
    & \qquad 
      + 
      \inner{
      \vPhi(e_2)-\vPhi(e_1)
      }{
      \nabla_{e_1-e_2}\u{n}_0 
      \times 
      \vPhi(e_3)
      } 
    \\
    & = 
      \inner{
      \nabla_{e_2}\u{n}_0 
      - 
      \nabla_{e_3}\u{n}_0
      }{
      \vPhi(e_1) 
      \times 
      \vPhi(e_2)
      } 
    \\
    & \qquad 
      - \inner{
      \nabla_{e_3}\u{n}_0 
      - 
      \nabla_{e_1}\u{n}_0
      }{
      \vPhi(e_2) 
      \times 
      \vPhi(e_1)
      } 
    \\
    & \qquad 
      + \inner{
      \nabla_{e_1}\u{n}_0 
      - 
      \nabla_{e_2}\u{n}_0
      }{
      \vPhi(e_3) 
      \times 
      (\vPhi(e_2)
      - 
      \vPhi(e_1))
      } 
    \\
    & = 
      - \inner{
      \nabla_{e_1}\u{n}_0
      }{
      \vPhi(e_1)\times \vPhi(e_2) 
      + \vPhi(e_2)\times \vPhi(e_3) 
      + \vPhi(e_3)\times \vPhi(e_1)
      } 
    \\
    & \qquad 
      + \inner{
      \nabla_{e_2}\u{n}_0
      }{
      \vPhi(e_1)\times \vPhi(e_2) 
      + \vPhi(e_2)\times \vPhi(e_3) 
      + \vPhi(e_3)\times \vPhi(e_1)
      } 
    \\
    & = 
      \inner{
      \nabla_{e_2}\u{n}_0 
      - 
      \nabla_{e_1}\u{n}_0
      }{
      \u{m}(x)
      } 
    \\
    & = 
      \inner{
      \nabla_{e_2-e_1}\u{n}_0
      }{
      \u{m}(x)
      }. 
  \end{align*}
  In a similar way, 
  taking the inner product with 
  $\vPhi(e_3)-\vPhi(e_1)$ gives 
  $\inner{\nabla_{e_3-e_1}\u{n}_0}{\u{m}(x)}=0$. 
  Since our assumption guarantees that 
  $\{\nabla_{e_2-e_1}\u{n}_0,\nabla_{e_3-e_1}\u{n}_0\}$ 
  spans the tangent plane to $\vPhi_0$ at $\vPhi_0(x)$, 
  we conclude that $\u{m}(x)$ is perpendicular to the plane, 
  or equivalently, parallel to $\u{n}_0(x)$. 
  The assumption that $\vPhi$ is a $3$-valent discrete surface 
  guarantees that $\u{m}(x)\neq \u{0}$ for every $x\in V$. 
\end{proof}
Notice that if we choose $H=0$ 
as a function $H\colon V \to \mathbb{R}$ 
in Corollary~\ref{Thm(charact_of_disc_min)}, 
then the equation (\ref{Eq.prescribed_H}) 
is linear with respect to $\vPhi$, 
so that, is always solvable, 
although a solution is not possibly 
a $3$-valent discrete surface. 
Several examples obtained by solving 
such equations will be actually provided 
in Section \ref{Section(mackay_min)}. 
% 
%% SECTION 3.3 VARIATIONAL APPROACH
\subsection{Variational approach} 
\label{subsec:variational}
Recall that both the mean curvature and the Gauss curvature 
of a smooth surface is formulated also 
by a variational approach of its area functional 
(\ref{Eq.first_var_of_A}) and (\ref{Eq.second_var_of_A}). 
This subsection is devoted to derive a variation formula 
for the \emph{area functional} defined as 
\begin{equation}
  \mathcal{A}[\vPhi] 
  := 
  \sum_{x\in V}
  \lvert 
  \vPhi(e_{x,1})\times\vPhi(e_{x,2}) + 
  \vPhi(e_{x,2})\times\vPhi(e_{x,3}) + 
  \vPhi(e_{x,3})\times\vPhi(e_{x,1}) 
  \rvert
  \label{Eq.disc_area}
\end{equation}
of a $3$-valent discrete surface 
$\vPhi\colon X=(V,E)\rightarrow \mathbb{R}^3$, 
where $E_x=\{e_{x,1},e_{x,2},e_{x,3}\}$ 
is the set of edges with origin $x$ 
such that the order of $e_i$'s is chosen to 
match the orientation. 
\par
As in the preceding subsection, 
we focus on a triangle 
$\triangle=\triangle(\u{x}_0,\u{x}_1,\u{x}_2)$ 
with vertices 
$\{\u{x}_1,\u{x}_2,\u{x}_3\}\subseteq \mathbb{R}^3$, 
to each $\u{x}_i$ of which the oriented unit normal vector 
$\u{n}_i$ is assumed to be assigned, 
to derive the variation formulas of its area: 
\begin{equation}
  A(\u{x}_1,\u{x}_2,\u{x}_3) 
  := 
  \lvert (\u{x}_2-\u{x}_1) 
  \times 
  (\u{x}_3-\u{x}_1) \rvert. 
  \label{Eq.1area}
\end{equation}
For any triplet $\u{u}=\{\u{u}_i\mid i=1,2,3\}$ 
of vectors in $\mathbb{R}^3$, 
we consider the variation of 
$\triangle(\u{x}_1,\u{x}_2,\u{x}_3)$ 
with $\u{u}$ as the variation vector field, 
that is, 
a $1$-parameter family of triangles 
$\triangle(
\u{x}_1(t),
\u{x}_2(t),
\u{x}_3(t)
)$ with 
\begin{equation}
  \u{x}_i(t) 
  = 
  \u{x}_i + t \u{u}_i
  \quad 
  (i=1,2,3)
  \label{Eq.n_variation}
\end{equation}
of its vertices for $t\in \mathbb{R}$. 
\begin{Lem}\label{Prop(var_of_triangle)}
  The first variation of area {\upshape (\ref{Eq.1area})} 
  of the triangle 
  $\triangle=\triangle(\u{x}_1,\u{x}_2,\u{x}_3)$ 
  with respect to $\u{u}=\{\u{u}_i\mid 
  i=1,2,3\}$ is given as 
  \begin{equation}
    \left.
      \frac{d}{dt}
    \right\rvert_{t=0}
    A(\u{x}_1(t),\u{x}_2(t),\u{x}_3(t)) 
    = 
    \inner{\u{V}^1}{\u{u}_1} 
    + \inner{\u{V}^2}{\u{u}_2} 
    + \inner{\u{V}^3}{\u{u}_3}, 
    \label{Eq.general_first_var_of_triangle}
  \end{equation}
  where, for $(i,j,k)=(1,2,3)$, $(2,3,1)$ or 
  $(3,1,2)$, $\u{V}^i$ is obtained by 
  rotating $\u{x}_j-\u{x}_k$ by $90^{\circ}$ 
  on the plane on which 
  $\triangle$ lie 
  {\upshape(}in the counterclockwise direction 
  as viewed facing the normal vector of $\triangle${\upshape)}. 
  If in particular $\u{u}_i$ is the unit normal 
  vector $\u{n}_i$ which is assigned to $\u{x}_i$, 
  it follows that 
  \begin{equation}
    \frac{1}{A(\u{x}_1,\u{x}_2,\u{x}_3)}
    \left.
      \frac{d}{dt}
    \right\rvert_{t=0}
    A(\u{x}_1(t),\u{x}_2(t),\u{x}_3(t)) 
    = 
    -2H_{\triangle}, 
    \label{Eq.first_var_of_triangle}
  \end{equation}
  where $H_{\triangle}$ is the mean curvature 
  of $\triangle$. 
  Moreover, if we consider a normal variation 
  of $\triangle$ 
  {\upshape(}i.e.\ $\u{u}_i=\u{n}_i$, 
  $i=1,2,3${\upshape)}, then we have 
  \begin{equation}
    \frac{1}{A(\u{x}_1,\u{x}_2,\u{x}_3)}
    \left.
      \frac{d^2}{dt^2}
    \right\rvert_{t=0} 
    A(
    \u{x}_1(t),
    \u{x}_2(t),
    \u{x}_3(t)
    )
    = 
    2K_{\triangle} 
    + 
    \mathrm{tr}(
    \fff_{\triangle}^{-1}
    (\tff_{\triangle}'-\tff_{\triangle}^{})
    ), 
    \label{Eq.second_var_of_triangle}
  \end{equation}
  where $K_{\triangle}$ is the Gauss curvature 
  of $\triangle$, 
  $\fff_{\triangle}$ and $\tff_{\triangle}$ 
  are, respectively, the first fundamental form {\upshape (\ref{Eq.EFG})} 
  and the third fundamental form {\upshape (\ref{Eq.c_ij})} 
  of $\triangle$ 
  with respect to the frame 
  $\{\u{u}_2-\u{u}_1,\u{u}_3-\u{u}_1\}$, and 
  \begin{equation}
    \tff_{\triangle}' 
    := 
    \begin{pmatrix}
      c_{22}' & c_{23}' \\ 
      c_{32}' & c_{33}'
    \end{pmatrix} 
    = 
    \begin{pmatrix}
      \inner{\u{n}_2-\u{n}_1}{\u{n}_2-\u{n}_1} 
      & 
      \inner{\u{n}_2-\u{n}_1}{\u{n}_3-\u{n}_1} 
      \\
      \inner{\u{n}_3-\u{n}_1}{\u{n}_2-\u{n}_1} 
      & 
      \inner{\u{n}_2-\u{n}_0}{\u{n}_3-\u{n}_1} 
    \end{pmatrix}.
  \end{equation}
  In particular, 
  if $\nabla_i\u{n}=\u{n}_i-\u{n}_1$ {\upshape(}$i=2,3${\upshape)}, 
  in other words, 
  $\triangle(
  \u{x}_1(t),
  \u{x}_2(t),
  \u{x}_3(t)
  )$ 
  is parallel to 
  $\triangle(\u{x}_1,\u{x}_2,\u{x}_3)$ 
  for some/any $t>0$, 
  then we have 
  \begin{equation}
    \frac{1}{A(\u{x}_1,\u{x}_2,\u{x}_3)}
    \left.
      \frac{d^2}{dt^2}
    \right\rvert_{t=0} 
    A(
    \u{x}_1(t),
    \u{x}_2(t),
    \u{x}_3(t)
    )
    = 
    2K_{\triangle}. 
  \end{equation}
\end{Lem}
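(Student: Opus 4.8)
The plan is to differentiate the vector-valued oriented area directly. Set $\u{\Psi}(t):=(\u x_2(t)-\u x_1(t))\times(\u x_3(t)-\u x_1(t))$, so that $A(\u x_1(t),\u x_2(t),\u x_3(t))=\lvert\u\Psi(t)\rvert$, and abbreviate $\u v_1=\u x_2-\u x_1$, $\u v_2=\u x_3-\u x_1$, $\u p=\u u_2-\u u_1$, $\u q=\u u_3-\u u_1$, together with the unit normal $\u\nu:=\u\Psi(0)/A$ of the plane $T_{\triangle}$ (here $A=A(\u x_1,\u x_2,\u x_3)$). From $\u\Psi(t)=(\u v_1+t\u p)\times(\u v_2+t\u q)$ one reads off $\u\Psi'(0)=\u p\times\u v_2+\u v_1\times\u q$ and $\u\Psi''(0)=2\,\u p\times\u q$, and since $\frac{d}{dt}\lvert\u\Psi\rvert=\inner{\u\Psi}{\u\Psi'}/\lvert\u\Psi\rvert$ the first variation is $A'(0)=\inner{\u\nu}{\u\Psi'(0)}$.

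For the general formula (\ref{Eq.general_first_var_of_triangle}) I would expand $\inner{\u\nu}{\u\Psi'(0)}$ using the cyclic identity $\inner{\u\nu}{\u a\times\u b}=\inner{\u a}{\u b\times\u\nu}$ to pull the $\u u_i$ to the front and collect coefficients; they emerge as $(\u x_2-\u x_3)\times\u\nu$, $(\u x_3-\u x_1)\times\u\nu$ and $(\u x_1-\u x_2)\times\u\nu$, i.e.\ the in-plane edge vectors $\u x_j-\u x_k$ rotated by a right angle about $\u\nu$, which is the asserted $\u V^i$. Specializing to $\u u_i=\u n_i$ (so $\u p=\u n_2-\u n_1$, $\u q=\u n_3-\u n_1$), Lemma \ref{Lem(sff)} lets me replace each $\inner{\u v_i}{\u n_j-\u n_1}$ by an entry of $\sff_{\triangle}$; applying $\inner{\u a\times\u b}{\u c\times\u d}=\inner{\u a}{\u c}\inner{\u b}{\u d}-\inner{\u a}{\u d}\inner{\u b}{\u c}$ to $\inner{\u\Psi(0)}{\u\Psi'(0)}$ collapses it to $-(EN+GL-F(M_1+M_2))$, whence $A'(0)/A=\inner{\u\Psi(0)}{\u\Psi'(0)}/A^2=-2H_{\triangle}$ by (\ref{Eq.H&K_discrete}). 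This proves (\ref{Eq.first_var_of_triangle}).

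For the second variation I would differentiate once more, using $\tfrac1A A''(0)=\bigl(\lvert\u\Psi'\rvert^2+\inner{\u\Psi}{\u\Psi''}\bigr)/A^2-\inner{\u\Psi}{\u\Psi'}^2/A^4$ at $t=0$ with $\u p=\u n_2-\u n_1$, $\u q=\u n_3-\u n_1$. The same cross-product identity gives $\inner{\u\Psi(0)}{\u\Psi''(0)}/A^2=2(LN-M_1M_2)/(EG-F^2)=2K_{\triangle}$ by (\ref{Eq.H&K_discrete}). The decisive step for the remaining terms is to split $\u\Psi'(0)$ into its $\u\nu$-component and its $T_{\triangle}$-component: writing $\u p=\nabla_1\u n+a\u\nu$ and $\u q=\nabla_2\u n+b\u\nu$ with $a=\inner{\u p}{\u\nu}$, $b=\inner{\u q}{\u\nu}$, the cross products of the in-plane pieces $\nabla_1\u n\times\u v_2+\u v_1\times\nabla_2\u n$ are normal to $T_{\triangle}$, while the pieces carrying $a,b$ combine to the in-plane vector $\u\nu\times(a\u v_2-b\u v_1)$. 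Consequently the squared length of the $\u\nu$-component of $\u\Psi'(0)$ equals $\inner{\u\Psi}{\u\Psi'}^2/A^2$ and cancels, leaving $\lvert\u\nu\times(a\u v_2-b\u v_1)\rvert^2/A^2=(Ga^2-2Fab+Eb^2)/(EG-F^2)$. Finally, since projecting $\u p,\u q$ onto $T_{\triangle}$ removes exactly the rank-one matrix $\left(\begin{smallmatrix}a^2&ab\\ab&b^2\end{smallmatrix}\right)$, we have $\tff'_{\triangle}-\tff_{\triangle}=\left(\begin{smallmatrix}a^2&ab\\ab&b^2\end{smallmatrix}\right)$, and a one-line trace computation identifies $(Ga^2-2Fab+Eb^2)/(EG-F^2)$ with $\mathrm{tr}\bigl(\fff_{\triangle}^{-1}(\tff'_{\triangle}-\tff_{\triangle})\bigr)$, giving (\ref{Eq.second_var_of_triangle}). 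The parallel case $\nabla_i\u n=\u n_i-\u n_1$ is exactly $a=b=0$, which makes this last term vanish.

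The main obstacle is the bookkeeping in the second variation: one must correctly sort the four summands of $\u\Psi'(0)$ into normal and tangential parts relative to $T_{\triangle}$, and check that the normal contribution is precisely what the $-\inner{\u\Psi}{\u\Psi'}^2/A^4$ term subtracts, so that only the tangential norm survives to match the trace expression. A secondary point requiring care is the index relabeling needed to invoke Lemma \ref{Lem(sff)}, whose triangle is written $\triangle(\u x_0,\u x_1,\u x_2)$, for the present triangle $\triangle(\u x_1,\u x_2,\u x_3)$, and the consistent sign convention for $\u\nu$ throughout.
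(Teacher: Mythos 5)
Your argument is correct, and for the second variation it takes a genuinely different route from the paper's. The paper never differentiates the vector-valued area: it works with the Gram matrix $g_{ij}(t)=\inner{\u{x}_i(t)-\u{x}_1(t)}{\u{x}_j(t)-\u{x}_1(t)}$, expands $\det(g_{ij}(t))$ as an explicit polynomial in $t$, applies $\sqrt{1+t\lambda+t^2\mu}=1+\tfrac{\lambda}{2}t+(\tfrac{\mu}{2}-\tfrac{\lambda^2}{8})t^2+O(t^3)$, and then massages the $t^2$-coefficient into $K_{\triangle}+\tfrac12\mathrm{tr}(\fff_{\triangle}^{-1}(\tff_{\triangle}'-\tff_{\triangle}))$ by invoking the identity $\sum_{i,j}g^{ij}c_{ij}=4H_{\triangle}^2-2K_{\triangle}+(M_1-M_2)^2/(EG-F^2)$, a consequence of Proposition~\ref{Prop(KI-2HII+III=0)}, together with the symmetrized form $\sff_{\sym}$. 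Your route --- differentiating $\u{\Psi}(t)=(\u{v}_1+t\u{p})\times(\u{v}_2+t\u{q})$, evaluating $\inner{\u{\Psi}}{\u{\Psi}'}$ and $\inner{\u{\Psi}}{\u{\Psi}''}$ via the Lagrange identity, and splitting $\u{\Psi}'(0)$ into its $\u{\nu}$- and $T_{\triangle}$-parts --- bypasses Proposition~\ref{Prop(KI-2HII+III=0)} entirely, and it buys a transparent geometric meaning for the correction term: $\tff_{\triangle}'-\tff_{\triangle}$ is exactly the rank-one Gram matrix $\left(\begin{smallmatrix}a^2&ab\\ab&b^2\end{smallmatrix}\right)$ of the normal components $a=\inner{\u{n}_2-\u{n}_1}{\u{\nu}}$, $b=\inner{\u{n}_3-\u{n}_1}{\u{\nu}}$, and $\mathrm{tr}(\fff_{\triangle}^{-1}\,\cdot\,)$ of it is precisely the squared tangential part of $\u{\Psi}'(0)$ (normalized by $A^2$) left over after the cancellation against $-\inner{\u{\Psi}}{\u{\Psi}'}^2/A^4$. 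I verified the computational claims you deferred: $\inner{\u{\Psi}(0)}{\u{\Psi}'(0)}=-(EN+GL-F(M_1+M_2))$ and $\inner{\u{\Psi}(0)}{\u{\Psi}''(0)}=2(LN-M_1M_2)$ follow from Lemma~\ref{Lem(sff)} exactly as you say, and the trace computation gives $(Ga^2-2Fab+Eb^2)/(EG-F^2)$ on both sides, so \eqref{Eq.second_var_of_triangle} and the parallel case $a=b=0$ come out right. For the first variation the two proofs are essentially parallel: the paper reads the coefficient of each $\u{u}_i$ off the Gram expansion and rewrites it with the triple-product formula as $V^i=\tfrac1A(\u{x}_j-\u{x}_k)\times[(\u{x}_i-\u{x}_j)\times(\u{x}_k-\u{x}_i)]$, and your $\u{V}^i=(\u{x}_j-\u{x}_k)\times\u{\nu}$ agrees with that intermediate computation (the bracketed cross product is $-A\u{\nu}$, which is why the paper speaks of the \emph{reversed} normal; the ``counterclockwise'' wording of the statement carries the same orientation ambiguity you rightly flag, and your sign is the one consistent with a direct check). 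What the paper's determinant expansion buys in exchange is the single closed quadratic expansion $\sqrt{\det(g_{ij}(t))}=\bigl(1-2tH_{\triangle}+t^2\{K_{\triangle}+\tfrac12\mathrm{tr}(\fff_{\triangle}^{-1}(\tff_{\triangle}'-\tff_{\triangle}))\}\bigr)\sqrt{\det(g_{ij})}+O(t^3)$, which is reused verbatim for the Steiner formula in Theorem~\ref{Thm(variation_formula)}, whereas your computation produces $A'(0)$ and $A''(0)$ one derivative at a time.
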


\begin{proof}
  We discuss with the frame 
  $\{\u{v}_2=\u{u}_2-\u{u}_1, 
  \u{v}_3=\u{u}_3-\u{u}_1\}$ 
  and set 
  \[
  g_{ij} 
  := 
  \inner{\u{v}_i}{\u{v}_j}, \quad 
  b_{ij} 
  := 
  - \inner{\u{v}_i}{\nabla_i\u{u}} 
  = 
  - \inner{\u{v}_i}{\u{u}_i-\u{u}_1}
  \quad 
  (i,j=2,3).
  \]
  By (\ref{Eq.n_variation}), 
  \begin{align*}
    g_{ij}(t) 
    :={} & 
           \inner{
           \u{x}_i(t)-\u{x}_0(t)
           }{
           \u{x}_j(t)-\u{x}_0(t)
           } 
    \\
    ={} & 
          g_{ij} 
          - 
          t b_{ij} 
          - 
          t b_{ji} 
          + 
          t^2\inner{\u{u}_i-\u{u}_1}{\u{u}_j-\u{u}_1}. 
  \end{align*}
  Note that the coefficient 
  $\inner{\u{u}_i-\u{u}_1}{\u{u}_j-\u{u}_1}$ 
  of $t^2$ is $c_{ij}'$ in the case 
  $\u{u}_i=\u{n}_i$. \par
  We first consider the case that $\u{u}_i=\u{n}_i$ 
  for $i=1,2,3$. 
  Then, by using $g_{ij}=g_{ji}$ 
  and 
  $2H_{\triangle}
  = g^{11}b_{11}+g^{22}b_{22}+g^{12}b_{12}+g^{21}b_{21}$, 
  where $g^{ij}$ is the $(i,j)$-component 
  of the inverse matrix of 
  $\fff_{\triangle}
  = (g_{ij})_{i,j}$, 
  \begin{equation}
    \begin{aligned}
      \det(g_{ij}(t))_{i,j} 
      & 
      \begin{aligned}
	= 
	g_{22}(t)g_{33}(t) 
	- 
	g_{23}(t)g_{32}(t) 
      \end{aligned} 
      \\
      &  
      \begin{aligned}
	= 
	\det(g_{ij})_{i,j} 
	& 
	- 2t
	(
	g_{22}b_{11} 
	+ 
	g_{11}b_{22} 
	- 
	g_{12}b_{21} 
	- 
	g_{21}b_{12}
	) 
	\\
	& 
	+ t^2
	(
	g_{11}c_{22}' 
	+ 
	g_{22}c_{11}' 
	- 
	g_{12}c_{21}' 
	- 
	g_{21}c_{12}'
	) 
	\\
	& 
	+ t^2 
	(
	4b_{11}b_{22} 
	- 
	2b_{12}b_{21} 
	- 
	b_{12}^2 
	- 
	b_{21}^2
	)
      \end{aligned} 
      \\
      & 
      \begin{aligned}
	= 
	\det(g_{ij})_{i,j} 
	\left\{
          1 - 4t H_{\triangle} 
          + 
          t^2 
          \left(
            \sum_{i,j=1}^2g^{ij}c_{ij}' 
            + 
            \frac{4\det\sff_{\sym}}{\det(g_{ij})_{i,j}} 
          \right)
	\right\}, 
      \end{aligned}
    \end{aligned}
    \label{Eq.det(gij(t))}
  \end{equation}
  where $\sff_{\sym}:=(\sff_{\triangle}+\sff_{\triangle}^T)/2$ 
  is the symmetrized matrix of the 
  second fundamental form. 
  Using 
  $\sqrt{1+t\lambda+t^2\mu} 
  = 1 + (\lambda/2)t 
  + (\mu/2 - \lambda^2/8)t^2 
  + O(t^3)$ 
  for $\lvert t\rvert\ll1$, 
  \[
  \frac{
    \sqrt{\det(g_{ij}(t))_{i,j}}
  }{
    \sqrt{\det(g_{ij})_{i,j}}
  }
  = 
  1 - 2t H_{\triangle} 
  + 
  t^2 
  \left(
    \sum_{i,j=1}^2\frac{g^{ij}c_{ij}'}{2} 
    + 
    \frac{2\det\sff_{\sym}}{\det(g_{ij})_{i,j}} 
    - 
    2H_{\triangle}^2
  \right) 
  + 
  O(t^3). 
  \]
  Since 
  $A(
  \u{x}_0(t),
  \u{x}_1(t),
  \u{x}_2(t)
  ) 
  = 
  \sqrt{\det(g_{ij}(t))_{i,j}}$, 
  this expansion 
  shows (\ref{Eq.first_var_of_triangle}). 
  \par
  To get (\ref{Eq.second_var_of_triangle}) 
  we continue with more computation 
  of the term involving $t^2$. 
  Using the equality 
  \[
  \sum_{i,j=1}^2g^{ij}c_{ij} 
  = 
  4H_{\triangle}^2 
  - 
  2K_{\triangle} 
  + 
  \frac{(M_1-M_2)^2}{EG-F^2}, 
  \]
  which follows from (\ref{Eq.KI-2HII+III!=0}), we have 
  \begin{align*}
    \sum_{i,j=1}^2\frac{g^{ij}c_{ij}}{2} 
    + 
    \frac{2\det\sff_{\sym}}{\det(g_{ij})_{i,j}} 
    & = 
      2H_{\triangle}^2 
      - 
      K_{\triangle} 
      + 
      \frac{(M_1-M_2)^2}{2(EG-F^2)} 
      + 
      \frac{4LN-(M_1+M_2)^2}{2(EG-F^2)} 
    \\
    & = 
      2H_{\triangle}^2 
      - 
      K_{\triangle} 
      + 
      \frac{2(LN-M_1M_2)}{EG-F^2} 
    \\
    & = 
      2H_{\triangle}^2 
      + 
      K_{\triangle}, 
  \end{align*}
  where the last equality follows from 
  the definition of $K_{\triangle}$. 
  Thus we infer 
  \[
  \frac{
    \sqrt{\det(g_{ij}(t))_{i,j}}
  }{
    \sqrt{\det(g_{ij})_{i,j}}
  }
  = 
  1 - 2t H_{\triangle} 
  + 
  t^2 
  \left\{
    K_{\triangle} 
    + 
    \frac{1}{2}
    \mathrm{tr}
    (\fff_{\triangle}^{-1}
    (\tff_{\triangle}'-\tff_{\triangle}^{})
    )
  \right\}
  + 
  O(t^3). 
  \]
  This proves (\ref{Eq.second_var_of_triangle}). 
  The latter assertion of the proposition 
  immediately follows from 
  (\ref{Eq.second_var_of_triangle}). \par
  We then consider for a general variation 
  vector field $\u{u}$. 
  A similar computation as 
  (\ref{Eq.general_first_var_of_triangle}) 
  shows 
  \[
  \frac{\sqrt{\det(g_{ij}(t))_{i,j}}
  }{
    \sqrt{\det(g_{ij})_{i,j}}
  } 
  = 
  1 - \frac{t}{\det(g_{ij})_{i,j}}
  (g_{33}b_{22}+g_{22}b_{33}-g_{23}b_{32}-g_{32}b_{23}) 
  + O(t^2). 
  \]
  Therefore 
  \begin{align*}
    \left.\frac{d}{dt}\right\rvert_{t=0}
    \sqrt{\det(g_{ij}(t))_{i,j}}
    & = \frac{1}{\sqrt{\det(g_{ij})_{i,j}}}
      \Bigl\{
      \inner{\u{v}_3}{\u{v}_3}
      \inner{\u{v}_2}{\u{u}_2-\u{u}_1}
      + 
      \inner{\u{v}_2}{\u{v}_2}
      \inner{\u{v}_3}{\u{u}_3-\u{u}_1}
    \\
    & 
      \qquad \qquad \qquad \quad 
      - 
      \inner{\u{v}_2}{\u{v}_3}
      \left(
      \inner{\u{v}_2}{\u{u}_3-\u{u}_1}
      + 
      \inner{\u{v}_3}{\u{u}_2-\u{u}_1}
      \right)
      \Bigr\}
    \\
    & = 
      \inner{\u{V}^1}{\u{u}_1} 
      + 
      \inner{\u{V}^2}{\u{u}_2} 
      + 
      \inner{\u{V}^3}{\u{u}_3},  
  \end{align*}
  where $\u{V}^i$ contains 
  neither of $\u{u}_j$ and 
  has the following expression: 
  \begin{align*}
    V^i 
    & = 
      \frac{1}{A(\u{x}_1,\u{x}_2,\u{x}_3)}
      \left\{
      \inner{\u{x}_j-\u{x}_k}{\u{x}_k-\u{x}_i}
      (\u{x}_i-\u{x}_j)
      - 
      \inner{\u{x}_j-\u{x}_k}{\u{x}_i-\u{x}_j}
      (\u{x}_k-\u{x}_i)
      \right\} 
    \\ 
    & = 
      \frac{1}{A(\u{x}_1,\u{x}_2,\u{x}_3)}
      (\u{x}_j-\u{x}_k)\times 
      \left[
      (\u{x}_i-\u{x}_j)\times 
      (\u{x}_k-\u{x}_i)
      \right],  
  \end{align*}
  where $(i,j,k)=(1,2,3)$, $(2,3,1)$ 
  or $(3,1,2)$. 
  Since $(\u{x}_i-\u{x}_j)\times 
  (\u{x}_k-\u{x}_i)$ divided by 
  $A(\u{x}_1,\u{x}_2,\u{x}_3)$ 
  is the (reversed) unit normal vector 
  of $\triangle$, 
  this expression shows that $V^i$ is 
  perpendicular to both 
  the normal vector and 
  $\u{x}_j-\u{x}_k$, 
  proving (\ref{Eq.general_first_var_of_triangle}). 
\end{proof}
\begin{Prop}[general first variation formula for $\mathcal{A}$]
  Let $\vPhi\colon X=(V,E)\rightarrow \mathbb{R}^3$ 
  be a $3$-valent discrete surface 
  and $\u{u}=\{\u{u}_x\}_{x\in V}$ 
  be any vector field on the surface, 
  that is, each $\u{u}_x$ is a vector in 
  $\mathbb{R}^3$ assigned to the vertex $x\in V$. 
  \begin{enumerate}
  \item[{\upshape (\rnum{1})}] 
    The first variation formula for $\vPhi+t\u{u}$ 
    {\upshape(}$t\in \mathbb{R}${\upshape)} 
    is given of the form 
    \[
    \left.\frac{d}{dt}\right\rvert_{t=0}
    \mathcal{A}[\vPhi+t\u{u}] 
    = 
    \sum_{x\in V}
    \inner{
      \u{V}_{x_1}^{e_1} 
      + \u{V}_{x_2}^{e_2} 
      + \u{V}_{x_3}^{e_3}
    }{
      \u{u}_x}, 
    \]
    where $E_x=\{e_1,e_2,e_3\}$ and $t(e_i)=x_i$. 
  \item[{\upshape (\rnum{2})}] 
    The vector $\u{V}_{x_1}^{e_1}$ is obtained by 
    rotating $\u{x}_{12}-\u{x}_{13}$ 
    by $90^{\circ}$ on the tangent plane $T_{x_1}$ at $x_1$ 
    in the counterclockwise direction as viewed 
    facing the normal vector of $T_{x_1}$ 
    {\upshape (see Figure~\ref{Figure(Vxe)})}. 
  \item[{\upshape (\rnum{3})}] 
    When the variation around a vertex $x\in V$ is 
    given by the normal deformation, 
    i.e.\ $\u{u}_{x_i}=\u{n}_{x_i}$ for 
    the adjacent vertices $x_i\in V$ 
    {\upshape(}$i=1,2,3${\upshape)} of $x$, 
    then 
    \[
    \inner{\u{V}_x^{e_1}}{\u{n}_{x_1}} 
    + 
    \inner{\u{V}_x^{e_2}}{\u{n}_{x_2}} 
    + 
    \inner{\u{V}_x^{e_3}}{\u{n}_{x_3}} 
    \]
    is equal to the mean curvature $-2H(x) A(x)$, 
    where $A(x):=\lvert (\u{e}_2-\u{e}_1)\times (\u{e}_3-\u{e}_1)$ 
    is the area element at $x\in V$.
  \end{enumerate}
  \begin{figure}[htpb]
    \centering
    \includegraphics[width=8cm]{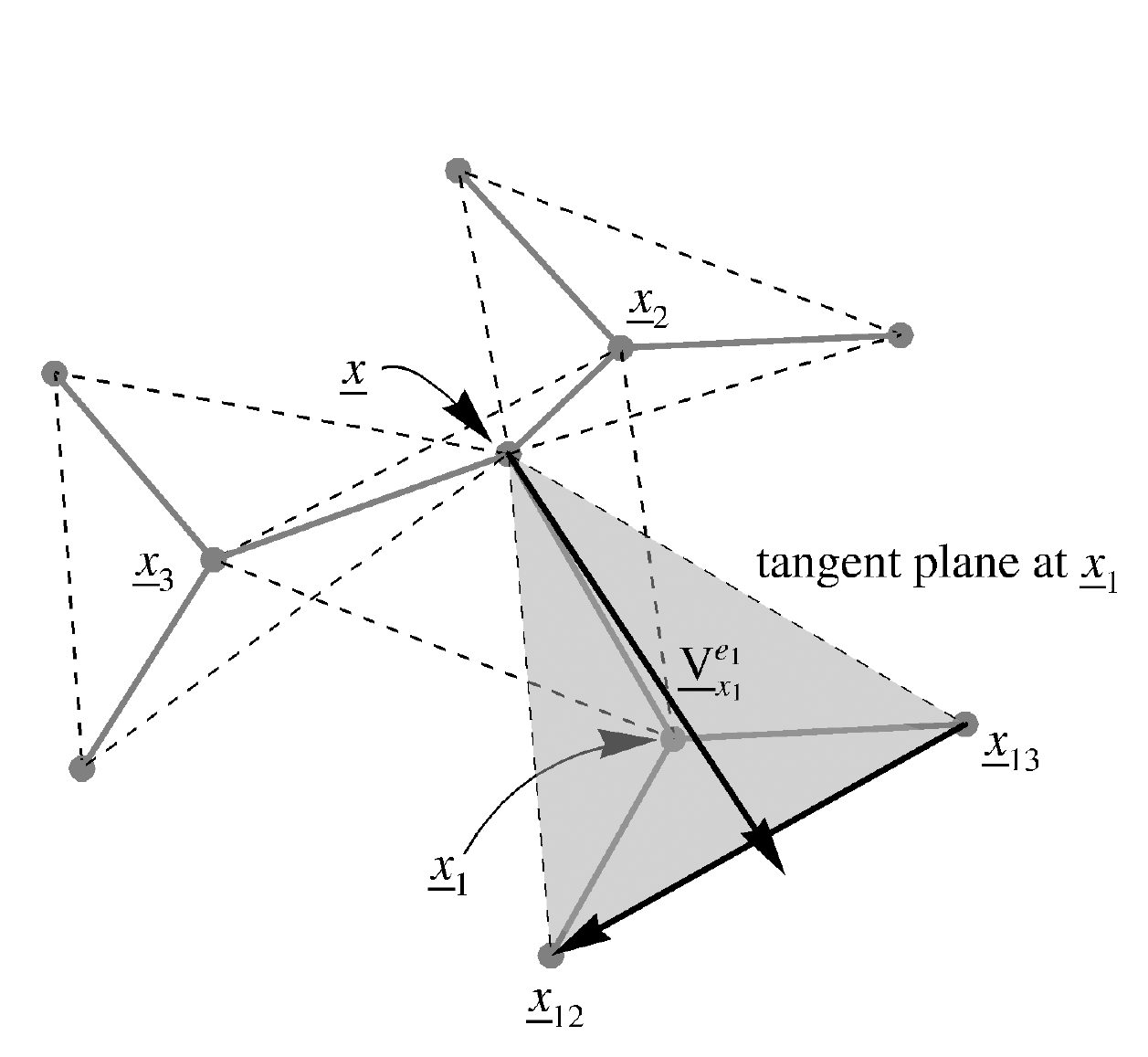}
    \caption{Thick segments are the edges of 
      the surface, and the four dashed triangles are 
      on the tangent planes at $\u{x}$, $\u{x}_i$ 
      ($i=1,2,3$). 
      $\u{V}_{x_1}^{e_1}$ is obtained by 
      rotating $\u{x}_{12}-\u{x}_{13}$ by 
      $90^{\circ}$ on the tangent plane at $\u{x}_1$, 
      on which the gray-hued triangle lies. 
    }
    \label{Figure(Vxe)}
  \end{figure}
\end{Prop}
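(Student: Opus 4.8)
The plan is to reduce the global formula to the single-triangle variation of Lemma~\ref{Prop(var_of_triangle)}. First I would rewrite the area functional as a vertex sum: by the identity $(\u{e}_2-\u{e}_1)\times(\u{e}_3-\u{e}_1)=\u{e}_1\times\u{e}_2+\u{e}_2\times\u{e}_3+\u{e}_3\times\u{e}_1$, each summand of (\ref{Eq.disc_area}) equals the quantity (\ref{Eq.1area}) for the triangle $\triangle_x=\triangle(\vPhi(x_1),\vPhi(x_2),\vPhi(x_3))$ spanned by the three neighbours of $x$, so that $\mathcal{A}[\vPhi]=\sum_{x\in V}A(x)$. The key structural remark is that $A(x)$ depends only on the positions of the neighbours $x_1,x_2,x_3$ of $x$, and not on $\vPhi(x)$ itself. (When $V$ is infinite I would assume $\u{u}$ to have finite support, so that only finitely many summands move and every sum below is finite.)

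Next I would differentiate termwise. Under $\vPhi+t\u{u}$ the vertices of $\triangle_x$ move with velocities $\u{u}_{x_1},\u{u}_{x_2},\u{u}_{x_3}$, so (\ref{Eq.general_first_var_of_triangle}) applied to $\triangle_x$ gives $\frac{d}{dt}\big|_{t=0}A(x)=\sum_{i=1}^3\inner{\u{V}_x^{e_i}}{\u{u}_{x_i}}$, where $\u{V}_x^{e_i}$ is the $90^\circ$ rotation, in the plane of $\triangle_x$, of the side of $\triangle_x$ opposite $x_i$. To see that this plane is genuinely the tangent plane, I would note that the normal $\u{n}(x)$ of (\ref{Eq.n}) is, by the same cross-product identity, perpendicular to the plane through $x_1,x_2,x_3$; this is also what makes the geometric description in statement~(ii) meaningful.

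The substantive step is to sum over $x\in V$ and regroup by the deformed vertex. Since $A(x)$ involves only the neighbours of $x$, a velocity $\u{u}_x$ picks up a contribution exactly from each triangle $\triangle_y$ of which $x$ is a vertex, i.e. from $y$ ranging over the three neighbours $x_1,x_2,x_3$ of $x$; three-valence (Definition~\ref{Def(disc_surf)}) is precisely what guarantees there are three such triangles. Relabelling, the coefficient of $\u{u}_x$ becomes $\u{V}_{x_1}^{e_1}+\u{V}_{x_2}^{e_2}+\u{V}_{x_3}^{e_3}$, proving~(i), and each summand $\u{V}_{x_1}^{e_1}$ is, by the previous step, the $90^\circ$ rotation of the side of $\triangle_{x_1}$ opposite $x$ inside $T_{x_1}$, which is~(ii). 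I expect the orientation bookkeeping to be the only delicate point: the sense of each $90^\circ$ rotation is fixed by the local orientation of Definition~\ref{Def(disc_surf)}(\rnum{3}), and one has to track carefully that the edge $e_i$ joining $x$ to $x_i$ picks out the intended opposite side of $\triangle_{x_i}$ with a consistent sign.

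Finally, for~(iii) I would specialise the single-triangle formula to the normal variation $\u{u}_{x_i}=\u{n}_{x_i}$ at the three vertices of $\triangle_x$. Then (\ref{Eq.first_var_of_triangle}) gives $\sum_{i=1}^3\inner{\u{V}_x^{e_i}}{\u{n}_{x_i}}=\frac{d}{dt}\big|_{t=0}A(x)=-2H_{\triangle_x}A(x)$, and since Proposition~\ref{Prop(large_triangle)} identifies the mean curvature $H_{\triangle_x}$ of the neighbour-triangle with the discrete mean curvature $H(x)$, the right-hand side equals $-2H(x)A(x)$, as asserted.
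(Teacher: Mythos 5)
Your proof is correct and takes essentially the paper's route: the paper states this proposition without a written proof, treating it as an immediate consequence of Lemma~\ref{Prop(var_of_triangle)}, and your argument --- rewriting $\mathcal{A}[\vPhi]=\sum_{x\in V}A(x)$ with $A(x)$ depending only on the three neighbours of $x$, differentiating termwise via (\ref{Eq.general_first_var_of_triangle}), regrouping the sum by the deformed vertex to get (\rnum{1}) and (\rnum{2}), and combining (\ref{Eq.first_var_of_triangle}) with Proposition~\ref{Prop(large_triangle)} for (\rnum{3}) --- is precisely the bookkeeping the authors leave implicit. Your finite-support caveat and the remark on orientation are sensible refinements, not deviations.
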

Now we restate the results 
on the variation formula 
in terms of $3$-valent discrete surfaces. 
\begin{Thm}[normal variation formula for $\mathcal{A}$]
  \label{Thm(variation_formula)}
  Let $\vPhi\colon X=(V,E)\rightarrow \mathbb{R}^3$ 
  be a $3$-valent discrete surface 
  with $\u{n}\colon V\rightarrow \mathbb{R}^3$ its 
  oriented unit normal vector field. 
  The normal variation 
  $\vPhi+t\u{n}$ {\upshape(}$t\in \mathbb{R}${\upshape)} 
  of $\vPhi$ gives the following variation formulas: 
  \begin{align*}
    \left.
    \frac{d}{dt} 
    \right\rvert_{t=0}
    \mathcal{A}
    [\vPhi+t\u{n}]
    & = 
      -2\sum_{x\in V} 
      H(x)A(x), 
    \\
    \left. 
    \frac{d^2}{dt^2} 
    \right\rvert_{t=0} 
    \mathcal{A} 
    [\vPhi+t\u{n}] 
    & = 
      \sum_{x\in V} 
      \left\{ 
      2K(x) 
      + 
      \mathrm{tr}
      (\fff_{\triangle(x)}^{-1}(
      \tff_{\triangle(x)}' 
      - 
      \tff_{\triangle(x)}^{})
      \right\}A(x), 
  \end{align*}
  where $\triangle(x)$ is 
  the triangle with 
  $\{
  \vPhi(t(e_{x,1})), 
  \vPhi(t(e_{x,2})), 
  \vPhi(t(e_{x,3}))
  \}$ 
  as its vertices for $x\in V$, 
  $E_x=\{e_{x,1},e_{x,2},e_{x,3}\}$. 
  If, in particular, 
  $\vPhi+t\u{n}$ 
  is actually a family of parallel surfaces 
  {\upshape(}in the sense that each tangent plane 
  of $\vPhi+t\u{n}$ is parallel 
  to the corresponding one of 
  $\vPhi+t'\u{n}$ for any 
  $t,t'\in \mathbb{R}${\upshape)}, 
  then the second variation is given as 
  \[
  \mathcal{A}[\vPhi+t\u{n}] 
  = 
  \sum_{x\in V}
  \left\{
    1-2t H(x)+t^2 K(x)
  \right\}
  A(x), 
  \]
  which is so-called the Steiner formula. 
\end{Thm}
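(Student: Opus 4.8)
The plan is to reduce both variation formulas to the single‑triangle computations already carried out in Lemma~\ref{Prop(var_of_triangle)}, using the observation that the area functional is nothing but the sum, over the vertices, of the areas of the large triangles of Proposition~\ref{Prop(large_triangle)}. First I would rewrite the summand of (\ref{Eq.disc_area}). Setting $\u{e}_i=\vPhi(e_i)$ and expanding, one has the identity $(\u{e}_2-\u{e}_1)\times(\u{e}_3-\u{e}_1)=\u{e}_1\times\u{e}_2+\u{e}_2\times\u{e}_3+\u{e}_3\times\u{e}_1$, so that the summand equals $A(x)$ of (\ref{Eq.n}) and, in the notation of (\ref{Eq.1area}), equals $A(\vPhi(x_1),\vPhi(x_2),\vPhi(x_3))$, twice the area of the large triangle $\triangle(x)$ on the neighbours $x_i=t(e_i)$. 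Thus $\mathcal{A}[\vPhi]=\sum_{x\in V}A(\triangle(x))$.

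The decisive point is that $\mathcal{A}$ depends on the vertex positions alone and not on the assigned normals. Hence the normal variation $\vPhi+t\u{n}$ sends each neighbour $\vPhi(x_i)$ to $\vPhi(x_i)+t\u{n}(x_i)$, and the large triangle $\triangle(x)$ evolves exactly according to the vertex variation (\ref{Eq.n_variation}) of Lemma~\ref{Prop(var_of_triangle)} with displacement vectors $\u{u}_i=\u{n}_i:=\u{n}(x_i)$. That the genuine normals of the perturbed surface would change is immaterial, since the area formula never sees them: the $\u{n}_i$ enter only as the fixed directions of motion. Differentiating $\mathcal{A}[\vPhi+t\u{n}]=\sum_{x\in V}A(\triangle(x)(t))$ termwise and applying the first variation formula (\ref{Eq.first_var_of_triangle}) gives, for each $x$, the value $-2H_{\triangle(x)}A(x)$; applying the second variation formula (\ref{Eq.second_var_of_triangle}) gives $\{2K_{\triangle(x)}+\mathrm{tr}(\fff_{\triangle(x)}^{-1}(\tff_{\triangle(x)}'-\tff_{\triangle(x)}^{}))\}A(x)$. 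By Proposition~\ref{Prop(large_triangle)} we may replace $H_{\triangle(x)}$ and $K_{\triangle(x)}$ by the discrete curvatures $H(x)$ and $K(x)$, and summing over $x\in V$ yields the two asserted formulas.

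For the Steiner formula I would show that, under the parallel hypothesis, each $A(\triangle(x)(t))$ is an \emph{exact} quadratic in $t$. Parallelism means $\nabla_i\u{n}=\u{n}_i-\u{n}_0$ with no projection, so that the deformed edge vectors $\u{v}_i(t)=\u{v}_i+t\,\nabla_i\u{n}$ stay in the fixed plane $T_{\triangle(x)}$ for all $t$. Consequently every term in $\u{v}_1(t)\times\u{v}_2(t)=\u{v}_1\times\u{v}_2+t(\u{v}_1\times\nabla_2\u{n}+\nabla_1\u{n}\times\u{v}_2)+t^2\,\nabla_1\u{n}\times\nabla_2\u{n}$ is a scalar multiple of the unit normal of $T_{\triangle(x)}$, and by Proposition~\ref{Prop(Gauss)} the $t^2$‑coefficient is $K_{\triangle(x)}(\u{v}_1\times\u{v}_2)$; in particular no term of order $t^3$ or higher occurs. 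Reading off the three Taylor coefficients from the constant term and from the first and second variation formulas—whose correction term $\tff'-\tff$ now vanishes—one obtains $A(\triangle(x)(t))=(1-2tH(x)+t^2K(x))A(x)$ exactly, and summation over $x\in V$ gives the Steiner formula.

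The only real obstacle is conceptual: recognizing that $\mathcal{A}$ is literally the sum of the large‑triangle areas and that a normal variation of the surface induces precisely the normal vertex‑variation of Lemma~\ref{Prop(var_of_triangle)}, with the perturbed normals playing no part. Once this identification is in place the theorem is obtained by termwise differentiation, the substantive calculations having already been discharged in the proof of that lemma.
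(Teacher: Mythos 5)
Your proposal is correct and takes essentially the same route as the paper: the two variation formulas are obtained, exactly as in the paper's proof, by applying the per-triangle formulas (\ref{Eq.first_var_of_triangle}) and (\ref{Eq.second_var_of_triangle}) of Lemma~\ref{Prop(var_of_triangle)} to the large triangles $\triangle(x)$ and summing (with Proposition~\ref{Prop(large_triangle)} identifying $H_{\triangle(x)}$, $K_{\triangle(x)}$ with $H(x)$, $K(x)$), and your Steiner argument---expanding $\u{v}_1(t)\times\u{v}_2(t)$ under the parallelism hypothesis, noting every coefficient is parallel to $\u{n}(x)$, and pinning down the linear and quadratic coefficients as $-2H(x)A(x)$ and $K(x)A(x)$ via the variation formulas and Proposition~\ref{Prop(Gauss)}---is the same computation the paper performs using (\ref{Eq.disc_min_eq}) and Proposition~\ref{Prop(Gauss)}. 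Your explicit observation that $\mathcal{A}[\vPhi]=\sum_{x\in V}A(\triangle(x))$ depends only on the vertex positions, so that the perturbed normals play no role in the differentiation, is left implicit in the paper but is precisely the identification its one-line reduction to Lemma~\ref{Prop(var_of_triangle)} rests on.
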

\begin{proof}
  The former assertion is immediate from 
  Proposition~\ref{Prop(var_of_triangle)}. \par
  Let us consider the case that 
  $\vPhi_t=\vPhi+t\u{n}$ 
  gives a family of parallel surfaces for $t\in \mathbb{R}$. 
  Then, since 
  $\nabla_{e_i-e_1}\vPhi_t 
  = \nabla_{e_i-e_1}\vPhi
  + t \nabla_{e_i-e_1}\u{n}$ for $i=2,3$, 
  we obtain 
  \begin{align*}
    \nabla_{e_2-e_1}\vPhi_t 
    \times 
    \nabla_{e_3-e_1}\vPhi_t 
    & = 
      \nabla_{e_2-e_1}\vPhi 
      \times 
      \nabla_{e_3-e_1}\vPhi 
    \\
    & \qquad - 
      t
      (
      \nabla_{e_3-e_1}\u{n}\times \nabla_{e_2-e_1}\vPhi 
      - 
      \nabla_{e_2-e_1}\u{n}\times \nabla_{e_3-e_1}\vPhi 
      ) 
    \\
    & \qquad + 
      t^2
      (\nabla_{e_3-e_1}\u{n}\times\nabla_{e_2-e_1}\u{n}) 
    \\
    & = 
      A(x)\u{n}(x) 
      - 
      2t H(x)A(x)\u{n}(x) 
      + 
      t^2K(x)A(x)\u{n}(x) 
    \\
    & = (1-2t H(x)+t^2K(x))A(x)\u{n}(x), 
  \end{align*}
  which implies 
  the unit normal vector $\u{n}_t$ 
  of $\vPhi_t$ does not change 
  for sufficiently small $\lvert t\rvert\ll1$. 
  The area element $A_t(x)$ of $\vPhi_t$ 
  is then computed as 
  \[
  A_t(x) 
  = 
  \lvert 
  \nabla_{e_2-e_1}\vPhi_t
  \times 
  \nabla_{e_3-e_1}\vPhi_t
  \rvert 
  = 
  (1-2t H(x)+t^2K(x))A(x), 
  \]
  as required. 
\end{proof}
\begin{Rem}
  Unlike the classical surface theory, 
  a tangential first variation 
  of a discrete surface may not vanish, 
  so that a surface with $H=0$ 
  is not always an extremum of area $\mathcal{A}$. 
\end{Rem}
% 
%% SECTION 3.3 HARMONIC AND MINIMAL SURFACES
\subsection{Harmonic and minimal surfaces}
\label{harmonic_minimal}
The area of a smooth regular surface 
$\s{p}\colon \Omega \to \mathbb{R}^3$, 
where $\Omega\subseteq \mathbb{R}^2$ is a domain, 
is dominated by its Dirichlet energy: 
\begin{equation}
  \int_{\Omega} 
  \sqrt{
    \lvert \partial_u\s{p} \rvert^2 
    \lvert \partial_v\s{p} \rvert^2 
    - 
    \inner{\partial_u\s{p}}{\partial_v\s{p}}^2
  }\,dudv 
  \leq 
  \frac{1}{2} 
  \int_{\Omega} 
  \left(
    \lvert \partial_u\s{p} \rvert^2 
    + 
    \lvert \partial_v\s{p} \rvert^2
  \right)\,dA, 
  \label{Eq.area<energy}
\end{equation}
and the equality holds if and only if $\s{p}$ 
is conformal in the sense that 
$\lvert \partial_u\s{p}\rvert^2
= 
\lvert \partial_v\s{p}\rvert^2$ 
and 
$\inner{\partial_u\s{p}}{\partial_v\s{p}}=0$. 
To solve the Plateau Problem, 
Douglas and Rad\'o (1930s) 
came up with the idea to minimize 
the Dirichlet energy 
instead of the area functional itself 
for several advantageous reasons. 
In our settings, the corresponding 
Dirichlet energy is given as 
the sum of square norm of the edges. 
A (periodic) realization of a graph which minimizes 
such an energy is called 
a \emph{harmonic realization} in \cite{MR1783793} 
or an \emph{equilibrium placement} 
in \cite{Delgado-Friedrichs-OKeeffe}. 
An elementary result, 
related with our settings, 
corresponding to (\ref{Eq.area<energy}) 
is described as follows. 
\begin{Prop}
  Let $\triangle(\u{x}_1,\u{x}_2,\u{x}_3)$ 
  be a triangle with vertices 
  $\{\u{x}_1,\u{x}_2,\u{x}_3\}\subseteq \mathbb{R}^3$ 
  and let $A$ be its area. 
  For any point $\u{x}\in \mathbb{R}^3$, 
  it follows 
  \[
  \frac{4\sqrt{3}}{3}A
  \leq 
  \lvert \u{x}_1-\u{x} \rvert^2 
  + 
  \lvert \u{x}_2-\u{x} \rvert^2 
  + 
  \lvert \u{x}_3-\u{x} \rvert^2. 
  \]
  The equality holds if and only if 
  $\triangle(\u{x}_1,\u{x}_2,\u{x}_3)$ 
  is an equilateral triangle 
  and $\u{x}$ is located at its barycenter. 
\end{Prop}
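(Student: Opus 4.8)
The plan is to decouple the free point $\u{x}$ from the intrinsic geometry of the triangle. First I would observe that the quadratic function $\u{x}\mapsto\sum_{i=1}^{3}\lvert\u{x}_i-\u{x}\rvert^{2}$ is minimized at the barycenter $\u{c}:=\tfrac13(\u{x}_1+\u{x}_2+\u{x}_3)$. Writing $\u{x}_i-\u{x}=(\u{x}_i-\u{c})+(\u{c}-\u{x})$ and using $\sum_i(\u{x}_i-\u{c})=\u{0}$ yields the parallel-axis identity
\[
\sum_{i=1}^{3}\lvert\u{x}_i-\u{x}\rvert^{2}
=\sum_{i=1}^{3}\lvert\u{x}_i-\u{c}\rvert^{2}+3\lvert\u{c}-\u{x}\rvert^{2},
\]
so the right-hand side of the claim is smallest precisely when $\u{x}=\u{c}$. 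Thus it suffices to prove the inequality for $\u{x}=\u{c}$, and any equality in the original statement forces $\u{x}=\u{c}$.

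Next I would rewrite the barycentric sum through the side lengths $a=\lvert\u{x}_2-\u{x}_3\rvert$, $b=\lvert\u{x}_3-\u{x}_1\rvert$, $c=\lvert\u{x}_1-\u{x}_2\rvert$. Again using $\sum_i(\u{x}_i-\u{c})=\u{0}$ (which gives $\sum_i\lvert\u{x}_i-\u{c}\rvert^{2}=\tfrac13\sum_{i<j}\lvert\u{x}_i-\u{x}_j\rvert^{2}$) one finds
\[
\sum_{i=1}^{3}\lvert\u{x}_i-\u{c}\rvert^{2}=\tfrac13\bigl(a^{2}+b^{2}+c^{2}\bigr),
\]
so the desired bound $\tfrac{4\sqrt3}{3}A\le\sum_i\lvert\u{x}_i-\u{c}\rvert^{2}$ reduces to the purely planar statement $a^{2}+b^{2}+c^{2}\ge 4\sqrt3\,A$, which is Weitzenb\"ock's inequality.

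Finally I would establish this via Heron's formula. Since both sides are positive, squaring reduces the claim to $(a^{2}+b^{2}+c^{2})^{2}\ge 48A^{2}$, and substituting $16A^{2}=2a^{2}b^{2}+2b^{2}c^{2}+2c^{2}a^{2}-a^{4}-b^{4}-c^{4}$ and expanding shows this is equivalent to
\[
a^{4}+b^{4}+c^{4}\ge a^{2}b^{2}+b^{2}c^{2}+c^{2}a^{2},
\]
which follows from the sum-of-squares identity $a^{4}+b^{4}+c^{4}-a^{2}b^{2}-b^{2}c^{2}-c^{2}a^{2}=\tfrac12\bigl[(a^{2}-b^{2})^{2}+(b^{2}-c^{2})^{2}+(c^{2}-a^{2})^{2}\bigr]\ge0$, with equality exactly when $a=b=c$. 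No step is genuinely delicate; the only point requiring care is the equality bookkeeping, namely tracing both the parallel-axis equality ($\u{x}=\u{c}$) and the Weitzenb\"ock equality ($a=b=c$) back through the two reductions to conclude that equality in the original inequality holds if and only if $\triangle(\u{x}_1,\u{x}_2,\u{x}_3)$ is equilateral and $\u{x}$ is its barycenter.
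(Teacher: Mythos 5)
Your proof is correct and complete. Note that the paper itself states this proposition without proof (it is introduced merely as ``an elementary result'' motivating the discussion of harmonic realizations), so there is no in-paper argument to compare against; your write-up supplies exactly the standard route one would expect. The three ingredients all check out: the parallel-axis identity $\sum_i\lvert\u{x}_i-\u{x}\rvert^2=\sum_i\lvert\u{x}_i-\u{c}\rvert^2+3\lvert\u{c}-\u{x}\rvert^2$ correctly isolates the dependence on $\u{x}$ and pins the equality case to $\u{x}=\u{c}$; the identity $\sum_i\lvert\u{x}_i-\u{c}\rvert^2=\tfrac13(a^2+b^2+c^2)$ is right (it follows from $\sum_{i<j}\lvert d_i-d_j\rvert^2=3\sum_i\lvert d_i\rvert^2$ when $\sum_i d_i=\u{0}$); and the reduction of Weitzenb\"ock's inequality $a^2+b^2+c^2\ge 4\sqrt3\,A$ via Heron's formula to $a^4+b^4+c^4\ge a^2b^2+b^2c^2+c^2a^2$, settled by the sum-of-squares identity, is a clean and fully rigorous finish, with the equality case $a=b=c$ correctly traced back. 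One minor point worth a sentence in a final version: squaring is legitimate because both sides of $a^2+b^2+c^2\ge 4\sqrt3\,A$ are nonnegative, and the argument also covers degenerate triangles ($A=0$), where the claimed inequality is trivially strict unless all vertices coincide --- consistent with the stated equality characterization.
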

\begin{Def}
  Let $X=(V,E,m)$ be a weighted graph with weight 
  $m\colon E \to (0,\infty)$ 
  satisfying $m(e)=m(\bar{e})$. 
  A discrete surface 
  $\vPhi\colon X=(V,E,m) \to \mathbb{R}^3$ 
  is said to be \emph{harmonic with weight $m$} if 
  it is a harmonic realization with weight $m$, 
  that is, if it satisfies 
  \begin{equation}
    m(e_{x,1})
    \vPhi(e_{x,1}) 
    + 
    m(e_{x,2})
    \vPhi(e_{x,2}) 
    + 
    m(e_{x,3})
    \vPhi(e_{x,3}) 
    = 
    \u{0}
    \label{Eq.disc_harm}
  \end{equation}
  for every vertex $x\in V$, where 
  $E_x=\{e_{x,1},e_{x,2},e_{x,3}\}$. 
\end{Def}
Exact representation of $H$ and $K$ in the case of discrete harmonic surfaces is given as follows. 
\begin{Prop}\label{Prop(harmonic_H&K)}
  Let $X=(V,E,m)$ be a weighted graph with weight 
  $m\colon E \to (0,\infty)$ 
  satisfying $m(e)=m(\bar{e})$, and 
  $\vPhi\colon X=(V,E,m) \to \mathbb{R}^3$ 
  be a $3$-valent discrete harmonic surface, 
  $x\in V$ be fixed and $E_x=\{e_1,e_2,e_3\}$. 
  Then 
  the mean curvature $H(x)$ 
  and the Gauss curvature $K(x)$ 
  are, respectively, written as 
  \begin{align}
    H(x) 
    & = 
      \frac{m_1+m_2+m_3}{2A(x)^2}
      \sum_{(\alpha,\beta,\gamma)}
      \frac{
      \inner{\u{e}_{\alpha}}{\u{e}_{\beta}}
      (
      \inner{\u{e}_{\alpha}}{\u{n}_{\beta}}
      + 
      \inner{\u{e}_{\beta}}{\u{n}_{\alpha}}
      )
      }{
      m_{\gamma}
      }, \label{Eq.harmonic_H} 
    \\
    K(x) 
    & = 
      - \frac{m_1+m_2+m_3}{2A(x)^2}
      \sum_{(\alpha,\beta,\gamma)}
      \frac{
      \inner{\u{e}_{\alpha}}{\u{n}_{\beta}}
      \inner{\u{e}_{\beta}}{\u{n}_{\alpha}}
      }{
      m_{\gamma}
      }, \label{Eq.harmonic_K}
  \end{align}
  where $m_i=m(e_i)$, 
  $A(x)
  = \lvert \u{e}_1\times\u{e}_2
  + \u{e}_2\times\u{e}_3 
  + \u{e}_3\times\u{e}_1 \rvert$, 
  $\u{e}_i=\nabla_{e_i}\vPhi=\vPhi(e_i)\in T_x$ 
  is a tangent vector at $\vPhi(x)$, 
  $\u{n}_i=\u{n}(t(e_i))$ is the oriented unit normal vector 
  at each adjacent vertex of $\vPhi(x)$, for $i=1,2,3$, 
  and the summations are taken over any 
  $(\alpha,\beta,\gamma)\in \{(1,2,3),(2,3,1),(3,1,2)\}$. 
\end{Prop}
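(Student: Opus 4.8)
The plan is to reduce everything to the three ``corner triangles'' $\triangle_{\alpha\beta}$ appearing in Definition~\ref{Def(H&K)} and to exploit two consequences of the harmonic condition (\ref{Eq.disc_harm}): one coming from harmonicity \emph{at} $x$, the other from harmonicity \emph{at the neighbours} of $x$.

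First I would record what harmonicity at $x$ buys us. Since $m_1\u{e}_1+m_2\u{e}_2+m_3\u{e}_3=\u{0}$ with $\u{e}_i=\vPhi(e_i)$, the three edge vectors are linearly dependent, hence lie in the tangent plane $T_x$; in particular $\nabla_{e_i}\vPhi=\u{e}_i$ and $\inner{\u{e}_i}{\u{n}(x)}=0$. Crossing the harmonic relation successively with $\u{e}_1,\u{e}_2,\u{e}_3$ gives $\u{e}_\alpha\times\u{e}_\beta=\tfrac{m_\gamma}{M}A(x)\,\u{n}(x)$ for each cyclic $(\alpha,\beta,\gamma)$, where $M:=m_1+m_2+m_3$. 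Consequently the corner-triangle area weight in Definition~\ref{Def(H&K)} collapses to $\sqrt{\det\fff_{\alpha\beta}}/A(x)=\lvert\u{e}_\alpha\times\u{e}_\beta\rvert/A(x)=m_\gamma/M$. This is the step that manufactures the factors $M/m_\gamma$ in the claimed formulas, so I would establish it carefully.

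The second, and crucial, observation is that harmonicity at the adjacent vertex $x_j=t(e_j)$ forces $\inner{\u{e}_j}{\u{n}_j}=0$. Indeed the reversed edge $\bar e_j$ emanates from $x_j$ with $\vPhi(\bar e_j)=-\u{e}_j$, and since (\ref{Eq.disc_harm}) holds at $x_j$ the edges there span $T_{x_j}$, so $\u{e}_j\in T_{x_j}$ and thus $\u{e}_j\perp\u{n}_j$. Writing $a_{ij}:=\inner{\u{e}_i}{\u{n}_j}$, this says the diagonal entries $a_{ii}$ vanish. Now I apply Lemma~\ref{Lem(sff)} to the corner triangle $\triangle_{\alpha\beta}=\triangle(\u{x}_0,\u{x}_\alpha,\u{x}_\beta)$, whose base normal is $\u{n}(x)$: because $\u{e}_\alpha,\u{e}_\beta\perp\u{n}(x)$ the $\u{n}(x)$-contributions drop, and with $a_{\alpha\alpha}=a_{\beta\beta}=0$ the second fundamental form collapses to the anti-diagonal matrix $\sff_{\alpha\beta}=-\left(\begin{smallmatrix}0&a_{\alpha\beta}\\ a_{\beta\alpha}&0\end{smallmatrix}\right)$, while $\fff_{\alpha\beta}$ is the Gram matrix of $\{\u{e}_\alpha,\u{e}_\beta\}$ with $\det\fff_{\alpha\beta}=(m_\gamma A(x)/M)^2$.

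With these reductions the per-triangle curvatures in (\ref{Eq.H&K_discrete}) become immediate $2\times2$ computations: $\tfrac12\mathrm{tr}(\fff_{\alpha\beta}^{-1}\sff_{\alpha\beta})=\inner{\u{e}_\alpha}{\u{e}_\beta}(a_{\alpha\beta}+a_{\beta\alpha})/(2\det\fff_{\alpha\beta})$ and $\det(\fff_{\alpha\beta}^{-1}\sff_{\alpha\beta})=-a_{\alpha\beta}a_{\beta\alpha}/\det\fff_{\alpha\beta}$. Substituting $\det\fff_{\alpha\beta}=(m_\gamma A(x)/M)^2$ together with the weight $m_\gamma/M$ into (\ref{Eq.disc_mean})--(\ref{Eq.disc_gauss}) and summing over the three cyclic triples then leads to the symmetric expressions (\ref{Eq.harmonic_H})--(\ref{Eq.harmonic_K}). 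I expect the main obstacle to be precisely the identity $\inner{\u{e}_j}{\u{n}_j}=0$: it is the sole input that annihilates the diagonal of $\sff_{\alpha\beta}$ and thereby produces the clean products $\inner{\u{e}_\alpha}{\u{n}_\beta}\inner{\u{e}_\beta}{\u{n}_\alpha}$, and it is the only place where harmonicity at the neighbouring vertices (rather than at $x$ itself) enters. The residual work is routine $2\times2$ linear algebra, the one delicate point being to keep careful track of the scalar $m_\gamma A(x)/M$ coming from the corner-triangle areas when collecting constants.
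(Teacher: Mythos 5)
Your proposal is correct and follows essentially the same route as the paper's proof: harmonicity at $x$ places the edge vectors in $T_x$ and yields the cross-product identity $m_3^{-1}(\u{e}_1\times\u{e}_2)=m_1^{-1}(\u{e}_2\times\u{e}_3)=m_2^{-1}(\u{e}_3\times\u{e}_1)$ parallel to $\u{n}(x)$, the diagonal of $\sff_{\alpha\beta}$ is annihilated by $\inner{\u{e}_j}{\u{n}_j}=0$, and the resulting per-triangle $2\times 2$ formulas are summed with the collapsed area weights $\sqrt{\det\fff_{\alpha\beta}}/A(x)=m_\gamma/(m_1+m_2+m_3)$. On two points you are actually more careful than the paper: you justify $\inner{\u{e}_j}{\u{n}_j}=0$ correctly, by applying harmonicity at the adjacent vertex $t(e_j)$ to the reversed edge $\bar e_j$ (the paper cites its observation about $\u{n}(x)$, which concerns the wrong normal), and your value $\det\fff_{\alpha\beta}=\bigl(m_\gamma A(x)/M\bigr)^2$ with $M=m_1+m_2+m_3$ corrects the paper's misprinted $\det\fff_{\alpha\beta}=A(x)^2m_\gamma/M$ (the paper's own next line, which uses $\sqrt{\det\fff_{\alpha\beta}}=m_\gamma A(x)/M$, confirms your version). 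One caveat: you should actually carry out the constant-collection you defer at the end, because it does not literally reproduce (\ref{Eq.harmonic_K}). Your (correct) intermediate formulas give
\begin{equation*}
K(x)=-\frac{M}{A(x)^2}\sum_{(\alpha,\beta,\gamma)}\frac{\inner{\u{e}_{\alpha}}{\u{n}_{\beta}}\inner{\u{e}_{\beta}}{\u{n}_{\alpha}}}{m_{\gamma}},
\end{equation*}
i.e.\ without the factor $\tfrac12$ printed in (\ref{Eq.harmonic_K}); that $\tfrac12$ is a misprint, inconsistent with Definition~\ref{Def(H&K)} and with (\ref{Eq.harmonic_Kab}) (for $m\equiv 1$ the definition gives weight $\tfrac13$ per triangle and $\det\fff_{\alpha\beta}=A(x)^2/9$, hence coefficient $-3/A(x)^2$, not $-3/(2A(x)^2)$), whereas the $\tfrac12$ in (\ref{Eq.harmonic_H}) genuinely arises from the half-trace and survives your computation.
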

\begin{proof}
  We first make the following observations 
  which are easily proved from (\ref{Eq.disc_harm}): 
  \begin{enumerate}
  \item[(\rnum{1})] 
    Every $\vPhi(e_i)$ lies on 
    the tangent plane $T_x$ at $\vPhi$, 
    so that 
    $\u{e}_i
    =\nabla_{e_i}\vPhi
    =\vPhi(e_i)\in T_x$ 
    for $i=1,2,3$. 
  \item[(\rnum{2})] 
    $m_3^{-1}(\u{e}_1\times \u{e}_2) 
    = m_1^{-1}(\u{e}_2\times \u{e}_3) 
    = m_2^{-1}(\u{e}_3\times \u{e}_1)$ 
    and is parallel to $\u{n}(x)$. 
  \end{enumerate}
  Let $(\alpha,\beta)=(1,2)$, $(2,3)$ or $(3,1)$ 
  be fixed. 
  The first fundamental form $\fff_{\alpha\beta}$ and 
  the second fundamental form $\sff_{\alpha\beta}$ 
  the triangle 
  $\triangle_{\alpha\beta} 
  = \triangle
  (\vPhi(x),
  t(\u{e}_{\alpha}),
  t(\u{e}_{\beta}))$ 
  are, respectively, written as 
  \[
  \fff_{\alpha\beta} 
  = 
  \begin{pmatrix}
    \inner{\u{e}_{\alpha}}{\u{e}_{\alpha}} 
    & 
    \inner{\u{e}_{\alpha}}{\u{e}_{\beta}} 
    \\
    \inner{\u{e}_{\beta}}{\u{e}_{\alpha}} 
    & 
    \inner{\u{e}_{\beta}}{\u{e}_{\beta}} 
  \end{pmatrix}, \quad 
  \sff_{\alpha\beta} 
  = 
  \begin{pmatrix}
    0 
    & 
    -\inner{\u{e}_{\alpha}}{\u{n}_{\beta}} 
    \\
    -\inner{\u{e}_{\beta}}{\u{n}_{\alpha}} 
    & 
    0 
  \end{pmatrix}
  \]
  because 
  $\inner{\u{e}_{\alpha}}{\u{n}_{\alpha}} 
  = 0
  = \inner{\u{e}_{\beta}}{\u{n}_{\beta}}$ by (\rnum{2}). 
  Then we have 
  \begin{align}
    H_{\triangle_{\alpha\beta}} 
    & = 
      \frac{
      \inner{\u{e}_{\alpha}}{\u{e}_{\beta}}
      (
      \inner{\u{e}_{\alpha}}{\u{n}_{\beta}} 
      + 
      \inner{\u{e}_{\beta}}{\u{n}_{\alpha}}
      )
      }{
      2(
      \lvert \u{e}_{\alpha}\rvert^2 
      \lvert \u{e}_{\beta}\rvert^2 
      - 
      \inner{\u{e}_{\alpha}}{\u{e}_{\beta}}^2
      )
      }, \label{Eq.harmonic_Hab}
    \\
    K_{\triangle_{\alpha\beta}} 
    & = 
      - 
      \frac{
      \inner{\u{e}_{\alpha}}{\u{n}_{\beta}}
      \inner{\u{e}_{\beta}}{\u{n}_{\alpha}}
      }{
      \lvert \u{e}_{\alpha}\rvert^2 
      \lvert \u{e}_{\beta}\rvert^2 
      - 
      \inner{\u{e}_{\alpha}}{\u{e}_{\beta}}^2
      }. \label{Eq.harmonic_Kab}
  \end{align}
  Here we note that 
  \[
  \lvert \u{e}_{\alpha}\rvert^2 
  \lvert \u{e}_{\beta}\rvert^2 
  - 
  \inner{\u{e}_{\alpha}}{\u{e}_{\beta}}^2 
  = 
  \det\fff_{\alpha\beta} 
  = 
  \lvert \u{e}_{\alpha}\times\u{e}_{\beta}\rvert^2 
  = 
  \frac{A(x)^2m_{\gamma}}{m_1+m_2+m_3}, 
  \]
  where $\gamma\neq\alpha,\beta$. 
  The desired expressions are now immediately obtained 
  from 
  \begin{align*}
    \frac{\sqrt{\det\fff_{\alpha\beta}(x)}}{A(x)}
    H_{\triangle_{\alpha\beta}}
    & = 
      \frac{1}{2A(x)\sqrt{\det\fff_{\alpha\beta}(x)}}
      \inner{\u{e}_{\alpha}}{\u{e}_{\beta}}
      (
      \inner{\u{e}_{\alpha}}{\u{n}_{\beta}}
      + 
      \inner{\u{e}_{\beta}}{\u{n}_{\alpha}}
      ) 
    \\
    & = 
      \frac{m_1+m_2+m_3}{2A(x)^2}\cdot 
      \frac{\inner{\u{e}_{\alpha}}{\u{e}_{\beta}}
      (
      \inner{\u{e}_{\alpha}}{\u{n}_{\beta}}
      + 
      \inner{\u{e}_{\beta}}{\u{n}_{\alpha}}
      )}{m_{\gamma}}, 
    \\
    \frac{\sqrt{\det\fff_{\alpha\beta}(x)}}{A(x)}
    K_{\triangle_{\alpha\beta}} 
    & = 
      \frac{m_1+m_2+m_3}{2A(x)^2}\cdot 
      \frac{
      \inner{\u{e}_{\alpha}}{\u{n}_{\beta}}
      \inner{\u{e}_{\beta}}{\u{n}_{\alpha}}
      }{
      m_{\gamma}
      }. 
  \end{align*}
\end{proof}
A discrete harmonic surface needs not be 
minimal in the sense of Definition~\ref{Def(minimal)}, 
but we can provide a sufficient condition 
for a harmonic surface 
to be minimal, 
which is corresponding to 
the conformality of graphs. 
\begin{Thm}\label{Thm(min_harm)}
  Let $X=(V,E,m)$ be a weighted graph with weight 
  $m\colon E\rightarrow (0,\infty)$ 
  satisfying $m(e)=m(\bar{e})$. 
  A $3$-valent harmonic discrete surface 
  $\vPhi:X=(V,E,m) \to \mathbb{R}^3$ 
  is minimal if 
  \begin{equation}
    \inner{\vPhi(e_1)}{\vPhi(e_2)} 
    = 
    \inner{\vPhi(e_2)}{\vPhi(e_3)} 
    = 
    \inner{\vPhi(e_3)}{\vPhi(e_1)}
    \label{Eq.conformal}
  \end{equation}
  holds at every $x\in V$, where $E_x=\{e_1,e_2,e_3\}$. 
  Moreover, if $m\colon E \to (0,\infty)$ 
  is constant, 
  then the condition {\upshape (\ref{Eq.conformal})} 
  is equivalent to 
  \[
  \lvert \vPhi(e_1)\rvert 
  = 
  \lvert \vPhi(e_2)\rvert 
  = 
  \lvert \vPhi(e_3)\rvert. 
  \]
\end{Thm}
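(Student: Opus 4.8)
The plan is to start from the closed-form expression for the mean curvature of a harmonic surface in Proposition~\ref{Prop(harmonic_H&K)} and show that the conformality hypothesis (\ref{Eq.conformal}) forces the sum in (\ref{Eq.harmonic_H}) to vanish. Writing $\u{e}_i=\vPhi(e_i)$, $\u{n}_i=\u{n}(t(e_i))$ and $m_i=m(e_i)$, and letting $c$ denote the common value $\inner{\u{e}_1}{\u{e}_2}=\inner{\u{e}_2}{\u{e}_3}=\inner{\u{e}_3}{\u{e}_1}$, I would first pull the factor $\inner{\u{e}_\alpha}{\u{e}_\beta}=c$ out of the sum in (\ref{Eq.harmonic_H}), so that proving $H(x)=0$ reduces to showing
\[
\sum_{(\alpha,\beta,\gamma)}\frac{\inner{\u{e}_\alpha}{\u{n}_\beta}+\inner{\u{e}_\beta}{\u{n}_\alpha}}{m_\gamma}=0,
\]
the sum being over $(\alpha,\beta,\gamma)\in\{(1,2,3),(2,3,1),(3,1,2)\}$.

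The key step is to regroup this sum by the three normal vectors $\u{n}_1,\u{n}_2,\u{n}_3$ rather than by triangle. Collecting the coefficients of $\u{n}_1$ gives $\inner{\u{e}_2/m_3+\u{e}_3/m_2}{\u{n}_1}$, and the harmonic equation (\ref{Eq.disc_harm}), $m_1\u{e}_1+m_2\u{e}_2+m_3\u{e}_3=\u{0}$, collapses $m_2\u{e}_2+m_3\u{e}_3=-m_1\u{e}_1$, so the $\u{n}_1$-contribution becomes $-\frac{m_1}{m_2m_3}\inner{\u{e}_1}{\u{n}_1}$; the contributions of $\u{n}_2$ and $\u{n}_3$ are treated identically by cyclic symmetry. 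Here I would invoke the fact (already used in the proof of Proposition~\ref{Prop(harmonic_H&K)}) that $\inner{\u{e}_i}{\u{n}_i}=0$ for a harmonic surface: the reverse edge at the adjacent vertex $t(e_i)$ is realized as $-\u{e}_i$, and at that (equally harmonic) vertex every emanating edge lies in its own tangent plane, hence is orthogonal to $\u{n}_i$. Thus each regrouped term vanishes, the displayed sum is zero, and $H(x)=0$ at every $x\in V$, so $\vPhi$ is minimal by Definition~\ref{Def(minimal)}.

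For the ``moreover'' assertion, with $m$ constant the harmonic equation becomes $\u{e}_1+\u{e}_2+\u{e}_3=\u{0}$. Taking the inner product of this relation successively with $\u{e}_1,\u{e}_2,\u{e}_3$ yields, for each permutation $(i,j,k)$ of $(1,2,3)$, the identity $\lvert\u{e}_i\rvert^2=-\inner{\u{e}_i}{\u{e}_j}-\inner{\u{e}_i}{\u{e}_k}$. If (\ref{Eq.conformal}) holds, all three right-hand sides equal $-2c$, giving $\lvert\u{e}_1\rvert=\lvert\u{e}_2\rvert=\lvert\u{e}_3\rvert$; conversely, if the three norms coincide, these identities become a nonsingular $3\times 3$ linear system in the pairwise products $\inner{\u{e}_1}{\u{e}_2},\inner{\u{e}_2}{\u{e}_3},\inner{\u{e}_3}{\u{e}_1}$ whose unique solution has them all equal, which is (\ref{Eq.conformal}).

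I expect the main obstacle to be purely organizational: recognizing that the per-triangle sum in (\ref{Eq.harmonic_H}) should be reorganized by normal vector so that the single linear relation (\ref{Eq.disc_harm}) can be applied, and being careful that the final cancellation relies on the harmonicity of the \emph{adjacent} vertices (through $\inner{\u{e}_i}{\u{n}_i}=0$) and not merely on harmonicity at $x$. No genuinely hard estimate is involved; the entire content is the two observations from the proof of Proposition~\ref{Prop(harmonic_H&K)} together with this regrouping trick.
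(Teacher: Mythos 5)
Your proof is correct and takes essentially the same route as the paper's: the paper likewise establishes minimality by sorting the sum in (\ref{Eq.harmonic_H}) by the common normal $\u{n}_{\alpha}$, collapsing $m_{\beta}\u{e}_{\beta}+m_{\gamma}\u{e}_{\gamma}=-m_{\alpha}\u{e}_{\alpha}$ via (\ref{Eq.disc_harm}) and using $\inner{\u{e}_{\alpha}}{\u{n}_{\alpha}}=0$, the only cosmetic difference being that you factor the common value of $\inner{\u{e}_{\alpha}}{\u{e}_{\beta}}$ out of the sum first, whereas the paper regroups in general and invokes (\ref{Eq.conformal}) at the end. The ``moreover'' part also matches, since your nonsingular $3\times 3$ system is just the paper's pairwise subtraction of the identities $\lvert\u{e}_i\rvert^{2}=-\inner{\u{e}_i}{\u{e}_j}-\inner{\u{e}_i}{\u{e}_k}$ in another guise.
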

\begin{proof}
  We use the same notation as in 
  Proposition~\ref{Prop(harmonic_H&K)}. 
  We then sort (\ref{Eq.harmonic_H}) 
  by terms involving the common $\u{n}_{\alpha}$ 
  to compute 
  \begin{align*}
    H(x) 
    & = 
      \frac{m_1+m_2+m_3}{2A(x)^2m_1m_2m_3}
      \sum_{(\alpha,\beta,\gamma)}
      m_{\alpha}m_{\beta}
      \inner{\u{e}_{\alpha}}{\u{e}_{\beta}}
      (
      \inner{\u{e}_{\alpha}}{\u{n}_{\beta}}
      + 
      \inner{\u{e}_{\beta}}{\u{n}_{\alpha}}
      ) 
    \\
    & = 
      \frac{m_1+m_2+m_3}{2A(x)^2}
      \sum_{(\alpha,\beta,\gamma)}
      \left\{
      m_{\alpha}m_{\beta}
      \inner{\u{e}_{\alpha}}{\u{e}_{\beta}}
      \inner{\u{e}_{\beta}}{\u{n}_{\alpha}} 
      + 
      m_{\gamma}m_{\alpha}
      \inner{\u{e}_{\gamma}}{\u{e}_{\alpha}}
      \inner{\u{e}_{\gamma}}{\u{n}_{\alpha}}
      \right\} 
    \\
    & = 
      \frac{m_1+m_2+m_3}{2A(x)^2}
      \sum_{(\alpha,\beta,\gamma)}
      m_{\alpha}
      \Bigl\langle
      \inner{\u{e}_{\alpha}}{\u{e}_{\beta}}
      m_{\beta}\u{e}_{\beta} 
      + 
      \inner{\u{e}_{\gamma}}{\u{e}_{\alpha}}
      m_{\gamma}\u{e}_{\gamma}, 
      \u{n}_{\alpha}
      \Bigr\rangle, 
  \end{align*}
  which equals zero provided 
  (\ref{Eq.conformal}); 
  $\inner{\u{e}_1}{\u{e}_2}
  = \inner{\u{e}_2}{\u{e}_3}
  = \inner{\u{e}_3}{\u{e}_1}$ 
  holds because 
  $m_{\beta}\u{e}_{\beta}+m_{\gamma}\u{e}_{\gamma}
  = -m_{\alpha}\u{e}_{\alpha}$ 
  is perpendicular to $\u{n}_{\alpha}$. \par
  Moreover, if the weight $m\colon E \to (0,\infty)$ 
  is constant, 
  then the equation (\ref{Eq.disc_harm}) becomes 
  $\u{e}_1+\u{e}_2+\u{e}_3=\u{0}$, 
  which gives
  \begin{align*}
    \lvert \u{e}_{\alpha}\rvert^2 
    & = 
      - 
      \inner{\u{e}_{\alpha}}{\u{e}_{\beta}} 
      - 
      \inner{\u{e}_{\gamma}}{\u{e}_{\alpha}}, 
    \\
    \lvert \u{e}_{\beta}\rvert^2 
    & = 
      - 
      \inner{\u{e}_{\beta}}{\u{e}_{\gamma}} 
      - 
      \inner{\u{e}_{\alpha}}{\u{e}_{\beta}} 
  \end{align*}
  after taking the inner product with 
  $\u{e}_{\alpha}$ and $\u{e}_{\beta}$. 
  This shows 
  $\lvert \u{e}_{\alpha}\rvert=\lvert\u{e}_{\beta}\rvert$ 
  if and only if 
  $\inner{\u{e}_{\gamma}}{\u{e}_{\alpha}}
  = \inner{\u{e}_{\beta}}{\u{e}_{\gamma}}$. 
\end{proof}
% 
%% SECTION 5 SEVERAL EXAMPLES 
\section{Several examples}\label{Section(example)}
%% SECTION 5.1 PLANE GRAPHS
\subsection{Plane graphs}
\label{subsec:plane_graphs}
A $3$-valent discrete surface 
$\vPhi\colon X=(V,E)\rightarrow \mathbb{R}^3$ 
is said to be a \emph{plane} if 
its image $\vPhi(X)$ lies on 
a plane in $\mathbb{R}^3$. 
Since the second fundamental form of a plane 
vanishes identically, 
independently of the choice of its side at each point, 
so do both its mean curvature and Gauss curvature. 
Since its third fundamental form again vanishes, 
the second variation of the area functional 
also vanishes. 
% 
%% SECTION 5.2 SPHERE-SHAPED GRAPHS
\subsection{Sphere-shaped graphs} 
\label{subsec:sphere_shaped}
\begin{Prop}
  Let $X=(V,E)$ be a finite graph, 
  $\mathbb{S}^2(r)\subseteq \mathbb{R}^3$ be 
  the round sphere with radius $r>0$ and 
  with center at the origin, and 
  $\vPhi\colon X=(V,E)\rightarrow \mathbb{S}^2(r)$ 
  be a $3$-valent discrete surface with the property that 
  \begin{equation}
    \vPhi(x) 
    = 
    r\u{n}(x)
    \label{Eq.phi=rn} 
  \end{equation}
  for every vertex $\u{x}\in V$, 
  where $\u{n}(x)$ is the oriented unit normal vector 
  at $x\in V$. Then the mean curvature $H$ 
  and the Gauss curvature $K$ of $\vPhi$ 
  are given, respectively, as 
  \begin{equation}
    H(x) 
    = 
    -\frac{1}{r}, 
    \quad 
    K(x) 
    = 
    \frac{1}{r^2}
    \label{Eq.H(x)=-1/r&K(x)=1/r^2}
  \end{equation}
  regardless of $x\in V$. 
\end{Prop}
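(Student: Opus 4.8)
The plan is to bypass the area-weighted average in Definition~\ref{Def(H&K)} entirely and invoke Proposition~\ref{Prop(large_triangle)}, which already expresses $H(x)$ and $K(x)$ as $\tfrac12\mathrm{tr}(\fff_{\triangle(x)}^{-1}\sff_{\triangle(x)})$ and $\det(\fff_{\triangle(x)}^{-1}\sff_{\triangle(x)})$, where $\triangle(x)$ is the single triangle on the three adjacent vertices $\vPhi(x_1),\vPhi(x_2),\vPhi(x_3)$. Writing $\u{e}_i=\vPhi(e_i)=\vPhi(x_i)-\vPhi(x)$ and $x_i=t(e_i)$ as usual, the forms recorded there are
\[
  \fff_{\triangle(x)}=\bigl(\inner{\u{e}_{j}-\u{e}_1}{\u{e}_{k}-\u{e}_1}\bigr)_{j,k=2,3},
  \qquad
  \sff_{\triangle(x)}=\bigl(-\inner{\u{e}_{j}-\u{e}_1}{\u{n}(x_k)-\u{n}(x_1)}\bigr)_{j,k=2,3},
\]
built from the edge differences $\u{e}_j-\u{e}_1=\vPhi(x_j)-\vPhi(x_1)$ and the normal differences $\u{n}(x_j)-\u{n}(x_1)$.

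The key step is to exploit that the hypothesis $\vPhi(x)=r\u{n}(x)$ holds at \emph{every} vertex, so in particular $\u{n}(x_i)=\tfrac1r\vPhi(x_i)$ at each adjacent vertex $x_i$. Subtracting these relations yields
\[
  \u{n}(x_j)-\u{n}(x_1)
  =\frac1r\bigl(\vPhi(x_j)-\vPhi(x_1)\bigr)
  =\frac1r(\u{e}_j-\u{e}_1)\qquad(j=2,3),
\]
which is the discrete counterpart of the fact that on a round sphere the Gauss map is (a rescaling of) the identity. Substituting this into the entries of $\sff_{\triangle(x)}$ turns each $-\inner{\u{e}_j-\u{e}_1}{\u{n}(x_k)-\u{n}(x_1)}$ into $-\tfrac1r\inner{\u{e}_j-\u{e}_1}{\u{e}_k-\u{e}_1}$, so that the second fundamental form is exactly a scalar multiple of the first:
\[
  \sff_{\triangle(x)}=-\frac1r\,\fff_{\triangle(x)}.
\]

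From here the conclusion is immediate: the Weingarten-type matrix is $\fff_{\triangle(x)}^{-1}\sff_{\triangle(x)}=-\tfrac1r\,\mathrm{Id}$, whence $H(x)=\tfrac12\mathrm{tr}(-\tfrac1r\mathrm{Id})=-\tfrac1r$ and $K(x)=\det(-\tfrac1r\mathrm{Id})=\tfrac1{r^2}$, both independent of $x$. There is no real analytic difficulty in this argument; the only point demanding care is orientation bookkeeping. One must check that $\u{n}$, as defined by \eqref{Eq.n}, is genuinely the \emph{outward} unit normal at each vertex, so that the sign in $\vPhi(x)=r\u{n}(x)$ is consistent from vertex to vertex: a flipped orientation at a single adjacent vertex would break the clean proportionality $\sff_{\triangle(x)}=-\tfrac1r\fff_{\triangle(x)}$ and corrupt the trace and determinant. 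Once this consistency is granted, the computation is the exact discrete mirror of the smooth sphere, whose shape operator is $-\tfrac1r\,\mathrm{Id}$.
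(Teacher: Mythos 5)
Your proposal is correct and is essentially the paper's own proof: both reduce to the single large triangle $\triangle(x)$ of Proposition~\ref{Prop(large_triangle)}, use the hypothesis at the adjacent vertices to get $\u{n}(x_j)-\u{n}(x_1)=\tfrac1r(\u{e}_j-\u{e}_1)$, and conclude $\sff_{\triangle(x)}=-\tfrac1r\fff_{\triangle(x)}$, whence $H(x)=-1/r$ and $K(x)=1/r^2$. The only difference is cosmetic: the paper also records $\fff_{\triangle(x)}=r^2\tff_{\triangle(x)}$ for use in the remark that follows, while your orientation caveat is subsumed by the hypothesis $\vPhi(x)=r\u{n}(x)$ itself.
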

\begin{proof}
  Let $x\in V$ be fixed and let $E_x=\{e_1,e_2,e_3\}$. 
  The assumption (\ref{Eq.phi=rn}) implies for $i=2,3$ that 
  \[
  \nabla_{e_i-e_1}\u{n} 
  = 
  \nabla_{e_i}\u{n} 
  - 
  \nabla_{e_1}\u{n} 
  = 
  \u{n}(t(e_i)) 
  - 
  \u{n}(t(e_1)), 
  \]
  which is parallel to both 
  $\nabla_{e_2-e_1}\vPhi 
  = \vPhi(e_2)-\vPhi(e_1)$ 
  and 
  $\nabla_{e_3-e_1}\vPhi 
  = \vPhi(e_3)-\vPhi(e_1)$ 
  with factor $r$. 
  Therefore the first $\fff_{\triangle(x)}$, 
  second $\sff_{\triangle(x)}$ and 
  third $\tff_{\triangle(x)}$ of 
  the triangle $\triangle(x)$ with vertices 
  $\{\vPhi(t(e_1)),
  \vPhi(t(e_2)),
  \vPhi(t(e_3))\}$ 
  satisfy 
  \begin{equation}
    \fff_{\triangle(x)} 
    = 
    -r\sff_{\triangle(x)} 
    = 
    r^2\tff_{\triangle(x)}, 
    \label{Eq.I=-rII=r^2III}
  \end{equation}
  which proves (\ref{Eq.H(x)=-1/r&K(x)=1/r^2}). 
\end{proof}
\begin{Rem}
  Since {\upshape (\ref{Eq.I=-rII=r^2III})} 
  implies that $\sff_{\triangle(x)}$ is symmetric, 
  it follows
  \[
  K(x)\fff_{\triangle(x)} 
  - 
  2H(x)\sff_{\triangle(x)} 
  + 
  \tff_{\triangle(x)} 
  = 
  0. 
  \]
  Moreover, since 
  $\tff_{\triangle(x)}'=\tff_{\triangle(x)}$, 
  \[
  A_t(x) 
  = 
  (1-2tH(x)+t^2K(x))A(x),\quad 
  \text{for $\lvert t\rvert\ll 1$}
  \]
  holds as well, where 
  $A_t(x)$ {\upshape(}$t\in \mathbb{R}${\upshape)} 
  stands for area of 
  the normal variation of $\triangle(x)$. 
\end{Rem}

\begin{Cor}
  {\upshape(1)} a regular hexahedron, 
  {\upshape(2)} a regular dodecahedron 
  and 
  {\upshape(3)} a regular truncated icosahedron 
  {\upshape(}fullerene $C_{60}${\upshape)} are all 
  $3$-valent discrete surfaces 
  with constant curvatures: 
  \[
  H(x) 
  = 
  -\frac{1}{r}, 
  \quad 
  K(x) 
  = 
  \frac{1}{r^2}, 
  \]
  where $r>0$ is 
  the radius of the round sphere 
  on which these surfaces lie. 
\end{Cor}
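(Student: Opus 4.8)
The plan is to reduce the corollary to the preceding Proposition by checking, for each of the three solids, the single geometric hypothesis $\vPhi(x)=r\u{n}(x)$ at every vertex, where $r$ is the circumradius. Once this radial condition is verified, the stated values $H(x)=-1/r$ and $K(x)=1/r^2$ follow immediately from (\ref{Eq.H(x)=-1/r&K(x)=1/r^2}).

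First I would record the two structural facts that the three solids share. Each is $3$-valent: at every vertex of the regular hexahedron three squares meet, of the dodecahedron three pentagons meet, and of the truncated icosahedron one pentagon and two hexagons meet, so in each case $E_x$ has exactly three elements. Moreover each solid is vertex-transitive---the hexahedron and dodecahedron are Platonic, the truncated icosahedron is Archimedean---so all its vertices lie on a common sphere, which I place as $\mathbb{S}^2(r)$ centered at the origin, and all of its edges have one common length $\ell$.

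The key step is to deduce the radial condition from these two facts by a single uniform argument. Fix a vertex $x$ with neighbors $\vPhi(t(e_1)),\vPhi(t(e_2)),\vPhi(t(e_3))$. Because every edge has length $\ell$, each neighbor satisfies $\lvert\vPhi(t(e_i))-\vPhi(x)\rvert=\ell$; because every vertex lies on $\mathbb{S}^2(r)$, each neighbor also satisfies $\lvert\vPhi(t(e_i))\rvert=r$. Hence the three neighbors lie on the intersection of $\mathbb{S}^2(r)$ with the sphere of radius $\ell$ about $\vPhi(x)$, which is a circle contained in a plane perpendicular to the radius vector $\vPhi(x)$. Consequently the plane through the three neighbors is orthogonal to $\vPhi(x)$, so the normal $\u{n}(x)$ defined by (\ref{Eq.n})---being perpendicular to that plane---is parallel to $\vPhi(x)$. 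Since $0<\ell<2r$ the circle is nondegenerate, so the three distinct neighbors are noncollinear and $\u{n}(x)$ is well defined (this also confirms condition (ii) of Definition~\ref{Def(disc_surf)}); choosing the local orientation (iii) so that $\u{n}(x)$ points outward gives exactly $\vPhi(x)=r\u{n}(x)$.

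With the radial condition in hand at every vertex, the Proposition applies verbatim to each solid and yields the asserted constant curvatures. I expect the only genuine subtlety to be the truncated icosahedron: unlike the hexahedron and dodecahedron it is not stabilized by a $3$-fold rotation about each vertex (its vertices have trivial rotational stabilizer in the icosahedral group), so a symmetry argument forcing the normal to be radial would fail there. This is precisely why I would frame the argument through equal edge lengths together with the inscribed sphere rather than through local rotational symmetry, so that all three cases---including the fullerene $C_{60}$---are handled simultaneously.
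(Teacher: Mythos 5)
Your proposal is correct and follows the same route as the paper, which simply reduces the corollary to the preceding Proposition by asserting that all three solids ``easily'' satisfy the radial condition $\vPhi(x)=r\u{n}(x)$ of (\ref{Eq.phi=rn}). The only difference is that you actually prove that condition---via the observation that equal edge lengths and a common circumsphere force the three neighbors of each vertex into a plane perpendicular to the radius vector---which soundly fills in the verification the paper leaves implicit, and correctly covers the truncated icosahedron where a vertex-stabilizer symmetry argument would fail.
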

\begin{proof}
  It is easily seen that all satisfy (\ref{Eq.phi=rn}).
\end{proof}
% 
%% SECTION 5.3 CNT
\subsection{Carbon nanotubes}\label{Section(swnt)}
In this section we will calculate the mean curvature 
and the Gauss curvature of 
a carbon nanotube $\cnt(\lambda,c)$ 
which is, as will be more precisely defined below, 
a regular hexagonal lattice 
wound on the right circular cylinder. 
\par
First, 
let 
$
H(\u{u},\u{\xi})
$
be the \emph{regular hexagonal lattice} 
which is the planer graph 
with vertices 
$\{\u{u}\}\cup\{\u{u}+\u{\xi}_i\}_{i=1}^3$ 
and with (unoriented) edges 
$\{(\u{u},\u{u}+\u{\xi}_i)\}_{i=1}^3$ 
extended by translations via 
$\u{a}_1:=\u{\xi}_2-\u{\xi}_1$ and 
$\u{a}_2:=\u{\xi}_3-\u{\xi}_1$. 
For an arbitrary positive number $\lambda>0$ called the \emph{scale factor}, 
we set the regular hexagonal lattice $X(\lambda)$ by 
\begin{equation}
  X(\lambda) 
  := 
  H(\u{u},\u{\xi}),
  \quad 
  \u{u}
  := 
  \begin{pmatrix}
    0 \\ 0
  \end{pmatrix}, \quad 
  \u{\xi}
  := 
  \lambda
  \begin{pmatrix}
    -\sqrt{3}/2 \\ -1/2
  \end{pmatrix}.
  \label{Eq.Xlambda}
\end{equation}
Let $\u{a}_1:=\rho_{2\pi/3}\u{\xi}-\u{\xi}$ 
and $\u{a}_2:=\rho_{-2\pi/3}\u{\xi}-\u{\xi}$ 
be its lattice vector. 
Note then that a vertex of the hexagonal lattice 
$X(\lambda)$ can be represented as 
\[
\u{\xi} 
= 
\alpha_1\u{a}_1 + \alpha_2\u{a}_2, 
\quad 
\text{
  $(\alpha_1,\alpha_2)
  \in \mathbb{Z}\times\mathbb{Z}$
  ~ or~ 
  $(\alpha_1+1/3,\alpha_2+1/3)
  \in \mathbb{Z}\times \mathbb{Z}$}
\]
and that 
$\u{\xi} = \alpha_1\u{a}_1+\alpha_2\u{a}_2$ and 
$\u{\eta} = \beta_1\u{a}_1+\beta_2\u{a}_2$ 
are mutually adjacent if and only if 
one of the following three conditions is satisfied. 
\begin{enumerate}
\item[{\upshape (\rnum{1})}] 
  $\alpha_1 - \beta_1 = \pm 1/3$, 
  $\alpha_2 - \beta_2 = \pm 1/3$, 
\item[{\upshape (\rnum{2})}] 
  $\alpha_1 - \beta_1 = \mp 2/3$, 
  $\alpha_2 - \beta_2 = \pm 1/3$, 
\item[{\upshape (\rnum{3})}] 
  $\alpha_1 - \beta_1 = \pm 1/3$, 
  $\alpha_2 - \beta_2 = \mp 2/3$, 
\end{enumerate}
where the double-sign corresponds in the same order. 
\begin{Def}\label{Def(cnt)}
  For any pair of integers 
  $c=(c_1,c_2)\in \mathbb{Z}\times\mathbb{Z}$ 
  satisfying $c_1>0$ and $c_2\geq 0$, 
  called a \emph{chiral index} 
  and $\lambda>0$, 
  a \emph{carbon nanotube} 
  $\cnt(\lambda,c)$ is 
  a $3$-valent discrete surface 
  $\vPhi_{\lambda,c}
  \colon X(\lambda)=(V(\lambda),E(\lambda))
  \rightarrow\mathbb{R}^3$ 
  defined by 
  the map
  \[
  \mathbb{R}^2\rightarrow \mathbb{R}^3~;~ 
  \begin{pmatrix}
    x \\ y
  \end{pmatrix}
  \mapsto 
  \begin{pmatrix}
    r(\lambda,c)
    \cos\dfrac{x}{r(\lambda,c)} 
    \\[2ex] 
    r(\lambda,c)
    \sin\dfrac{x}{r(\lambda,c)} 
    \\[1.5ex] 
    y 
  \end{pmatrix}
  \]
  composed with the counterclockwise rotation 
  \[
  \rho_{-\theta(\lambda,c)}
  = 
  \frac{\sqrt{3}}{2L_0(c)}
  \begin{pmatrix}
    2c_1+c_2 & \sqrt{3}c_2 
    \\ 
    -\sqrt{3}c_2 & 2c_1+c_2
  \end{pmatrix}
  \]
  of angle $-\theta(\lambda,c)$, 
  where $\theta(\lambda, c)\in [0,\pi/2)$ is 
  the vector angle between 
  $\u{c}:=c_1\u{a}_1+c_2\u{a}_2$, 
  called the \emph{chiral vector} 
  and $(1,0)^T$, 
  and $r(\lambda,c):=\lvert \u{c}\rvert/(2\pi)$, 
  called the \emph{radius}. 
  More precisely, 
  $\cnt(\lambda,c)$ is the graph 
  in $\mathbb{R}^3$ with 
  $\vPhi_{\lambda,c}
  (V(\lambda))$ 
  as the set of vertices and with 
  \[
  \{\vPhi_{\lambda,c}(t(e))
  - 
  \vPhi_{\lambda,c}(o(e))
  \mid
  e\in E(\lambda)\}
  \]
  as the set of edges. 
\end{Def}
\begin{Rem}
  (1) Let $X(\lambda,c)=(V(\lambda,c),E(\lambda,c))$ 
  be defined as a fundamental region 
  with respect to the translation in $\u{c}$-direction 
  acting on the regular hexagonal lattice 
  $X(\lambda)=(V(\lambda),E(\lambda))$. 
  In other words, $X(\lambda,c)$ is obtained by identifying 
  $\u{x}$ with $\u{x}+n\u{c}$ and $\u{e}$ with $\u{e}+n\u{e}$, 
  respectively, for any 
  $n\in \mathbb{Z}$, $\u{x}\in V(\lambda)$, 
  and $\u{e}\in E(\lambda)$. 
  Then $\cnt(\lambda,c)$ is isomorphic to 
  $X(\lambda,c)$ as an abstract graph; 
  indeed, $\u{x}$ and $\u{x}+n\u{c}$, 
  where $n\in \mathbb{Z}$, 
  are mapped by $\vPhi_{\lambda,c}$ 
  to the same point in $\mathbb{R}^3$. 
  \par
  (2) $\cnt(\lambda,c)$ has period 
  in the direction $\u{t}:=t_1\u{a}_1+t_2\u{a}_2$, where 
  $d(c):=\gcd(c_1+2c_2,2c_1+c_2)$ and 
  \[
  (t_1,t_2) 
  := 
  \left(
    -\frac{c_1+2c_2}{d(c)}, 
    \frac{2c_1+c_2}{d(c)}
  \right), 
  \]
  that is, the image of $\vPhi_{\lambda,c}$ 
  is invariant under a translation in $\u{t}$-direction 
  acting on $X(\lambda)$ (See \cite{Naito:2016}).
\end{Rem}
\begin{figure}[htbp]
  \centering
  \includegraphics[width=8cm]
  {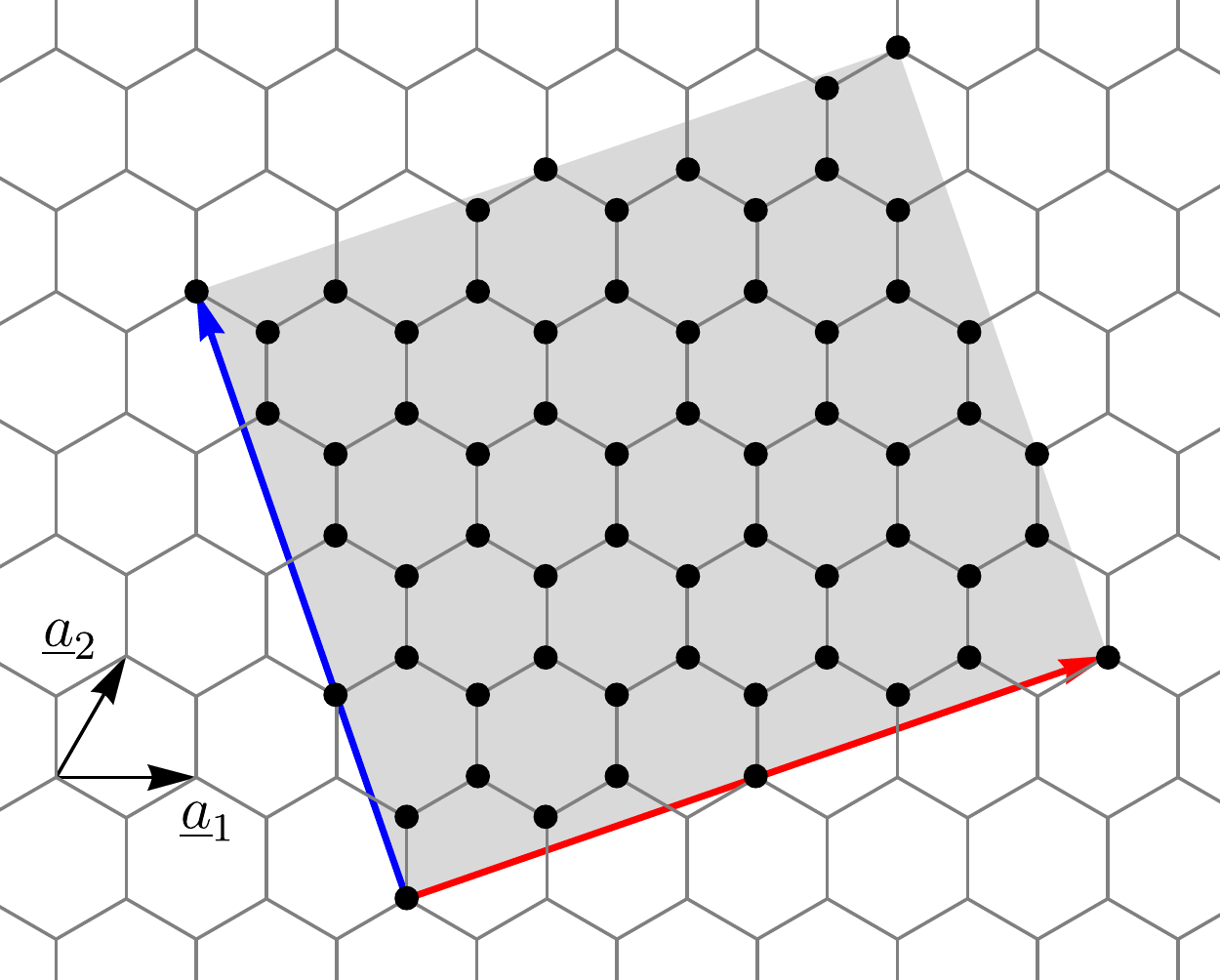}
  \caption{
    $(c_1, c_2) = (4, 2)$, and $(t_1, t_2) = (-4, 5)$. 
    The rectangle is wound on the cylinder 
    along the red line. 
  }
  \includegraphics[width=8cm]
  {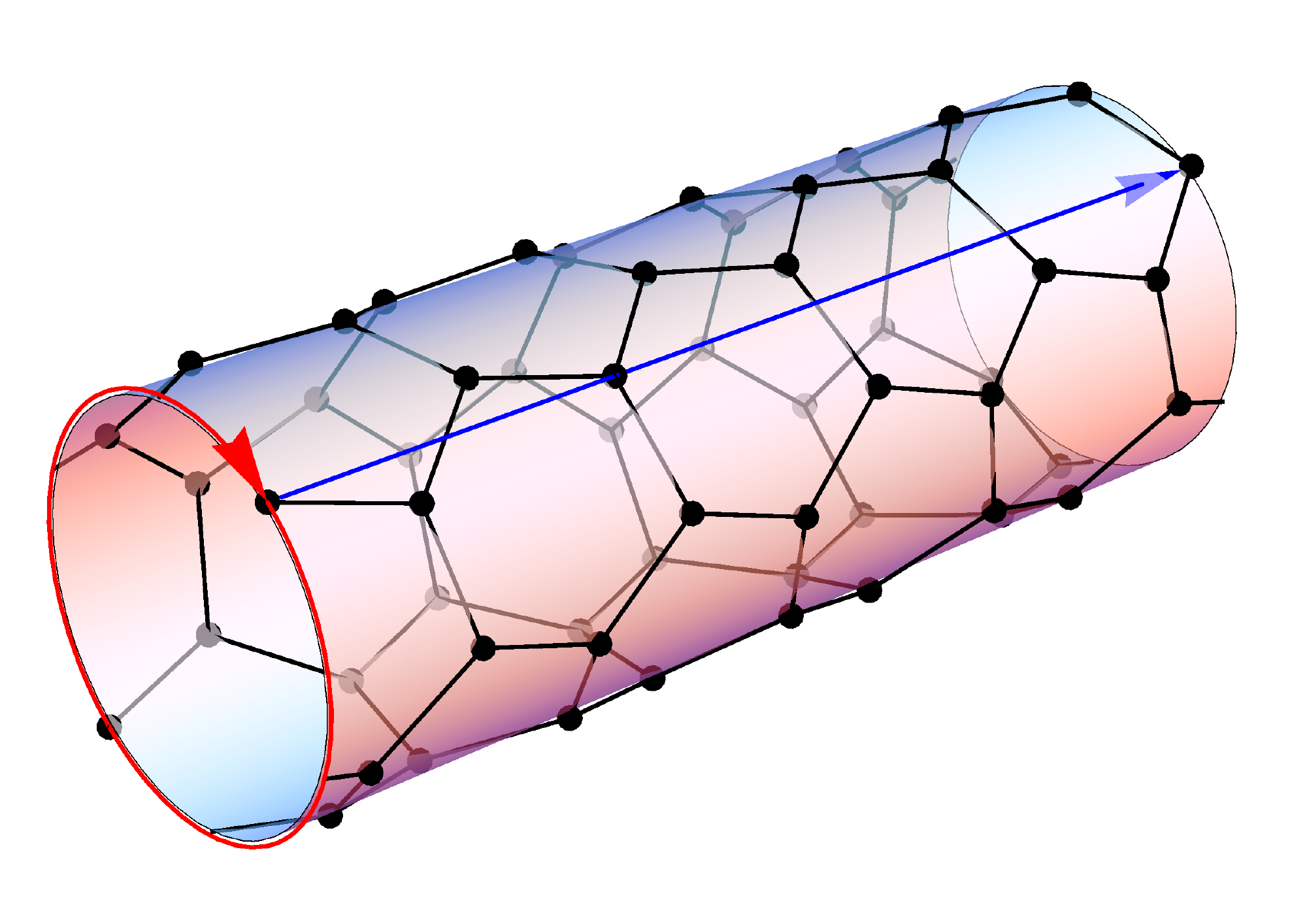}
  \caption{$\cnt(\lambda, (4,2))$}
\end{figure}
Now we come to grips with the calculation 
of the curvatures of $\cnt(\lambda,c)$. 
In what follows, fix $\lambda>0$ and $c=(c_1,c_2)$, 
which are sometimes abbreviated, 
such as $r=r(\lambda,c)$ or $L_0=L_0(c)$. 
\begin{Prop}\label{Prop(vrep)}
  For any 
  $(\alpha_1,\alpha_2)
  \in \mathbb{Z}\times \mathbb{Z}$, 
  a vertex 
  $\u{x}(\alpha_1,\alpha_2) = 
  \vPhi_{\lambda,c}
  (\alpha_1\u{a}_1+\alpha_2\u{a}_2)$ 
  of $\cnt(\lambda,c)$ 
  is represented as 
  \begin{equation}
    \u{x}(\alpha_1,\alpha_2) 
    = 
    R(\phi(\alpha_1,\alpha_2))\u{x}(0,0) 
    - 
    \sqrt{3}r
    \psi(\alpha_1,\alpha_2)(0,0,1)^T, 
    \label{Eq.x(alpha)}
  \end{equation}
  where $\u{x}(0,0)=(r,0,0)^T$, 
  \[
  R(\tau) 
  := 
  \begin{pmatrix}
    \cos\varphi & -\sin\varphi & 0 
    \\
    \sin\varphi & \cos\varphi & 0 
    \\
    0 & 0 & 1
  \end{pmatrix}
  \]
  and 
  \begin{gather}
    (C_1,C_2) 
    := 
    \left(
      \frac{3\pi c_1}{L_0(c)^2},
      \frac{3\pi c_2}{L_0(c)^2}
    \right), 
    \quad 
    (T_1,T_2) 
    := 
    \left(
      \frac{3\pi t_1d(c)}{L_0(c)^2}, 
      \frac{3\pi t_2d(c)}{L_0(c)^2}
    \right), \label{Eq.C1C2T1T2} 
    \\
    \phi(\alpha_1,\alpha_2) 
    = T_2\alpha_1 - T_1\alpha_2, \quad 
    \psi(\alpha_1,\alpha_2)
    = C_2\alpha_1 - C_1\alpha_2. \notag
  \end{gather}
\end{Prop}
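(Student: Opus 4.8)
The plan is a direct computation: evaluate $\vPhi_{\lambda,c}$ on the lattice point $\alpha_1\u{a}_1+\alpha_2\u{a}_2$ by first applying the planar rotation $\rho_{-\theta(\lambda,c)}$ and then the cylinder map, and compare the result with the claimed expression. The whole design of $\rho_{-\theta}$ is that it carries the chiral vector onto the positive $x$-axis, which is exactly what makes the rotated lattice vectors easy to read off. First I would make the lattice vectors explicit: from $\u{\xi}=\lambda(-\sqrt{3}/2,-1/2)^T$ together with $\u{a}_1=\rho_{2\pi/3}\u{\xi}-\u{\xi}$ and $\u{a}_2=\rho_{-2\pi/3}\u{\xi}-\u{\xi}$ one obtains $\u{a}_1=\lambda(\sqrt{3},0)^T$ and $\u{a}_2=\lambda(\sqrt{3}/2,3/2)^T$, hence $\u{c}=\tfrac{\lambda\sqrt{3}}{2}(2c_1+c_2,\ \sqrt{3}c_2)^T$ and $\lvert\u{c}\rvert^2=3\lambda^2(c_1^2+c_1c_2+c_2^2)$. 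This fixes $L_0(c)^2=3(c_1^2+c_1c_2+c_2^2)$ — equivalently, the prefactor $\sqrt{3}/(2L_0)$ is precisely the normalization making $\rho_{-\theta}$ orthogonal — and gives $r=\lvert\u{c}\rvert/(2\pi)=\lambda L_0/(2\pi)$.

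Next I would apply $\rho_{-\theta}$ to the two lattice vectors. A short matrix multiplication yields
\[
\rho_{-\theta}\u{a}_1=\frac{3\lambda}{2L_0}\begin{pmatrix}2c_1+c_2\\ -\sqrt{3}c_2\end{pmatrix},\qquad
\rho_{-\theta}\u{a}_2=\frac{3\lambda}{2L_0}\begin{pmatrix}c_1+2c_2\\ \sqrt{3}c_1\end{pmatrix},
\]
so that, writing $(X,Y):=\rho_{-\theta}(\alpha_1\u{a}_1+\alpha_2\u{a}_2)$, one gets
\[
X=\frac{3\lambda}{2L_0}\bigl[(2c_1+c_2)\alpha_1+(c_1+2c_2)\alpha_2\bigr],\qquad
Y=\frac{3\sqrt{3}\lambda}{2L_0}\bigl(c_1\alpha_2-c_2\alpha_1\bigr).
\]
The cylinder map then sends this point to $\u{x}(\alpha_1,\alpha_2)=(r\cos(X/r),\,r\sin(X/r),\,Y)^T$, so it only remains to identify $X/r$ with $\phi(\alpha_1,\alpha_2)$ and $Y$ with $-\sqrt{3}r\,\psi(\alpha_1,\alpha_2)$.

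Finally I would match the constants. Using $r=\lambda L_0/(2\pi)$ gives $X/r=\frac{3\pi}{L_0^2}[(2c_1+c_2)\alpha_1+(c_1+2c_2)\alpha_2]$; substituting $(t_1,t_2)$ into $(T_1,T_2)$ (the factor $d(c)$ cancelling against the one in $t_1,t_2$) yields $T_2=\frac{3\pi(2c_1+c_2)}{L_0^2}$ and $-T_1=\frac{3\pi(c_1+2c_2)}{L_0^2}$, so $X/r=T_2\alpha_1-T_1\alpha_2=\phi$. Likewise $-\sqrt{3}r\,\psi=-\sqrt{3}\cdot\frac{\lambda L_0}{2\pi}\cdot\frac{3\pi}{L_0^2}(c_2\alpha_1-c_1\alpha_2)=\frac{3\sqrt{3}\lambda}{2L_0}(c_1\alpha_2-c_2\alpha_1)=Y$. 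Since $R(\phi)\u{x}(0,0)=(r\cos\phi,r\sin\phi,0)^T$ while $-\sqrt{3}r\psi(0,0,1)^T=(0,0,Y)^T$, adding the two reproduces $\u{x}(\alpha_1,\alpha_2)$, which is exactly the claim.

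There is no serious conceptual obstacle here; the work is entirely bookkeeping. The one place that must be handled with care is the bundle of normalizations — establishing $L_0(c)^2=3(c_1^2+c_1c_2+c_2^2)$ so that $\rho_{-\theta}$ is a genuine rotation and $r=\lambda L_0/(2\pi)$, and then threading $r$, $L_0$, $d(c)$ and $(t_1,t_2)$ through the computation so that the $\frac{3\lambda}{2L_0}$-prefactors collapse exactly into the $\frac{3\pi}{L_0^2}$-constants $C_i,T_i$. The mild surprise worth flagging is that the angular coordinate $\phi$ is governed by the translation data $(T_1,T_2)$, whereas the axial height $\psi$ is governed by the chiral data $(C_1,C_2)$; this swap simply records that after rotation $\u{c}$ becomes horizontal and $\u{t}$ vertical (one checks $\u{c}\perp\u{t}$), so reading off the two coordinates interchanges the two index vectors.
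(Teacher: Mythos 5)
Your proposal is correct, and since the paper states Proposition~\ref{Prop(vrep)} without proof, your direct evaluation — rotating the lattice vectors by $\rho_{-\theta(\lambda,c)}$, applying the cylinder map, and matching $X/r=\phi(\alpha_1,\alpha_2)$ and $Y=-\sqrt{3}r\,\psi(\alpha_1,\alpha_2)$ via $r=\lambda L_0/(2\pi)$ and the cancellation of $d(c)$ — is exactly the intended (omitted) verification. All your intermediate constants check out against the paper's conventions ($\u{a}_1=\lambda(\sqrt{3},0)^T$, $\u{a}_2=\lambda(\sqrt{3}/2,3/2)^T$, $L_0(c)^2=3(c_1^2+c_1c_2+c_2^2)$, and $(T_1,T_2)$ as in Theorem~\ref{Thm(H&K_of_cnt)}), including your observations that $\rho_{-\theta}$ is a genuine rotation carrying $\u{c}$ to the positive $x$-axis and that $\u{c}\perp\u{t}$ explains the swap of roles between $(C_1,C_2)$ and $(T_1,T_2)$.
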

\begin{Lem}\label{Lem(0only)}
  The value of the mean curvature as well as 
  the Gauss curvature  of $\cnt(\lambda,c)$ at 
  $\u{x}(\alpha_1,\alpha_2)$ 
  coincide with 
  those at $\u{x}(0,0)$, respectively. 
\end{Lem}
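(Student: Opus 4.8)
The plan is to exploit the helical (screw-motion) symmetry of $\cnt(\lambda,c)$ that is already encoded in the representation formula (\ref{Eq.x(alpha)}). Write $\alpha=(\alpha_1,\alpha_2)$ and $\beta=(\beta_1,\beta_2)$ for index pairs, and for a fixed $\alpha\in\mathbb{Z}\times\mathbb{Z}$ introduce the rigid motion
\[
g_{\alpha}(\u{p}) := R(\phi(\alpha))\,\u{p} - \sqrt{3}\,r\,\psi(\alpha)\,(0,0,1)^T, \qquad \u{p}\in\mathbb{R}^3,
\]
whose linear part $R(\phi(\alpha))$ is a rotation about the $z$-axis and whose translation part is parallel to the $z$-axis; thus $g_\alpha$ is an orientation-preserving isometry of $\mathbb{R}^3$. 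The two ingredients I would combine are: \emph{(a)} $g_\alpha$ is an orientation-preserving automorphism of the embedded graph $\cnt(\lambda,c)$ carrying $\u{x}(0,0)$ to $\u{x}(\alpha)$, and \emph{(b)} the discrete curvatures $H$ and $K$ are invariant under such isometries.

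First I would establish \emph{(a)}. Since $R(\cdot)$ consists of rotations about the $z$-axis, $R(s)R(t)=R(s+t)$ and $R(s)(0,0,1)^T=(0,0,1)^T$, while $\phi$ and $\psi$ are linear in $\alpha$. Substituting (\ref{Eq.x(alpha)}) into $g_\alpha(\u{x}(\beta))$ and using these facts gives, after a short computation, $g_\alpha(\u{x}(\beta)) = \u{x}(\alpha_1+\beta_1,\alpha_2+\beta_2)$ for every $\beta$. Hence $g_\alpha$ realizes the index translation $\beta\mapsto\alpha+\beta$, which preserves adjacency because the adjacency relations (\rnum{1})--(\rnum{3}) stated before Definition~\ref{Def(cnt)} depend only on index differences. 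In particular $g_\alpha$ maps the three edges emanating from $\u{x}(0,0)$ onto the three edges emanating from $\u{x}(\alpha)$, and, being orientation-preserving, it respects their cyclic order, i.e.\ the local orientation (\rnum{3}).

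Next I would invoke \emph{(b)}. By Proposition~\ref{Prop(large_triangle)} the values $H(x)$ and $K(x)$ are determined by $\fff_{\triangle(x)}$ and $\sff_{\triangle(x)}$, whose entries are inner products of the edge-differences $\u{e}_i-\u{e}_j$ and of the normal-differences $\u{n}(x_i)-\u{n}(x_j)$. Writing $g_\alpha(\u{p})=A\u{p}+\u{b}$ with $A:=R(\phi(\alpha))\in SO(3)$, the edge-differences transform by $A$; moreover, since $\det A=1$ the cross products in (\ref{Eq.n}) obey $A\u{u}\times A\u{v}=A(\u{u}\times\u{v})$, so after normalization the unit normal at $g_\alpha(x)$ equals $A\,\u{n}(x)$, the compatibility of orientations from step \emph{(a)} ruling out a sign flip. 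Because $A$ preserves $\inner{\cdot}{\cdot}$, every entry of $\fff_{\triangle(\u{x}(\alpha))}$ and of $\sff_{\triangle(\u{x}(\alpha))}$ equals the corresponding entry at $\u{x}(0,0)$, whence $H(\u{x}(\alpha))=H(\u{x}(0,0))$ and $K(\u{x}(\alpha))=K(\u{x}(0,0))$.

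The main obstacle is the bookkeeping in step \emph{(b)}: one must verify that the unit normal field genuinely transforms as $\u{n}\circ g_\alpha=A\,\u{n}$ with no sign change, which is precisely where the orientation-preservation of $g_\alpha$ must be combined with the preservation of the edge ordering (\rnum{3}) under the index translation established in step \emph{(a)}. Once this naturality of the normal (\ref{Eq.n}) under $SO(3)$ is secured, the equality of the first and second fundamental forms, and hence of $H$ and $K$, is immediate.
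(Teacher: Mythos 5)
Your screw-motion argument is sound as far as it goes, and it reproduces exactly case~(\rnum{1}) of the paper's proof: the paper likewise observes that the tree of radius $2$ around an integer-index vertex is carried to the tree around $\u{x}(0,0)$ by a rotation about the $z$-axis composed with a vertical translation, and that such an isometry preserves $H$ and $K$. The gap is that this only proves the lemma for half of the vertices. The vertex set of $X(\lambda)$ is bipartite: besides the vertices with $(\alpha_1,\alpha_2)\in\mathbb{Z}\times\mathbb{Z}$ it contains those with $(\alpha_1+1/3,\alpha_2+1/3)\in\mathbb{Z}\times\mathbb{Z}$, e.g.\ $\u{x}_1=\u{x}(-1/3,-1/3)$, and the lemma must cover these too, since Theorem~\ref{Thm(H&K_of_cnt)} computes the curvatures at $\u{x}(0,0)$ alone and concludes they are constant on the whole tube. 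No member of your family $\{g_\alpha\}_{\alpha\in\mathbb{Z}\times\mathbb{Z}}$ reaches a fractional-index vertex: the orbit of $\u{x}(0,0)$ is exactly the integer sublattice, because a lattice translation of the hexagonal lattice preserves each of the two bipartite vertex classes. Nor can you fix this by allowing fractional $\alpha$: the planar translation by $\u{\xi}$ sends the vertex $\u{\xi}$ to $2\u{\xi}$, which is not a vertex, so the corresponding screw motion is not an automorphism of the embedded graph, and your step~(a) fails for it.

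This is precisely where the paper's proof does additional work. In its case~(\rnum{2}), the tree around a vertex of the second class, $\u{x}(\beta_1-1/3,\beta_2-1/3)$, is matched with the tree around the integer-index vertex $\u{x}(\beta_1,\beta_2)$ by the rotation of angle $180^{\circ}$ about the axis that meets the $z$-axis orthogonally and passes through the midpoint $\frac{1}{2}\bigl(\u{x}(\beta_1-1/3,\beta_2-1/3)+\u{x}(\beta_1,\beta_2)\bigr)$. This is the cylindrical counterpart of the planar point inversion through an edge midpoint, which swaps the two vertex classes; it is an orientation-preserving isometry of $\mathbb{R}^3$ preserving the cylinder and hence the outer normal, so your invariance argument in step~(b) applies to it verbatim. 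Adding this one case closes the gap; without it the statement is proved only on the integer sublattice.
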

\begin{proof}
  For any $\alpha_1\u{a}_1+\alpha_2\u{a}_2\in V(\lambda)$, 
  let $T(\alpha_1,\alpha_2)\subseteq X(\lambda)$ be the tree 
  consisting of $\alpha_1\u{a}_1+\alpha_2\u{a}_2$ as well as 
  all the chains of length $2$ 
  starting with $\alpha_1\u{a}_1+\alpha_2\u{a}_2$, 
  which completely determines 
  the curvatures at $\u{x}(\alpha_1,\alpha_2)$. 
  Then $T(\alpha_1,\alpha_2)$ is obtained from 
  the translation either 
  (\rnum{1}) of $T(0,0)$ or 
  (\rnum{2}) of $T(-1/3,-1/3)$ 
  by $\beta_1\u{a}_1+\beta_2\u{a}_2$ 
  for some 
  $(\beta_1,\beta_2) 
  \in \mathbb{Z}\times \mathbb{Z}$. \par
  In the case of (\rnum{1}), 
  $\u{x}(T(\alpha_1,\alpha_2))
  \subseteq \cnt(\lambda,c)$ 
  is the rotation of angle $\phi(\beta_1,\beta_2)$ 
  around $z$-axis composed with 
  the translation by 
  $(0,0,-\sqrt{3}r\psi(\beta_1,\beta_2))^T$ of 
  $\u{x}(T(0,0))$. 
  Thus the mean curvature as well as 
  the Gauss curvature at $\u{x}(\alpha_1,\alpha_2)$ clearly 
  coincides with those at $\u{x}(0,0)$, respectively. 
  \par
  While, in the case of (\rnum{2}), 
  $\u{x}(T(\alpha_1,\alpha_2))$ is the rotation of 
  $\u{x}(T(\beta_1,\beta_2))$ of angle $180^{\circ}$ 
  around the axis which intersects orthogonally with $z$-axis 
  and through the point 
  \[
  \frac{1}{2}(\u{x}(\alpha_1,\alpha_2) 
  + \u{x}(\beta_1,\beta_2))
  = 
  \frac{1}{2} 
  (
  \u{x}(\beta_1-1/3,\beta_2-1/3) 
  + 
  \u{x}(\beta_1,\beta_2)
  ). 
  \]
  Therefore 
  again the curvatures  
  at $\u{x}(\alpha_1,\alpha_2)$ coincide with 
  those at $\u{x}(\alpha_1,\alpha_2)$ and thus coincides with 
  those at $\u{x}(0,0)$. 
\end{proof}

Let 
\[
\u{x}_1
:= 
\u{x}(-1/3,-1/3),\quad 
\u{x}_2
:= 
\u{x}(+2/3,-1/3),\quad 
\u{x}_3
:= 
\u{x}(-1/3,+2/3)	
\]
in the sequel. 
A normal vector of $\cnt(\lambda,c)$ is 
computed as follows. 
\begin{Prop}\label{Prop(cnt_normal)}
  For $(\alpha_1,\alpha_2)\in \mathbb{Z}\times\mathbb{Z}$, 
  the outer unit normal vector 
  $\u{n}(\alpha_1,\alpha_2)$ at $\u{x}(\alpha_1,\alpha_2)$ 
  is given as the rotation 
  $\u{n}(\alpha_1,\alpha_2)=R(\phi(\alpha_1,\alpha_2))\u{n}_0$ 
  of the outer unit normal vector $\u{u}_0=\u{n}(0,0)$ at 
  $\u{x}(0,0)$, 
  where 
  $\u{n}_0=\u{m}_0/\lvert \u{m}_0 \rvert$, 
  $\u{m}_0 
  = \u{x}_1\times\u{x}_2 
  + \u{x}_2\times\u{x}_3 
  + \u{x}_3\times\u{x}_1$ 
  having the coordinates
  \[
  \u{m}_0 
  = 
  2\sqrt{3}r^2
  \begin{pmatrix}
    C_1\cos\dfrac{C_2}{2}\sin\dfrac{T_2}{2} 
    - 
    C_2\cos\dfrac{C_1}{2}\sin\dfrac{T_1}{2} 
    \\[1.5ex]
    -C_1\sin\dfrac{C_2}{2}\sin\dfrac{T_2}{2} 
    + 
    C_2\sin\dfrac{C_1}{2}\sin\dfrac{T_1}{2} 
    \\[1.5ex]
    \dfrac{2}{\sqrt{3}}
    \sin\dfrac{T_1}{2}\sin\dfrac{T_2}{2}
    \sin\dfrac{T_1+T_2}{2}
  \end{pmatrix} 
  =: 
  2\sqrt{3}r^2
  \begin{pmatrix}
    m_x(c) \\
    m_y(c) \\
    \dfrac{2}{\sqrt{3}}m_z(c)
  \end{pmatrix}. 
  \]
  While for $(\beta_1+1/3,\beta_2+1/3)\in \mathbb{Z}\times\mathbb{Z}$, the outer unit normal vector 
  $\u{n}(\beta_1,\beta_2)$ 
  at $\u{x}(\beta_1,\beta_2)$ 
  is given as the rotation 
  $\u{n}(\beta_1,\beta_2)
  = R(\phi(\beta_1+1/3,\beta_2+1/3))\u{n}_1$ 
  of the outer unit normal vector 
  $\u{n}_1=\u{n}(-1/3,-1/3)$ at 
  $\u{x}(-1/3,-1/3)$, where 
  $\u{n}_1=\u{m}_1/\lvert\u{m}_1\rvert$, 
  $\u{m}_1$ satisfies 
  $\lvert \u{m}_1\rvert=\lvert \u{m}_0\rvert$ 
  and has the coordinates 
  \[
  \u{m}_1 
  = 2\sqrt{3}r^2
  \begin{pmatrix}
    C_1\sin\dfrac{T_2}{2}\cos\dfrac{T_2}{2} 
    - 
    C_2\sin\dfrac{T_1}{2}\cos\dfrac{T_1}{2} 
    \\[1.5ex]
    -C_1\sin^2\dfrac{T_2}{2} 
    - 
    C_2\sin^2\dfrac{T_1}{2} 
    \\[1.5ex]
    -\dfrac{2}{\sqrt{3}}\sin\dfrac{T_1}{2}\sin\dfrac{T_2}{2}
    \sin\dfrac{T_1+T_2}{2}
  \end{pmatrix} 
  =: 
  2\sqrt{3}r^2
  \begin{pmatrix}
    m_{1,x}(c) \\
    m_{1,y}(c) \\
    -\dfrac{2}{\sqrt{3}}m_{1,z}(c) 
  \end{pmatrix}. 
  \]
\end{Prop}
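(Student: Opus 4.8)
The plan is to exploit the helical symmetry recorded in Proposition~\ref{Prop(vrep)} to reduce the problem to the two representative vertices $\u{x}(0,0)$ and $\u{x}_1=\u{x}(-1/3,-1/3)$, and then to evaluate the cross–product sum (\ref{Eq.n}) explicitly. The preliminary observation I would make is that the unnormalized normal $\u{m}(x)=\u{e}_1\times\u{e}_2+\u{e}_2\times\u{e}_3+\u{e}_3\times\u{e}_1$ depends only on the \emph{positions} $\u{x}_1,\u{x}_2,\u{x}_3$ of the three neighbours of $x$, not on $\u{x}_0=\vPhi(x)$ itself. Writing $\u{e}_i=\u{x}_i-\u{x}_0$ and expanding, the terms containing $\u{x}_0$ cancel because $(\sum_i\u{x}_i)\times\u{x}_0+\u{x}_0\times(\sum_i\u{x}_i)=\u{0}$, so
\[
\u{m}(x)=\u{x}_1\times\u{x}_2+\u{x}_2\times\u{x}_3+\u{x}_3\times\u{x}_1 .
\]
In particular $\u{m}$ is invariant under a common translation of $\{\u{x}_1,\u{x}_2,\u{x}_3\}$ and covariant under a common rotation.

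With this in hand I would argue the transformation laws exactly as in the proof of Lemma~\ref{Lem(0only)}: by Proposition~\ref{Prop(vrep)} the two–step neighbourhood of an arbitrary vertex is the image of that of $\u{x}(0,0)$ (type~A) or of $\u{x}_1$ (type~B) under a rotation $R(\phi)$ about the $z$–axis followed by a translation along $(0,0,1)^{T}$. Since $\u{m}$ is unchanged by the translation and rotated by $R(\phi)$, the same holds for $\u{n}=\u{m}/\lvert\u{m}\rvert$, giving $\u{n}(\alpha_1,\alpha_2)=R(\phi(\alpha_1,\alpha_2))\u{n}_0$ and $\u{n}(\beta_1,\beta_2)=R(\phi(\beta_1+1/3,\beta_2+1/3))\u{n}_1$. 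Everything is thereby reduced to computing $\u{m}_0$ and $\u{m}_1$.

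Next I would insert the explicit coordinates from (\ref{Eq.x(alpha)}): each neighbour has the form $(r\cos\phi_i,\,r\sin\phi_i,\,-\sqrt{3}r\psi_i)^{T}$. For the type–A representative a short computation gives $\phi_2-\phi_1=T_2$, $\phi_3-\phi_2=-(T_1+T_2)$, $\phi_1-\phi_3=T_1$, together with the analogous differences of the $\psi_i$ expressed through $C_1,C_2$. The $z$–component of $\u{m}_0$ is then $r^2\bigl[\sin(\phi_2-\phi_1)+\sin(\phi_3-\phi_2)+\sin(\phi_1-\phi_3)\bigr]=r^2\bigl[\sin T_1+\sin T_2-\sin(T_1+T_2)\bigr]$, which sum–to–product identities collapse to $4r^2\sin\frac{T_1}{2}\sin\frac{T_2}{2}\sin\frac{T_1+T_2}{2}$, matching the stated third coordinate. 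The $x$– and $y$–components pair the heights $-\sqrt{3}r\psi_i$ against the sines and cosines of the $\phi_i$ and reduce, after the same product–to–sum and sum–to–product manipulations, to the two displayed trigonometric expressions in $C_1,C_2,T_1,T_2$.

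For the type–B representative $\u{x}_1$ I would first read off, from the adjacency conditions {\upshape(\rnum{1})}--{\upshape(\rnum{3})}, that its three neighbours are $\u{x}(0,0)$, $\u{x}(-1,0)$ and $\u{x}(0,-1)$, take their coordinates from (\ref{Eq.x(alpha)}), and repeat the cross–product computation to obtain $\u{m}_1$. The equality $\lvert\u{m}_1\rvert=\lvert\u{m}_0\rvert$ is cleanest to see structurally rather than by comparing the two explicit vectors: the triangle spanned by the neighbours of $\u{x}_1$ is carried onto $\triangle(\u{x}_1,\u{x}_2,\u{x}_3)$ by the $180^{\circ}$ lattice rotation used in the proof of Lemma~\ref{Lem(0only)}, so the two triangles are congruent and $\lvert\u{m}\rvert$, being twice the triangle's area, agrees (consistently, the $z$–component of $\u{m}_1$ comes out as the negative of that of $\u{m}_0$, reflecting the orientation–reversing rotation). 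The main obstacle is purely computational: organizing the nine scalar cross–product entries and steering the resulting sums of products of sines and cosines into the compact half–angle form of the statement, while keeping the bookkeeping of signs and of the $C$–versus–$T$ variables under control. I expect no conceptual difficulty beyond this.
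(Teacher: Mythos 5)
The paper states this proposition without proof, presenting it as a direct computation from Proposition~\ref{Prop(vrep)}, and your proposal is correct and is essentially that intended argument: the reduction of $\u{m}$ to $\u{x}_1\times\u{x}_2+\u{x}_2\times\u{x}_3+\u{x}_3\times\u{x}_1$, the screw-motion equivariance transferring everything to the representatives $\u{x}(0,0)$ and $\u{x}(-1/3,-1/3)$ (whose neighbours are indeed $\u{x}(0,0)$, $\u{x}(-1,0)$, $\u{x}(0,-1)$), and the angle differences $\phi_2-\phi_1=T_2$, $\phi_3-\phi_2=-(T_1+T_2)$, $\phi_1-\phi_3=T_1$ all check out, as do the resulting components of $\u{m}_0$ and $\u{m}_1$. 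One point worth making explicit when you write it up: the raw $x$- and $y$-entries of $\u{m}_0$ are \emph{linear} in the heights, hence in $C_1,C_2$, with trigonometric functions of the $\phi_i$ (e.g.\ the $x$-component is $\sqrt{3}r^2\bigl[C_1\sin\phi_2-(C_1+C_2)\sin\phi_1+C_2\sin\phi_3\bigr]$), so reaching the displayed mixed products such as $C_1\cos\frac{C_2}{2}\sin\frac{T_2}{2}$ requires the linear relations $T_1=-(C_1+2C_2)$, $T_2=2C_1+C_2$ (equivalently $\phi_1=-(C_1+C_2)$, $\phi_2=C_1$, $\phi_3=C_2$) in addition to product-to-sum identities; this is covered by your ``$C$-versus-$T$ bookkeeping'' remark but is the one step where the computation could stall if left implicit.
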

Using Proposition~\ref{Prop(vrep)}, 
Lemma~\ref{Lem(0only)} 
and Proposition~\ref{Prop(cnt_normal)}, 
we can obtain the following result. 
\begin{Thm}\label{Thm(H&K_of_cnt)}
  The carbon nanotube $\cnt(\lambda,c)$ 
  with scale factor $\lambda>0$ 
  and with chiral index $c=(c_1,c_2)$ 
  has the constant mean curvature 
  \[
  H(\lambda,c) 
  = 
  -\frac{m_x(c)}{2r(\lambda,c)}
  \cdot
  \frac{
    m_x(c)^2+m_y(c)^2+(8/3)m_z(c)^2
  }{
    (m_x(c)^2+m_y(c)^2+(4/3)m_z(c)^2)^{3/2}
  }
  \]
  and has the constant Gauss curvature 
  \[
  K(\lambda,c)
  = 
  \frac{
    4m_z(c)^2(m_x(c)^2+m_y(c)^2)
  }{
    3r(\lambda,c)^2
    (m_x(c)^2+m_y^2(c)+(4/3)m_z(c)^2)^2
  }, 
  \]
  where $r(\lambda,c)=\lvert \u{c}\rvert/(2\pi)>0$ is the radius 
  of the circular cylinder 
  on which $\cnt(\lambda,c)$ winds, and 
  \begin{align*}
    m_x(c) 
    & = 
      C_1\cos\frac{C_2}{2}\sin\frac{T_2}{2}
      - 
      C_2\cos\frac{C_1}{2}\sin\frac{T_1}{2}, 
    \\
    m_y(c) 
    & = 
      -C_1\sin\frac{C_2}{2}\sin\frac{T_2}{2}
      + 
      C_2\sin\frac{C_1}{2}\sin\frac{T_1}{2}, 
    \\
    m_z(c) 
    & = 
      \sin\frac{T_1}{2}\sin\frac{T_2}{2}
      \sin\frac{T_1+T_2}{2}, 
    \\
    (C_1,C_2) 
    & = 
      \left(
      \frac{3\pi c_1}{L_0(c)^2}, 
      \frac{3\pi c_2}{L_0(c)^2}
      \right), 
    \\
    (T_1,T_2)
    & = 
      \left(
      -\frac{3\pi(c_1+2c_2)}{L_0(c)^2}, 
      \frac{3\pi(2c_1+c_2)}{L_0(c)^2} 
      \right), 
    \\
    L_0(c) 
    & = 
      \frac{\lvert \u{c}\rvert}{\lambda} 
      = 
      \sqrt{3(c_1^2+c_1c_2+c_2^2)}. 
  \end{align*}
  If, in particular, $c_1=c_2$, then, $m_z(c)=0$, so that 
  $H(\lambda,c)$ and $K(\lambda,c)$ respectively have 
  the following representations
  \[
  H(\lambda,c) 
  = 
  -\frac{1}{2r(\lambda,c)}
  \cos\frac{C_1}{2}, 
  \quad 
  K(\lambda,c) 
  = 
  0. 
  \]
\end{Thm}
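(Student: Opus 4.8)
The plan is to reduce everything to the single vertex $\u{x}(0,0)$ and then feed the explicit coordinates into closed forms for $H$ and $K$ of the ``large triangle'' at that vertex. By Lemma~\ref{Lem(0only)} the two curvatures take the same value at every vertex of $\cnt(\lambda,c)$, so it is enough to evaluate them at $\u{x}(0,0)$. Reading off the adjacency conditions stated before Definition~\ref{Def(cnt)}, the three neighbours of $\u{x}(0,0)$ are exactly $\u{x}_1=\u{x}(-1/3,-1/3)$, $\u{x}_2=\u{x}(2/3,-1/3)$ and $\u{x}_3=\u{x}(-1/3,2/3)$; hence the triangle $\triangle(x)$ appearing in Proposition~\ref{Prop(large_triangle)} is $\triangle(\u{x}_1,\u{x}_2,\u{x}_3)$, its doubled area is $A(x)=\lvert\u{m}_0\rvert$ with $\u{m}_0=\u{x}_1\times\u{x}_2+\u{x}_2\times\u{x}_3+\u{x}_3\times\u{x}_1$, and $\u{n}_0:=\u{n}(\u{x}(0,0))=\u{m}_0/\lvert\u{m}_0\rvert$ is the triangle's unit normal.

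Writing $\u{n}_i:=\u{n}(\u{x}_i)$ for $i=1,2,3$, I would first record two convenient closed forms. Applying Proposition~\ref{Prop(Gauss)} to $\triangle(\u{x}_1,\u{x}_2,\u{x}_3)$ and using that orthogonal projection onto the triangle's plane leaves the $\u{n}_0$-component of a cross product unchanged gives
\[
  K(x)=\frac{\inner{\u{N}_0}{\u{m}_0}}{A(x)^2},\qquad
  \u{N}_0:=\u{n}_1\times\u{n}_2+\u{n}_2\times\u{n}_3+\u{n}_3\times\u{n}_1,
\]
while testing the mean-curvature identity (\ref{Eq.disc_min_eq}) against $\u{n}_0$ (again the projections may be dropped, and the $\u{x}(0,0)$-terms cancel because $(\u{n}_2-\u{n}_3)+(\u{n}_3-\u{n}_1)+(\u{n}_1-\u{n}_2)=\u{0}$) yields
\[
  H(x)=\frac{1}{2A(x)}
  \inner{\u{n}_0}{\u{n}_1\times(\u{x}_3-\u{x}_2)+\u{n}_2\times(\u{x}_1-\u{x}_3)+\u{n}_3\times(\u{x}_2-\u{x}_1)}.
\]
Equivalently these are $K=\det(\fff_{\triangle(x)}^{-1}\sff_{\triangle(x)})$ and $2H=\mathrm{tr}(\fff_{\triangle(x)}^{-1}\sff_{\triangle(x)})$ of Proposition~\ref{Prop(large_triangle)}; the cross-product forms are simply the most economical for the trigonometry to come, and with the outer normal of Proposition~\ref{Prop(cnt_normal)} the signs come out as stated.

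The substitution step uses Proposition~\ref{Prop(vrep)} for the positions $\u{x}_i$ and Proposition~\ref{Prop(cnt_normal)} for the normals. The key simplification is that, because the screw symmetry acts by the $z$-axis rotation $R(\cdot)$, all three neighbouring normals are rotations of one template: with $\u{n}_1=\u{m}_1/\lvert\u{m}_1\rvert$ one has $\u{n}_2=R(T_2)\u{n}_1$ and $\u{n}_3=R(-T_1)\u{n}_1$, the angles coming from $\phi(1,0)=T_2$ and $\phi(0,1)=-T_1$. Replacing each $\u{n}_i$ by its unnormalised representative (all of common length $\lvert\u{m}_0\rvert$) turns both formulas into polynomial identities in the components of $\u{m}_0$ and $\u{m}_1$, and since $\lvert\u{m}_0\rvert^2=12r^4\bigl(m_x^2+m_y^2+\tfrac{4}{3}m_z^2\bigr)$ the fractional powers $(m_x^2+m_y^2+\tfrac{4}{3}m_z^2)^{3/2}$ and $(m_x^2+m_y^2+\tfrac{4}{3}m_z^2)^{2}$ in the statement are produced exactly by the powers of $A(x)$ in the denominators.

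The crux, and the only laborious part, is the trigonometric collapse: after inserting the coordinates one is left with long sums of products of $\sin$ and $\cos$ of the half-angles $C_1/2,C_2/2,T_1/2,T_2/2$, which must be reduced to the compact combinations $m_x,m_y,m_z$ by repeated product-to-sum manipulation, the decoupling of the $z$-axis under $R(\cdot)$ being what keeps this finite. Once the two formulas are in hand, the case $c_1=c_2$ follows at once: then $T_1+T_2=0$, so $m_z=\sin\tfrac{T_1}{2}\sin\tfrac{T_2}{2}\sin\tfrac{T_1+T_2}{2}=0$ and therefore $K=0$, while substituting $m_z=0$ and $C_1=C_2$ into the $H$-formula and simplifying $m_x/\sqrt{m_x^2+m_y^2}$ to $\cos\tfrac{C_1}{2}$ gives $H=-\tfrac{1}{2r}\cos\tfrac{C_1}{2}$.
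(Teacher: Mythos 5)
Your proposal is correct and follows essentially the same route as the paper's proof: reduce to the single vertex $\u{x}(0,0)$ via Lemma~\ref{Lem(0only)}, work with the large triangle $\triangle(\u{x}_1,\u{x}_2,\u{x}_3)$ of Proposition~\ref{Prop(large_triangle)}, substitute the explicit coordinates of Propositions~\ref{Prop(vrep)} and~\ref{Prop(cnt_normal)}, and finish by a direct trigonometric computation. The only difference is one of packaging: where the paper tabulates the entries of $\fff_{\triangle(x)}$ and $\sff_{\triangle(x)}$ and evaluates $\frac{1}{2}\mathrm{tr}(\fff^{-1}\sff)$ and $\det(\fff^{-1}\sff)$, you use the equivalent scalar cross-product identities derived from (\ref{Eq.disc_min_eq}) and Proposition~\ref{Prop(Gauss)} (your dropping of the projections and of the $\u{x}(0,0)$-terms is justified, since projection changes neither the $\u{n}_0$-component of a cross product nor the telescoping sum) together with the rotation relations $\u{n}_2=R(T_2)\u{n}_1$, $\u{n}_3=R(-T_1)\u{n}_1$, which is a valid reorganization of the same computation at the same level of detail as the paper itself.
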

\begin{proof}
  By Lemma~\ref{Lem(0only)}, 
  it suffices to determine the value of the mean curvature 
  and the Gauss curvature of $\cnt(\lambda,c)$ 
  only at $\u{x}(0,0)$. 
  We will make use of Proposition~\ref{Prop(large_triangle)} 
  rather than 
  calculate according to 
  the original definition 
  (Definition~\ref{Def(H&K)}). 
  Namely, we choose the frame 
  \[
  \u{v}_1 
  := 
  \u{x}_2-\u{x}_1, 
  \quad 
  \u{v}_2 
  := 
  \u{x}_3-\u{x}_1
  \]
  to determine the first fundamental form $\fff$ 
  and the second fundamental form $\sff$ 
  of $\cnt(\lambda,c)$ at $\u{x}(0,0)$ 
  The results are follows. 
  \[
  \fff 
  = 
  \begin{pmatrix}
    \lvert \u{v}_1\rvert^2 & 
    \inner{\u{v}_1}{\u{v}_2} 
    \\
    \inner{\u{v}_2}{\u{v}_1} & 
    \lvert \u{v}_2\rvert^2
  \end{pmatrix}, 
  \quad 
  \sff 
  = 
  \begin{pmatrix}
    -\inner{\u{v}_1}{\u{n}_2-\u{n}_1} & 
    -\inner{\u{v}_1}{\u{n}_3-\u{n}_1}  
    \\
    -\inner{\u{v}_2}{\u{n}_2-\u{n}_1} & 
    -\inner{\u{v}_2}{\u{n}_3-\u{n}_1} 
  \end{pmatrix}, 
  \]
  where $\u{n}_i$ is the outer unit normal vector 
  at $\u{x}_i$ ($i=1,2,3$) and 
  \begin{align*}
    \lvert \u{v}_1\rvert^2 
    & = 
      r^2
      \left(
      4\sin^2\frac{T_2}{2}+3C_1^2
      \right), 
    \\
    \lvert \u{v}_2\rvert^2 
    & = 
      r^2
      \left(
      4\sin^2\frac{T_1}{2}+3C_2^2
      \right), 
    \\
    \inner{\u{v}_1}{\u{v}_2}
    & = 
      -r^2
      \left(
      4\sin\frac{T_1}{2}\sin\frac{T_2}{2}\cos\frac{T_1+T_2}{2} + 
      3C_1C_2
      \right), 
    \\
    -\inner{\u{v}_1}{\u{n}_2-\u{n}_1}
    & = 
      -\frac{8\sqrt{3}r^3m_x}{\lvert \u{m}_0 \rvert}
      \sin^2\frac{T_2}{2}, 
    \\
    -\inner{\u{v}_2}{\u{n}_3-\u{n}_1}
    & = 
      -\frac{8\sqrt{3}r^3m_x}{\lvert \u{m}_0 \rvert}
      \sin^2\frac{T_1}{2}, 
    \\
    -\inner{\u{v}_1}{\u{n}_3-\u{n}_1}
    & = 
      \frac{8\sqrt{3}r^3}{\lvert \u{m}_0\rvert}
      \sin\frac{T_1}{2}\sin\frac{T_2}{2}
      \left(
      m_x
      \cos\frac{T_1+T_2}{2}
      - 
      m_y
      \sin\frac{T_1+T_2}{2}
      \right) 
    \\
    -\inner{\u{v}_2}{\u{n}_2-\u{n}_1}
    & = 
      \frac{8\sqrt{3}r^3}{\lvert \u{m}_0 \rvert}
      \sin\frac{T_1}{2}\sin\frac{T_2}{2}
      \left(
      m_x
      \cos\frac{T_1+T_2}{2}
      + 
      m_y
      \sin\frac{T_1+T_2}{2}
      \right). 
  \end{align*}
  The rest of the proof 
  is also a direct computation of 
  \[
  H(\lambda,c) 
  = 
  \frac{1}{2}\mathrm{tr}(\fff^{-1}\sff), 
  \quad 
  K(\lambda,c) 
  = 
  \det(\fff^{-1}\sff). 
  \]
\end{proof}
%% SUBSECTION 5.4 
\subsection{Mackay-like crystals}\label{Section(mackay_min)}
The Schwarzian surface of type $P$ and $D$ and Gyroid 
are well known example of triply periodic minimal surfaces 
in $\mathbb{R}^3$. 
Examples of $3$-valent discrete surfaces which is claimed to
lie on the Schwarzian surface of $P$-type
have been known since 1990s. 
In 1991, A.\ L.\ Mackay and 
H.\ Terrones~\cite{Mackay-Terrones:1991} 
proposed a $sp^2$-bonding (hence a $3$-valent discrete surface)
carbon crystal, now called the \emph{Mackay crystal}, 
consists of $6$- and $8$-membered rings 
(see Figure~\ref{Figure(mackay_crys)}). 
There is no proof for it actually lies on the Schwarzian surface 
but it has the same symmetries as those of 
the Schwarzian surface of type $P$.
T.\ Lenosky et al.~\cite{Lenosky:1992} introduced 
another $3$-valent discrete surfaces consisting of 
$6$- and $7$-membered rings. 
More recently, the first and second author 
et al.~\cite{Tagami:2014} 
systematically investigated carbon crystals of 
$sp^2$-bonding (i.e.\ $3$-valent) with the octahedral symmetry 
and the dihedral symmetry, 
and listed up all the possible structures 
with small number of vertices. 
They are called {\it Mackay-like crystals}. \par
In \cite{Tagami:2014},  a standard realization 
of a Mackay-like crystal  in $\mathbb{R}^3$
is obtained by solving 
a solvable linear system 
and that the lattice vectors 
form an orthogonal basis 
all with equal length. 
Thus the Gauss curvature as well as 
the mean curvature of them are computed explicitly. 
Please see Figure~\ref{Figure(K_or_mackay-like)} 
for the Gauss curvature 
and Figure~\ref{Figure(H_for_mackay-like)} 
for the mean curvature 
how the curvatures are distributed on them. 
\begin{figure}[htbp]
  \centering
  \begin{tabular}{cc}
    \includegraphics[width=150pt]{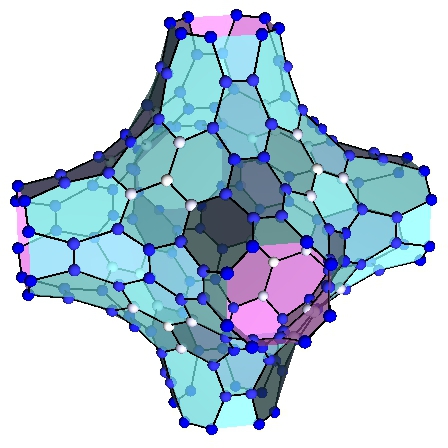}
    &
      \includegraphics[width=150pt]{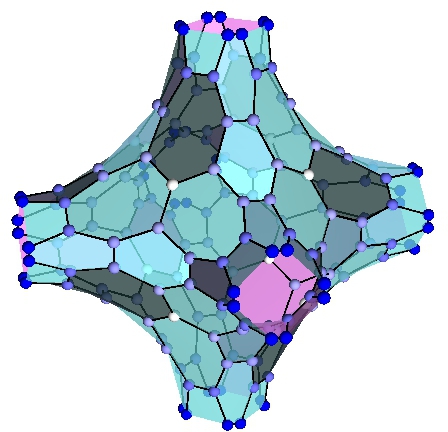}
      \cr
      \includegraphics[width=150pt]{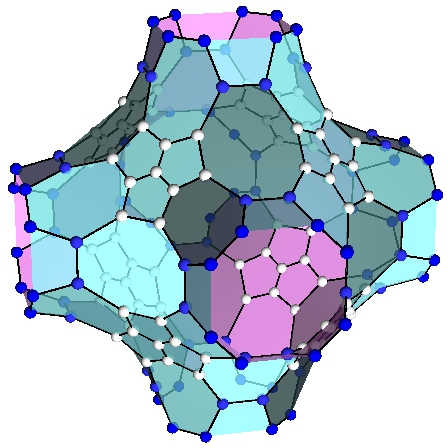}
    &
      \includegraphics[width=150pt]{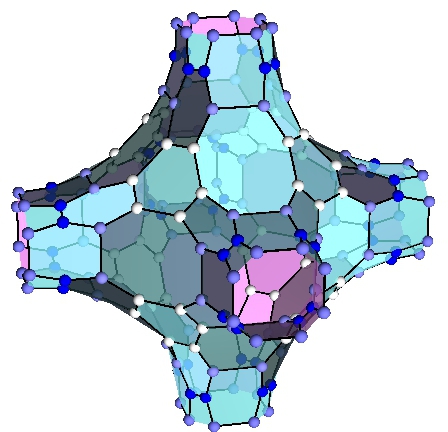}
  \end{tabular}
  \caption{
    The Gauss curvature of Mackay-like crystals founded in \cite{Tagami:2014}.
    The Gauss curvature attain smallest (negative, largest absolutely) values at the most blue points
    in the respective pictures.
    while zero at the white points. The color on the faces are linearly
    interpolated between vertices.
  }
  \label{Figure(K_or_mackay-like)}
\end{figure}
\begin{figure}[htbp]
  \centering
  \begin{tabular}{cc}
    \includegraphics[width=150pt]{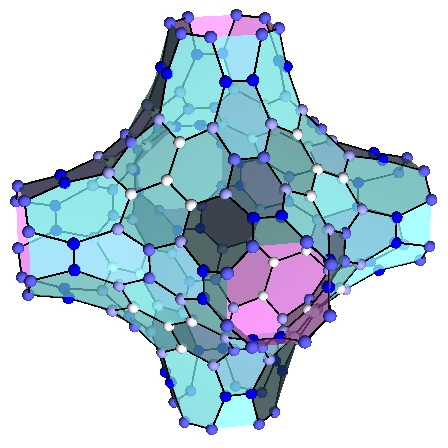}
    &
      \includegraphics[width=150pt]{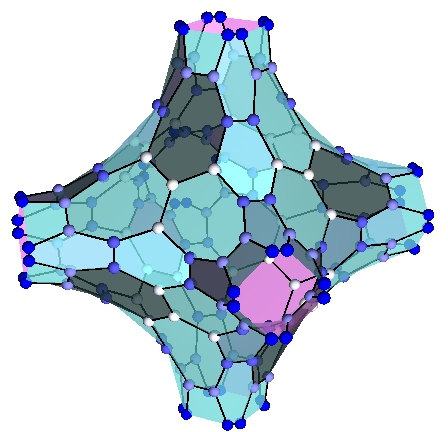}
      \cr
      \includegraphics[width=150pt]{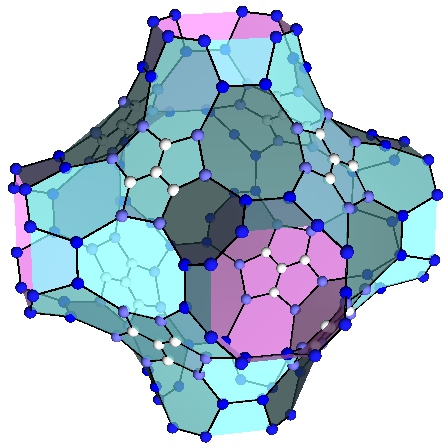}
    &
      \includegraphics[width=150pt]{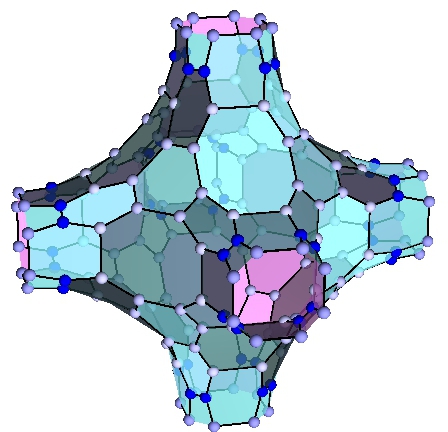}
  \end{tabular}
  \caption{
    The mean curvature of Mackay-like crystals founded in \cite{Tagami:2014}.
    The mean curvature attain smallest (negative, largest absolutely) values at the most blue points
    in the respective pictures.
    while zero at the white points. The color on the faces are linearly
    interpolated between vertices.
  }
  \label{Figure(H_for_mackay-like)}
\end{figure}
\begin{figure}[htbp]
  \centering
  \includegraphics[scale=0.6]
  {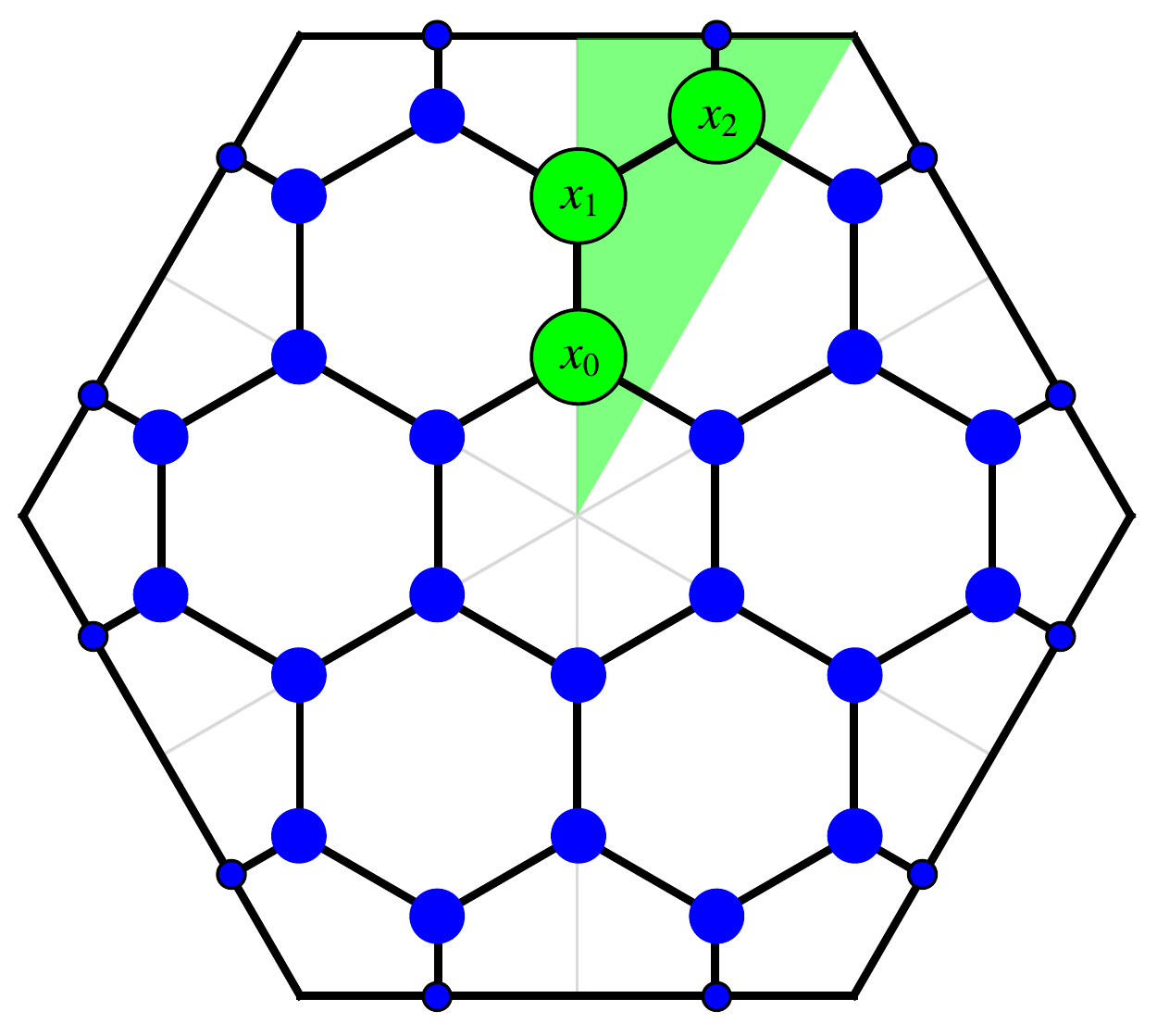}
  \caption{The fundamental region 
    for the octahedral symmetry
  }
  \label{Figure(hex_dom_of_P)}
\end{figure}
The classical Mackay crystal~\cite{Mackay-Terrones:1991} 
has, as an abstract graph, the octahedral symmetry, 
whose fundamental region is shown in 
{\upshape Figure~\ref{Figure(hex_dom_of_P)}}. 
It has further symmetry 
and its smallest patch is 
a subgraph with three vertices 
lying on the green-hued domain in 
{\upshape Figure~\ref{Figure(hex_dom_of_P)}}. 
\par
A standard realization 
$\vPhi_0\colon X=(V,E)\rightarrow \mathbb{R}^3$ 
for the classical Mackay crystal, 
whose lattice vectors $\{\u{e}_x,\u{e}_y,\u{e}_z\}$ 
form an orthogonal frame of $\mathbb{R}^3$, say, 
$\u{e}_x=(2,0,0)^T$, 
$\u{e}_y=(0,2,0)^T$ 
and 
$\u{e}_z=(0,0,2)^T$. 
Then $\vPhi_0$ is the unique solution of the system 
\[
\left\{
  \begin{aligned}
    0 
    & = 
    \vPhi_0(x_1) 
    + 
    R_{xy}(L(\vPhi_0(x_0))) 
    + 
    L(\vPhi_0(x_0)) 
    - 
    3\vPhi_0(x_0), 
    \\
    0 
    & = 
    \vPhi_0(x_0) 
    + 
    \vPhi_0(x_2) 
    + 
    R_{xy}(\vPhi_0(x_2)) 
    - 
    3\vPhi_0(x_1), 
    \\
    0 
    & = 
    \vPhi_0(x_1) 
    + 
    L(\vPhi_0(x_2)) 
    + 
    R_{z1}(\vPhi_0(x_2)) 
    - 
    3\vPhi_0(x_2), 
  \end{aligned}
\right.
\]
where 
$R_{xy}(x,y,z):=(y,x,z)$, 
$L(x,y,z):=(1-z,1-y,1-x)$, 
which is the reflection over the line 
through $(1/2,1/2,1/2)$ and $(0,1/2,1)$, 
and $R_{z1}(x,y,z):=(x,y,2-z)$. 
Its figure is shown in the left-side of 
{\upshape Figure~\ref{Figure(mackay_crys)}}. \par
A $3$-valent discrete minimal surface of the Mackay crystal 
is constructed by deforming the standard realization 
$\vPhi_0$  by
solving {\upshape(\ref{Eq.prescribed_H})} 
on the smallest patch $\{x_0,x_1,x_2\}$ 
{\upshape(}see {\upshape Figure~\ref{Figure(hex_dom_of_P)}}{\upshape)} 
so as to have symmetry with respect to 
folding along the boundary of the green-hued domain. 
The system is as follows: 
\begin{equation}
  \left\{
    \begin{aligned}
      0 
      & = 
      \proj_{x_0}(\u{n}_{50}-\u{n}_{10}) 
      \times 
      (\vPhi(x_1)-\vPhi(x_0)) 
      \\
      & \qquad 
      + \proj_{x_0}(\u{n}_{10}-\u{n}_{1}) 
      \times 
      (R_{xy}(L(\vPhi(x_0)))-\vPhi(x_0)) 
      \\
      & \qquad 
      + \proj_{x_0}(\u{n}_{1}-\u{n}_{50}) 
      \times 
      (L(\vPhi(x_0))-\vPhi(x_0)), 
      \\
      0 
      & = 
      \proj_{x_1}(\u{n}_{2}-\u{n}_{52}) 
      \times 
      (\vPhi(x_0)-\vPhi(x_1)) 
      \\
      & \qquad 
      + \proj_{x_1}(\u{n}_{52}-\u{n}_{0}) 
      \times 
      (\vPhi(x_2)-\vPhi(x_1)) 
      \\
      & \qquad 
      + \proj_{x_1}(\u{n}_{0}-\u{n}_{2}) 
      \times 
      (R_{xy}(\vPhi(x_2))-\vPhi(x_1)), 
      \\
      0 
      & = 
      \proj_{x_2}(\u{n}_{3}-\u{n}'_{2}) 
      \times 
      (\vPhi(x_1)-\vPhi(x_2)) 
      \\
      & \qquad 
      + \proj_{x_2}(\u{n}'_{2}-\u{n}_{1}) 
      \times 
      (L(\vPhi(x_2))-\vPhi(x_2)) 
      \\
      & \qquad 
      + \proj_{x_2}(\u{n}_{1}-\u{n}_{3}) 
      \times 
      (R_{z1}(\vPhi(x_2))-\vPhi(x_2)), 
      \\
      0 
      & = 
      \vPhi(x_0)-R_{xy}(\vPhi(x_0)), 
      \\
      0 
      & = 
      \vPhi(x_1)-R_{xy}(\vPhi(x_1)), 
    \end{aligned}
  \right.
  \label{Eq.minimal_mackay}
\end{equation}
where $\u{n}_i$ stands for the unit normal vector 
of $\vPhi_0$ at $x_i$, 
$\u{n}_2'$ for the one at $R_{z1}(x_2)$, 
and $\proj_{x_i}$ for the orthogonal projection 
onto the tangent plane $T_{x_i}$ of $\vPhi_0$. 
The actual coordinates are given as follows:
\begin{align*}
  \vPhi(x_0) 
  & = 
    \frac{1}{18190160132}
    \begin{pmatrix}
      7635077341+4959792\sqrt{187} 
      \\
      7635077341+4959792\sqrt{187} 
      \\
      12286671541-10842192\sqrt{187}
    \end{pmatrix}, 
  \\
  \vPhi(x_1) 
  & = 
    \frac{1}{18190160132}
    \begin{pmatrix}
      6537796891+8687376\sqrt{187} 
      \\
      6537796891+8687376\sqrt{187} 
      \\
      15629549191-22198320\sqrt{187}
    \end{pmatrix}, 
  \\
  \vPhi(x_2) 
  & = 
    \frac{1}{18190160132}
    \begin{pmatrix}
      3663967141+18450096\sqrt{187} 
      \\
      8262094741+2829744\sqrt{187} 
      \\
      17302810741-27882576\sqrt{187} 
      \\
    \end{pmatrix}. 
\end{align*}
By symmetry of $\vPhi_0$, 
the first three equations in 
{\upshape (\ref{Eq.minimal_mackay})} 
actually gives $7$ linearly independent equations, 
which is less than the number of unknown variables 
by $2$. 
The last two equations in 
{\upshape (\ref{Eq.minimal_mackay})} 
is needed for $\vPhi$ to have 
the same symmetry as the Mackay crystal. 
The figure of the minimal discrete surface $\vPhi$ is shown in 
the right-side of {\upshape Figure~\ref{Figure(mackay_crys)}}. 
\begin{figure}[htbp]
  \begin{minipage}{.5\textwidth}
    \centering
    \includegraphics[bb=0 0 350 350, scale=0.5]
    {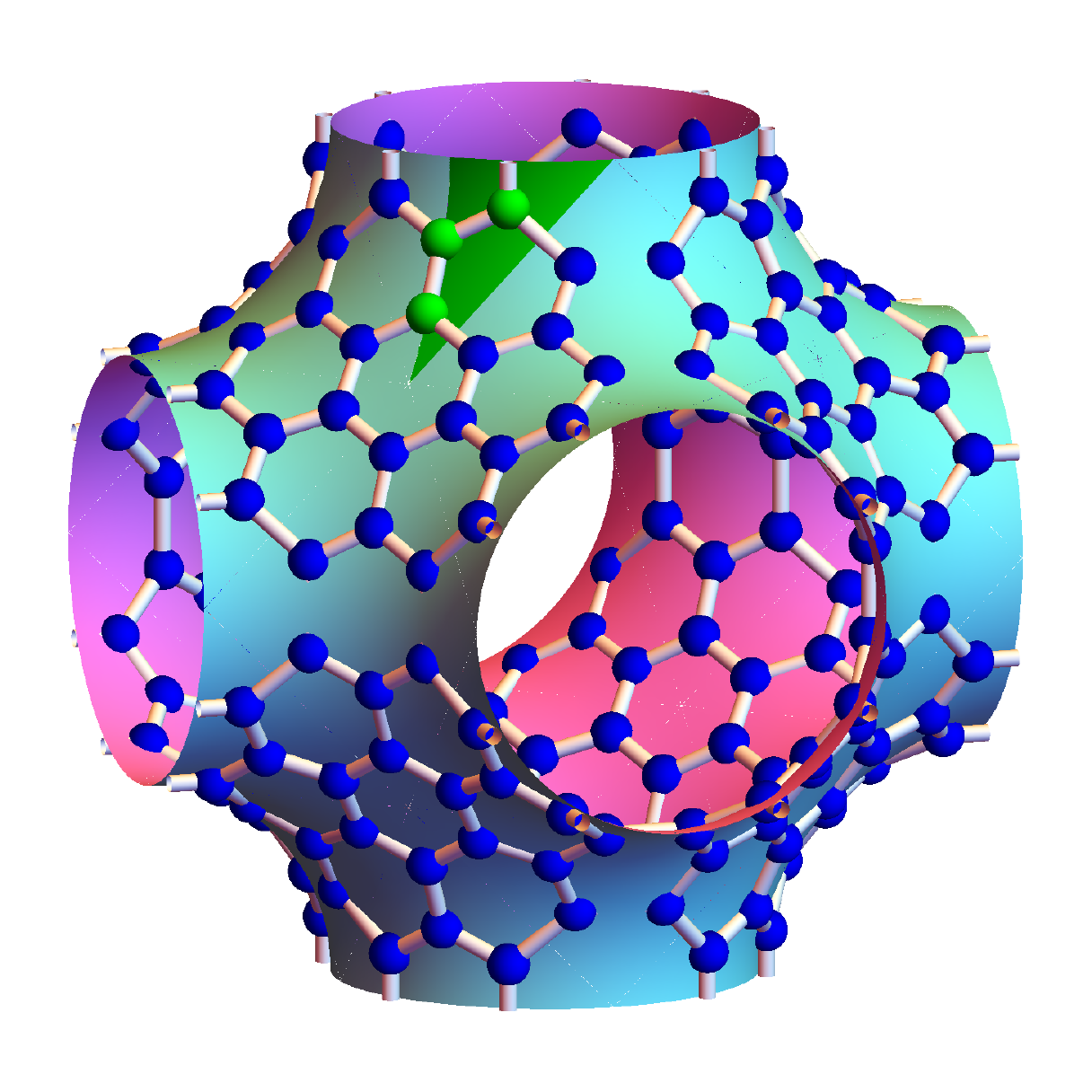}
  \end{minipage}
  \begin{minipage}{.49\textwidth}
    \centering
    \includegraphics[bb=0 0 350 350, scale=0.5]
    {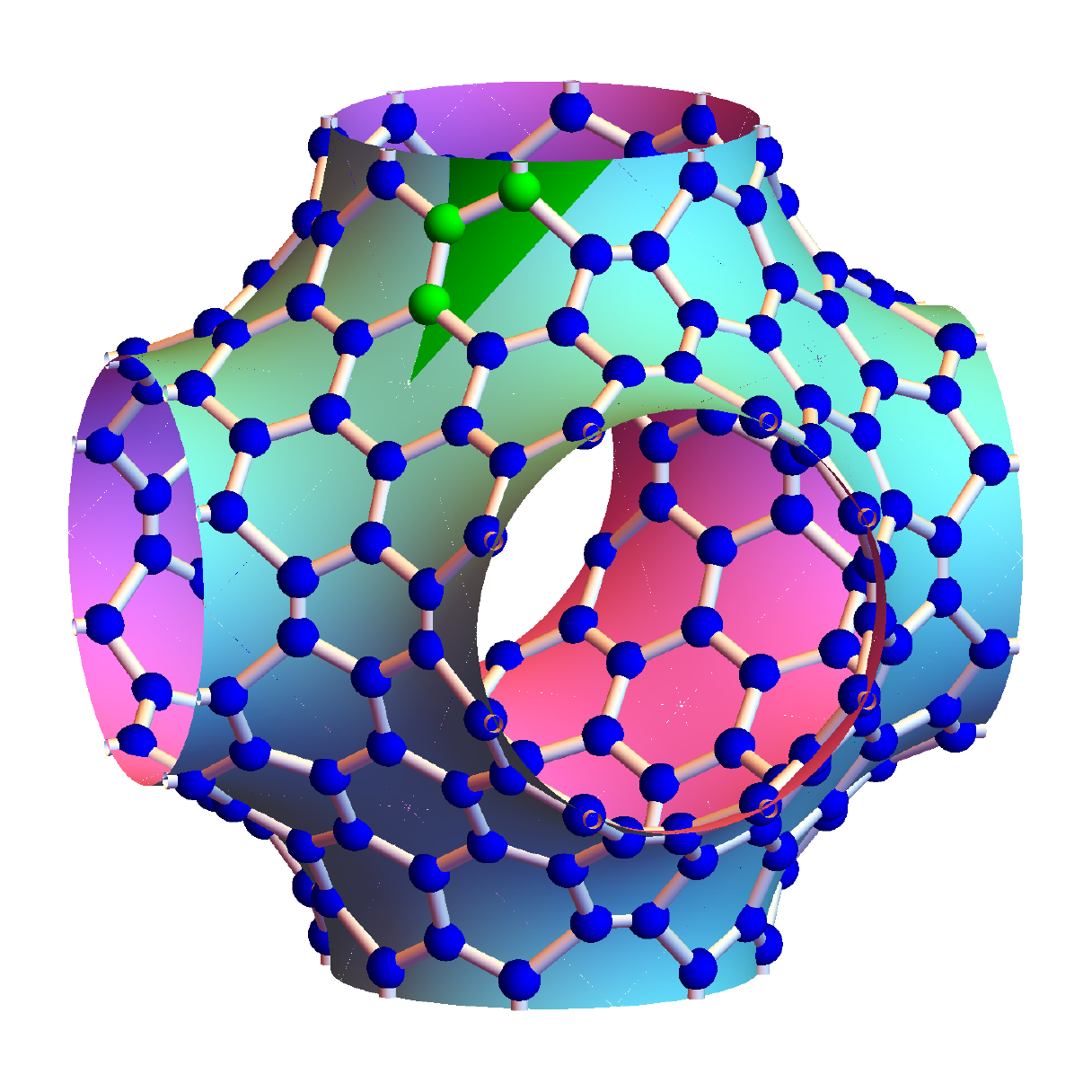}
  \end{minipage}
  \caption{
    Cyan-hued surfaces are both 
    Schwarzian surface of type $P$.
    Left: The classical Mackay crystal which is 
    standardly realized. 
    Right: A $3$-valent minimal discrete discrete surface which has 
    same maximal symmetry as the standardly realized 
    Mackay crystal. }
  \label{Figure(mackay_crys)}
\end{figure}
% 
%% SECTION 5.5 K4 LATTICE
\subsection{$K_4$ lattice}
\label{subsec:k4}
The $K_4$ lattice is the maximal abelian cover of 
the complete 3-valent graph (the $K_4$ graph)
(c.f.\ \cite{MR1783793,MR1500145,MR2375022}). 
The standard realization of the $K_4$ lattice 
has edges with uniform length, so the mean curvature of it 
vanishes at every vertex 
(by Proposition~\ref{Thm(min_harm)}) 
while its Gauss curvature is positive 
at every vertex. 
Thus the eigenvalues of 
the Weingarten-type map 
(the principal curvatures) 
are not real numbers. 
% 
%% SECTION 4 Goldberg-Coxeter construction
\section{Goldberg-Coxeter construction}\label{Section(GC)}
For a given $3$-valent discrete surface, 
we would like to construct 
a sequence of its subdivisions 
which gets finer and finer, 
and converges to a smooth surface. 
An idea for this is the Goldberg-Coxeter construction, 
which can be applied to $3$-valent 
abstract planer graph to increase only $6$-membered rings. 
The Goldberg-Coxeter construction is a generalization of simplicial subdivision considered by 
Goldberg \cite{1937104}, 
and originally discussed by M.\ Deza and M.\ Dutour \cite{MR2429120,MR2035314}.
Here, we recall the definition of it for readers' convenience, 
and calculate subdivisions of the regular hexagonal lattice.
\begin{Def}[\!{\cite[Section~2.1]{MR2429120}}]
  Let $X=(V,E)$ be a $3$-valent planer graph 
  and $k>0$, $\ell\geq 0$ be integers. 
  The graph 
  $\GC_{k,\ell}(X)$ is built in the following steps. 
  \begin{enumerate}
  \item[{\upshape (1)}] 
    Take the dual graph $X^{\ast}$ of $X$. 
    Since $X$ is $3$-valent, $X^{\ast}$ is 
    a triangulation, namely, 
    a planer graph whose faces are all triangles. 
  \item[{\upshape (2)}] 
    Every triangle in $X^{\ast}$ is subdivided 
    into another set of faces in accordance with 
    {\upshape Figure~\ref{Figure(GC_constr)}}. 
    If we obtain a face which are not triangle, 
    then it can be glued with other neighboring 
    non-triangle faces to form triangles. 
  \item[{\upshape (3)}] 
    By duality, the triangulation of {\upshape (2)} 
    is transformed into $\GC_{k,\ell}(X)$. 
  \end{enumerate}
\end{Def}
\begin{figure}[H]
  \centering
  \includegraphics[scale=0.65]
  {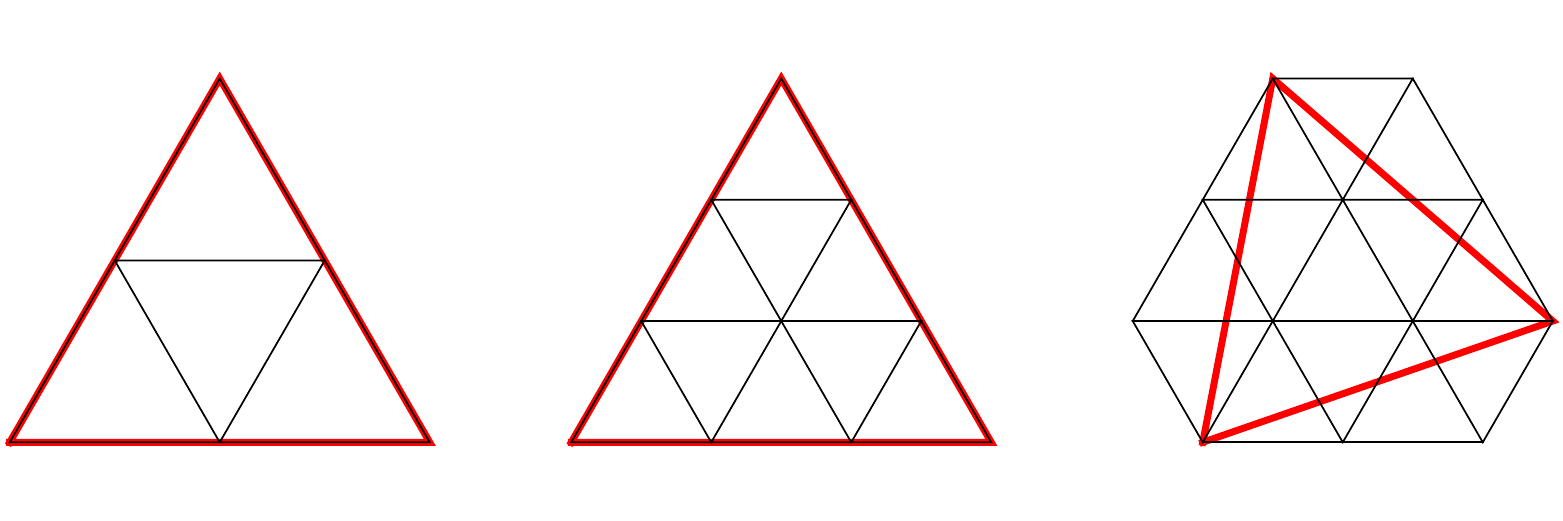} 
  \\[-10pt]
  $(k,\ell)=(2,0)$ \hspace{1cm}
  $(k,\ell)=(3,0)$ \hspace{1cm}
  $(k,\ell)=(2,1)$ 
  \caption{The red triangles are those of $X^{\ast}$}
  \label{Figure(GC_constr)}
\end{figure}
\begin{Ex}
  \label{ex:00}
  The following figures (Figure \ref{fig:gc0}) give the steps of  
  the construction for $\GC_{2,0}(X)$ of a regular 
  hexagonal lattice $X$ (i.e.\ $X(\lambda)$ in Section \ref{Section(swnt)}). 
\end{Ex}
\begin{figure}[H]
  \centering
  \begin{tabular}{ccccc}
    {\upshape (a)} 
    & {\upshape (b)} 
    & {\upshape (c)} 
    & {\upshape (d)} 
    & {\upshape (e)} 
    \\
    \includegraphics[bb=0 0 110 110,scale=0.65]
    {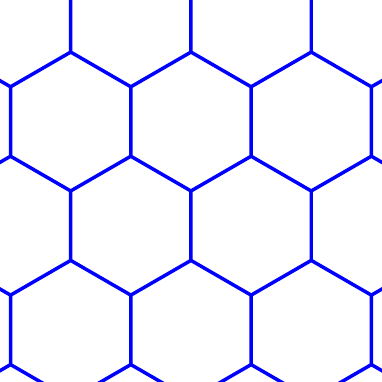}
    & 
      \includegraphics[bb=0 0 110 110,scale=0.65]
      {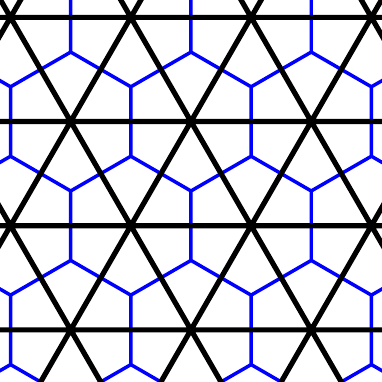}
    & 
      \includegraphics[bb=0 0 110 110,scale=0.65]
      {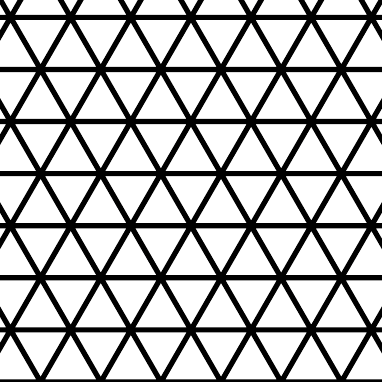}
    & 
      \includegraphics[bb=0 0 110 110,scale=0.65]
      {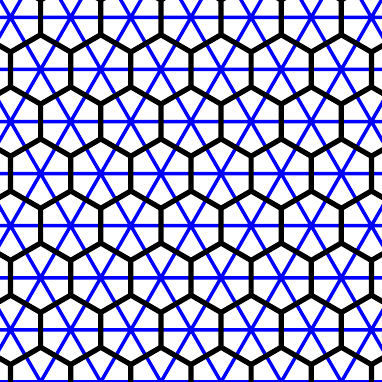}
    & 
      \includegraphics[bb=0 0 110 110,scale=0.65]
      {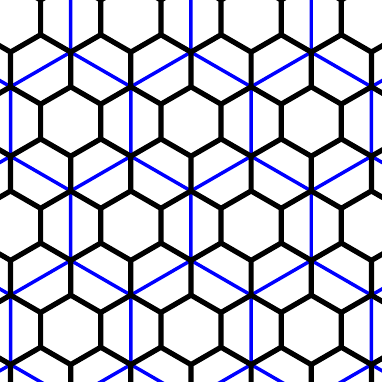}
  \end{tabular}
  \caption{
    {\upshape (a)}~$X$, 
    {\upshape (b)}~$X$ and its dual $X^{\ast}$, 
    {\upshape (c)}~$X^{\ast}$ and its subdivision, 
    {\upshape (d)}~the subdivision and 
    its dual $\GC_{2,0}(X)$, 
    {\upshape (e)}~$X$ and $\GC_{2,0}(X)$. 
  }
  \label{fig:gc0}
\end{figure}
\begin{Ex}
  Here are the figures of $\GC_{k,\ell}(X)$ 
  of a regular hexagonal lattice $X$ 
  for several $(k,\ell)$. 
\end{Ex}
\begin{figure}[H]
  \centering
  \begin{tabular}{ccccc}
    \includegraphics[bb=0 0 110 110,scale=0.65]
    {figures/gc_hex/gc_2_0_3.pdf}
    & 
      \includegraphics[bb=0 0 110 110,scale=0.65]
      {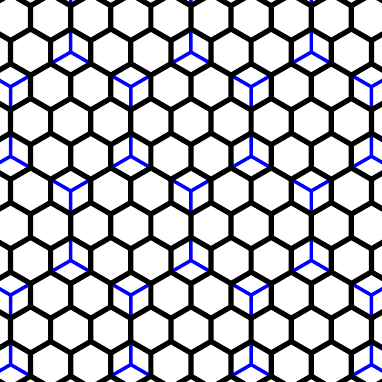}
    & 
      \includegraphics[bb=0 0 110 110,scale=0.65]
      {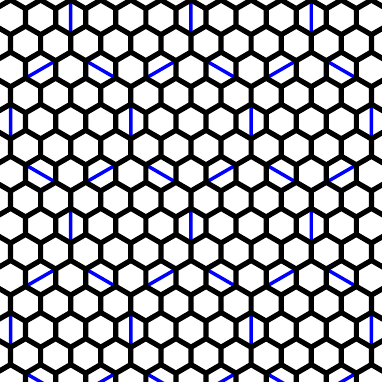}
    & 
      \includegraphics[bb=0 0 110 110,scale=0.65]
      {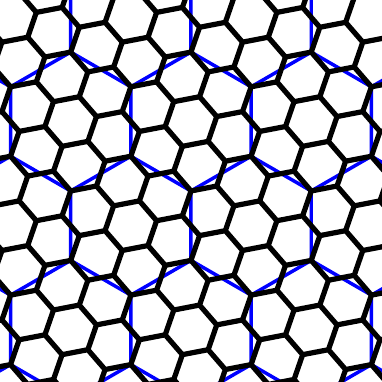}
    & 
      \includegraphics[bb=0 0 110 110,scale=0.65]
      {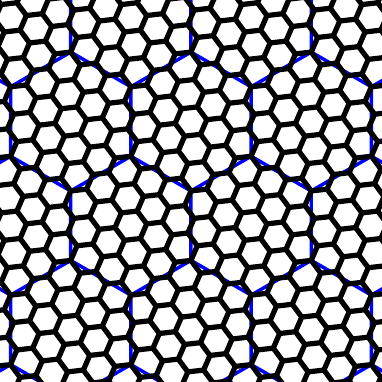} 
    \\
    $\GC_{2,0}(X)$ 
    & 
      $\GC_{3,0}(X)$ 
    & 
      $\GC_{4,0}(X)$ 
    & 
      $\GC_{2,1}(X)$ 
    & 
      $\GC_{3,2}(X)$
  \end{tabular}
  \caption{In each figure, blue graph is original $X$.}
  \label{fig:gc1}
\end{figure}
A basic result on the iterating subdivisions 
is stated as follows. 
\begin{Thm}[{\!\cite[Theorem~2.1.1]{MR2429120}}]
  For any $3$-valent planer graph $X$, it follows 
  \[
  \GC_{z_2}(\GC_{z_1}(X)) 
  = 
  \GC_{z_2z_1}(X), 
  \]
  where $\GC_z(X):=\GC_{k,\ell}(X)$ 
  for $z=k+\ell\omega$, $\omega=(1+\sqrt{3}i)/2$. 
\end{Thm}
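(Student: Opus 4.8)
The plan is to recast the Goldberg-Coxeter operation $\GC_z$ as a scaling on the Eisenstein lattice and then read off the composition law from the multiplicativity of such scalings. First I would identify the triangular lattice underlying the dual triangulation $X^{\ast}$ with the ring of Eisenstein integers $\mathbb{Z}[\omega]$, where $\omega=(1+\sqrt{3}i)/2$ is a primitive sixth root of unity; the two generating translations of the master triangular tiling correspond to $1$ and $\omega$. Under this identification, an Eisenstein integer $z=k+\ell\omega$ acts on the plane as the similarity $w\mapsto zw$, namely a rotation by $\arg z$ followed by a dilation by $\lvert z\rvert$, and it carries $\mathbb{Z}[\omega]$ onto the sublattice $z\,\mathbb{Z}[\omega]$ of index $\lvert z\rvert^2=k^2+k\ell+\ell^2$.

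Next I would translate the recipe of Figure~\ref{Figure(GC_constr)} into this language. Each red master triangle of $X^{\ast}$ is a fundamental triangle for the coarse lattice $z\,\mathbb{Z}[\omega]$, and step~(2) of the construction fills it with exactly the finer $\mathbb{Z}[\omega]$-points it contains; the gluing of the non-triangular pieces along shared edges reassembles a genuine triangulation. Thus $\GC_z$ is governed by the similarity multiplication-by-$z$, replacing the coarse lattice $z\,\mathbb{Z}[\omega]$ by the fine lattice $\mathbb{Z}[\omega]$ before dualizing back by step~(3). Since dualization is an involution interchanging $3$-valent planar graphs with triangulations, it suffices to prove the composition law at the level of these triangulations.

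With this dictionary the composition becomes composition of similarities. Performing $\GC_{z_1}$ subdivides each master triangle by multiplication-by-$z_1$, and performing $\GC_{z_2}$ on the outcome subdivides each resulting triangle in turn by multiplication-by-$z_2$; composing the two similarities of the Eisenstein plane yields multiplication-by-$z_2z_1$. Equivalently, the nested sublattices
\[
(z_2z_1)\,\mathbb{Z}[\omega]
\subseteq
z_1\,\mathbb{Z}[\omega]
\subseteq
\mathbb{Z}[\omega]
\]
realize the factorization $z_2z_1=z_2\cdot z_1$, and the multiplicativity of the norm $\lvert z_2z_1\rvert^2=\lvert z_2\rvert^2\lvert z_1\rvert^2$ confirms that the twofold refinement produces exactly the vertex count of the single refinement attached to $z_2z_1$. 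Reinstating duals via step~(3) then gives $\GC_{z_2}(\GC_{z_1}(X))=\GC_{z_2z_1}(X)$.

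The main obstacle I expect is not this lattice algebra, which is immediate, but the verification that the local Eisenstein model glues consistently over all of $X^{\ast}$: one must check that the subdivision pattern induced on a master triangle depends only on $z$ and not on the local choice of identification with $\mathbb{Z}[\omega]$, so that the refinements coming from the two triangles sharing an edge agree along that edge. This reduces to the equivariance of multiplication-by-$z$ under the order-six rotational symmetry of the hexagonal lattice, that is, $R\,(zw)=z\,(Rw)$ for each lattice rotation $R$, together with the fact that the boundary subdivision of a master triangle is determined by the $z$-images of its three edge vectors. Once this edge-compatibility is in place, the per-triangle composition of similarities patches to the global identity, and the stated formula records precisely the associativity of scaling on $\mathbb{Z}[\omega]$.
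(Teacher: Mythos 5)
The paper itself contains no proof of this theorem---it is imported with a citation to Deza--Dutour \cite[Theorem~2.1.1]{MR2429120}---and the argument in that source is exactly the one you give: identify the dual triangulation locally with the Eisenstein lattice $\mathbb{Z}[\omega]$, realize $\GC_{k,\ell}$ as multiplication by $z=k+\ell\omega$ (a sublattice of index $\lvert z\rvert^2=k^2+k\ell+\ell^2$), and read the composition law off the multiplicativity $z_2\cdot z_1$ in $\mathbb{Z}[\omega]$. Your proposal is correct and essentially coincides with the cited proof, including your correct identification of the one nontrivial verification, namely that the local $\mathbb{Z}[\omega]$-identifications of adjacent master triangles glue consistently along shared edges (which, given the paper's standing assumption of a local orientation, fixes the handedness of the chiral patterns with $\ell\neq 0$, $k\neq\ell$ and makes the per-triangle similarities patch globally).
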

The rest of this section 
is devoted to the computation 
of the actual coordinates of 
$\GC_{k,\ell}(X)$ of a hexagonal lattice $X$. 
To this end, we fix the following notation: \par
Let $\rho_{\theta}$ be the 
counterclockwise rotation in $\mathbb{R}^2$ 
of angle $\theta$ around the origin. 
For given $\u{u},\u{\xi}\in \mathbb{R}^2$, 
set $\u{\xi}_1:=\u{\xi}$, 
$\u{\xi}_2:=\rho_{2\pi/3}\u{\xi}$ and 
$\u{\xi}_3:=\rho_{-2\pi/3}\u{\xi}$. 
Recall the regular hexagonal lattice
$
H(\u{u},\u{\xi})
$
as in Section \ref{Section(swnt)}, 
and 
let 
$
T(\u{u},\u{a})
$
be the triangular lattice
which is also the planer graph 
with vertices $\{\u{v},\u{a}_1+\u{v},\u{a}_2+\u{v}\}$ 
and with (unoriented) edges 
$\{(\u{v},\u{v}+\u{a}_1),
(\u{v},\u{v}+\u{a}_2),
(\u{v}+\u{a}_1,\u{v}+\u{a}_2)\}$ 
extended by translations via 
$\u{a}_1$ and $\u{a}_2$. 
\begin{Lem}\label{Lem(dual&dual)}
  \begin{enumerate}
  \item[{\upshape (1)}] 
    The dual lattice $X^{\ast}$ 
    of $X=H(\u{u},\u{\xi})$ is given as 
    $X^{\ast}=T(\u{v},\u{a})$, where, for example, 
    \begin{align*}
      \u{v} 
      & = 
        \u{u}-\u{\xi}_3 
        = 
        \u{u}-\rho_{-2\pi/3}\u{\xi}, 
      \\
      \u{a} 
      & = 
        \rho_{\pi/3}(\u{\xi}_2-\u{\xi}_1) 
        = 
        -\u{\xi}-\rho_{\pi/3}\u{\xi} 
        = 
        -\sqrt{3}\rho_{\pi/6}\u{\xi}. 
    \end{align*}
  \item[{\upshape (2)}] 
    The dual lattice $X^{\ast}$ of $X=T(\u{v},\u{a})$ 
    is given as 
    $X^{\ast}=H(\u{u},\u{\xi})$, where, for example, 
    \begin{align*}
      \u{u} 
      & = 
        \u{v}+\frac{1}{3}(\u{a}+\rho_{\pi/3}\u{a}), 
      \\
      \u{\xi} 
      & = 
        \frac{1}{3}(\rho_{\pi/3}\u{a}-\u{a}) 
        = 
        (\rho_{-2\pi/3}-\mathrm{Id})^{-1}\u{a} 
        = 
        -\frac{1}{\sqrt{3}}\rho_{-\pi/6}\u{a}. 
    \end{align*}
  \end{enumerate}
\end{Lem}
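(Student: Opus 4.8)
The plan is to realise the abstract planar dual concretely: put a vertex of $X^{\ast}$ at the centre of each bounded face of $X$ and join two of them by a straight segment precisely when the two faces share an edge of $X$. Both statements compare the same pair of lattices with the roles of $H$ and $T$ exchanged, and for a doubly periodic planar graph one has $(X^{\ast})^{\ast}=X$; so I would establish (1) by locating the face centres explicitly and then deduce (2) either by the analogous centroid computation or, more quickly, by solving the relations of (1) for $\u{u}$ and $\u{\xi}$ and invoking the involutivity of duality. The computations rest on the three identities $\u{\xi}_1+\u{\xi}_2+\u{\xi}_3=\u{0}$, $\rho_{2\pi/3}\u{\xi}_i=\u{\xi}_{i+1}$ and $\rho_{\pi/3}\u{\xi}_i=-\u{\xi}_{i-1}$ (indices read mod $3$), which record the sixfold symmetry of the honeycomb.

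For (1) I would first note that $H(\u{u},\u{\xi})$ is the union of the two sublattices $\u{u}+\Lambda$ and $\u{u}+\u{\xi}_1+\Lambda$, where $\Lambda=\mathbb{Z}(\u{\xi}_2-\u{\xi}_1)\oplus\mathbb{Z}(\u{\xi}_3-\u{\xi}_1)$, and that each $\Lambda$-cell carries exactly one hexagonal face. The key elementary point is that $\u{v}:=\u{u}-\u{\xi}_3$ is a face centre: using $\u{\xi}_1+\u{\xi}_2+\u{\xi}_3=\u{0}$ one checks that its six surrounding vertices are exactly $\u{v}\pm\u{\xi}_1,\u{v}\pm\u{\xi}_2,\u{v}\pm\u{\xi}_3$, which alternate between the two sublattices and form a regular hexagon. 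By the threefold symmetry the three faces meeting at $\u{u}$ then have centres $\u{u}-\u{\xi}_1,\u{u}-\u{\xi}_2,\u{u}-\u{\xi}_3$; since these faces pairwise share an edge issuing from $\u{u}$, their centres are pairwise dual-adjacent and bound one triangular face of $X^{\ast}$. Its edge vectors $(\u{u}-\u{\xi}_1)-\u{v}=\u{\xi}_3-\u{\xi}_1$ and $(\u{u}-\u{\xi}_2)-\u{v}=\u{\xi}_3-\u{\xi}_2$ are, by $\rho_{\pi/3}\u{\xi}_i=-\u{\xi}_{i-1}$, exactly $\u{a}$ and $\rho_{\pi/3}\u{a}$ with $\u{a}=\rho_{\pi/3}(\u{\xi}_2-\u{\xi}_1)=-\sqrt{3}\rho_{\pi/6}\u{\xi}$, the asserted vectors. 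Finally I would check that all face centres fill out the vertex set of $T(\u{v},\u{a})$, which is the unimodular identity $\mathbb{Z}\u{a}\oplus\mathbb{Z}\rho_{\pi/3}\u{a}=\Lambda$, and that dual-adjacency agrees with $T$-adjacency.

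For (2) the same scheme runs in reverse. The faces of $T(\u{v},\u{a})$ are the upward and downward triangles; the centroid of the upward triangle $\{\u{v},\u{v}+\u{a}_1,\u{v}+\u{a}_2\}$, with $\u{a}_1=\u{a}$ and $\u{a}_2=\rho_{\pi/3}\u{a}$, is $\u{u}=\u{v}+\tfrac13(\u{a}_1+\u{a}_2)=\u{v}+\tfrac13(\u{a}+\rho_{\pi/3}\u{a})$, the stated base point. The three downward triangles sharing its sides have centroids whose displacements from $\u{u}$ are $\tfrac13(\u{a}_2-2\u{a}_1)$ and its $\rho_{\pm2\pi/3}$-images; simplifying $\tfrac13(\u{a}_2-2\u{a}_1)$ by means of $\rho_{2\pi/3}\u{a}_1=\u{a}_2-\u{a}_1$ turns it into $(\rho_{-2\pi/3}-\mathrm{Id})^{-1}\u{a}=-\tfrac1{\sqrt3}\rho_{-\pi/6}\u{a}$, which I take as $\u{\xi}$. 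Alternatively, (1) gives $H(\u{u},\u{\xi})^{\ast}=T(\u{v},\u{a})$, so dualising and using $(X^{\ast})^{\ast}=X$ yields $T(\u{v},\u{a})^{\ast}=H(\u{u},\u{\xi})$ as soon as $\u{u}$ and $\u{\xi}$ are written through $\u{v}$ and $\u{a}$ by inverting the formulas of (1); both routes produce the displayed expressions.

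The individual steps are routine; the only real work is the bookkeeping. I expect the main obstacle to be the passage from a single matching cell to the global identification, that is, verifying that every face centre lands on the claimed lattice and that two centres are joined by a dual edge exactly when the faces abut, rather than merely checking one fundamental triangle. I would settle this by combining the unimodular change-of-basis identity above with the shared sixfold symmetry about a face centre, which promotes agreement on one cell to agreement everywhere, so that the combinatorial dual and the embedded lattice $T$ (respectively $H$) coincide as realised graphs.
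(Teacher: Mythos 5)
Your proof is correct and is exactly the paper's (far terser) argument: the paper's entire proof consists of the observation that the dual base vertices $\u{v}$ and $\u{u}$ are barycenters of a hexagonal face and a triangular face respectively, with all the bookkeeping you carry out (face-centre location, edge vectors, the unimodular identity $\mathbb{Z}\u{a}\oplus\mathbb{Z}\rho_{\pi/3}\u{a}=\Lambda$) declared immediate. One point worth flagging: your centroid computation yields $\u{\xi}=\tfrac13(\u{a}_2-2\u{a}_1)=\tfrac13(\rho_{2\pi/3}\u{a}-\u{a})$, which agrees with the lemma's second and third expressions $(\rho_{-2\pi/3}-\mathrm{Id})^{-1}\u{a}=-\tfrac{1}{\sqrt3}\rho_{-\pi/6}\u{a}$ but not with its first, $\tfrac13(\rho_{\pi/3}\u{a}-\u{a})$, whose modulus is $\lvert\u{a}\rvert/3$ rather than the correct $\lvert\u{a}\rvert/\sqrt3$ --- a typo in the statement ($\rho_{\pi/3}$ should read $\rho_{2\pi/3}$) that your derivation silently corrects.
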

\begin{proof}
  The proof is immediate by noting that 
  the respective base vertices $\u{v}$ and $\u{u}$ 
  of the dual lattices $X^{\ast}$ are barycenters of 
  a hexagon and a triangle of $X$, respectively. 
\end{proof}
\begin{Prop}\label{Prop(GC_of_hex)}
  Let $X=H(\u{u},\u{\xi})$ be a hexagonal lattice, 
  $k>0$ and $l\geq 0$ be integers. 
  Then $\GC_{k,\ell}(X)$ is the hexagonal lattice 
  $H(\u{w},\u{\zeta})$ with 
  \begin{align*}
    \u{w} 
    & = 
      \u{u}-\rho_{-2\pi/3}\u{\xi} 
      - 
      \frac{1}{3(k^2+k\ell+\ell^2)}
      \left\{
      (k+2\ell)\u{\xi} 
      + 
      (2k+\ell)\rho_{\pi/3}\u{\xi} 
      + 
      (k-\ell)\rho_{2\pi/3}\u{\xi}
      \right\}, 
    \\
    \u{\zeta} 
    & = 
      \frac{1}{3(k^2+k\ell+\ell^2)}
      \left\{
      (2k+\ell)\u{\xi} 
      + 
      (k-\ell)\rho_{\pi/3}\u{\xi} 
      - 
      (k+2\ell)\rho_{2\pi/3}\u{\xi}
      \right\}
  \end{align*}
\end{Prop}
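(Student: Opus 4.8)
The plan is to realize the three steps of the Goldberg--Coxeter construction --- dualize, subdivide, dualize --- as explicit operations on lattice data and to carry the whole computation in complex coordinates. I identify $\mathbb{R}^2$ with $\mathbb{C}$ so that each rotation $\rho_\theta$ is multiplication by $e^{i\theta}$; in particular $\rho_{\pi/3}$ is multiplication by $\omega=e^{i\pi/3}=(1+\sqrt3\,i)/2$, and $z:=k+\ell\omega$ is the Eisenstein integer attached to $\GC_{k,\ell}$ in the theorem of Deza and Dutour recalled above. I will use repeatedly the identities $\omega^2=\omega-1$, $\omega^3=-1$, $(1+\omega)^2=3\omega$, $1+\omega=\sqrt3\,e^{i\pi/6}$, $\rho_{-2\pi/3}=\omega^{-2}$ with $\omega^{-2}-1=-(1+\omega)$, $\bar z=(k+\ell)-\ell\omega$ and $z\bar z=k^2+k\ell+\ell^2$. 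First I would apply Lemma~\ref{Lem(dual&dual)}(1) to write the dual $X^\ast$ of $X=H(\u{u},\u{\xi})$ as the triangular lattice $T(\u{v},\u{a})$ with $\u{a}=-(1+\omega)\u{\xi}$ and $\u{v}=\u{u}-\rho_{-2\pi/3}\u{\xi}$.

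Next comes the subdivision step, which I expect to be the heart of the argument. Viewing $T(\u{v},\u{a})$ as the translate $\u{v}+\u{a}\,\mathbb{Z}[\omega]$ of the Eisenstein lattice, the Goldberg--Coxeter subdivision refines each master triangle into $k^2+k\ell+\ell^2=z\bar z$ congruent triangles, so that the original vertices persist as the corners of the master triangles and the result is again a triangular lattice sharing the base vertex $\u{v}$. The key point is that, in the Eisenstein description, passing from the fine generator to the coarse one is multiplication by $z$; hence the subdivided lattice is $T(\u{v},\u{a}')$ with $\u{a}'=\u{a}/z$, the coarse lattice $\u{v}+\u{a}\,\mathbb{Z}[\omega]=\u{v}+z\u{a}'\,\mathbb{Z}[\omega]$ sitting inside $\u{v}+\u{a}'\,\mathbb{Z}[\omega]$ with index $z\bar z$. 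I would justify this by matching the number of small triangles per master triangle and by tracking that, when $\ell\neq0$, the fine grid is rotated by $\arg z$ relative to the master triangle --- precisely the rotation encoded in dividing by $z$.

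Finally I would dualize once more: by Lemma~\ref{Lem(dual&dual)}(2), $\GC_{k,\ell}(X)=H(\u{w},\u{\zeta})$ with $\u{\zeta}=(\rho_{-2\pi/3}-\mathrm{Id})^{-1}\u{a}'=-\u{a}'/(1+\omega)$ and $\u{w}=\u{v}+\tfrac13(1+\omega)\u{a}'$. Substituting $\u{a}'=\u{a}/z=-(1+\omega)\u{\xi}/z$ collapses the edge vector immediately to $\u{\zeta}=\u{\xi}/z=\bar z\,\u{\xi}/(k^2+k\ell+\ell^2)$, which upon reducing with $\omega^2=\omega-1$ is exactly the stated three-term expression $\tfrac{1}{3(k^2+k\ell+\ell^2)}\{(2k+\ell)\u{\xi}+(k-\ell)\rho_{\pi/3}\u{\xi}-(k+2\ell)\rho_{2\pi/3}\u{\xi}\}$, since that numerator collapses to $3\bar z\,\u{\xi}$. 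For the base point, $(1+\omega)\u{a}'=-(1+\omega)^2\u{\xi}/z=-3\omega\u{\xi}/z$ gives $\u{w}=\u{u}-\rho_{-2\pi/3}\u{\xi}-\omega\u{\xi}/z$, and expanding $\omega/z=\omega\bar z/(z\bar z)=(\ell+k\omega)/(k^2+k\ell+\ell^2)$ together with the same reduction (the stated numerator $(k+2\ell)+(2k+\ell)\omega+(k-\ell)\omega^2$ collapses to $3(\ell+k\omega)$) yields the claimed formula for $\u{w}$. The only genuine obstacle is the subdivision step: everything after it is forced bookkeeping with the complex identities above, whereas identifying the refined triangulation as $T(\u{v},\u{a}/z)$ --- with the correct orientation and the persistence of the base vertex --- is where the content of the Goldberg--Coxeter construction actually enters.
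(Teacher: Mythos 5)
Your proposal is correct and takes essentially the same route as the paper's proof: dualize via Lemma~\ref{Lem(dual&dual)}, determine the subdivided triangular lattice from the relations $\u{a}_1=k\u{b}_1+\ell\u{b}_2$, $\u{a}_2=-\ell\u{b}_1+(k+\ell)\u{b}_2$, and dualize back --- your identity $\u{a}'=\u{a}/z$ with $z=k+\ell\omega$ is exactly the paper's formula (\ref{Eq.b1b2}) rewritten in Eisenstein-integer notation, and your subdivision step is asserted at the same level of detail as the paper's. The complex-coordinate packaging is a clean streamlining (and your $\u{\zeta}=(\rho_{-2\pi/3}-\mathrm{Id})^{-1}\u{a}'=-\u{a}'/(1+\omega)$ uses the correct closed forms from Lemma~\ref{Lem(dual&dual)}(2)), but it is the same argument, not a different one.
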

\begin{proof}
  The dual graph $X^{\ast}=T(\u{v},\u{a})$ 
  as is given in Lemma~\ref{Lem(dual&dual)}. 
  The $(k,\ell)$-subdivision $(X^{\ast})_{k,\ell}$ 
  of $X^{\ast}$ is by definition $T(\u{v},\u{b})$ 
  for some $\u{b}\in \mathbb{R}^2$. 
  Since the set of vertices of $(X^{\ast})_{k,\ell}$ is 
  given as 
  $\{\u{v}+\beta_1\u{b}_1+\beta_2\u{b}_2\mid 
  (\beta_1,\beta_2)\in \mathbb{Z}\times\mathbb{Z}\}$, 
  where $\u{b}_1=\u{b}$ and $\u{b}_2=\rho_{\pi/3}\u{b}$, 
  the definition of subdivision implies 
  \begin{align*}
    \u{a}_1 
    & = 
      k\u{b}_1+\ell\u{b}_2, 
    \\
    \u{a}_2 
    & = 
      \rho_{\pi/3}\u{a} 
      = 
      k\rho_{\pi/3}\u{b}_1 
      + 
      \ell\rho_{\pi/3}\u{b}_2 
      = 
      -\ell\u{b}_1 + (k+\ell)\u{b}_2, 
  \end{align*}
  so that 
  \begin{equation}
    \begin{aligned}
      \u{b}_1 
      & = 
      \frac{1}{k^2+k\ell+\ell^2}
      \left(
	(k+\ell)\u{a}_1-\ell\u{a}_2
      \right), 
      \\
      \u{b}_2 
      & = 
      \frac{1}{k^2+k\ell+\ell^2}
      \left(
	\ell\u{a}_1+k\u{a}_2
      \right). 
    \end{aligned}
    \label{Eq.b1b2}
  \end{equation}
  So far we have 
  \[
  (X^{\ast})_{k,\ell} 
  = 
  T(\u{v},\u{b}) 
  = 
  T\left(
    \u{u}-\rho_{-2\pi/3}\u{\xi},~ 
    \frac{1}{k^2+k\ell+\ell^2}
    \left(
      (k+\ell)\u{a}_1-\ell\u{a}_2
    \right)
  \right). 
  \]
  The dual of $(X^{\ast})_{k,\ell}=T(\u{v},\u{b})$, 
  say $H(\u{w},\u{\zeta})$, is $\GC_{k,\ell}(X)$ 
  and we already know from Lemma~\ref{Lem(dual&dual)} 
  how to find $(\u{w},\u{\zeta})$ 
  from $(\u{v},\u{b})$. 
\end{proof}
\begin{Ex}
  Table \ref{tab:1} is a table for $X=H((0,0),(-\sqrt{3}/2,1/2))$. 
  \begin{table}[H]
    \centering
    \caption{}
    \label{tab:1}
    {\renewcommand\arraystretch{1.3}
      \begin{tabular}{c|c|c|c}
        $(k, \ell)$ & $\u{w}$ 
        & $\u{\zeta}$ 
        & 
          $\lvert \u{\zeta}\rvert/\lvert\u{\xi}\rvert$
          \cr \hline
          $(2, 0)$ 
        & $(0, 1/2)$
        & $(-\sqrt{3}/4, 1/4)$
        & $1/2$
          \cr \hline
          $(3, 0)$ 
        & $(0, 2/3)$
        & $(-\sqrt{3}/6, 1/6)$
        & $1/3$
          \cr \hline
          $(4, 0)$ 
        & $(0, 3/4)$
        & $(-\sqrt{3}/8, 1/8)$
        & $1/4$
          \cr \hline
          $(2, 1)$ 
        & $(-\sqrt{3}/14, 9/14)$
        & $(-\sqrt{3}/7, 2/7)$
        & $ 1/\sqrt{7}$
          \cr \hline
          $(3, 1)$ 
        & $(-\sqrt{3}/26, 19/26)$
        & $(-3\sqrt{3}/26, 5/26)$
        & $ 1/\sqrt{13}$
          \cr 
      \end{tabular}
    }
  \end{table}
\end{Ex}
%% SECTION 6 CONVERGENCE OF THE GC SUBDIVISIONS
\section{Convergence of the GC-subdivisions}
\label{sec:subdivision}
This section provides several examples of 
the convergence of GC-subdivisions. 
We start with a simple observation 
on the convergence of general sequence of 
$3$-valent discrete surfaces. 
\par
GC-subdivisions of a dodecahedron, a hexahedron, or a tetrahedron are called 
\emph{Goldberg polyhedra}.
In particular, 
fullerene $C_{60}$ (a truncated octahedron) is $\GC_{1,1}$ of a dodecahedron
(see \cite{Hart}).
\subsection{Convergence theorem}
\label{subsec:convergence}
Here, a \emph{GC-subdivision} of discrete surfaces is 
a discrete surface, 
which is a GC-construction as an abstract graph and 
is embedded as suitable way.
\begin{Prop}
  Let 
  $\{
  \vPhi_k\colon X_k=(V_k,E_k)\rightarrow \mathbb{R}^3
  \}_{k=1}^{\infty}$ 
  be a sequence of $3$-valent discrete surfaces 
  with the following properties. 
  \begin{enumerate}
  \item[{\upshape (\rnum{1})}] 
    The sequence of sets of points 
    $\{\vPhi_k(V_k)\}_{k=1}^{\infty}$ converges to 
    a smooth surface $M$ in $\mathbb{R}^3$ 
    in the Hausdorff topology. 
  \item[{\upshape (\rnum{2})}] 
    For any $p\in M$, 
    the unit normal vector $\u{n}_k(x_k)$ 
    of $\vPhi_k$ at $x_k\in V_k$ 
    converges to the unit normal vector $\u{n}(p)$ of $M$ at $p$, 
    independently of the choice of $\{x_k\}_{k=1}^{\infty}$ 
    with $\vPhi_k(x_k)\rightarrow p$ as $k\rightarrow \infty$. 
  \item[{\upshape (\rnum{3})}] 
    The Weingarten map 
    $S_k\colon T_{x_k}\rightarrow T_{x_k}$ 
    of $\vPhi_k$ converges to 
    the Weingarten map 
    $S\colon T_pM\rightarrow T_pM$ 
    of $M$ in the following sense: 
    for $\{x_k\}_{k=1}^{\infty}$ 
    with $\vPhi_k(x_k)\rightarrow p$ as $k\rightarrow \infty$ 
    and for 
    $\{\u{v}_k\in T_{x_k}\}_{k=1}^{\infty}$ 
    converging to some $\u{v}\in T_pM$, 
    it follows 
    \[
    S_k(\u{v}_k)\rightarrow S(\u{v})
    \]
    in $\mathbb{R}^3$ as $k\rightarrow \infty$. 
  \end{enumerate}
  Then both the mean curvature $H_k(x_k)$ 
  and the Gauss curvature $G_k(x_k)$ of $\vPhi_k$ 
  respectively converge to the mean curvature $H(p)$ 
  and the Gauss curvature $G(p)$ of $M$ 
  for $\{x_k\}_{k=1}^{\infty}$ with 
  $\vPhi_k(x_k)\rightarrow p$ as $k\rightarrow \infty$. 
\end{Prop}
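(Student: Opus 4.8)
The plan is to reduce the statement to the elementary fact that the trace and the determinant of a linear operator on a two-dimensional space depend continuously on the operator. Recall from Proposition~\ref{Prop(large_triangle)} that the discrete curvatures are exactly the normalized trace and the determinant of the Weingarten-type map at the vertex, namely $H_k(x_k)=\tfrac12\,\mathrm{tr}(S_k)$ and $K_k(x_k)=\det(S_k)$, while on the smooth side (Section~\ref{Section2}) one has $H(p)=\tfrac12\,\mathrm{tr}(S)$ and $K(p)=\det(S)$. Thus it suffices to show that the convergence assumed in (\rnum{3}) forces $\mathrm{tr}(S_k)\to\mathrm{tr}(S)$ and $\det(S_k)\to\det(S)$.

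The only genuine difficulty is that each $S_k$ acts on its own tangent plane $T_{x_k}$, whereas $S$ acts on $T_pM$; since trace and determinant are basis-independent, I would compute them in a frame that varies continuously with $k$. Fix once and for all a basis $\{\u{v}^1,\u{v}^2\}$ of $T_pM$. By assumption (\rnum{2}) the unit normals satisfy $\u{n}_k(x_k)\to\u{n}(p)$, so the planes $T_{x_k}=\u{n}_k(x_k)^{\perp}$ converge to $T_pM=\u{n}(p)^{\perp}$; concretely, setting $\u{v}_k^i:=\proj_{T_{x_k}}\u{v}^i$ (orthogonal projection onto $T_{x_k}$) we obtain, for all sufficiently large $k$, a genuine basis $\{\u{v}_k^1,\u{v}_k^2\}$ of $T_{x_k}$ with $\u{v}_k^i\to\u{v}^i$ in $\mathbb{R}^3$, because being a basis is an open condition on the Gram determinant.

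Write $g_k:=(\inner{\u{v}_k^i}{\u{v}_k^j})_{i,j}$ for the Gram matrix of this frame and $(g_k^{ij})_{i,j}$ for its inverse; since $\u{v}_k^i\to\u{v}^i$ and $\{\u{v}^1,\u{v}^2\}$ is a basis, $g_k$ converges to the invertible Gram matrix $g$ of $\{\u{v}^1,\u{v}^2\}$, and continuity of inversion at invertible matrices gives $g_k^{ij}\to g^{ij}$. Expressing the operator in this frame, the matrix entries of $S_k$ are recovered as
\[
(S_k)^{\,j}_{\;i}
=
\sum_{l=1,2}
\inner{S_k\u{v}_k^i}{\u{v}_k^l}\,g_k^{\,lj},
\]
and likewise for $S$ with respect to $\{\u{v}^1,\u{v}^2\}$. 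By assumption (\rnum{3}), applied to the admissible sequence $\u{v}_k^i\to\u{v}^i$, one has $S_k\u{v}_k^i\to S\u{v}^i$, so each inner product $\inner{S_k\u{v}_k^i}{\u{v}_k^l}\to\inner{S\u{v}^i}{\u{v}^l}$; combined with $g_k^{lj}\to g^{lj}$ this yields $(S_k)^{\,j}_{\;i}\to (S)^{\,j}_{\;i}$ entrywise. Since trace and determinant are polynomial, hence continuous, functions of these entries, we conclude $\mathrm{tr}(S_k)\to\mathrm{tr}(S)$ and $\det(S_k)\to\det(S)$, that is $H_k(x_k)\to H(p)$ and $K_k(x_k)\to K(p)$, as claimed.

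The main obstacle—and the only place where more than bookkeeping is required—is precisely the varying-domain issue handled in the second paragraph: one must produce frames of $T_{x_k}$ that converge to a frame of $T_pM$ so that hypothesis (\rnum{3}) can legitimately be invoked. I note that the possible non-symmetry of $S_k$ in the discrete setting, emphasized after (\ref{Eq.LMN}), causes no trouble here, since trace and determinant are defined for arbitrary $2\times 2$ matrices and the argument above never uses symmetry of $S_k$.
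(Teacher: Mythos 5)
Your proof is correct and follows essentially the same route as the paper's: both project a fixed basis of $T_pM$ orthogonally onto $T_{x_k}$ (using (\rnum{2}) to get convergent frames), represent $S_k$ in that frame as the inverse Gram matrix times the matrix of inner products $\inner{S_k\u{v}_k^i}{\u{v}_k^j}$, and conclude by continuity of trace and determinant using (\rnum{3}). Your write-up is in fact slightly more careful than the paper's on two minor points: you spell out why the projected vectors form a basis for large $k$, and you keep the factor $\tfrac12$ in $H_k=\tfrac12\,\mathrm{tr}(S_k)$, which the paper's proof drops in passing.
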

\begin{proof}
  Let $p\in M$ be a point and $\{x_k\}_{k=1}^{\infty}$ 
  be a sequence of points $x_k\in V_k$ 
  such that $\vPhi_k(x_k)$ converges to 
  $p$ in $\mathbb{R}^3$. 
  For any tangent vector $\u{v}\in T_pM$, 
  as is easily seen using (\rnum{2}), 
  it follows that the sequence $\{\u{v}_k\}_{k=1}^{\infty}$, 
  where $\u{v}_k$ is the orthogonal projection 
  of $\u{v}$ onto $T_{x_k}$, 
  converges to $\u{v}$. 
  If we take a pair of linearly independent vectors 
  $\{\u{v},\u{w}\}\subseteq T_pM$ 
  so that $\u{v}\times\u{w}$ has the same direction 
  as $\u{n}(p)$, then, 
  the vectors $\{\u{v}_k,\u{w}_k\}\subseteq T_{x_k}$ 
  which are respectively obtained from 
  $\{\u{v},\u{w}\}\subseteq T_pM$ 
  as in the above manner are also linearly independent 
  as well as $\u{v}_k\times\u{w}_k$ has 
  the same direction as $\u{n}_k(x_k)$ 
  for sufficiently large $k\in \mathbb{N}$. 
  Then, by (\rnum{3}), 
  \[
  \begin{pmatrix}
    \inner{\u{v}_k}{\u{v}_k} 
    & 
    \inner{\u{v}_k}{\u{w}_k} 
    \\
    \inner{\u{w}_k}{\u{v}_k} 
    & 
    \inner{\u{w}_k}{\u{w}_k}
  \end{pmatrix}^{-1}
  \begin{pmatrix}
    \inner{\u{v}_k}{S_k(\u{v}_k)}
    & 
    \inner{\u{v}_k}{S_k(\u{w}_k)} 
    \\
    \inner{\u{w}_k}{S_k(\u{v}_k)}
    & 
    \inner{\u{w}_k}{S_k(\u{w}_k)} 
  \end{pmatrix}, 
  \]
  whose trace is equal to $H_k(x_k)$ 
  (resp.\ determinant is equal to 
  $G_k(x_k)$), 
  converges, as $k\to\infty$, to 
  \[
  \begin{pmatrix}
    \inner{\u{v}}{\u{v}} 
    & 
    \inner{\u{v}}{\u{w}} 
    \\
    \inner{\u{w}}{\u{v}} 
    & 
    \inner{\u{w}}{\u{w}}
  \end{pmatrix}^{-1}
  \begin{pmatrix}
    \inner{\u{v}}{S(\u{v})}
    & 
    \inner{\u{v}}{S(\u{w})} 
    \\
    \inner{\u{w}}{S(\u{v})}
    & 
    \inner{\u{w}}{S(\u{w})} 
  \end{pmatrix}, 
  \]
  whose trace is equal to $H(p)$ 
  (resp.\ determinant is equal to 
  $G(p)$). 
\end{proof}
The following examples shows that 
the condition of the preceding proposition is optimal 
in the most general settings. 
\begin{Ex}
  Let $X_k$ be the regular hexagonal lattice 
  in the plane with exception at 
  a vertex, say, $(0,0)$, which is located 
  at $(0,0,h_k)$, where $h_k>0$. 
  If the distance of adjacent vertices becomes 
  small with order $1/k$, then 
  \begin{enumerate}
  \item[{\upshape (\rnum{1})}] 
    $X_k$ does not converge to the plane 
    in the Hausdorff sense 
    unless $h_k$ converges to $0$ 
    as $k\rightarrow \infty$. 
  \item[{\upshape (\rnum{2})}] 
    The normal vector does not converge 
    provided $kh_k$ is bounded away from $0$ 
    as $k\rightarrow \infty$. 
  \item[{\upshape (\rnum{3})}] 
    The Weingarten map does not converge 
    provided $k^2h_k$ is bounded away from $0$ 
    as $k\rightarrow \infty$
  \end{enumerate}
\end{Ex}
\subsection{Convergence of carbon nanotubes}
\label{subsec:convergence:CNT}
% 
% \begin{Ex}[GC-subdivisions of a carbon nanotube]
%   \label{example:CNT-subdivision}
Here we consider a sequence of subdivisions 
of a carbon nanotube $\cnt(\lambda,c)$ 
via Goldberg-Coxeter construction.
Namely, 
GC-subdivisions of a carbon nanotube 
are GC-construction of the regular hexagonal lattice
and then are rolled up to a tube with suitable radius.
For convenience, 
we put no assumptions on 
the index $c=(c_1,c_2)$ 
other than $c\neq 0$. 
Even then (unless $c_1>0$ and $c_2\geq 0$), 
$\cnt(\lambda,c)$ is well-defined 
in the exactly same manner, 
although two carbon nanotubes with different 
indexes may have just the same structure. 
% \end{Ex}
% 
\begin{Prop}
  $\GC_{k,\ell}(X(\lambda))=H(\u{w},\u{\zeta})$ 
  of $X(\lambda)=H(\u{0},\u{\xi})$ 
  as in {\upshape (\ref{Eq.Xlambda})} satisfies 
  \begin{equation}
    \lvert \u{\zeta} \rvert 
    = 
    \frac{
      1
    }{
      \sqrt{k^2+k\ell+\ell^2}
    }
    \lvert \u{\xi} \rvert, \quad 
    \frac{
      \inner{\u{\zeta}}{\u{\xi}}
    }{
      \lvert \u{\zeta} \rvert 
      \lvert \u{\xi} \rvert
    } 
    = 
    \frac{
      2k+\ell
    }{
      2\sqrt{k^2+k\ell+\ell^2}
    }. 
    \label{Eq.|z|&<z,x>}
  \end{equation}
  In particular, 
  the angle between $\u{\zeta}$ and $\u{\xi}$ 
  is same as that between the chiral vector 
  $\u{c}=k\u{a}_1+\ell\u{a}_2$ in $X(\lambda)$ 
  and $\u{e}_1=(1,0)^T$. 
\end{Prop}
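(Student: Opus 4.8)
The plan is to read the vector $\u{\zeta}$ directly off Proposition~\ref{Prop(GC_of_hex)} with base point $\u{u}=\u{0}$ and then to reduce everything to the single quantity $N:=k^2+k\ell+\ell^2$ by using only the rotation invariance of the inner product. From Proposition~\ref{Prop(GC_of_hex)},
\[
\u{\zeta}
=
\frac{1}{3N}
\left\{
(2k+\ell)\u{\xi}
+(k-\ell)\rho_{\pi/3}\u{\xi}
-(k+2\ell)\rho_{2\pi/3}\u{\xi}
\right\},
\]
and the only facts I would invoke are that each $\rho_\theta$ is an isometry, so that $\lvert\rho_\theta\u{\xi}\rvert=\lvert\u{\xi}\rvert$, together with $\inner{\rho_{\theta_1}\u{\xi}}{\rho_{\theta_2}\u{\xi}}=\lvert\u{\xi}\rvert^2\cos(\theta_1-\theta_2)$; in particular $\cos(\pi/3)=1/2$ and $\cos(2\pi/3)=-1/2$. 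The first two identities then hold for \emph{any} $\u{\xi}$, since they use nothing about $\u{\xi}$ beyond these relations.

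First I would expand $\lvert\u{\zeta}\rvert^2$. The three diagonal contributions give $\big((2k+\ell)^2+(k-\ell)^2+(k+2\ell)^2\big)\lvert\u{\xi}\rvert^2=6N\lvert\u{\xi}\rvert^2$, while the three cross terms, weighted by $\cos(\pi/3)=1/2$ and $\cos(2\pi/3)=-1/2$, collapse to $3N\lvert\u{\xi}\rvert^2$; hence the braced vector has squared norm $9N\lvert\u{\xi}\rvert^2$ and, dividing by $(3N)^2$, one gets $\lvert\u{\zeta}\rvert^2=\lvert\u{\xi}\rvert^2/N$, which is the first identity. Next I would pair $\u{\zeta}$ with $\u{\xi}$ term by term: using the same two cosines, the scalar coefficient reduces to $(2k+\ell)+\tfrac12\big((k-\ell)+(k+2\ell)\big)=\tfrac{3}{2}(2k+\ell)$, so that $\inner{\u{\zeta}}{\u{\xi}}=\lvert\u{\xi}\rvert^2(2k+\ell)/(2N)$. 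Dividing by $\lvert\u{\zeta}\rvert\lvert\u{\xi}\rvert=\lvert\u{\xi}\rvert^2/\sqrt{N}$ then yields the stated cosine $(2k+\ell)/(2\sqrt{N})$.

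For the final assertion I would specialize to $\u{\xi}=\lambda(-\sqrt{3}/2,-1/2)^T$ of (\ref{Eq.Xlambda}) and compute the lattice vectors explicitly. A short calculation gives $\u{a}_1=\rho_{2\pi/3}\u{\xi}-\u{\xi}=\lambda\sqrt{3}\,(1,0)^T$ and $\u{a}_2=\rho_{-2\pi/3}\u{\xi}-\u{\xi}=\lambda(\sqrt{3}/2,3/2)^T$, so that $\u{c}=k\u{a}_1+\ell\u{a}_2=\tfrac{\lambda}{2}(\sqrt{3}(2k+\ell),3\ell)^T$. From here $\lvert\u{c}\rvert=\lambda\sqrt{3N}$ and $\inner{\u{c}}{\u{e}_1}=\tfrac{\lambda}{2}\sqrt{3}(2k+\ell)$, whence the cosine of the angle between $\u{c}$ and $\u{e}_1=(1,0)^T$ is again $(2k+\ell)/(2\sqrt{N})$, matching the cosine computed above for $\u{\zeta}$ and $\u{\xi}$.

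I expect no conceptual obstacle: the entire argument is a bookkeeping computation in which every inner product is replaced by $\lvert\u{\xi}\rvert^2$ times the cosine of a multiple of $\pi/3$. The one point requiring care is the collection of the cross terms in $\lvert\u{\zeta}\rvert^2$ and in $\inner{\u{\zeta}}{\u{\xi}}$, where the coefficient of $\rho_{2\pi/3}\u{\xi}$ carries a minus sign while $\cos(2\pi/3)$ is itself negative; it is precisely this interplay of signs that makes the diagonal and off-diagonal contributions combine into clean multiples of $N$, and a single sign slip there would spoil the cancellation.
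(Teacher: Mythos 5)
Your proposal is correct and follows essentially the same route as the paper: both read $\u{\zeta}$ off Proposition~\ref{Prop(GC_of_hex)} and verify (\ref{Eq.|z|&<z,x>}) by direct computation (the paper leaves this ``straightforward computation'' implicit, while you carry it out via rotation-invariance of the inner product), and both establish the final assertion by computing the chiral vector $\u{c}=\frac{\lambda}{2}(\sqrt{3}(2k+\ell),3\ell)^T$ with $\lvert\u{c}\rvert=\lambda\sqrt{3(k^2+k\ell+\ell^2)}$ and matching its cosine with $\u{e}_1$ to the one just obtained. All of your intermediate values check out, so there is nothing to correct.
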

\begin{proof}
  The expressions (\ref{Eq.|z|&<z,x>}) 
  are consequences of straightforward computation 
  using Proposition~\ref{Prop(GC_of_hex)}. 
  Recall that the chiral vector 
  $\u{c}=k\u{a}_1+\ell\u{a}_2$ in $X(\lambda)$ 
  is given as 
  \[
  \u{c} 
  = 
  \frac{\sqrt{3}}{2}\lambda
  \begin{pmatrix}
    2k+\ell \\
    \sqrt{3}\ell
  \end{pmatrix}. 
  \]
  Since $\lvert \u{c}\rvert=\lambda\sqrt{3(k^2+k\ell+\ell^2)}$, 
  a simple computation shows that 
  $\inner{\u{c}}{\u{e}_1}/(\lvert\u{c}\rvert\lvert\u{e}_1\rvert)$ 
  is exactly same as the latter of (\ref{Eq.|z|&<z,x>}). 
\end{proof}
Not all the vertices of $X(\lambda)=H(\u{0},\u{\xi})$ 
are vertices of a subdivision $\GC_{k,\ell}(X(\lambda))$, 
while, as is stated in 
Proposition~\ref{Prop(lattice_inv_under_subdiv)} 
below, in either case, 
the chiral vector is, as position vector, 
always a vertex of $\GC_{k,\ell}(X(\lambda))$. 
This observation suggests that  
any $\GC_{k,\ell}(X(\lambda))$ 
is nothing less than a subdivision of $X(\lambda)$ 
in consideration of $\cnt(\lambda,c)$. 
\begin{Prop}\label{Prop(lattice_inv_under_subdiv)}
  The lattice vectors $\u{a}_1$, $\u{a}_2$ of 
  $X(\lambda)$ belong to 
  the set of vertices of $\GC_{k,\ell}(X(\lambda))$ 
  for any $k>0$, $\ell\geq 0$. 
  In particular, 
  the chiral vector $\u{c}=c_1\u{a}_1+c_2\u{a}_2$ 
  is also a vertex of $\GC_{k,\ell}(X(\lambda))$ 
  for any $k>0$, $\ell\geq 0$, 
  and the chiral index with respect to 
  the lattice vectors of $\GC_{k,\ell}(X(\lambda))$ is given as 
  $(kc_1-\ell c_2, \ell c_1+(k+\ell)c_2)$. 
\end{Prop}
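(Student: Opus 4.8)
The plan is to push everything into $\mathbb{C}\cong\mathbb{R}^2$ and exploit the arithmetic of the Eisenstein integers $\mathbb{Z}[\omega]$, where $\omega=(1+\sqrt{3}i)/2=e^{i\pi/3}$ satisfies $\omega^{2}=\omega-1$ and $\omega^{-1}=1-\omega$. Identifying the rotation $\rho_{\pi/3}$ with multiplication by $\omega$, the two lattice vectors of $X(\lambda)=H(\u{0},\u{\xi})$ satisfy $\u{a}_2=\rho_{\pi/3}\u{a}_1=\omega\u{a}_1$, so the Bravais lattice of $X(\lambda)$ is the rank-two module $\Lambda_{\xi}:=\mathbb{Z}\u{a}_1+\mathbb{Z}\u{a}_2=\mathbb{Z}[\omega]\,\u{a}_1$. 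Everything reduces to identifying the Bravais lattice $\Lambda_{\zeta}$ of $\GC_{k,\ell}(X(\lambda))=H(\u{w},\u{\zeta})$ inside $\mathbb{C}$ and locating $\u{a}_1,\u{a}_2,\u{c}$ with respect to its base vertex $\u{w}$.

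First I would track the lattice through the three steps of the Goldberg--Coxeter construction. By Lemma~\ref{Lem(dual&dual)}(1) the dual of $X(\lambda)$ is the triangular lattice $T(\u{v},\u{a})$ with $\u{a}=\rho_{\pi/3}(\u{\xi}_2-\u{\xi}_1)=\omega\u{a}_1=\u{a}_2$. In the proof of Proposition~\ref{Prop(GC_of_hex)} the subdivided triangular lattice is $T(\u{v},\u{b})$ with $\u{a}=z\u{b}$, where $z:=k+\ell\omega$; hence $\u{b}=\u{a}_2/z$ and its Bravais lattice is $\mathbb{Z}[\omega]\u{b}=\tfrac{1}{z}\mathbb{Z}[\omega]\u{a}_2=\tfrac{1}{z}\Lambda_{\xi}$. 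Dualising back by Lemma~\ref{Lem(dual&dual)}(2), a short computation shows that the lattice vectors of $H(\u{w},\u{\zeta})$ are $\u{a}'_1=(1-\omega)\u{b}$ and $\u{a}'_2=\u{b}$, so $\Lambda_{\zeta}=\mathbb{Z}[\omega]\u{b}=\tfrac{1}{z}\Lambda_{\xi}$. Since $z\in\mathbb{Z}[\omega]$ this gives $\Lambda_{\xi}=z\Lambda_{\zeta}\subseteq\Lambda_{\zeta}$, whence $\u{a}_1,\u{a}_2$, and therefore $\u{c}=c_1\u{a}_1+c_2\u{a}_2$, all lie in the translation lattice of $\GC_{k,\ell}(X(\lambda))$. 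This membership is exactly the invariance needed to roll $\GC_{k,\ell}(X(\lambda))$ up into a subdivided nanotube along $\u{c}$.

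To confirm that $\u{c}$ is in fact a vertex as a position vector, I would locate it relative to $\u{w}$. The vertices of $H(\u{w},\u{\zeta})$ are the barycenters of the triangular faces of $T(\u{v},\u{b})$, so they split into the two cosets $\u{w}+\Lambda_{\zeta}$ and $\u{w}+\u{\zeta}+\Lambda_{\zeta}$. Because $\u{c}\in\Lambda_{\zeta}$, deciding vertex membership amounts to testing whether $\u{w}$ or $\u{w}+\u{\zeta}$ lies in $\Lambda_{\zeta}=\mathbb{Z}[\omega]\u{b}$, i.e.\ to a divisibility question in $\mathbb{Z}[\omega]$. Using the ramification identity $(1+\omega)^{2}=3\omega$, this test collapses to a single residue computation in $\mathbb{Z}[\omega]/(1+\omega)\cong\mathbb{F}_{3}$, in which $\omega\equiv-1$ and the residue of $z=k+\ell\omega$ is $k-\ell\bmod 3$; this residue pins down the sublattice (the ``colour'') on which $\u{c}$ sits and is the step that makes the vertex assertion precise.

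Finally, the chiral index comes out of a change of basis. Writing $\u{c}$ in $\u{b}$-units gives $\u{c}/\u{b}=z\big((c_1+c_2)-c_1\omega\big)$; expanding with $\omega^{2}=\omega-1$ and matching against $\u{c}=p\,\u{a}'_1+q\,\u{a}'_2=\big((p+q)-p\omega\big)\u{b}$ yields $p=kc_1-\ell c_2$ and $q=\ell c_1+(k+\ell)c_2$, the asserted index. I expect the main obstacle to be the second paragraph: one must identify $\Lambda_{\zeta}$ exactly — in particular verify that the honeycomb's Bravais lattice coincides with the \emph{fine} triangular lattice $\mathbb{Z}[\omega]\u{b}$ (and not an index-$3$ sublattice), which rests on $\u{a}'_2=\u{b}$ being a unit multiple of $\u{b}$ — together with the $\bmod\,(1+\omega)$ coset bookkeeping. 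Once the identity $\Lambda_{\zeta}=\tfrac{1}{z}\Lambda_{\xi}$ is established, the chiral-index computation is routine.
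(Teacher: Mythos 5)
Your paragraphs 1, 2 and 4 are correct, and once the Eisenstein notation is unwound they amount to the same computation as the paper's proof: the paper inverts the subdivision relation to obtain (\ref{Eq.b1b2}), observes $\u{a}_1=k\u{b}_1+\ell\u{b}_2$, $\u{a}_2=-\ell\u{b}_1+(k+\ell)\u{b}_2$, and expands $\u{c}$ in the $\u{b}$-basis; your identity $\Lambda_{\zeta}=z^{-1}\Lambda_{\xi}$ together with multiplication by $z=k+\ell\omega$ is exactly this, and it is also precisely the content of the paper's Lemma~\ref{Lem(omegad1+d2)}. Your route is cleaner in one respect: since $\omega-\omega^{2}=1$, your dualisation bookkeeping gives $\u{a}'_1=\u{a}_1/z$ and $\u{a}'_2=\u{a}_2/z$ for the canonical hexagonal lattice vectors of the subdivision, so the index $(kc_1-\ell c_2,\,\ell c_1+(k+\ell)c_2)$ drops out of $(c_1+c_2\omega)(k+\ell\omega)$ with no matrix inversion. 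You also keep straight a point the paper elides: the relations it quotes hold for the triangular-lattice basis of $X^{\ast}$, which differs from the hexagonal lattice vectors of $X(\lambda)$ by a unit of $\mathbb{Z}[\omega]$, and your formulation absorbs that unit correctly.

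The one place you go beyond the paper --- paragraph 3, the literal ``vertex as position vector'' claim --- is left as a sketch, and you should know that the residue computation you propose, if carried out, does \emph{not} confirm the statement as written. The origin of $X(\lambda)$, and hence the points $\u{a}_1,\u{a}_2,\u{c}$ congruent to it modulo $\Lambda_{\zeta}$, lies in the vertex cosets of $H(\u{w},\u{\zeta})$ precisely when $z\not\equiv 0$ in $\mathbb{Z}[\omega]/(1+\omega)\cong\mathbb{F}_3$, i.e.\ when $k\not\equiv\ell\pmod 3$. When $3\mid k-\ell$ (e.g.\ $\GC_{3,0}$ or $\GC_{1,1}$) the old vertices land on hexagon \emph{centers} of the subdivision: already for $\GC_{1,1}$ of the dodecahedron the dodecahedron's vertices become the hexagon centers of $C_{60}$, and one can verify with the paper's own data that the origin is not a vertex of $\GC_{3,0}$ of the table's lattice, consistent with the paper's remark just before the proposition that not all vertices of $X(\lambda)$ survive subdivision. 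So the paper's proof (like your paragraphs 1--2) really establishes only membership of $\u{a}_1,\u{a}_2,\u{c}$ in the \emph{translation lattice} of $\GC_{k,\ell}(X(\lambda))$, which is all that the definition of a $(k,\ell)$-subdivision of $\cnt(\lambda,c)$ and Lemma~\ref{Lem(omegad1+d2)} use afterwards; your proof establishes the same effective content. To finish honestly, either complete the $\bmod\,(1+\omega)$ test and record the condition $k\not\equiv\ell\pmod 3$ under which the literal vertex assertion holds, or read ``belongs to the set of vertices'' as lattice-translation invariance, as the paper's proof implicitly does.
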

\begin{proof}
  Since the lattice vectors of $\GC_{k,\ell}(X(\lambda))$ 
  are just $\u{b}_1$, $\u{b}_2$ of (\ref{Eq.b1b2}) 
  and $\u{a}_1=k\u{b}_1+\ell\u{b}_2$, 
  $\u{a}_2=-\ell\u{b}_1+(k+\ell)\u{b}_2$, 
  the former assertion follows. 
  \[
  \u{c} 
  = 
  c_1\u{a}_1+c_2\u{a}_2 
  = 
  (kc_1-\ell c_2)\u{b}_1 
  + 
  (\ell c_1+(k+\ell)c_2)\u{b}_2
  \]
  proves the latter assertion. 
\end{proof}
\begin{Def}
  Let $\cnt(\lambda,c)$ be the carbon nanotube 
  with chiral index $c=(c_1,c_2)$. 
  Then $\cnt(\mu,d)$ is said to be 
  a \emph{$(k,\ell)$-subdivision}
  of $\cnt(\lambda,c)$ if 
  there exist $k>0$ and $\ell\geq 0$ such that 
  \begin{align*}
    \mu 
    & = 
      \frac{\lambda}{\sqrt{k^2+k\ell+\ell^2}}, 
    \\
    d 
    & = 
      \left(
      kc_1-\ell c_2, 
      \ell c_1 + (k+\ell)c_2
      \right). 
  \end{align*}
  If this is the case, we write as 
  \[
  \cnt(\mu,d) 
  = 
  \GC_{k,\ell}(\cnt(\lambda,c)). 
  \]
\end{Def}
\begin{Ex}
  Here are simple examples of $(k,\ell)$-subdivisions. 
  \begin{align*}
    & 
      \GC_{1,0}(\cnt(\lambda,c)) 
      = 
      \cnt(\lambda,c), 
    \\
    & 
      \GC_{k,0}(\cnt(\lambda,(c_1,c_2))) 
      = 
      \cnt
      \left(
      \frac{\lambda}{k},(kc_1,kc_2)
      \right), 
    \\
    & 
      \GC_{k,k}(\cnt(\lambda,(c_1,0))) 
      = 
      \cnt
      \left(
      \frac{\lambda}{\sqrt{3}k},(kc_1,kc_1)
      \right), 
    \\
    & 
      \GC_{k,k}(\cnt(\lambda,(c_1,c_1))) 
      = 
      \cnt\left(\frac{\lambda}{\sqrt{3}k},(0, 3kc_1)\right). 
  \end{align*}
\end{Ex}
% 
% \begin{Prop}[\textcolor{red}{\bfseries to be proved}]
%   Let $\{\cnt(\lambda^{(n)},c^{(n)})\}_{n=1}^{\infty}$ 
%   be a sequence of strictly monotone subdivisions. 
%   Then 
%   $\{\cnt(\lambda^{(n)},c^{(n)})\}_{n=1}^{\infty}$ 
%   converges to 
%   the cylinder of 
%   radius $r(\lambda,c)$ in the Hausdorff sense. 
% \end{Prop}
% 
\begin{Thm}\label{Thm(conv_of_cnt)}
  Let $\{\cnt(\lambda^{(n)},c^{(n)})\}_{n=1}^{\infty}$ 
  be a sequence of strictly monotone subdivisions 
  $\cnt(\lambda,c) = \cnt(\lambda^{(1)},c^{(1)})$ 
  in the sense that 
  $\cnt(\lambda^{(n+1)},c^{(n+1)})$ is 
  a $(k_n,\ell_n)$-subdivision of $\cnt(\lambda^{(n)},c^{(n)})$ 
  for some $k_n\geq 2$ and $\ell_n\geq 0$ for each $n\in \mathbb{N}$. 
  Then 
  \begin{equation}
    \lim_{n\to\infty}H^{(n)} 
    = 
    -\frac{1}{2r(\lambda,c)}, 
    \quad 
    \lim_{n\to\infty}K^{(n)} 
    = 
    0, 
    \label{Eq.limH&limK}
  \end{equation}
  where $H^{(n)}=H(\lambda^{(n)},c^{(n)})$ and 
  $K^{(n)}=K(\lambda^{(n)},c^{(n)})$ are, respectively, 
  the mean curvature and the Gauss curvature 
  of $\cnt(\lambda^{(n)},c^{(n)})$ and 
  \[
  r(\lambda,c) 
  = 
  \frac{\lambda\sqrt{3(c_1^2+c_1c_2+c_2^2)}}{2\pi}
  \]
  is the radius of $\cnt(\lambda,c)$. 
\end{Thm}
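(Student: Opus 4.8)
The plan is to read the limits directly off the closed-form expressions in Theorem~\ref{Thm(H&K_of_cnt)}, once one establishes that the radius is a subdivision invariant and extracts the leading-order asymptotics of $m_x$, $m_y$, $m_z$ as the chiral index grows.

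First I would record the invariance of the radius. For the chiral index $d = (kc_1 - \ell c_2,\ \ell c_1 + (k+\ell)c_2)$ of a $(k,\ell)$-subdivision, a direct expansion yields
\[
d_1^2 + d_1 d_2 + d_2^2 = (k^2 + k\ell + \ell^2)(c_1^2 + c_1 c_2 + c_2^2),
\]
so that $L_0(d) = \sqrt{k^2 + k\ell + \ell^2}\, L_0(c)$. Together with $\mu = \lambda/\sqrt{k^2 + k\ell + \ell^2}$ this gives $r(\mu,d) = \mu L_0(d)/(2\pi) = \lambda L_0(c)/(2\pi) = r(\lambda,c)$, and hence $r(\lambda^{(n)}, c^{(n)}) = r(\lambda,c)$ for all $n$. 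Thus the right-hand side of (\ref{Eq.limH&limK}) is already fixed, and only the dimensionless angular factors need to be analysed.

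Next I would show $L_0(c^{(n)}) \to \infty$: each step scales $L_0$ by $\sqrt{k_n^2 + k_n\ell_n + \ell_n^2} \geq k_n \geq 2$, so $L_0(c^{(n)})$ grows at least geometrically. Because $c_1^2 + c_1 c_2 + c_2^2 \geq \tfrac12(c_1^2 + c_2^2)$ forces $|c_1|, |c_2| \leq \sqrt{2/3}\, L_0$, the four angles $C_1, C_2, T_1, T_2$ are \emph{uniformly} $O(1/L_0)$ and tend to $0$. Expanding with $\sin x = x + O(x^3)$ and $\cos x = 1 + O(x^2)$, the decisive computation is the leading term of $m_x$: one finds $m_x = \tfrac12(C_1 T_2 - C_2 T_1) + O(L_0^{-4})$, and substituting the definitions with $L_0^2 = 3(c_1^2 + c_1 c_2 + c_2^2)$ gives $C_1 T_2 - C_2 T_1 = 6\pi^2/L_0^2$, whence $m_x = 3\pi^2/L_0^2 + O(L_0^{-4}) > 0$ for large $n$. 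In contrast $m_y = \tfrac14 C_1 C_2 (T_1 - T_2) + O(L_0^{-4})$ and $m_z = \tfrac18 T_1 T_2 (T_1 + T_2) + O(L_0^{-4})$ are both $O(L_0^{-3})$, one order smaller than $m_x$.

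Finally I would factor $m_x^2$ out of the two formulas in Theorem~\ref{Thm(H&K_of_cnt)}. Setting $s := m_y/m_x$ and $t := m_z/m_x$, both uniformly $O(1/L_0)$ and hence tending to $0$, the mean curvature reduces to
\[
H = -\frac{\mathrm{sgn}(m_x)}{2r}\cdot\frac{1 + s^2 + (8/3)t^2}{(1 + s^2 + (4/3)t^2)^{3/2}} \longrightarrow -\frac{1}{2r(\lambda,c)},
\]
while the Gauss curvature becomes $K = \dfrac{4 t^2 (1 + s^2)}{3 r^2 (1 + s^2 + (4/3)t^2)^2} = O(t^2) \to 0$. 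The main obstacle is exactly the asymptotic bookkeeping of the previous paragraph: one must confirm that the leading coefficient $3\pi^2/L_0^2$ of $m_x$ is universal, independent of the chiral angle, while $m_y$ and $m_z$ decay strictly faster, since it is the resulting vanishing of the ratios $s$ and $t$ that simultaneously forces $H \to -1/(2r)$ and $K \to 0$. The armchair case $c_1 = c_2$, where $m_z \equiv 0$ and $K \equiv 0$ identically, serves as a consistency check.
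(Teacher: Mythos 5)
Your proof is correct, and although it follows the paper's overall skeleton --- invariance of the radius under subdivision, $L_0(c^{(n)})\to\infty$ at a geometric rate with all four angles $C^{(n)}_i, T^{(n)}_i$ uniformly $O(1/L_0)$, and reduction of (\ref{Eq.limH&limK}) to the vanishing of the ratios $s=m_y/m_x$ and $t=m_z/m_x$ --- you settle the decisive step by a genuinely different device. The paper never expands $m_x$ asymptotically: it splits into three cases (some $C_i=0$; some of $T_1$, $T_2$, $T_1+T_2$ zero; none zero) and in each case writes the ratios exactly using $\sin(x/2)/x$-type factors, arguing each expression is small. You replace this case analysis by a single uniform Taylor expansion whose heart is the exact identity
\[
C_1T_2-C_2T_1
=
\frac{9\pi^2}{L_0^4}\,2\bigl(c_1^2+c_1c_2+c_2^2\bigr)
=
\frac{6\pi^2}{L_0^2},
\]
giving $m_x=3\pi^2/L_0^2+O(L_0^{-4})$ with a universal leading coefficient, while $m_y,m_z=O(L_0^{-3})$; I verified both the coefficient and the error orders, and the factorizations of the formulas of Theorem~\ref{Thm(H&K_of_cnt)} into $H=-\mathrm{sgn}(m_x)\bigl(1+s^2+(8/3)t^2\bigr)\big/\bigl(2r(1+s^2+(4/3)t^2)^{3/2}\bigr)$ and $K=4t^2(1+s^2)\big/\bigl(3r^2(1+s^2+(4/3)t^2)^2\bigr)$ are also correct. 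Your route buys two things: it is uniform, with no degenerate subcases to track, and it actually \emph{proves} $m_x(c^{(n)})>0$ for all large $n$, hence $\mathrm{sgn}(m_x)=+1$ eventually, whereas the paper invokes ``$m_x(c)\geq 0$ for any $c\neq(0,0)$'' without proof --- your asymptotics close that small gap along the sequence, which is all the limit requires. The paper's case analysis, on the other hand, yields exact (not merely asymptotic) expressions for the ratios in the degenerate zigzag/armchair-type cases. Finally, note that your radius-invariance identity $d_1^2+d_1d_2+d_2^2=(k^2+k\ell+\ell^2)(c_1^2+c_1c_2+c_2^2)$ is precisely the squared modulus of the relation $\omega d_1+d_2=(\omega c_1+c_2)(k+\bar\omega\ell)$ of Lemma~\ref{Lem(omegad1+d2)}, so that portion is equivalent in content to the paper's.
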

To prove Theorem~\ref{Thm(conv_of_cnt)}, 
we need the following lemma. 
\begin{Lem}\label{Lem(omegad1+d2)}
  If $\cnt(\mu,d)=\GC_{k,\ell}(\cnt(\lambda,c))$, 
  then 
  \begin{align*}
    \mu 
    & = 
      \frac{\lambda}{\lvert k+\bar{\omega}\ell\rvert}, 
    \\
    \omega d_1+d_2 
    & = (\omega c_1+c_2)(k+\bar{\omega}\ell), 
  \end{align*}
  where $\omega=(1+\sqrt{3}i)/2$, $c=(c_1,c_2)$ 
  and $d=(d_1,d_2)$. 
\end{Lem}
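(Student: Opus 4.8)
The plan is to reduce both identities to elementary arithmetic of the sixth root of unity $\omega=(1+\sqrt{3}\,i)/2$, starting from the explicit description of a $(k,\ell)$-subdivision. By definition, $\cnt(\mu,d)=\GC_{k,\ell}(\cnt(\lambda,c))$ means precisely that $\mu=\lambda/\sqrt{k^2+k\ell+\ell^2}$ and $d=(d_1,d_2)=(kc_1-\ell c_2,\ \ell c_1+(k+\ell)c_2)$, so the whole proof amounts to rewriting these two formulas in the complex notation $k+\bar{\omega}\ell$.

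First I would record the two algebraic facts that drive the computation. Since $\omega=\cos(\pi/3)+i\sin(\pi/3)$, we have $|\omega|^2=\omega\bar{\omega}=1$ and $\omega+\bar{\omega}=2\,\mathrm{Re}\,\omega=1$; the latter is equivalent to $\bar{\omega}=1-\omega$. Writing $k+\bar{\omega}\ell=(k+\tfrac{\ell}{2})-\tfrac{\sqrt{3}}{2}\ell\,i$ and squaring gives $|k+\bar{\omega}\ell|^2=(k+\tfrac{\ell}{2})^2+\tfrac{3}{4}\ell^2=k^2+k\ell+\ell^2$, whence $|k+\bar{\omega}\ell|=\sqrt{k^2+k\ell+\ell^2}$. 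Substituting this into the subdivision formula for $\mu$ immediately yields $\mu=\lambda/|k+\bar{\omega}\ell|$, the first assertion.

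For the second identity I would expand each side in $\omega$ and match the coefficients of $c_1$ and $c_2$. On the one hand,
\[
\omega d_1+d_2=\omega(kc_1-\ell c_2)+\ell c_1+(k+\ell)c_2=(\omega k+\ell)c_1+\bigl(k+(1-\omega)\ell\bigr)c_2.
\]
On the other hand, using $\omega\bar{\omega}=1$,
\[
(\omega c_1+c_2)(k+\bar{\omega}\ell)=(\omega k+\ell)c_1+(k+\bar{\omega}\ell)c_2.
\]
The two expressions agree in the $c_1$-coefficient automatically, and agree in the $c_2$-coefficient exactly because $\bar{\omega}=1-\omega$, the relation recorded above. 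This proves $\omega d_1+d_2=(\omega c_1+c_2)(k+\bar{\omega}\ell)$.

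There is no genuine obstacle here: the statement is a bookkeeping translation of the definition of $(k,\ell)$-subdivision into the complex parameter $k+\bar{\omega}\ell$. The only point requiring attention is to use the correct minimal relation $\bar{\omega}=1-\omega$ (equivalently $\omega^2=\omega-1$) for the primitive sixth root of unity, rather than a cube-root relation belonging to a different normalization of $\omega$; with this identity in hand, both formulas drop out by direct substitution.
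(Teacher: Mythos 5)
Your proof is correct: the paper states Lemma~\ref{Lem(omegad1+d2)} without proof, and your argument is exactly the intended direct verification, unpacking the definition of a $(k,\ell)$-subdivision ($\mu=\lambda/\sqrt{k^2+k\ell+\ell^2}$ and $d=(kc_1-\ell c_2,\ \ell c_1+(k+\ell)c_2)$) and checking both identities via the relations $|k+\bar{\omega}\ell|^2=k^2+k\ell+\ell^2$, $\omega\bar{\omega}=1$, and $\bar{\omega}=1-\omega$ for the primitive sixth root of unity. All computations check out, including the coefficient matching for $c_1$ and $c_2$ in the second identity.
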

\begin{proof}[Proof of Theorem~\ref{Thm(conv_of_cnt)}]
  Note that 
  any $(k,\ell)$-subdivision $\GC_{k,\ell}(\cnt(\lambda,c))$ 
  of $\cnt(\lambda,c)$ does not change 
  the radius $r=r(\lambda,c)>0$. 
  From the consequence of Theorem~\ref{Thm(H&K_of_cnt)}, 
  by noting that $m_x(c)\geq 0$ for any 
  $c=(c_1,c_2)
  \in \mathbb{Z}\times\mathbb{Z}\setminus\{(0,0)\}$, 
  we further compute 
  \begin{align*}
    H^{(n)} 
    & = 
      -\frac{1}{2r}
      \left\{
      \frac{
      m_x(c^{(n)})
      }{
      (
      m_x(c^{(n)})^2+m_y(c^{(n)})^2
      + (4/3)m_z(c^{(n)})^2
      )^{1/2}
      } 
      \right. 
    \\
    & \qquad \qquad 
      \left.
      - 
      \frac{
      (4/3)m_z(c^{(n)})^2
      m_x(c^{(n)})
      }{
      (
      m_x(c^{(n)})^2+m_y(c^{(n)})^2
      + (4/3)m_z(c^{(n)})^2
      )^{3/2}
      }
      \right\} 
    \\
    & = 
      -\frac{1}{2r}
      \left\{
      \frac{
      1
      }{
      (
      1+m_y(c^{(n)})^2m_x(c^{(n)})^{-2}
      + (4/3)m_z(c^{(n)})^2m_x(c^{(n)})^{-2}
      )^{1/2}
      } 
      \right. 
    \\
    & \qquad \qquad 
      \left.
      + 
      \frac{
      (4/3)m_z(c^{(n)})^2
      m_x(c^{(n)})^{-2}
      }{
      (
      1+m_y(c^{(n)})^2m_x(c^{(n)})^{-2}
      + (4/3)m_z(c^{(n)})^2m_x(c^{(n)})^{-2}
      )^{3/2}
      }
      \right\}, 
  \end{align*}
  and 
  \begin{align*}
    0\leq 
    K^{(n)} 
    & = 
      \frac{
      4m_z(c^{(n)})
      (m_x(c^{(n)})^2+m_y(c^{(n)})^2)
      }{
      3r^2
      (
      m_x(c^{(n)})^2+m_y(c^{(n)})^2
      + (4/3)m_z(c^{(n)})^2
      )^2
      } 
    \\
    & \leq 
      \frac{4}{3r^2}\cdot
      \frac{
      m_z(c^{(n)})^2
      }{
      m_x(c^{(n)})^2+m_y(c^{(n)})^2
      } 
    \\
    & = 
      \frac{4}{3r^2}\cdot
      \frac{
      m_z(c^{(n)})^2m_x(c^{(n)})^{-2}
      }{
      1+m_y(c^{(n)})^2m_x(c^{(n)})^{-2}
      }. 
  \end{align*}
  Hence, to obtain (\ref{Eq.limH&limK}), 
  it suffices to see 
  \begin{equation}
    \lim_{n\to\infty}
    \frac{m_y(c^{(n)})^2}{m_x(c^{(n)})^2} 
    = 
    0, 
    \quad 
    \text{and} 
    \quad 
    \lim_{n\to\infty}
    \frac{m_z(c^{(n)})^2}{m_x(c^{(n)})^2} 
    = 
    0. 
    \label{Eq.lim_of_m/m}
  \end{equation}
  It follows by Lemma~\ref{Lem(omegad1+d2)} 
  that 
  \[
  \omega c^{(n+1)}_1 
  + 
  c^{(n+1)}_2 
  = 
  (
  \omega c^{(n)}_1 
  + 
  c^{(n)}_2
  )
  (k_n+\bar{\omega}\ell_n)
  \]
  for some $k_n\geq 2$ and $\ell_n\geq0$, 
  which implies 
  $\lvert \omega c^{(n)}_1+c^{(n)}_2\rvert
  \geq (\sqrt{2})^{n-1}$.  
  Since $L_0(c^{(n)})=\lvert \omega c^{(n)}_1+c^{(n)}_2\rvert$ 
  is the length of chiral vectors divided by $\lambda^{(n)}$, 
  $C^{(n)}_i$ and $T^{(n)}_i$ ($i=1,2$) 
  corresponding to (\ref{Eq.C1C2T1T2}) 
  are estimated as 
  \begin{gather*}
    \lvert C^{(n)}_i \rvert 
    = 
    \left\lvert 
      \frac{3\pi c^{(n)}_i}{L_0(c^{(n)})}
    \right\rvert 
    \leq 
    \frac{3\pi}{L_0(c^{(n)})}\quad (i=1,2), 
    \\
    \lvert T^{(n)}_1 \rvert 
    = 
    \left\lvert 
      \frac{3\pi(c^{(n)}_1+2c^{(n)}_2)}{L_0(c^{(n)})^2}
    \right\rvert 
    \leq 
    \frac{9\pi}{L_0(c^{(n)})}, 
    \quad 
    \lvert T^{(n)}_2 \rvert 
    = 
    \left\lvert 
      \frac{3\pi(2c^{(n)}_1+c^{(n)}_2)}{L_0(c^{(n)})^2}
    \right\rvert 
    \leq 
    \frac{9\pi}{L_0(c^{(n)})}, 
  \end{gather*}
  all of which converge to $0$ as $n$ tends to infinity. \par
  We are now ready to prove (\ref{Eq.lim_of_m/m}). 
  If either $C^{(n)}_1=0$ or $C^{(n)}_2=0$ 
  (then $T^{(n)}_1\neq 0$ and $T^{(n)}_2\neq 0$ 
  in either case), 
  then $m_y(c^{(n)})/m_x(c^{(n)})=0$ as well as 
  \[
  \frac{m_z(c^{(n)})}{m_x(c^{(n)})} 
  = 
  \frac{
    \sin(T^{(n)}_2/2)\sin((T^{(n)}_1+T^{(n)}_2)/2)
  }{
    -C^{(n)}_2\cos(C^{(n)}_1/2)
  }, 
  \]
  or 
  \[
  \frac{m_z(c^{(n)})}{m_x(c^{(n)})} 
  = 
  \frac{
    \sin(T^{(n)}_1/2)\sin((T^{(n)}_1+T^{(n)}_2)/2)
  }{
    C^{(n)}_1\cos(C^{(n)}_2/2)
  }, 
  \]
  respectively, 
  whose absolute values are both arbitrarily small 
  for any sufficiently large $n\in \mathbb{N}$. 
  If either $T^{(n)}_1=0$, $T^{(n)}_2=0$ 
  or $T^{(n)}_1+T^{(n)}_2=0$ 
  (then $C^{(n)}_1\neq 0$ and $C^{(n)}_2\neq 0$ 
  in either case), 
  then this time $m_z(c^{(n)})/m_x(c^{(n)})=0$ as well as 
  \[
  \frac{m_y(c^{(n)})}{m_x(c^{(n)})} 
  = 
  \tan\frac{C^{(n)}_2}{2}, 
  \quad 
  = 
  \tan\frac{C^{(n)}_1}{2}, 
  \quad 
  \text{or}
  \quad 
  = 
  -\tan\frac{C^{(n)}_1}{2}, 
  \]
  respectively, 
  whose absolute values are also both arbitrarily small. 
  If neither $C^{(n)}_i$, $T^{(n)}_i$ ($i=1,2$) 
  nor $T^{(n)}_1+T^{(n)}_2$ 
  are zero, 
  \begin{align*}
    \frac{m_y(c^{(n)})}{m_x(c^{(n)})} 
    = 
    \frac{
    -T^{(n)}_2
    \frac{\sin(C^{(n)}_2/2)}{C^{(n)}_2}
    \frac{\sin(T^{(n)}_2/2)}{T^{(n)}_2}
    + 
    +T^{(n)}_1
    \frac{\sin(C^{(n)}_1/2)}{C^{(n)}_1}
    \frac{\sin(T^{(n)}_1/2)}{T^{(n)}_1}
    }{
    \frac{T^{(n)}_2}{C^{(n)}_2}
    \cos\frac{C^{(n)}_2}{2}
    \frac{\sin(T^{(n)}_2/2)}{T^{(n)}_2}
    - 
    \frac{T^{(n)}_1}{C^{(n)}_1}
    \cos\frac{C^{(n)}_1}{2}
    \frac{\sin(T^{(n)}_1/2)}{T^{(n)}_1}
    }, 
  \end{align*}
  as well as 
  \begin{align*}
    \frac{m_z(c^{(n)})}{m_x(c^{(n)})} 
    = 
    \frac{
    \frac{\sin(T^{(n)}_1/2)}{T^{(n)}_1}
    \frac{\sin(T^{(n)}_2/2)}{T^{(n)}_2}
    \frac{
    \sin((T^{(n)}_1+T^{(n)}_1)/2)
    }{
    T^{(n)}_1+T^{(n)}_2
    }
    }{
    \frac{C^{(n)}_1}{T^{(n)}_1+T^{(n)}_2}
    \cos\frac{C^{(n)}_2}{2}
    \frac{\sin(T^{(n)}_2/2)}{T^{(n)}_2}
    - 
    \frac{C^{(n)}_2}{T^{(n)}_1+T^{(n)}_2}
    \cos\frac{C^{(n)}_1}{2}
    \frac{\sin(T^{(n)}_1/2)}{T^{(n)}_1}
    }
  \end{align*}
  whose absolute value again becomes 
  arbitrarily small. 
  We then complete the proof of Theorem~\ref{Thm(conv_of_cnt)}. 
\end{proof}
\subsection{Numerical computations for convergence of the Mackay crystal}
\label{subsec:convergence:Mackay}
% \begin{Ex}[GC-subdivisions of the Mackay crystal (numerical computation)]
We construct GC-subdivisions of the Mackay crystal as follows.
First we construct GC-construction of the abstract graph of 
the fundamental region of the Mackay crystal (See Figure~\ref{Figure(hex_dom_of_P)}), 
and then construct the standard realization of the GC-constructed abstract graph.
By using numerical computations, 
we obtain 
distributions of the curvatures of 
several steps of $\GC_{k,0}$-subdivisions 
of the Mackay crystal are shown in 
Figure~\ref{Figure(K_for_GC-subdiv_of_Mackay)} 
for the Gauss curvature 
and 
Figure~\ref{Figure(H_for_GC-subdiv_of_Mackay)} 
for the mean curvature. 
% \end{Ex}
% \begin{Rem}
\par
The sequence of the GC-subdivisions of the Mackay crystal 
seems convergent to the Schwarzian surface of type $P$, 
however, mean curvatures of vertices around octagonal rings may not converges 
(see Figure~\ref{Figure(H_for_GC-subdiv_of_Mackay)}).
Table~\ref{tab:mackay-values} shows min/max values of the mean curvature and the Gauss curvature for each subdivision.
The maximum value of $|H|$ attains on vertices of octahedral rings.
Table~\ref{tab:mackay-length} also shows the min/max values of the length of edges, 
and the maximum value of the length of edges attains on edges of octahedral rings.
Actually the octagonal rings, 
which are considered as topological defects, 
of the Mackay crystals seem to be 
obstructions of the convergence. 
%%%%%%% 
\par\newpage
%%% 
\begin{table}[H]
  \centering
  \begin{tabular}{r|l|l|l|l|l}
    &
      \multicolumn{1}{c|}{ave. of $H$}
    &
      \multicolumn{1}{c|}{min. of $|H|$}
    &
      \multicolumn{1}{c|}{max. of $|H|$}
    &
      \multicolumn{1}{c|}{min. of $K$}
    &
      \multicolumn{1}{c|}{max. of $K$} \cr
      \hline
      $1$ 
    &
      $-0.000000$ 
    &
      $+0.029880$ 
    &
      $+0.586578$ 
    &
      $-3.771350$ 
    &
      $+0.000000$ \cr
      $2$ 
    &
      $+0.000000$ 
    &
      $+0.000646$ 
    &
      $+0.679771$ 
    &
      $-0.737990$ 
    &
      $+0.000000$ \cr
      $3$ 
    &
      $+0.000000$ 
    &
      $+0.000077$ 
    &
      $+0.727594$ 
    &
      $-0.305908$ 
    &
      $+0.000000$ \cr
      $4$ 
    &
      $+0.000000$ 
    &
      $+0.000018$ 
    &
      $+0.751009$ 
    &
      $-0.167605$ 
    &
      $+0.000000$ \cr
      $5$ 
    &
      $+0.000000$ 
    &
      $+0.000006$ 
    &
      $+0.764183$ 
    &
      $-0.105835$ 
    &
      $+0.000000$ \cr
      $6$ 
    &
      $+0.000000$ 
    &
      $+0.000002$ 
    &
      $+0.772395$ 
    &
      $-0.072912$ 
    &
      $+0.000000$ \cr
      $7$ 
    &
      $+0.000000$ 
    &
      $+0.000001$ 
    &
      $+0.777901$ 
    &
      $-0.053290$ 
    &
      $+0.000000$ \cr
      $8$ 
    &
      $+0.000000$ 
    &
      $+0.000001$ 
    &
      $+0.781799$ 
    &
      $-0.040653$ 
    &
      $+0.000000$ \cr
      $9$ 
    &
      $-0.000000$ 
    &
      $+0.000000$ 
    &
      $+0.784674$ 
    &
      $-0.032036$ 
    &
      $+0.000000$ \cr
      $10$ 
    &
      $+0.000000$ 
    &
      $+0.000000$ 
    &
      $+0.786866$ 
    &
      $-0.025898$ 
    &
      $+0.000000$ \cr
      $12$ 
    &
      $+0.000000$ 
    &
      $+0.000000$ 
    &
      $+0.789952$ 
    &
      $-0.017935$ 
    &
      $+0.000000$ \cr
      $14$ 
    &
      $+0.000000$ 
    &
      $+0.000000$ 
    &
      $+0.791993$ 
    &
      $-0.013153$ 
    &
      $+0.000000$ \cr
      $16$ 
    &
      $+0.000000$ 
    &
      $+0.000000$ 
    &
      $+0.793424$ 
    &
      $-0.010057$ 
    &
      $+0.000000$ \cr
      $18$ 
    &
      $-0.000000$ 
    &
      $+0.000000$ 
    &
      $+0.794472$ 
    &
      $-0.007939$ 
    &
      $+0.000000$ \cr
      $20$ 
    &
      $-0.000000$ 
    &
      $+0.000000$ 
    &
      $+0.795268$ 
    &
      $-0.006426$ 
    &
      $+0.000000$ \cr
  \end{tabular}
  \caption{
    The table of values of the mean curvature and the Gauss curvature, 
    if the translation vector of each discrete surface is 
    $e_x = (1, 0, 0)$, $e_y = (0, 1, 0)$ and $e_z = (0, 0, 1)$.
  }
  \label{tab:mackay-values}
\end{table}
%%%%%% 
%%%% 
\begin{table}[H]
  \centering
  \begin{tabular}{r|l|l|l}
    & \multicolumn{1}{c|}{min. length}
    & \multicolumn{1}{c|}{max. length}
    & \multicolumn{1}{c}{ratio}
      \cr\hline
      1
    & +0.08861403
    & +0.10810811
    & +1.2200
      \cr
      2
    & +0.04511223
    & +0.06204098
    & +1.3753
      \cr
      3
    & +0.03022620
    & +0.04543279
    & +1.5031
      \cr
      4
    & +0.02271783
    & +0.03650569
    & +1.6069
      \cr
      5
    & +0.01819462
    & +0.03083187
    & +1.6946
      \cr
      6
    & +0.01517240
    & +0.02686634
    & +1.7707
      \cr
      7
    & +0.01301066
    & +0.02391848
    & +1.8384
      \cr
      8
    & +0.01138784
    & +0.02162994
    & +1.8994
      \cr
      9
    & +0.01012480
    & +0.01979505
    & +1.9551
      \cr
      10
    & +0.00911387
    & +0.01828677
    & +2.0065
      \cr
      12
    & +0.00759670
    & +0.01594444
    & +2.0989
      \cr
      14
    & +0.00651247
    & +0.01420059
    & +2.1805
      \cr
      16
    & +0.00569903
    & +0.01284540
    & +2.2540
      \cr
      18
    & +0.00506621
    & +0.01175807
    & +2.3209
      \cr
      20
    & +0.00455986
    & +0.01086381
    & +2.3825
      \cr
  \end{tabular}
  \caption{
    The table of length of edges.
    In each subdivision, 
    edges of hexagons of center of dihedral action attain
    minimums of length, 
    and 
    two edges of octahedron attain
    maximums of length.
  }
  \label{tab:mackay-length}
\end{table}
\par\newpage
%%%%%%%% 
\begin{figure}[H]
  \begin{center}
    \begin{tabular}{ccc}
      \includegraphics[bb=0 0 464 464, width=\figsize]{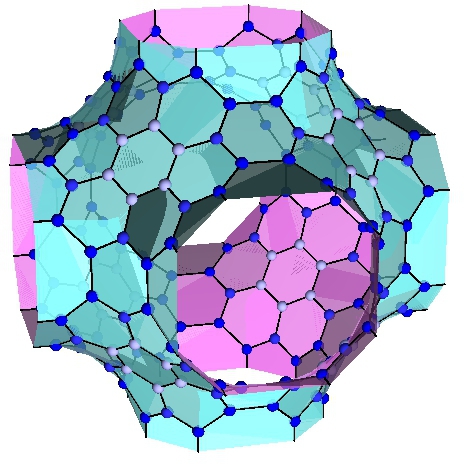}
      & 
        \includegraphics[bb=0 0 464 464, width=\figsize]{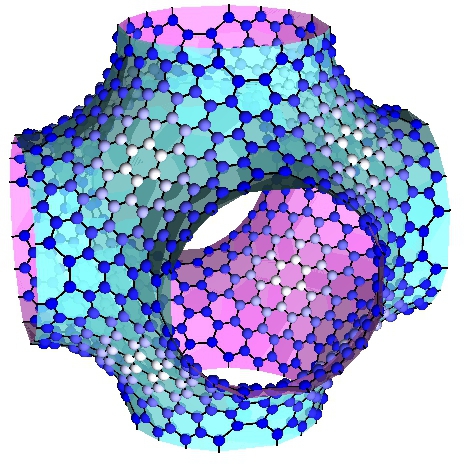}
      & 
        \includegraphics[bb=0 0 464 464, width=\figsize]{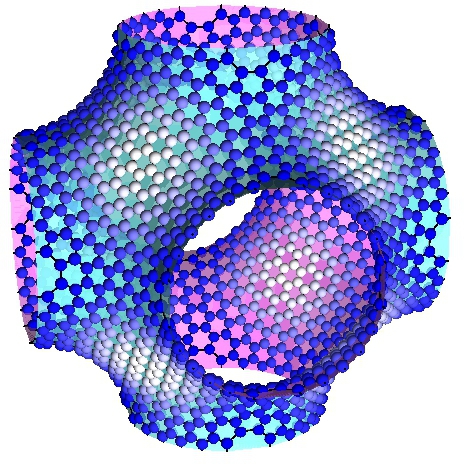}
      \\
      % $\GC_{1,0}(\mathrm{Mackay})$ 
      % & $\GC_{2,0}(\mathrm{Mackay})$ 
      % & $\GC_{3,0}(\mathrm{Mackay})$
      % \\
      %	\\
      \includegraphics[bb=0 0 464 464, width=\figsize]{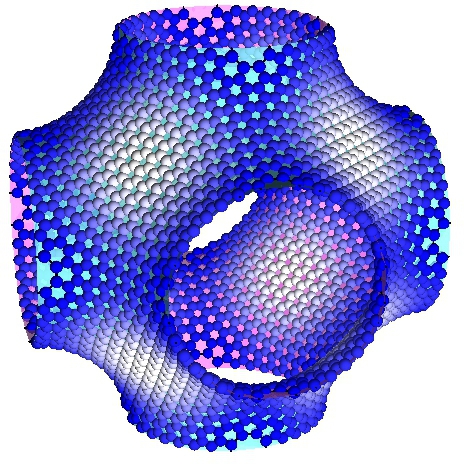}
      & 
        \includegraphics[bb=0 0 464 464, width=\figsize]{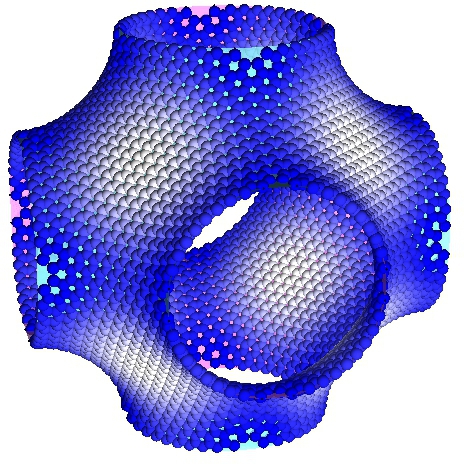}
      & 
        \includegraphics[bb=0 0 464 464, width=\figsize]{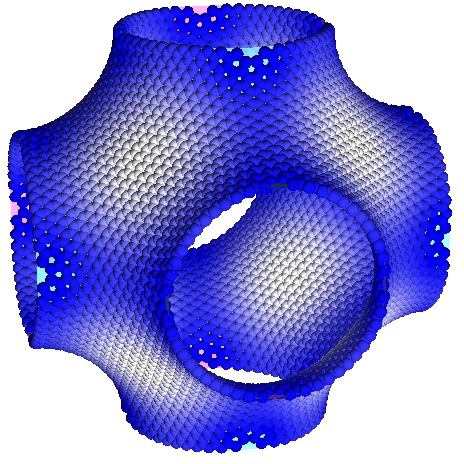}
      \\
      % $\GC_{4,0}(\mathrm{Mackay})$ 
      % & $\GC_{5,0}(\mathrm{Mackay})$ 
      % & $\GC_{6,0}(\mathrm{Mackay})$
      % \\
      \includegraphics[bb=0 0 464 464, width=\figsize]{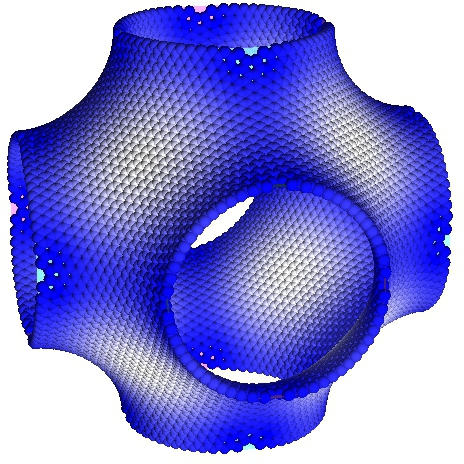}
      & 
        \includegraphics[bb=0 0 464 464, width=\figsize]{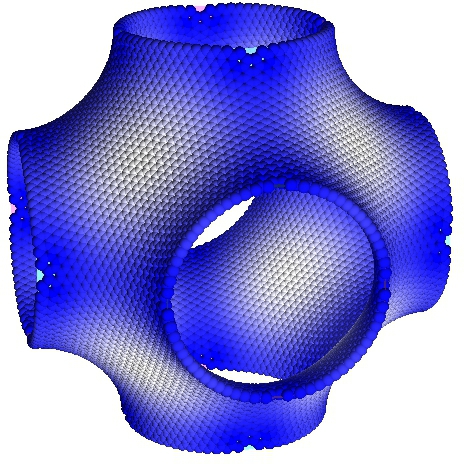}
      & 
        \includegraphics[bb=0 0 464 464, width=\figsize]{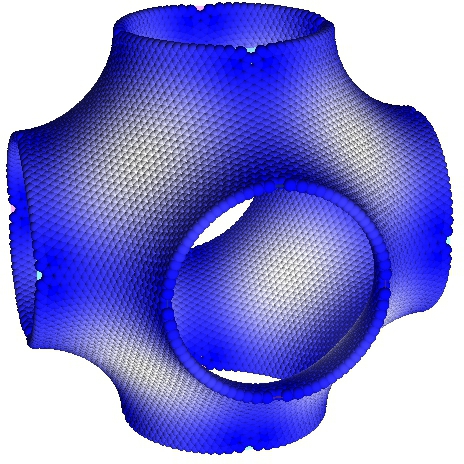}
      \\
      % $\GC_{7,0}(\mathrm{Mackay})$ 
      % & $\GC_{8,0}(\mathrm{Mackay})$ 
      % & $\GC_{9,0}(\mathrm{Mackay})$
    \end{tabular}
  \end{center}
  \caption{
    The Gauss curvature of $\GC_{(k, 0)}$ of Mackay crystals for $k = 1, \ldots, 9$.
    The Gauss curvature attain the smallest (negative, largest absolutely) values 
    in the respective pictures
    at the most red points, 
    while white points are those where the mean curvature is zero, 
    and colors are linearly interpolated between blue and white.
  }
  \label{Figure(K_for_GC-subdiv_of_Mackay)}
\end{figure}
%%%%%%%%% 
% \par\newpage
\begin{figure}[H]
  \begin{center}
    \begin{tabular}{ccc}
      \includegraphics[bb=0 0 464 464, width=\figsize]{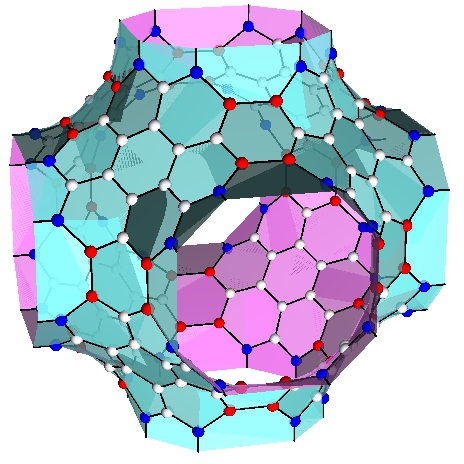}
      & 
        \includegraphics[bb=0 0 464 464, width=\figsize]{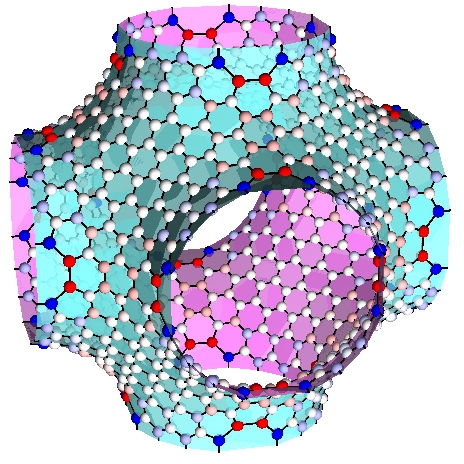}
      & 
        \includegraphics[bb=0 0 464 464, width=\figsize]{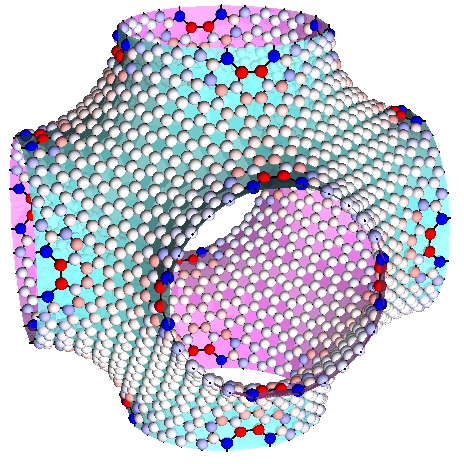}
      \\
      % $\GC_{1,0}(\mathrm{Mackay})$ 
      % & $\GC_{2,0}(\mathrm{Mackay})$ 
      % & $\GC_{3,0}(\mathrm{Mackay})$
      % \\
      \includegraphics[bb=0 0 464 464, width=\figsize]{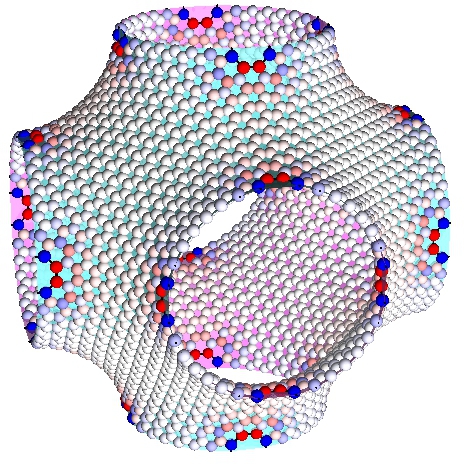}
      &
        \includegraphics[bb=0 0 464 464, width=\figsize]{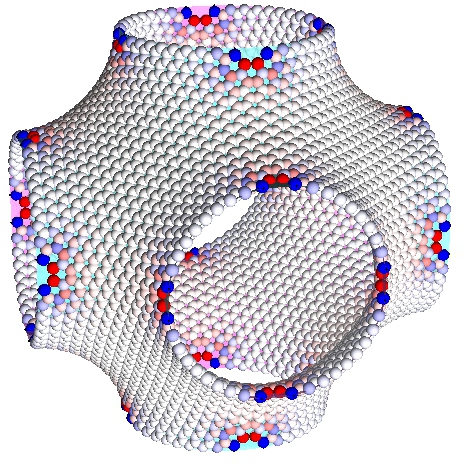}
      & 
        \includegraphics[bb=0 0 464 464, width=\figsize]{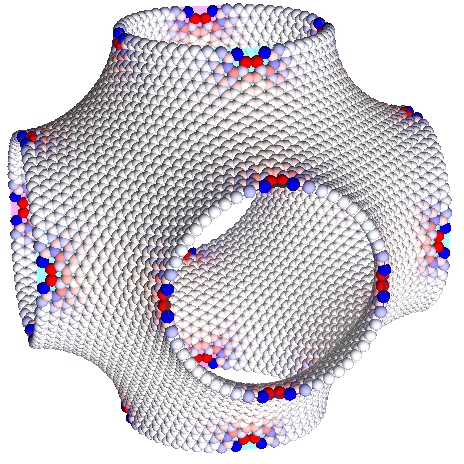}
      \\
      % $\GC_{4,0}(\mathrm{Mackay})$ 
      % & $\GC_{5,0}(\mathrm{Mackay})$ 
      % & $\GC_{6,0}(\mathrm{Mackay})$
      % \\
      \includegraphics[bb=0 0 464 464, width=\figsize]{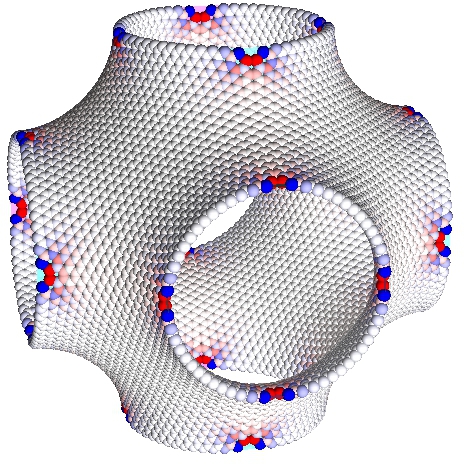}
      & 
        \includegraphics[bb=0 0 464 464, width=\figsize]{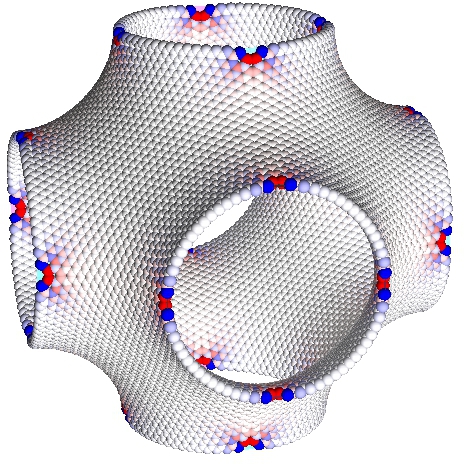}
      & 
        \includegraphics[bb=0 0 464 464, width=\figsize]{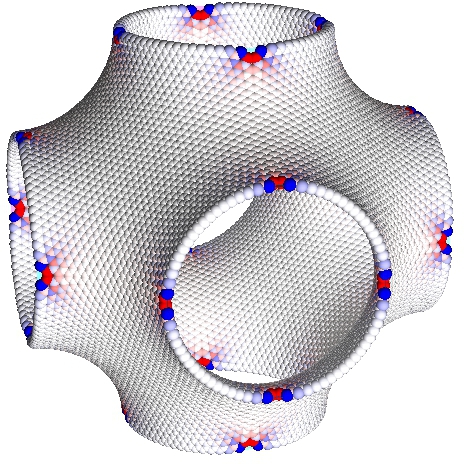}
      \\
      % $\GC_{7,0}(\mathrm{Mackay})$ 
      % & $\GC_{8,0}(\mathrm{Mackay})$ 
      % & $\GC_{9,0}(\mathrm{Mackay})$
    \end{tabular}
  \end{center}
  \caption{
    The mean curvature of $\GC_{(k, 0)}$ of Mackay crystals for $k = 1, \ldots, 9$.
    The mean curvature attain the smallest (negative, largest absolutely)/largest (positive) values 
    in the respective pictures
    at the most blue/red points, 
    while white points are those where the mean curvature is zero, 
    and colors are linearly interpolated between blue/white/red. 
  }
  \label{Figure(H_for_GC-subdiv_of_Mackay)}
\end{figure}
\subsubsection*{Acknowledgment}
Authors are partially supported by JSPS KAKENHI Grant Number
40211411, 15H02055, 24244004, 15K13432, and 15K17546.
Moreover, authors was partially supported by JST, CREST, 
``A mathematical challenge to a new phase of material sciences'' (2008-2013).
\subsubsection*{Dedication}
Authors dedicate this paper to Hisashi's late wife Yumiko Naito (September 8, 1963 -- October 2, 2015). 
While battling with breast cancer, she spent her days with a positive and enthusiastic attitude, supporting his research and caring for their son.
Hisashi's research up to now could not have existed without her support.
%%%%%%%%%%%%% 
\par\newpage
\nocite{MR1743611}
\nocite{MR1748964}
\nocite{MR1783793}
\nocite{MR1500145}
\nocite{MR2902247}
\nocite{MR3014418}
\nocite{MR2299728}


\begin{thebibliography}{10}
\bibitem{MR1396732}
%
A.~Bobenko and U.~Pinkall.
\newblock Discrete isothermic surfaces.
\newblock {\em J. Reine Angew. Math.}, {\bfseries 475}, 187--208 (1996).
\newblock doi:{10.1515/crll.1996.475.187}.
%
\bibitem{MR2233848}
A.~I. Bobenko, T.~Hoffmann, and B.~A. Springborn.
\newblock Minimal surfaces from circle patterns: geometry from combinatorics.
\newblock {\em Ann. of Math. (2)}, {\bfseries 164}, 231--264, (2006).
\newblock doi:10.4007/annals.2006.164.231.
%
\bibitem{MR2657431}
A.~I. Bobenko, H.~Pottmann, and J.~Wallner.
\newblock A curvature theory for discrete surfaces based on mesh parallelity.
\newblock {\em Math. Ann.}, {\bfseries 348}, 1--24, (2010).
\newblock doi:{10.1007/s00208-009-0467-9}.
%
\bibitem{MR2407724}
A.~I. Bobenko, P.~Schr{\"o}der, J.~M. Sullivan, and G.~M. Ziegler, editors.
\newblock {\em Discrete differential geometry}, volume~38 of {\em Oberwolfach
  Seminars}.
\newblock Birkh\"auser Verlag, Basel, 2008.
\newblock doi:10.1007/978-3-7643-8621-4.
%
\bibitem{MR2467378}
A.~I. Bobenko and Y.~B. Suris.
\newblock {\em Discrete differential geometry}, volume~98 of {\em Graduate
  Studies in Mathematics}.
\newblock American Mathematical Society, Providence, RI, 2008.
\newblock doi:{10.1007/978-3-7643-8621-4}.
%
\bibitem{Delgado-Friedrichs-OKeeffe}
O.~Delgado-Friedrichs and M.~O'Keeffe.
\newblock Identification of and symmetry computation for crystal nets.
\newblock {\em Acta Crystallogr. A}, {\bfseries 59}, 351--360, (2003).
\newblock doi:{10.1107/S0108767303012017}.
%
\bibitem{MR2359767}
M.~Desbrun and K.~Polthier.
\newblock Foreword: {D}iscrete differential geometry.
\newblock {\em Comput. Aided Geom. Design}, {\bfseries 24}, 427, (2007).
\newblock doi:{10.1016/j.cagd.2007.07.005}.
%
\bibitem{MR2429120}
M.~Deza and M.~Dutour~Sikiri{\'c}.
\newblock {\em Geometry of chemical graphs: polycycles and two-faced maps},
  volume 119 of {\em Encyclopedia of Mathematics and its Applications}.
\newblock Cambridge University Press, Cambridge, 2008.
\newblock doi:{10.1017/CBO9780511721311}.
%
\bibitem{MR2566897}
U.~Dierkes, S.~Hildebrandt, and F.~Sauvigny.
\newblock {\em Minimal surfaces}
\newblock Springer, Heidelberg, second edition, 2010.
\newblock doi:{10.1007/978-3-642-11698-8}.
%
\bibitem{MR2035314}
M.~Dutour and M.~Deza.
\newblock Goldberg-{C}oxeter construction for 3- and 4-valent plane graphs.
\newblock {\em Electron. J. Combin.}, {\bfseries 11}, Research Paper 20, 49, (2004).
\newline
\newblock {http://www.combinatorics.org/Volume\_11/Abstracts/v11i1r20.html}.
%
\bibitem{1937104}
M.~Goldberg.
\newblock A class of multi-symmetric polyhedra.
\newblock {\em Tohoku Mathematical Journal, First Series}, {\bfseries 43}, 104--108, (1937).
%
\bibitem{Hart}
G.~Hart.
\newblock Goldberg polyhedra.
\newblock In {\em Shaping Space (2nd ed.)}, pages 125–--138. Springer, 2012.
\newline
\newblock doi:{10.1007/978-0-387-92714-5\_9}.
%
\bibitem{MR2299728}
K.~Hildebrandt, K.~Polthier, and M.~Wardetzky.
\newblock On the convergence of metric and geometric properties of polyhedral
  surfaces.
\newblock {\em Geom. Dedicata}, {\bfseries 123}, 89--112, (2006).
\newblock doi:{10.1007/s10711-006-9109-5}.
%
\bibitem{MR1743611}
M.~Kotani and T.~Sunada.
\newblock Albanese maps and off diagonal long time asymptotics for the heat
  kernel.
\newblock {\em Comm. Math. Phys.}, {\bfseries 209}, 633--670, (2000).
\newblock doi:{10.1007/s002200050033}.
%
\bibitem{MR1748964}
M.~Kotani and T.~Sunada.
\newblock Jacobian tori associated with a finite graph and its abelian covering
  graphs.
\newblock {\em Adv. in Appl. Math.}, {\bfseries 24}, 89--110, (2000).
\newblock doi:{10.1006/aama.1999.0672}.
%
\bibitem{MR1783793}
M.~Kotani and T.~Sunada.
\newblock Standard realizations of crystal lattices via harmonic maps.
\newblock {\em Trans. Amer. Math. Soc.}, {\bfseries 353}, 1--20, (2001).
\newblock doi:{10.1090/S0002-9947-00-02632-5}.
%
\bibitem{Lenosky:1992}
T.~Lenosky, X.~Gonze, M.~Teter, and V.~Elser.
\newblock Energetics of negatively curved graphitic carbon.
\newblock {\em Nature}, {\bfseries 355}, 333--335, (1992).
\newblock doi:{10.1038/355333a0}.
%
\bibitem{Mackay-Terrones:1991}
A.~Mackay and H.~Terrones.
\newblock Diamond from graphite.
\newblock {\em Nature}, {\bfseries 352}, 762, (1991).
\newblock doi:{10.1038/352762a0}.
%
\bibitem{MR2579698}
C.~M{\"u}ller and J.~Wallner.
\newblock Oriented mixed area and discrete minimal surfaces.
\newblock {\em Discrete Comput. Geom.}, {\bfseries 43}, 303--320, (2010).
\newblock doi:{10.1007/s00454-009-9198-7}.
%
\bibitem{MR1500145}
H.~Naito.
\newblock Visualization of standard realized crystal lattices.
\newblock {\em Contemp. Math.}, {\bfseries 484}, 153--164, (2009).
\newblock doi:{10.1090/conm/484/09472}.
%
\bibitem{Naito:2016}
H.~Naito.
\newblock Construction of negatively curved carbon crystals via standard
  realizations, to appear in Springer Proc. Math. Stat., (2016).
\newblock arXiv:1601.02142.
%
\bibitem{MR1246481}
U.~Pinkall and K.~Polthier.
\newblock Computing discrete minimal surfaces and their conjugates.
\newblock {\em Experiment. Math.}, {\bfseries 2}, 15--36, (1993).
\newblock {http://projecteuclid.org/euclid.em/1062620735}.
%
\bibitem{MR2375022}
T.~Sunada.
\newblock Crystals that nature might miss creating.
\newblock {\em Notices Amer. Math. Soc.}, {\bfseries 55}, 208--215, (2008).
%
\bibitem{MR2902247}
T.~Sunada.
\newblock Lecture on topological crystallography.
\newblock {\em Jpn. J. Math.}, {\bfseries 7}, 1--39, (2012).
\newblock doi:{10.1007/s11537-012-1144-4}.
%
\bibitem{MR3014418}
T.~Sunada.
\newblock {\em Topological crystallography}, volume~6 of {\em Surveys and
  Tutorials in the Applied Mathematical Sciences}.
\newblock Springer, Tokyo, 2013.
\newblock doi:{10.1007/978-4-431-54177-6}.
%
\bibitem{Tagami:2014}
M.~Tagami, Y.~Liang, H.~Naito, Y.~Kawazoe, and M.~Kotani.
\newblock Negatively curved cubic carbon crystals with octahedral symmetry.
\newblock {\em Carbon}, {\bfseries 76}, 266--274, (2014).
\newblock doi:{10.1016/j.carbon.2014.04.077}.
%
\end{thebibliography}
\end{document}